  \definecolor{limegreen}{rgb}{0.196,0.804,0.196}
  \definecolor{darkgreen}{rgb}{0.0,0.5,0.0}
  \definecolor{darkbluegreen}{rgb}{0,0.3,0.6}
  \definecolor{badgerred}{rgb}{0.715,0.004,0.004}
  \newcommand{\cE}{\mathcal{E}}
  \newcommand{\bv}{{\bar v}}
  \newcommand{\bW}{{\bar W}}
  \newcommand{\cH}{\mathcal{H}}
  \newcommand{\cD}{\mathcal{D}}
  \newcommand{\cB}{\mathcal{B}}
  \newcommand{\cC}{\mathcal{C}}
  \newcommand{\cL}{\mathcal{L}}
  \newcommand{\cM}{\mathcal{M}}
  \newcommand{\cP}{\mathcal{P}}
  \newcommand{\cO}{\mathcal{O}}
  \newcommand{\cS}{\mathcal{S}}
  \newcommand{\h}{\mathcal{H}}
  \newcommand{\hilb}{\mathfrak{H}}
  \newcommand{\hv}{\mathfrak{D}}
  \newcommand{\pr}{\mathcal{P}}      % for projections in Hilbert space
  \newcommand{\cyl}{\mathcal{C}}     %cylindrical region
  \newcommand{\collar}{\mathcal{K}}  %collar region
  \newcommand{\tip}{\mathcal{T}}     %Tip region
  \newcommand{\pd}{\partial}
  \newcommand{\so}{{\mathfrak{so}}} % the Lie algebra
  \newcommand{\R}{{\mathbb R}}
  \newcommand{\supp}{\mathop{\mathrm {supp}}}
  \newtheorem{theorem}{Theorem}[section]
  \newtheorem{proposition}[theorem]{Proposition}
  \newtheorem{lemma}[theorem]{Lemma}
  \newtheorem{definition}[theorem]{Definition}
  \newtheorem{prop}[theorem]{Proposition}
  \newtheorem{corollary}[theorem]{Corollary}
  \theoremstyle{remark}
  \newtheorem{rem}{Remark}[section]
  \newtheorem{remark}[theorem]{Remark}
  \newtheorem{claim}[theorem]{Claim}
  \newtheorem{step}{Step}
  \numberwithin{equation}{section}
  \numberwithin{theorem}{section}
\begin{document}
\title[Uniqueness of Ancient Ovals in MCF]
{Uniqueness of two-convex closed ancient solutions to the mean curvature flow 
} 
\author[Angenent]{Sigurd Angenent}
\address{Department of Mathematics, University of Wisconsin -- Madison}
\author[Daskalopoulos]{Panagiota Daskalopoulos}
\address{Department of Mathematics, Columbia University, New York}
\author[Sesum]{Natasa Sesum}
\address{Department of Mathematics, Rutgers University, New Jersey}

\thanks{
P.~Daskalopoulos thanks the NSF for support in DMS-1266172.
N.~Sesum thanks the NSF for support in DMS-1056387.
}

\date{\today}

\begin{abstract}
  In this paper we consider closed noncollapsed ancient solutions to the mean curvature flow ($n \ge 2$) which are uniformly two-convex.
  We prove such an ancient solution is up to translations and scaling the unique rotationally symmetric closed ancient noncollapsed solution constructed in \cite{Wh} and \cite{HH}.
\end{abstract}
\maketitle

\tableofcontents

\section{Introduction} 
In this paper we consider closed noncollapsed ancient solutions $F(\cdot,t): M^n \to \mathbb{R}^{n+1}$ to the mean curvature flow ($n \ge 2$) 
\begin{equation}
  \label{eq-mcf}
  \frac{\partial}{\partial t} F = -H\, \nu
\end{equation}
for $t\in (-\infty,0)$, where $H$ is the mean curvature of $M_t := F(M^n,t)$ and $\nu$ is the outward unit normal vector.
We know by Huisken's result \cite{Hu} that the surfaces $M_t$ will contract to a point in finite time.

The main focus of the paper is the classification of two-convex {\em closed ancient solutions}  to mean curvature flow, i.e.~solutions that are defined for $t\in (-\infty,T)$, for some $T < +\infty$. 
Ancient solutions play an important role in understanding the singularity formation in geometric flows, as such solutions  are usually obtained after performing a  blow up near points where the curvature is very large.
In fact, 
Perelman's famous work on the Ricci flow \cite{P} shows that  the high curvature regions are modeled on ancient solutions  which have nonnegative curvature and are $\kappa$-noncollapsed.
Similar results for mean curvature flow were obtained in \cite{HK}, \cite{Wh0}, \cite{Wh} assuming mean convexity and embeddedness. 

Daskalopoulos, Hamilton and Sesum previously established the complete classification of  ancient compact convex solutions  to the curve 
shortening flow in \cite{DHS1},  and   ancient compact solutions  the  Ricci flow on $S^2$  in \cite{DHS2}.   
The higher dimensional cases have remained open for  both  the mean curvature flow and  the Ricci flow. 

\smallskip 
In an important work by Xu-Jia Wang \cite{W} the author introduced the following notion of non-collapsed solutions to the MCF which is the analogue to the $\kappa$-non-collapsing condition for the Ricci flow discussed above.
In the same work Xu-Jia Wang provided a number of results regarding the asymptotic behavior of ancient solutions, as $t \to -\infty$, and he also constructed new examples of ancient MCF solutions.

\begin{definition}
  Let $K^{n+1} \subset \mathbb{R}^{n+1}$ be a smooth domain whose boundary is a mean convex hypersurface $M^n$. 
  We say that $M^n$ is  $\alpha$-noncollapsed if for every $p\in M^n$ there are balls $B_1$ and $B_2$ of radius at least $\frac{\alpha}{H(p)}$ such that  $\bar B_1 \subset K^{n+1}$ and $\bar B_2 \subset \mathbb{R}^{n+1}\setminus  Int (K^{n+1})$, and such that $B_1$ and $B_2$ are tangent to $M^n$ at the point $p$, from the interior and exterior of $K^{n+1}$, respectively (in the limiting case $H(p) \equiv 0$, this means that $K^{n+1}$ is a halfspace). 
  A smooth mean curvature flow $\{M_t\}$ is $\alpha$-noncollapsed if $M_t$ is $\alpha$-noncollapsed for every $t$.
\end{definition}

In \cite{And} Andrews showed that the $\alpha$-noncollapsedness property is preserved along mean curvature flow, namely, if the initial hypersurface is $\alpha$-noncollapsed at time $t = t_0$, then evolving hypersurfaces $M_t$ are $\alpha$-noncollapsed for all later times for which the solution exists. 
Haslhofer and Kleiner \cite{HK} showed that every closed, ancient, and $\alpha$-noncollapsed solution is necessarily convex. 

\smallskip
In recent  breakthrough  works,  Brendle and Choi  \cite{BC,BC2} gave   the  complete classification of noncompact ancient solutions to the mean curvature flow that are both strictly convex and uniformly two-convex. 
More precisely, they show that any noncompact and complete ancient solution to mean curvature flow \eqref{eq-mcf} that is strictly convex, uniformly two-convex, and noncollapsed is the Bowl soliton, up to scaling and ambient isometries. 
Recall that the Bowl soliton is the unique rotationally-symmetric, strictly convex solution to mean curvature flow that translates with unit speed.
It has the approximate shape of a paraboloid and its mean curvature is largest at the tip. 
The uniqueness of the Bowl soliton among convex and uniformly two-convex translating solitons has been proved  by Haslhofer in \cite{Ha}.

While the $\alpha$-noncollapsedness property for mean curvature flow is preserved forward in time, it is not necessarily preserved going back in time.
Indeed, Xu-Jia Wang (\cite{W}) exhibited examples of ancient compact convex mean curvature flow solutions $\{M_t \,\,\,|\,\,\, t<0\}$, that is not uniformly $\alpha$-noncollapsed for any $\alpha > 0$.
Such solutions lie in slab regions.
The methods in \cite{W} rely on the level set flow.
Recently, Bourni, Langford and Tinaglia \cite{BLT} provided a detailed construction of the Xu-Jia Wang solutions by different methods, showing also that the solution they construct is unique within the class of rotationally symmetric mean curvature flows that lie in a slab of a fixed width.
In the present paper we will not consider these ancient collapsed solutions and focus on the classification of ancient closed noncollapsed mean curvature flows.

Ancient self-similar solutions to MCF are of the form $M_t = \sqrt{T-t}\,\bar M$ for some fixed surface $\bar M$ and some ``blow-up time'' $T$.
We rewrite a general ancient solution $\{M_t : t<T\}$ as
\begin{equation}
  \label{eq-parabolic-blow-up}
  M_t = \sqrt{T-t} \, \bar M_\tau, \qquad \tau:={-\log (T-t)}.
\end{equation}
%The new family of surfaces $\bar M_\tau$ with $\tau = -\log(T-t)$, is called a type-I or \emph{parabolic blow-up} of the original solution $M_t$.

\smallskip
Haslhofer and Kleiner \cite{HK} proved that every closed ancient noncollapsed mean curvature flow with strictly positive mean curvature sweeps out the whole space. 
By Xu-Jia~Wang's result \cite{W}, it follows that in this case the backward limit as $\tau \to -\infty$ of the   type-I rescaling  $\bar M_\tau$ of the original solution $M_t$, 
defined by \eqref{eq-parabolic-blow-up},   is either a sphere or a generalized cylinder $\R^k\times S^{n-k}$ of  radius $\sqrt{2(n-k)}$. 
In \cite{ADS} we showed that if the backward limit is a  sphere then the ancient solution $\{M_t\}$ has to be a family of shrinking spheres itself.

\begin{definition}
  \label{def-oval}
  We say an ancient mean curvature flow $\{M_t : -\infty < t < T\}$ is an \emph{Ancient Oval} if it is compact, smooth, noncollapsed, and not self-similar.
  % We say a solution $M_t \subset \mathbb{R}^{n+1}$ to the mean curvature flow \eqref{eq-mcf} is an \emph{text}{Ancient Oval} if it is compact smooth noncollapsed ancient mean curvature flow solution, which is not self-similar.
\end{definition}

\begin{definition}
  We say that an ancient solution $\{M_t : -\infty < t < T\}$ is   \emph{uniformly 2-convex} if there exists a uniform constant $\beta > 0$ 
  so that 
  \begin{equation}
    \label{eqn-2convex}
    \lambda_1 + \lambda_2 \ge \beta H, \qquad \mbox{ for all} \,\, t \le t_0.
  \end{equation}
\end{definition} 

Throughout the paper we will be using the following observation: if an Ancient Oval $M_t$ is uniformly 2-convex, then by results in \cite{W}, the backward limit of its type-I parabolic blow-up must be a shrinking round cylinder $\mathbb{R}\times S^{n-1}$, with radius $\sqrt{2(n-1)}$.

Based on formal matched asymptotics, Angenent \cite{A} conjectured the existence of an Ancient Oval, that is, of an ancient solution that for $t\to 0$ collapses to a round point, but for $t\to -\infty$ becomes more and more oval in the sense that it looks like a round cylinder $\mathbb{R}\times S^{n-1}$ in the middle region, and like a rotationally symmetric translating soliton (the Bowl soliton) near the tips. 
A variant of this conjecture was proved already by White in \cite{Wh}. 
By considering convex regions of increasing eccentricity and using a limiting argument, he proved the existence of ancient flows of compact, convex sets that are not self-similar.  
Haslhofer and Hershkovits \cite{HH} carried out White's construction in more detail, including, in particular, the study of the geometry at the tips.
As a result they gave a rigorous and simple proof for the existence of an Ancient Oval.

Our main result in this paper is as follows.

\begin{theorem}
  \label{thm-main-main}
  Let $\{ M_t, \, -\infty < t < T \} $ be a uniformly 2-convex Ancient Oval.
  Then it is unique and hence must be the solution constructed by White in \cite{Wh} and later by Haslhofer and Hershkovits in \cite{HH}, up to ambient isometries, scaling and translations in time.
\end{theorem}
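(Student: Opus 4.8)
To prove Theorem~\ref{thm-main-main} the plan is to pass through three stages — rotational symmetry, sharp asymptotics, and a uniqueness-by-difference argument — after an initial normalization: using ambient isometries, the parabolic rescaling $F\mapsto\lambda F$, $t\mapsto\lambda^2 t$, and time translation, arrange $T=0$, so that by the observation recorded after Definition~\ref{def-oval} the type-I rescaled flow $\bar M_\tau$ converges as $\tau\to-\infty$ to the round cylinder $\R\times S^{n-1}$ of radius $a:=\sqrt{2(n-1)}$; parabolic rescaling then acts merely by shifting $\tau$, leaving one such freedom for later.

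\textbf{Stage 1: rotational symmetry.} Being closed, ancient and noncollapsed, the flow is convex (hence $H>0$) and sweeps out $\R^{n+1}$ by Haslhofer--Kleiner \cite{HK}, and with Xu-Jia Wang \cite{W} and uniform $2$-convexity its backward limit is the cylinder above. So for $\tau\ll 0$ the hypersurface $\bar M_\tau$ splits into a long central piece that is a small $C^k$ graph over a segment of the cylinder and two compact caps containing the points of maximal curvature. Rescaling each cap by its (bounded, by noncollapsing) curvature and taking a limit yields a complete, noncompact, convex, uniformly $2$-convex, noncollapsed ancient solution, which by Brendle--Choi \cite{BC,BC2} is the rotationally symmetric Bowl soliton. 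To upgrade this approximate symmetry to exact $SO(n)$-invariance of $M_t$ I would run an Alexandrov-reflection / neck-improvement argument: the closeness of the solution to a rotationally symmetric model on both the neck and the caps lets one apply the maximum principle to the difference between $F(\cdot,t)$ and its reflection across hyperplanes containing the (approximate) axis, and iterate to get invariance under all such reflections, hence under $SO(n)$.

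\textbf{Stage 2: asymptotics.} Writing the rotationally symmetric rescaled hypersurface in the neck as a graph $r=u(x,\tau)$ over the axis, $u$ solves a quasilinear parabolic equation and $u\to a$ locally uniformly. Linearizing about the constant $a$ produces the Ornstein--Uhlenbeck operator $\cL=\partial_x^2-\tfrac{x}{2}\partial_x+1$ on $L^2(e^{-x^2/4}\,dx)$, with spectrum $\{1-\tfrac k2:k\ge 0\}$ and Hermite eigenfunctions $h_k$. The positive-eigenvalue modes do not govern the decay ($h_1$ is removed by centering on the axis, $h_0$ is slaved to the neutral mode), so the neutral mode $h_2(x)=x^2-2$ dominates; analyzing the projections by a Merle--Zaag-type argument together with the Bernoulli-type ODE $\dot b_2 = -\tfrac{4}{a}\,b_2^2+\ldots$ satisfied by the $h_2$-coefficient $b_2$ gives $u(x,\tau)=a\bigl(1-\tfrac{x^2-2}{4|\tau|}\bigr)+o(|\tau|^{-1})$ uniformly on compact $x$-intervals, while in the intermediate variable $y=x/\sqrt{|\tau|}$ the profile converges to the semicircular arc $u\approx a\sqrt{1-y^2/2}$, placing the two tips at $x\approx\pm\sqrt{2|\tau|}$. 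Quantitative convergence to the Bowl soliton near each tip (with scale pinned by matching to the neck) completes the asymptotic picture, which is the same for every normalized uniformly $2$-convex Ancient Oval.

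\textbf{Stage 3: uniqueness, and the main obstacle.} For two normalized solutions $M^1_t,M^2_t$ with rotationally symmetric profiles $u_1,u_2$, the difference $w:=u_1-u_2$ solves a linear parabolic equation whose coefficients tend to those of $\cL$. One controls $w$ in three glued regions — a Gaussian-weighted energy/spectral estimate in the neck, an estimate in the intermediate region, and a maximum-principle/barrier estimate in the tip regions bounding their contribution by that of the neck — and decomposes $w=w_++w_0+w_-$ onto the spans of $\{h_0,h_1\}$, $\{h_2\}$, and $\{h_k:k\ge 3\}$. A Merle--Zaag dichotomy then forces either $w_+$ or $w_0$ to dominate as $\tau\to-\infty$ (with $w_-$ negligible, since a dominant stable part would blow up backward). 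The unstable part $w_+$ encodes the remaining geometric parameters (the axial translation) and is killed by the normalization; the neutral part $w_0$, which by Stage 2 carries the same universal leading term for both solutions, is reduced to lower order using the residual $\tau$-shift together with a bootstrap of the spectral expansion; a weighted backward-uniqueness argument for the rescaled flow then forces $w\equiv 0$, and equal profiles together with equal tip data give $M^1_t=M^2_t$. I expect the hardest part to be exactly this three-region bookkeeping: the neck argument controls only a Gaussian-weighted norm over a region expanding like $\sqrt{|\tau|}$, the tips need entirely different curvature-rescaled estimates built on the Bowl soliton, and closing the loop demands that no error leak out of the tip regions faster than the Gaussian weight can absorb — in effect a quantitative, compact-setting refinement of the Brendle--Choi tip analysis matched to the Hermite-mode neck expansion, alongside making the reflection argument of Stage 1 rigorous.
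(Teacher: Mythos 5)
Your three-stage outline matches the paper's architecture (symmetry, then asymptotics, then a three-region coercive estimate for the difference), but the key mechanisms you invoke at Stages~1 and~3 do not carry out, and the paper uses genuinely different ones.

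\textbf{Stage 1.} Alexandrov reflection across ``hyperplanes containing the (approximate) axis'' presupposes that you already know the axis, which is exactly what rotational symmetry would give you. Before symmetry is established the axis is undefined, and the reflection argument cannot even be set up. The paper instead follows Brendle--Choi's quantitative scheme: a notion of $\epsilon$-symmetry in terms of normalized rotation vector fields $K_\alpha$ (Definition~\ref{def-normalized}), the Neck Improvement and Cap Improvement propositions (Propositions~\ref{prop-neck}, \ref{prop-cap}) that halve the $\epsilon$ when iterated, and control on the drift of the approximate axis via Lemma~4.3 of \cite{BC}. Moreover, your claim that rescaling the cap by its curvature yields a limit to which the Brendle--Choi classification applies is incomplete: the limit could split off a line and be a round cylinder, which is not strictly convex. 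The paper rules that out (Lemma~\ref{lem-simple-lambda} and Proposition~\ref{limit-tip}) by a topological degree argument on the eigenvector field of $\lambda_{\min}$; if $\lambda_{\min}$ were simple on a whole cap the associated unit eigenvector would furnish a nowhere-zero section pointing outward on the boundary, contradicting that the normalized map $S^{n-1}\to S^{n-1}$ extends over the ball. Without such an argument your Stage~1 has a real gap.

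\textbf{Stage 3.} The spectral decomposition is right, but your bookkeeping of free parameters is off. The unstable subspace of $\cL$ is two-dimensional (spanned by $\psi_0=1$, $\psi_1=y$) and the neutral subspace is one-dimensional (spanned by $\psi_2=y^2-2$), so to kill both $\pr_+ w_\cyl(\tau_0)$ and $\pr_0 w_\cyl(\tau_0)$ you need \emph{three} parameters; you cannot rely on ``$h_0$ slaved to the neutral mode,'' which holds for the asymptotics of a single solution but fails for the difference $w=u_1-u_2^{\alpha\beta\gamma}$, whose $\psi_0$-component is shifted by time translation. The paper accordingly varies $(\alpha,\beta,\gamma)$ (axial shift, time shift relative to the blow-up time, and dilation) and finds an admissible triple by a degree argument (Proposition~\ref{lem-rescaling-components-zero}). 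Also, the endgame is not a ``weighted backward-uniqueness argument'' or a Merle--Zaag dichotomy for $w$: once the cylindrical coercive estimate, the tip-region Poincar\'e estimate (Proposition~\ref{prop-Poincare}), and the transition-region norm equivalence (Lemma~\ref{prop-norm-equiv}) are combined to give $\|\hat w_\cyl\|\le\epsilon\|\pr_0 w_\cyl\|$, the paper projects the evolution onto $\psi_2$, derives an ODE $a'(\tau)=\tfrac{2}{|\tau|}a(\tau)+F(\tau)$ with $\|F\|$ small relative to $\|a\|$, and uses $a(\tau_0)=0$ (the tuned initial condition) to conclude $a\equiv 0$, hence $w\equiv 0$. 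That is the actual closure of the argument, and it is where your sketch is vague. Finally, you should note that the \cite{ADS} asymptotics you cite were proved assuming $O(1)\times O(n)$ symmetry, whereas Stage~1 only gives $O(n)$ symmetry; the paper needs (and supplies, Theorem~\ref{thm-O1}) a separate argument to remove the reflection assumption before Stage~3 can use those asymptotics.
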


The proof of this theorem will follow from the results stated below. 

\begin{theorem}
  \label{thm-rot-symm}
  If $\{M_t : -\infty < t <0\}$  is an Ancient Oval which is uniformly 2-convex, then it is rotationally symmetric.
\end{theorem}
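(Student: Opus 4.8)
The plan is to prove rotational symmetry via the moving-plane (or Alexandrov reflection) method, exploiting the precise asymptotics of the Ancient Oval as $t \to -\infty$ and as $t \to T$. The starting point is the structure we already know: the backward limit of the type-I rescaling $\bar M_\tau$ is the round cylinder $\R \times S^{n-1}$ of radius $\sqrt{2(n-1)}$, and near each tip the solution is, after rescaling, asymptotic to the (rotationally symmetric) Bowl soliton. These two facts pin down a preferred axis: the axis of the limiting cylinder, which I will take to be the $x_{n+1}$-axis after an ambient rotation. The goal is then to show $M_t$ is a surface of revolution about that axis for every $t$.

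First I would set up Alexandrov reflection with respect to hyperplanes $H_\mu = \{ x \cdot \omega = \mu \}$ orthogonal to an arbitrary direction $\omega$ perpendicular to the cylinder axis. Because each $M_t$ bounds a compact convex region $K_t$ (convexity is guaranteed by Haslhofer--Kleiner), standard Alexandrov reflection produces, for each such $\omega$, a well-defined ``first'' reflection parameter $\mu_*(\omega, t)$ at which the reflected cap becomes internally tangent or develops equal normals; the maximum principle then forces the reflected piece to lie inside $K_t$ for all $\mu \ge \mu_*$. The delicate point for \emph{ancient} flows is that reflection at a fixed time is not preserved by the flow in a naive way; one instead runs the reflection argument simultaneously for all $t$ and uses the strong maximum principle for the (nonlinear, degenerate-parabolic) mean curvature flow together with the Hopf boundary lemma to conclude that if the reflected region is contained in $K_t$ at some time it remains so, and that strict containment at one time and one point upgrades to symmetry. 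Concretely, I would show the function $\mu_*(\omega, t)$ is constant in $t$ and, using the asymptotics, that its value is $0$ — i.e.\ the hyperplane through the axis is always a plane of symmetry.

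This is where the asymptotics do the essential work, and it is the step I expect to be the main obstacle. To show $\mu_*(\omega,t) = 0$ I need to control the position of $M_t$ relative to candidate reflection planes uniformly as $t \to -\infty$ and $t \to T$. As $t \to T$ the surface shrinks to a round point, so after type-I rescaling it is close to a round sphere, which is symmetric across every hyperplane through its center; this forces $\mu_*$ toward $0$ near the extinction time. As $t \to -\infty$, in the cylindrical middle region $\bar M_\tau$ is $C^\infty_{loc}$-close to $\R \times S^{n-1}$, which is rotationally symmetric about its axis, and near the tips it is close (after the appropriate rescaling of Brendle--Choi / Haslhofer type) to the rotationally symmetric Bowl soliton. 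The technical heart is to promote these \emph{qualitative} limits to \emph{quantitative} reflection estimates that hold on the entire surface at once and are compatible across the cylinder-to-tip transition region — this requires uniform (in $\tau$) geometric estimates of the kind established in \cite{ADS} and in the work of Brendle--Choi \cite{BC, BC2}, and a careful barrier/neck-detection argument to rule out any asymmetric ``bump'' escaping the reflection at an intermediate scale. Once $\mu_*(\omega, t) \equiv 0$ for every $\omega \perp$ axis, the surface is symmetric across every hyperplane containing the axis, hence invariant under the full rotation group $O(n)$ fixing that axis, which is exactly rotational symmetry.

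A possible alternative, which I would keep in reserve in case the moving-plane uniformity proves too delicate, is a linearization/spectral approach: write $\bar M_\tau$ as a graph over the cylinder, decompose the graph function into spherical harmonics on the $S^{n-1}$ factor, and show that the non-rotationally-symmetric modes (those with angular frequency $\ge 1$) decay relative to the symmetric ones as $\tau \to -\infty$, using the structure of the linearized operator (the Ornstein--Uhlenbeck-type operator whose unstable and neutral modes are explicitly known from \cite{ADS}). Combined with the backward uniqueness for the (parabolic) mean curvature flow, vanishing of the asymmetric modes in the limit would force them to vanish for all $\tau$. Either way, the crux is the same: leveraging the known backward and forward asymptotic models — both of which are rotationally symmetric — to exclude asymmetry at all intermediate times, and the main work is making the relevant estimates uniform across the cylindrical region, the two tip regions, and the transition zones between them.
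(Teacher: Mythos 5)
Your approach is genuinely different from the paper's, and it has a gap that I think is fatal as written. The paper proves Theorem~\ref{thm-rot-symm} by adapting the Brendle--Choi framework: it shows that for $t\ll -1$ every point of $M_t$ has a uniform normalized parabolic neighborhood that is either an $(\epsilon,10)$-neck or $\epsilon$-close to a Bowl cap (Proposition~\ref{lem-either-or}), so every point is $\epsilon$-symmetric in the sense of Definition~\ref{def-normalized}; then it applies the Neck Improvement and Cap Improvement propositions (Propositions~\ref{prop-neck} and~\ref{prop-cap}) iteratively to obtain $\epsilon/2^j$-symmetry for all $j$, and lets $j\to\infty$. This is a local, scale-covariant stability argument. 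Your plan is a global moving-plane argument, and the place it breaks is exactly the place you flag as the main obstacle.

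Concretely: Alexandrov reflection is preserved \emph{forward} in time under mean curvature flow, so the first reflection parameter $\mu_*(\omega,t)$ is non-increasing in $t$. That means two of the ingredients you list actually pull in the wrong direction. The near-extinction asymptotics (closeness to a round sphere as $t\to T$) give you $\mu_*(\omega,t)\to 0$ as $t\to T$, but by monotonicity that is $\inf_t\mu_* = 0$, which says nothing about earlier times; you need $\sup_t\mu_*(\omega,t)=0$, i.e.\ control as $t\to-\infty$, where forward preservation cannot help you. And the backward asymptotics only give $C^\infty_{\mathrm{loc}}$ convergence to the cylinder \emph{after type-I rescaling}; this controls the surface in balls of radius $O(\sqrt{|t|})$ but says nothing quantitative about the unrescaled reflection parameter, which is a global quantity sensitive to the tip regions where the relevant scale is $\sqrt{|t|/\log|t|}$ and the model is the Bowl soliton, not the cylinder. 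The step where you ``promote these qualitative limits to quantitative reflection estimates that hold on the entire surface at once and are compatible across the cylinder-to-tip transition region'' is not a technicality: it is essentially the entire content of the theorem, and it is precisely what the Neck and Cap Improvement iteration is engineered to accomplish at all scales simultaneously. There is also a subtler issue: the axis you reflect about is only defined in the limit $\tau\to-\infty$; at finite times the ``approximate axis'' can drift, and translations/tilts of the axis correspond to low modes of the linearized operator that your reflection setup does not isolate. Your spectral backup plan runs into the same wall: decomposing into spherical harmonics over the cylinder only makes sense in the cylindrical region, and controlling the coupling to the tip region without already knowing rotational symmetry is exactly what is hard. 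For these reasons the proposal, as sketched, does not close, and I would not expect the moving-plane route to go through without ideas of comparable depth to the $\epsilon$-symmetry improvement scheme that the paper uses.
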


Our proof of Theorem \ref{thm-rot-symm} closely follows the arguments 
by Brendle and Choi in \cite{BC, BC2} on the uniqueness of strictly convex, noncompact, uniformly 2-convex, and noncollapsed ancient
mean curvature flow. 
It was shown in \cite{BC} that such solutions are rotationally symmetric.
Then, by analyzing the rotationally symmetric solutions, Brendle and Choi showed that such solutions agree with the Bowl soliton.

Given Theorem~\ref{thm-rot-symm}, we may assume in our proof of Theorem~\ref{thm-main-main} that any Ancient Oval $M_t$ is rotationally symmetric. 
After applying a suitable Euclidean motion we may assume that its {\em axis of symmetry is the $x_1$-axis}.
Then, $M_t$ can be represented as
\begin{equation}
  \label{eq-O1xOn-symmetry}
  M_t = \bigl\{ (x, x') \in \R\times\R^{n} : -d_1(t)<x<d_2(t), \|x'\|=U(x, t)\bigr\}
\end{equation}
for some function $\|x'\|=U(x,t)$, and from now on we will set $x:=x_1$ and $x'=(x_2, \cdots, x_{n+1})$. 
We call the points $(-d_1(t), 0)$ and $(d_2(t),0)$ \emph{the tips} of the surface.  
The function $U(x, t)$, which we call the \emph{profile} of the hypersurface $M_t$, is only defined for $x\in[-d_1(t), d_2(t)]$.
Any surface $M_t$ defined by \eqref{eq-O1xOn-symmetry} is automatically invariant under $O(n)$ acting on $\R\times\R^n$.  
Convexity of the surface $M_t$ is equivalent to  concavity of the profile $U$, i.e.~$M_t$ is convex if and only if $U_{xx}\leq0$.

A family of surfaces $M_t$ defined by $\|x'\|=U(x, t)$ evolves by mean curvature flow if and only if the profile $U(x,t)$ satisfies
\begin{equation}
  \label{eq-u-original}
  \frac{\pd U}{\pd t} = \frac{U_{xx}}{1+U_x^2} - \frac{n-1}{U}.
\end{equation}
If $M_t$ satisfies MCF, then its parabolic rescaling $\bar{M}_\tau$ defined by \eqref{eq-parabolic-blow-up} evolves by the \emph{rescaled MCF}
\[
\nu\cdot\frac{\partial \bar F}{\partial \tau} = H + \tfrac12 \bar F\cdot\nu,
\]
where $\bar F(x, \tau) = e^{\tau/2}F(x, T-e^{-\tau})$ is the parametrization of $\bar M_\tau$, and $\nu = \nu(x, t)$ is the corresponding unit normal.
Also,
\[
\bar{M}_{\tau}
= \{(y,y')\in \mathbb{R}\times \mathbb{R}^n
\mid
-\bar{d}_1(\tau) \le y \le \bar{d}_2(\tau), \,\,\, \|y'\| = u(y,\tau)\}
\]
for a profile function $u$, which is related to $U$ by
\[
U(x,t) = \sqrt{T-t}\, u(y, \tau), \qquad y=\frac x{ \sqrt{T-t}}, \quad \tau=-\log (T-t).
\]
The points $(-\bar{d}_1(\tau),0)$ and $(\bar{d}_2(\tau),0)$ are referred to as the tips of rescaled surface $\bar{M}_{\tau}$.
Equation \eqref{eq-u-original} for $U(x,t)$ is equivalent to the following equation for $u(y,\tau)$ 
\begin{equation}
  \label{eq-u}
  \frac{\pd u}{\pd \tau} = \frac{u_{yy}}{1+u_y^2} - \frac y2 \, u_y - \frac{n-1}{u}+ \frac u2.
\end{equation}

\smallskip

It follows from  the discussion above, that our most general result \ref{thm-main-main} reduces to the following  classification under the presence  of rotational symmetry. 
\begin{theorem}
  \label{thm-main}
  Let $(M_1)_t$  and $(M_2)_t$, $-\infty < t < T$  be two  $O(n)$-invariant   Ancient Ovals   with the same  axis  of symmetry (which is assumed  to be the $x_1$-axis)    whose profile functions $U_1(x,t)$ and $U_2(x,t))$ satisfy equation \eqref{eq-u-original}
  and rescaled profile functions    $u_1(y,\tau)$ and $u_2(y,\tau)$ satisfy equation \eqref{eq-u}.
  Then,  they are the same up to  translations along  the axis of symmetry (translations in $x$),  translations in time and parabolic rescaling.  
  %In particular, the Ancient Ovals constructed by White \cite{Wh}  and by Haslhofer and Hershkovits \cite{HH} are equivalent up to scaling and  translations space and time.
\end{theorem}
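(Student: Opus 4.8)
The plan is to reduce the comparison of the two Ancient Ovals, via their sharp backward asymptotics, to a weighted energy estimate in the cylindrical region coupled to a separate analysis near the two tips. The first task is to establish these asymptotics for a solution $u(y,\tau)$ of \eqref{eq-u} as $\tau\to-\infty$. In the parabolic region $|y|\le L$ one has $u\to\sqrt{2(n-1)}$ with the refined expansion $u(y,\tau)=\sqrt{2(n-1)}\,\bigl(1-(y^2-2)/(4|\tau|)\bigr)+o(|\tau|^{-1})$; in the intermediate region $|y|\sim\sqrt{|\tau|}$ the graph, rescaled in the variable $z=y/\sqrt{|\tau|}$, converges to $\sqrt{2(n-1)}\,\sqrt{1-z^2/2}$ on $|z|<\sqrt2$; and near each tip $(\pm\bd_i(\tau),0)$, after rescaling distances by the (asymptotically computable) reciprocal of the tip curvature, the cap converges to the Bowl soliton. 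Any two Ancient Ovals as in the hypotheses have the same picture in each of the three regions, and the three admissible normalizations — a parabolic rescaling, a translation in time, and a translation along the $x$-axis — can then be used to align $u_1$ and $u_2$ so that the difference $w:=u_1-u_2$ and its relevant derivatives vanish in the limit $\tau\to-\infty$ in all three regions.

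The core of the argument is a weighted energy estimate for $w$ in the cylindrical region. There $w$ solves a linear equation $\partial_\tau w=\cL w+\cE$, where $\cL=\partial_y^2-\tfrac y2\,\partial_y+1$ is the linearization of \eqref{eq-u} at the cylinder $u\equiv\sqrt{2(n-1)}$, and $\cE$ is an error term controlled by $w$ and $w_y$ times quantities that tend to $0$ as $\tau\to-\infty$. I would work in the Gaussian-weighted space $L^2(e^{-y^2/4}\,dy)$, multiply by a cutoff $\varphi$ supported where both solutions are genuinely cylindrical, i.e. on $|y|\lesssim\theta\sqrt{|\tau|}$, and then decompose $\varphi w$ along the spectrum of $\cL$, whose eigenvalues are $1-k/2$ with the Hermite polynomials as eigenfunctions: the unstable modes $k=0$ (the constant) and $k=1$ (the function $y$), the neutral mode $k=2$ (the function $y^2-2$), and the stable modes $k\ge3$. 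Differential inequalities for the norms $\|w_+\|,\|w_0\|,\|w_-\|$ of the three spectral pieces, together with a Merle--Zaag type lemma, should then give the dichotomy: as $\tau\to-\infty$, either $w$ is eventually dominated by its neutral component $a(\tau)(y^2-2)$, or $\|w\|$ decays at least exponentially in the cylindrical region. In the first case the scalar ODE satisfied by $a(\tau)$, read off from the precise $|\tau|^{-1}$ asymptotics of the two profiles, is expected to be incompatible with the normalization already imposed unless $a\equiv0$; after iterating to remove the neutral and unstable pieces one is left with exponential — in fact super-exponential — decay of $w$ in the cylindrical region.

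The step I expect to be the main obstacle is the coupling between the regions. The cutoff and the integration by parts in the cylindrical estimate produce boundary contributions on $|y|\sim\theta\sqrt{|\tau|}$ and on the collar joining the cylinder to the tips, which must be controlled by the behaviour of $w$ in the intermediate and tip regions; conversely, the estimate near the tips — obtained by writing each cap as a graph $y=Y_i(r,\tau)$ over the axial direction (inverting $u_i$, which is necessary since $u_y\to\infty$ there) or in rescaled Bowl coordinates, and running a comparable weighted estimate for $Y_1-Y_2$ and for the difference $\bd_i^{(1)}-\bd_i^{(2)}$ of tip positions — requires as input the smallness of $w$ inherited from the cylindrical side. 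Resolving this requires running the two estimates simultaneously as a single bootstrap, with weights and cutoffs arranged so that each region's error is absorbed into a small fraction of the other's gain. Carried through, the combination of (super-)exponential decay of $w$ in the cylindrical region with the matching estimate near the tips forces $w\equiv0$, $Y_1\equiv Y_2$, and $\bd_1^{(1)}\equiv\bd_1^{(2)}$, $\bd_2^{(1)}\equiv\bd_2^{(2)}$; hence $u_1\equiv u_2$, and equivalently $U_1\equiv U_2$, which is precisely the asserted uniqueness up to a parabolic rescaling, a translation in time, and a translation along the axis of symmetry.
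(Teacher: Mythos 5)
Your proposal correctly identifies the overall architecture — Gaussian-weighted estimates in the cylindrical region via the spectral decomposition of $\cL=\partial_y^2-\tfrac{y}{2}\partial_y+1$, inverted-graph coordinates $Y(u,\tau)$ near the tips with separate weighted estimates there, and a coupling of the two through the collar. However, there is a genuine gap in the closing mechanism, and the Merle--Zaag dichotomy you propose to run on the difference $w=u_1-u_2$ would not, as stated, produce the conclusion.

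The issue is how the three free parameters $(\alpha,\beta,\gamma)$ are used. You say they ``can be used to align $u_1$ and $u_2$ so that the difference $w$ \dots vanish[es] in the limit $\tau\to-\infty$,'' but this is automatic (both solutions converge to the cylinder) and is not information; what the free parameters actually buy is the vanishing of the \emph{projections} of $w_\cyl$ onto the $3$-dimensional space $\hilb_+\oplus\hilb_0$ (spanned by $\psi_0,\psi_1,\psi_2$) at one \emph{fixed} time $\tau_0\ll-1$. The paper establishes this by a degree-theory argument (Proposition~\ref{lem-rescaling-components-zero}), using the expansion of $\bar v-v$ in $(b,\Gamma,A)$ to produce an admissible triple with $\pr_+w_\cyl(\tau_0)=\pr_0w_\cyl(\tau_0)=0$. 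Your Merle--Zaag step, applied to $w$ directly, gives only that as $\tau\to-\infty$ one spectral block dominates; but for the difference of two ancient solutions this alone does not force $w\equiv0$. If the neutral mode $a(\tau)\psi_2$ dominates, the ODE $a'=2a/|\tau|+F$ gives $a(\tau)=\bigl(C-\int_\tau^{\tau_0}s^2F\,ds\bigr)/\tau^2$, and one needs \emph{both} that $F$ is genuinely smaller than $a$ (the coercive estimate of Proposition~\ref{prop-cor-main}, not merely the Merle--Zaag hypotheses) \emph{and} the initial condition $a(\tau_0)=0$ to conclude $a\equiv0$. You never impose this initial condition, and ``incompatible with the normalization already imposed'' does not identify which normalization forces $a(\tau_0)=0$. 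Likewise, the ``exponential decay'' branch does not by itself give $w\equiv0$ on an ancient time interval; what actually closes the argument is the inequality $\|\hat w_\cyl\|_{\hv,\infty}\leq\epsilon\|\pr_0 w_\cyl\|_{\hv,\infty}$ together with $a(\tau_0)=0$, not a decay rate for $w$.

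There is a second, smaller gap: the tip estimate is not just ``a comparable weighted estimate.'' The weight $\mu(u,\tau)$ must be $C^1$-matched between the collar (where it is $-Y_1^2/4$, conjugate to $e^{-y^2/4}$) and the soliton region (where it is built from the Bowl soliton, $m(\rho)+a(L,\tau)\rho+b(L,\tau)$), and the Poincar\'e inequality with constant of order $|\tau|$ in this weight is the mechanism that produces the gain $1/\sqrt{|\tau_0|}$ needed to close the loop between the two regions. Without specifying this weight and its Poincar\'e inequality, the ``bootstrap'' you describe has no quantitative source of smallness, and the boundary error from the cylindrical cutoff cannot be absorbed.
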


Since the asymptotics result from \cite{ADS} will play a significant role in this work, we state it below for the reader's convenience. 

\begin{theorem}[Angenent, Daskalopoulos, Sesum in \cite{ADS}]
  \label{thm-old}
  Let $\{M_t\}$ be any $O(1)\times O(n)$ invariant Ancient Oval (see Definition \ref{def-oval}) .
  Then the solution $u(y,\tau)$ to \eqref{eq-u}, defined on $\mathbb{R}\times \mathbb{R}$, has the following asymptotic expansions:
  \begin{enumerate}
    \item[(i)] For every $M > 0$,
    \[
    u(y,\tau) = \sqrt{2(n-1)} \Bigl(1 - \frac{y^2 - 2}{4|\tau|}\Bigr) + o(|\tau|^{-1}), \qquad |y| \le M
    \]
    as $\tau \to -\infty$.
    \item[(ii)]  Define $z := {y}/{\sqrt{|\tau|}}$ and $\bar{u}(z,\tau) := u(z\sqrt{|\tau|}, \tau)$.
    Then, 
    $$\lim_{\tau \to -\infty} u(z,\tau) = \sqrt{(n-1)\, (2 - z^2)}$$
    uniformly on compact subsets in $|z| \leq \sqrt{2}$.
    
    \item[(iii)] Denote by $p_t$ the tip of $M_t \subset \mathbb{R}^{n+1}$, and define for any $t_*<0$ the  rescaled flow at the tip  
    \[
    \tilde{M}_{t_*}(t) = \lambda(t_*) \bigl\{M_{t_* + t \lambda(t_*)^{-2}} - p _{t_*}\bigr\}
    \]
    where
    \[
    \lambda(t) := H(p_{t}, t) = H_{\max}(t) = \sqrt{ \tfrac12 |t|\log|t| } \bigl(1+o(1)\bigr)
    \]
    Then, as $t_*\to-\infty$, the family of mean curvature flows $\tilde M_{t_*}(\cdot)$ converges to the unique unit speed Bowl soliton, i.e.~the unique convex rotationally symmetric translating soliton with velocity one.
  \end{enumerate}
\end{theorem}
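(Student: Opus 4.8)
The plan is to analyze $u(y,\tau)$ in three overlapping regions, following Angenent's formal matched asymptotics \cite{A}: a \emph{cylindrical} region $|y|\lesssim 1$ governed by expansion (i), a \emph{parabolic} region $|y|\sim\sqrt{|\tau|}$ governed by (ii), and a \emph{tip} region near $y=\bar d_i(\tau)$ governed by (iii); once each region is understood, one matches them across the transition zones. The starting point is that a noncollapsed Ancient Oval is convex by Haslhofer--Kleiner \cite{HK}, so its profile $u$ is concave and, by the assumed $O(1)$ symmetry, even in $y$; and by Huisken's monotonicity together with Xu-Jia Wang's \cite{W} classification of blow-down limits, its type-I rescaling converges as $\tau\to-\infty$ to the round cylinder $\mathbb{R}\times S^{n-1}$ of radius $\sqrt{2(n-1)}$, locally uniformly, so $v:=u-\sqrt{2(n-1)}\to 0$ together with all its $y$-derivatives on compact sets. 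I would then linearize \eqref{eq-u} about the constant $\sqrt{2(n-1)}$, obtaining $v_\tau=\mathcal L v+\mathcal N(v)$ with $\mathcal L v=v_{yy}-\tfrac y2 v_y+v$; on the Gaussian-weighted space $L^2(e^{-y^2/4}\,dy)$ the operator $\mathcal L$ is self-adjoint with eigenvalues $1-\tfrac k2$ and Hermite eigenfunctions $h_k$, and since only even modes survive the reflection symmetry the unstable part is $a_0(\tau)h_0$, the neutral part is $a_2(\tau)h_2$ with $h_2=y^2-2$, and the rest is stable.

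For (i): by a Merle--Zaag-type ODE argument one of the non-stable modes dominates as $\tau\to-\infty$. The $h_0$ mode corresponds to a translation of the blow-up time; using that $M_t$ is compact and convex one rules out its domination (exponential convergence to the cylinder would, by rigidity, force $M_t$ to be the shrinking cylinder itself). Hence the neutral mode dominates, $v=a_2(\tau)(y^2-2)+o(|a_2|)$ with $a_2\to 0$. Projecting \eqref{eq-u} onto $h_2$ kills the linear term, and the quadratic part of the nonlinearity, which equals $-\tfrac{1}{2\sqrt{2(n-1)}}\,v^2$, contributes $\dot a_2=\gamma\, a_2^2+o(a_2^2)$ with $\gamma=-4/\sqrt{2(n-1)}$ (computed from $\langle h_2^3\rangle/\langle h_2^2\rangle$ in the Gaussian measure); integrating gives $a_2(\tau)=-\tfrac{\sqrt{2(n-1)}}{4|\tau|}\,(1+o(1))$, which is exactly (i). Controlling the error terms — the stable modes, and the part of $v$ supported where $|y|$ is not small — is the first real technical burden and requires weighted energy estimates for \eqref{eq-u} on the cylindrical scale.

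For (ii): passing to $z=y/\sqrt{|\tau|}$ and $\bar u(z,\tau)$, the formally leading part of \eqref{eq-u} becomes the first-order ODE $\tfrac z2\,(\bar u^2)_z=\bar u^2-2(n-1)$, whose solution with $\bar u(0)=\sqrt{2(n-1)}$ is $\bar u^2=2(n-1)+Cz^2$; requiring $\bar u$ to vanish at the edge of the Oval forces $C=-(n-1)$, hence the candidate profile $\sqrt{(n-1)(2-z^2)}$ and $\bar d_i(\tau)/\sqrt{|\tau|}\to\sqrt 2$. I would make this rigorous by building explicit sub- and supersolutions of \eqref{eq-u} of the form $\sqrt{(n-1)(2-z^2)}\pm(\text{error})$, using concavity of the profile and comparison with evolving spheres to bound the diameter above and below by a multiple of $\sqrt{|\tau|}$, and using (i) as the inner matching datum; this yields the claimed locally uniform convergence on $|z|\le\sqrt 2$.

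Finally, for (iii): rescaling the flow at the tip by $\lambda(t)=H(p_t,t)=H_{\max}(t)$ (the maximum being attained at the tip for a convex rotationally symmetric $M_t$) produces flows $\tilde M_{t_*}(t)$ that are convex, noncollapsed, and uniformly 2-convex (all three scale-invariant and preserved) and, as $t_*\to-\infty$, ancient, with $H\le 1$ and $H=1$ at the tip; the limit is non-flat and has a genuine tip (where the profile vanishes), so it is not a cylinder, and by the uniqueness of the Bowl soliton among noncollapsed, convex, uniformly 2-convex translators \cite{Ha}, together with an argument identifying the limit as a translator, it must be the unit-speed Bowl. The quantitative statement $\lambda(t)=\sqrt{\tfrac12|t|\log|t|}\,(1+o(1))$ then follows by matching the outer, paraboloidal end of this tip soliton with the edge $z\to\sqrt 2$ of the parabolic profile from (ii) in the overlap region. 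The hardest part, I expect, is making this three-region matching rigorous — in particular pinning down the sharp constant in $\lambda(t)$ and controlling all errors uniformly through the transition zones — together with the exclusion of the unstable mode in (i); each individual region, treated in isolation, is comparatively routine once the matching data are in hand.
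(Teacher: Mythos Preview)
The present paper does not prove this theorem; it is quoted from \cite{ADS}, and only the Appendix (Theorem~\ref{thm-O1}) sketches the modifications needed to drop the $O(1)$ reflection symmetry. From that sketch one can read off the structure of the original argument, and your outline matches it closely: linearize about the cylinder, decompose $v$ spectrally in the Gaussian-weighted space, apply a Merle--Zaag ODE lemma to the components to isolate the dominant mode, rule out the unstable modes, project onto $\psi_2=y^2-2$ to extract $a_2(\tau)=-\sqrt{2(n-1)}/(4|\tau|)$, then pass to the intermediate $z=y/\sqrt{|\tau|}$ scale via barriers and to the tip via rescaling by $H_{\max}$. Your computation of the quadratic coefficient and of $a_2$ is correct.

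Two points where your sketch diverges from what \cite{ADS} actually does (as visible in the Appendix here). First, your exclusion of the $\psi_0$ mode by ``rigidity of exponential convergence to the cylinder'' is not the mechanism used: instead one foliates $\{y\ge L_n,\ 0\le u\le\sqrt{2(n-1)}\}$ by self-shrinkers and shows (Lemma~\ref{lem-must-be-inside}) that $u(\pm L_n,\tau_i)<\sqrt{2(n-1)}$ along sequences $\tau_i\to-\infty$, which directly contradicts $v\approx c_0(\tau)$ of a definite sign; your rigidity route would need an extra ingredient. Second, for (iii) you invoke Haslhofer's translator uniqueness \cite{Ha} after ``identifying the limit as a translator,'' but that identification is not automatic --- the tip limit is a priori only an ancient solution. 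The route in \cite{ADS} is a direct analysis of the rotationally symmetric profile near the tip; the present paper, in a different context (Lemma~\ref{lemma-possible-limits}), bypasses the translator step entirely via the Brendle--Choi classification \cite{BC,BC2}.
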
\noindent

Before we conclude our introduction we give a short description of our proof for Theorem \ref{thm-main}.
A more detailed outline of this proof is given in Section \ref{sec-regions}. 

\smallskip
\noindent{\em Discussion on the proof of Theorem \ref{thm-main}.}   The proof of Theorem \ref{thm-main} makes extensive use of  our previous work \cite{ADS} where the detailed asymptotic behavior of
Ancient  Ovals, as $\tau:= - \log |t|  \to -\infty$,  was given under the assumption of $O(1)\times O(n)$ symmetry (see Theorem  ~\ref{thm-old} below). 
Note that our symmetry result, Theorem ~\ref{thm-rot-symm}, which will be shown in Section \ref{sec-rot-symm},   only shows the $O(n)$-symmetry of solutions and not the $O(1)\times O(n)$-symmetry assumed in Theorem \ref{thm-old}. 
However, as we will demonstrate in the Appendix of this work (see Theorem \ref{thm-O1}),   the estimates in  Theorem  ~\ref{thm-old} 
simply extend to the $O(n)$-symmetric case.
Since the proof of Theorem \ref{thm-main} is quite involved, in Section \ref{sec-regions} we will give an outline of
the different steps of our proof.
The main idea is simple: given $U_1(x,t)$ and $U_2(x,t)$ any two solutions of \eqref{eq-u-original},
we will find parameters $\alpha, \beta, \gamma$, corresponding to translations along the x-axis, translations in time $t$ and parabolic rescaling respectively,  such that 
$U_1(x,t) \equiv  U^{\alpha\beta\gamma}(x,t)$, where  $U^{\alpha\beta\gamma}$ denotes the image of $U(x,t)$ under these transformations (see \eqref{eq-Ualphabeta}). 
To achieve this uniqueness, we will consider  the corresponding  rescaled profiles $u_1(y,\tau), u^{\alpha\beta\gamma}_2(y,\tau)$ and show that $w:= u_1(y,\tau)- u^{\alpha\beta\gamma}_2(y,\tau) \equiv 0$.
It  will mainly follow from analyzing the equation for $w$ in the {\em cylindrical region}
(the region $\{ (y, \tau): \,\, u_1(y,\tau) \geq \theta/2 >0 \}$, for some $\theta >0$ and small).
We  restrict $w$ to the cylindrical region by  introducing an  appropriate cut off function $\varphi_{\cC}$ and setting $w_{\cC}:= \varphi_{\cC}\, w$.
The difference  $w_{\cC}$ in this region satisfies the equation 
\begin{equation}\label{eq-wC-pde}
  \partial_t (w_{\cC} ) = \cL w_{\cC} + \mathcal{E} [w, \varphi_C].
\end{equation}
for a nonlinear error term $\mathcal{E} [w, \varphi_C]$.
The operator $ \cL := \partial_y^2 - \frac y2 \partial_y + 1$ is simply the linearized operator 
for equation \eqref{eq-u} on the cylinder which we see in the middle, i.e. constant the $ \sqrt{2(n-1)}$.
This operator is well studied and it is known  to have two unstable modes (corresponding to
two positive eigenvalues) and one neutral mode (corresponding to the zero eigenvalue).
The uniqueness at the end follows by a coercive estimate on \eqref{eq-wC-pde} 
with the right norm (we call it $ \| \cdot \|_{2,\infty}$),  which roughly implies that if $w \not\equiv 0$, then 
\begin{equation}\label{eq-wC-estimate}
  \| w_{\cC} \|_{2,\infty} \leq C \, \| \mathcal{E} [w, \varphi_{\cC}] \|_{2,\infty}  < \frac 12 \, \| w_{\cC} \|_{2,\infty}
\end{equation} 
thus leading to a contradiction.
It is apparent that to obtain such a coercive estimate one needs  to adjust the parameters $\alpha, \beta, \gamma$ in
such a way   that the projections $\pr_+ w (\tau)$ and $\pr_0 w(\tau)$ onto the positive and zero eigenspaces of $\cL$ are all {\em simultaneously zero } at some time 
$\tau_0 \ll -1$.
The main challenge in showing \eqref{eq-wC-estimate} comes from the  \emph{error terms}  which are introduced by the cut-off function $\varphi_{\cC}$
and supported at the {\em transition region}
between the {\em cylindrical } and {\em tip} regions (the latter is defined to be the region $\{ (y, \tau): \,\, u_1(y,\tau) \geq 2\theta \}$). 
To estimate these errors one needs to consider our equation in the tip region and show a suitable coercive estimate there which allows us to {\em bound back $w$ in the tip region back in terms of $w_{\cC}$}.
To achieve this, one heavily uses the \emph{a priori} estimates and Theorem \ref{thm-old} from \cite{ADS}.
We also need to introduce an appropriate weighted norm in the tip region which lets us show the Poincar\'e type estimate we need to proceed.
Unfortunately, numerous technical difficulties arise from various facts including the non-compactness of the limit as $\tau \to -\infty$ and the fact that $u_y \to \pm \infty$ at the tips.

\medskip

In previous classifications of ancient solutions to mean curvature flow and Ricci flow, \cite{DHS1}, \cite{DHS2}, \cite{BC, BC2}, an essential role in the proofs was played by the fact that all such solutions were given in closed form or they were solitons. 
One of the significance of our techniques in our current work is that they overcome such a requirement and potentially can be used in many other parabolic equations and particularly in other geometric flows.
To our knowledge, our work and the recent work by Bourni, Langford and Tinaglia \cite{BLT} are the first classification results of geometric ancient solutions where the solutions are not given in closed form and they are not solitons.
Let us also point out that our current techniques are reminiscent of the significant work by Merle and Zaag in \cite{MZ} which has provided an inspiration for us. 

\medskip

{\bf Acknowledgements:}  The authors are indebted to S.
Brendle  for many useful discussions regarding the rotational symmetry of ancient solutions.

\section{Rotational symmetry}\label{sec-rot-symm}

The main goal in this section is to prove Theorem \ref{thm-rot-symm}.
Our proof of Theorem \ref{thm-rot-symm} follows closely the arguments of
the recent  work by Brendle and Choi \cite{BC, BC2} on the uniqueness of strictly convex, uniformly 2-convex, noncompact and noncollapsed ancient solutions 
of mean curvature flow in $\R^{n+1}$.
It was shown in \cite{BC} that such solutions are rotationally symmetric.
Then by analyzing 
the rotationally symmetric solutions, Brendle and Choi showed that such solutions agree with the Bowl soliton. 
For the reader's convenience we state their result next.  

\begin{theorem}[Brendle and Choi \cite{BC}]
  \label{thm-BC}
  Let $\{M_t : t \in (-\infty,0)\}$ be a noncompact ancient mean curvature flow in $\R^{n+1}$ which is strictly convex, noncollapsed, and uniformly 2-convex.
  Then $M_t$ agrees with the Bowl soliton, up to scaling and ambient isometries.
\end{theorem}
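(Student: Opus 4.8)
The plan is to follow the strategy of Brendle and Choi in three stages: fix the asymptotic geometry of $M_t$, prove that it is rotationally symmetric, and then identify the rotationally symmetric solution with the Bowl soliton.

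\emph{Stage 1: asymptotics.} Since $M_t$ is strictly convex, noncollapsed and uniformly $2$-convex, the results of Xu-Jia Wang together with the structure theory of Haslhofer--Kleiner apply. In particular the type-I parabolic rescaling of $M_t$ about a fixed point converges, as $\tau\to-\infty$, to the shrinking round cylinder $\R\times S^{n-1}$ of radius $\sqrt{2(n-1)}$ (the sphere is excluded since $M_t$ is noncompact, and a shrinking cylinder $\R^k\times S^{n-k}$ with $k\ge 2$ would violate $\lambda_1+\lambda_2\ge\beta H$). Consequently each $M_t$ has the shape of a ``capped cylinder'': it possesses a single end which, after rescaling a neck at scale $\sqrt{-t}$, is sharply asymptotic to a round cylinder, together with a bounded tip region at whose tip the curvature is largest. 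I would record quantitative forms of these facts: exponential closeness to a round cylinder along the end (obtained by expanding the neck in spherical harmonics on the $S^{n-1}$ factor), and curvature and noncollapsedness bounds near the tip.

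\emph{Stage 2: rotational symmetry.} This is the heart of the argument and the step I expect to be the main obstacle. For a $2$-plane $\Sigma\subset\R^{n+1}$ let $\mathcal{K}_\Sigma$ be the corresponding rotation vector field; along MCF the function $f=\langle \mathcal{K}_\Sigma,\nu\rangle$ satisfies the Jacobi equation $\partial_t f=\Delta f+|A|^2 f$. The goal is to choose the axis of symmetry so that $f\equiv 0$ for every rotation $\mathcal{K}_\Sigma$ fixing it. The classical moving-plane method is not available (the solution is noncompact and not self-similar), so instead, following Brendle, I would first choose the axis to annihilate the $\ell=1$ (translational) part of the neck, so that the genuinely asymmetric $\ell\ge 2$ modes decay along the end; then I would upgrade ``small'' to ``identically zero'' by a maximum-principle argument for $f$ divided by a suitable fixed positive supersolution of the Jacobi equation, propagating the decay from the cylindrical end inward and closing the estimate in the tip region using the bounds from Stage 1, possibly with a Carleman-type inequality. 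The delicate point is to choose the weight and barrier so that the error created at the transition between the cylindrical end and the tip is controlled, while handling the noncompactness of the solution.

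\emph{Stage 3: identification with the Bowl.} Once $M_t$ is known to be rotationally symmetric its profile $U(x,t)$ solves the scalar equation \eqref{eq-u-original}. From the asymptotic description in Stage 1 — asymptotic to the shrinking cylinder in the middle and with a well-controlled tip region (cf. Theorem~\ref{thm-old}(iii)) — I would show that $M_t$ is a translating soliton: the quantity measuring the failure of $M_t$ to move by translation along its axis satisfies a parabolic equation that, in view of the asymptotics in both regions, admits no nontrivial solution, hence vanishes. By Haslhofer's classification of convex, uniformly $2$-convex translating solitons \cite{Ha}, $M_t$ must then be the Bowl soliton, which completes the proof. This soliton step still requires care, but the genuinely hard part remains the symmetry argument of Stage 2.
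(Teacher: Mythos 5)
The paper does not prove this theorem: it is stated as an external result of Brendle and Choi \cite{BC} and cited without proof, so the comparison has to be against the actual argument of \cite{BC}, which the paper records in part when it states Propositions \ref{prop-neck} and \ref{prop-cap} and adapts them in the proof of Theorem \ref{thm-rot-symm}.

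Your Stages 1 and 3 are broadly consistent with \cite{BC}. Your Stage 2, however, is structurally different from the actual argument, and the difference is exactly where your sketch would break down. You propose a single global backward-in-time maximum-principle argument: divide $f=\langle \mathcal{K}_\Sigma,\nu\rangle$ by a \emph{fixed} positive supersolution of the Jacobi equation, propagate decay inward from the cylindrical end, and close with a Carleman estimate at the tip. But $M_t$ is not yet known to be a soliton at this stage, so there is no self-similar structure supplying such a fixed global supersolution; and the tip length scale $H_{\max}(t)^{-1}$ is time-dependent, so a fixed barrier cannot be calibrated across times without already knowing the self-similar structure you are trying to prove. Brendle--Choi sidestep this by proving two \emph{local, scale-invariant} improvement propositions --- Neck Improvement (Proposition \ref{prop-neck}) and Cap Improvement (Proposition \ref{prop-cap}) --- which assert that $\epsilon$-symmetry on a normalized parabolic cylinder $\hat{\mathcal{P}}(\bar x,\bar t, L)$ implies $\tfrac{\epsilon}{2}$-symmetry at its center. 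Every point of $M_t$ for $t\ll 0$ is $\epsilon$-symmetric by the asymptotics; iterating the two propositions then gives $\tfrac{\epsilon}{2^j}$-symmetry for all $j$, hence exact rotational symmetry. The tools you name --- spherical harmonics on the neck, eliminating the $\ell=1$ mode by choosing the axis, a parabolic supersolution near the tip (compare the function $f_\alpha^{(j)}$ appearing in the proof of Proposition \ref{prop-cap}) --- do all appear in \cite{BC}, but as ingredients \emph{inside} the local improvement step, not as a one-shot global barrier argument. Without the iteration you have no mechanism to pass from ``the symmetry deficit is small'' to ``the symmetry deficit is zero,'' which you rightly flag as the main obstacle but do not resolve.
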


In the proof of Theorem \ref{thm-rot-symm} we will use both the key results 
that led to the proof of the main theorem in \cite{BC} (see Propositions \ref{prop-neck} and \ref{prop-cap} below), and the uniqueness result as stated in Theorem \ref{thm-BC}. 
\smallskip  

Before we proceed with the proof of Theorem~\ref{thm-rot-symm}, let us recall some standard notation.  
Our solution $M_t$ is embedded in $\R^{n+1}$,  for all $t \in (-\infty,T)$ and in the mean curvature flow, time scales like distance squared.  
We denote by $\cP(\bar x,\bar t,r)$ the \emph{parabolic cylinder} centered at  $(\bar x,\bar t) \in \R^{n+1}\times \R$ of radius $r > 0$, namely the set 
\[
\cP(\bar x,\bar t,r) := \cB(\bar x,r) \times [\bar t-r^2,\bar t]
\]
where $\cB(x,r):= \{ x \in \R^{n+1} \mid |x-\bar x| \leq r \} $ denotes the \emph{closed} Euclidean ball of radius $r$ in $\R^{n+1}$. 

Also, following the notation in \cite{HS} and \cite{BC}, we denote by $ \hat \cP (\bar x,\bar t,r)$ the {\em rescaled by the mean curvature}
parabolic cylinder centered at $(\bar x,\bar t) \in \mathbb{R}^{n+1}\times \R$ of radius $r > 0$,  namely the set
\[
\hat {\cP} (\bar x,\bar t,r) 
:= \cP(\bar x,\bar t, \hat \rho (\bar x, \bar t)\, r),
\qquad 
\hat \rho(\bar x, \bar t) :=\frac{n}{H(\bar x,\bar t)}.
\]

Note that in \cite[\S7]{HS} Huisken and Sinestrari consider parabolic cylinders with respect to the intrinsic metric $g(t)$ on the solution $M_t$, which in our case is equivalent to the extrinsic metric on space-time that we are considering here.  

We recall Brendle and Choi's \cite{BC} definition of  a mean curvature flow being $\epsilon$-symmetric, in terms of the normal components of rotation vector fields.  
In what follows we identify $\so(n)$ with the subalgebra of $\so(n+1)$ consisting of skew symmetric matrices of the form
\[
J = \begin{bmatrix}
  0 & 0 \\ 0 & J'
\end{bmatrix},\quad
\text{ with }\quad
J'\in\so(n).
\]
Thus $\so(n)$ acts on the second factor in the splitting $\R^{n+1}=\R\times\R^n$.
Any $J\in \so(n+1)$ generates a vector field on $\R^{n+1}$ by $\vec v(x) =  Jx$.
If $\varPhi(x) = Sx+p$ is a Euclidean motion, with $p\in\R^{n+1}$ and $S\in O(n+1)$, then the pushforward of the vector field $\vec v(x) = Jx$ under $\varPhi$ is given by 
\[
\varPhi_*\vec v(x) = d\varPhi_x\cdot \vec v(\varPhi^{-1}x) = SJS^{-1}(x-p).
\]
Any vector field of this form is a \emph{rotation vector field.}
\begin{definition}
  A collection of vector fields $\mathcal{K} := \{K_{\alpha}\mid 1 \le \alpha \le \frac{1}{2}n(n-1)\}$ on $\mathbb{R}^{n+1}$ is a \emph{normalized set of rotation vector fields} if there exist an orthonormal basis $\{J_{\alpha}\mid 1 \le \alpha \le \frac{1}{2}n(n-1)\}$ of $\so(n) \subset \so(n+1)$, a matrix $S \in O(n+1)$, and a point $q\in \mathbb{R}^{n+1}$ such that
  \[
  K_{\alpha}(x) = S J_{\alpha} S^{-1}(x-q).
  \]
\end{definition}

\begin{definition}
  \label{def-normalized}
  Let $M_t$ be a solution of mean curvature flow.
  We say that a point $(\bar{x},\bar{t})$ is $\epsilon$-symmetric if there exists a normalized set of rotation vector fields $\mathcal{K}^{(\bar{x},\bar{t})} = \{K_{\alpha}^{(\bar{x},\bar{t})} \mid 1 \le \alpha \le \frac{1}{2}n(n-1)\}$ such that $\max_{\alpha}|\langle K_{\alpha}, \nu\rangle| H \le \epsilon$ in the parabolic neighborhood $\bar{\cP}(\bar{x},\bar{t},10)$.
\end{definition}

Lemma 4.3 in \cite{BC} allows us to control how the axis of rotation of a normalized set of rotation vector fields $\mathcal{K}^{(x,t)}$ varies as we vary the point $(x,t)$. 

The proof of Theorem \ref{thm-rot-symm} relies on the following two key propositions which were both shown in  \cite{BC}. 
The first proposition is directly taken from \cite{BC} (see Theorem 4.4 in \cite{BC}).
The second proposition required some modifications of arguments in \cite{BC} and hence we present parts of its proof below (see \ref{prop-cap}). 

\begin{definition}
  A point $(x, t)$ of a mean curvature flow lies on an $(\epsilon, L)$-neck if there is a Euclidean transformation $\varPhi:\R^{n+1}\to\R^{n+1}$ , and a scale $\lambda>0$ such that
  \begin{itemize}
    \item $\varPhi$ maps $x$ to $(0, \sqrt{2(n-1)}, 0, \dots, 0)$
    \item for all $\tau\in[-L^2, 0]$ the hypersurface $\lambda^{-1} \varPhi \bigl(M_{t + \lambda^2\tau}\bigr)$ is $\epsilon$-close in $C^{20}$ to the cylinder of length $L$, of radius $\sqrt{2(n-1)(1-\tau)}$, and with the $x_1$-axis as symmetry axis.
  \end{itemize}
\end{definition}

\begin{proposition}[Neck Improvement - Theorem 4.4 in \cite{BC}]
  \label{prop-neck}
  There exists a large constant $L_0$ and a small constant $\epsilon_0$ with the following property.  
  Suppose that $M_t$ is a mean curvature flow, and suppose that $(\bar{x}, \bar{t})$ is a point in space-time with the property that every point in $\hat \cP(\bar{x},\bar{t}, L_0)$ is $\epsilon$-symmetric and lies on an $(\epsilon_0, 10)$-neck, where $\epsilon \le \epsilon_0$.
  Then $(\bar{x}, \bar{t})$ is $\frac{\epsilon}{2}$-symmetric.
\end{proposition}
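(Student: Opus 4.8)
The plan is to reduce the statement, via a contradiction/compactness argument, to the analysis of the linearized mean curvature flow on the shrinking cylinder, for which a quantitative decay estimate for the normal components of rotation vector fields can be established directly. First I would set up the contradiction: suppose the conclusion fails for fixed candidate constants. Then there exist sequences $\epsilon_j\to 0$, mean curvature flows $M^{(j)}_t$, and space-time points $(\bar x_j,\bar t_j)$ such that every point of $\hat{\cP}(\bar x_j,\bar t_j,L_0)$ is $\epsilon_j$-symmetric and lies on an $(\epsilon_0,10)$-neck, yet $(\bar x_j,\bar t_j)$ is \emph{not} $\tfrac{\epsilon_j}{2}$-symmetric. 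After normalizing (parabolic rescaling so that $\hat\rho(\bar x_j,\bar t_j)=1$, translating $(\bar x_j,\bar t_j)$ to $(0,0)$, rotating so the neck axis is the $x_1$-axis), the neck hypothesis together with standard interior estimates for mean curvature flow gives $C^\infty_{loc}$ convergence of the flows $M^{(j)}$ on $\hat{\cP}(0,0,L_0)$ to the shrinking round cylinder $\mathbb{R}\times S^{n-1}$ of radius $\sqrt{2(n-1)(1-\tau)}$, which is exactly rotationally symmetric.

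The key point is then to track the rotation vector fields. For each $j$ and each point $(x,t)\in\hat{\cP}(0,0,L_0)$ the $\epsilon_j$-symmetry hypothesis supplies a normalized set of rotation vector fields $\mathcal{K}^{(x,t)}_j$ with $\max_\alpha |\langle K_\alpha,\nu\rangle| H\le\epsilon_j$ on $\bar{\cP}(x,t,10)$. Using the Lemma 4.3 of \cite{BC} quoted in the excerpt (which controls how the axis of a normalized set of rotation vector fields changes as the base point moves), I would show that after passing to a subsequence one can choose, for each $j$, a \emph{single} normalized set of rotation vector fields $\mathcal{K}_j=\{K_{\alpha,j}\}$ valid on all of $\hat{\cP}(0,0,L_0)$ with $\max_\alpha\sup_{\hat{\cP}(0,0,L_0/2)}|\langle K_{\alpha,j},\nu\rangle| H\le C\epsilon_j$; this uses that the axis-wobble estimate accumulates at most geometrically over the finitely many rescaled-parabolic steps needed to cover the cylinder of length $L_0$. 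Now set $w_{\alpha,j}:=\epsilon_j^{-1}\langle K_{\alpha,j},\nu\rangle$, normalized (rechoosing $\mathcal{K}_j$ among normalized sets to make the sup over $\hat{\cP}(0,0,L_0/2)$, say, equal to $1$, or normalizing its value at the center to unit size if it does not vanish). These functions are uniformly bounded on $\hat{\cP}(0,0,L_0/2)$ and, since $\langle K,\nu\rangle$ satisfies a Jacobi-type equation $(\partial_t - \Delta - |A|^2)\langle K,\nu\rangle = 0$ along the flow, by parabolic estimates a subsequence converges in $C^\infty_{loc}$ to a solution $w_\alpha$ of the linearized (Jacobi) equation on the limiting shrinking cylinder.

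The heart of the argument is the classification of such Jacobi fields. On the shrinking cylinder, after the rescaling $\bar M_\tau$, the Jacobi equation for the normal part of a Killing field becomes the linear equation governed by the operator $\mathcal L=\partial_y^2-\tfrac y2\partial_y+1$ (tensored with the spherical Laplacian on $S^{n-1}$) appearing in the introduction of the paper; solutions with at most polynomial growth are spanned by the normal components of the ambient Killing fields of $\mathbb{R}\times S^{n-1}$, i.e.\ translations, rotations fixing the axis, and the generator of axis rotations — but the point is that each limit $w_\alpha$ arising here is, by construction, the normal component of a rotation vector field whose axis is (in the limit) \emph{exactly} the $x_1$-axis, hence $w_\alpha\equiv 0$. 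This contradicts the normalization forcing $\sup|w_\alpha|=1$ (or unit size at the center), \emph{provided} one has arranged that the center value or the sup is achieved in the interior in a way that survives the limit. To make this last step rigorous I would argue as in \cite{BC}: show that the non-$\tfrac{\epsilon_j}{2}$-symmetry of $(0,0)$ forces $\max_\alpha\sup_{\bar{\cP}(0,0,10)}|w_{\alpha,j}|\ge\tfrac12$ for a \emph{suitable} competing normalized family, run the above convergence with that family, and obtain that the limit Jacobi field is both nonzero (size $\ge\tfrac12$ somewhere in $\bar{\cP}(0,0,10)$) and, by the axis-alignment from Lemma 4.3, forced to be a rotation about the exact axis, hence zero — the contradiction. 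The last ingredient is a genuine decay/improvement rather than mere rigidity: one shows that the convergence to the symmetric limit is fast enough that the optimal normalized rotation field at $(0,0)$ does better than $\tfrac{\epsilon_j}{2}$, which follows from the fact that the limiting Jacobi field vanishing, combined with uniform $C^{20}$ neck control, upgrades to a quantitative estimate $\max_\alpha|\langle K_\alpha,\nu\rangle|H\le\tfrac{\epsilon_j}{2}$ on $\bar{\cP}(0,0,10)$ for large $j$.

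The main obstacle, I expect, is not the linear classification (which is soft once the limit is identified) but the \emph{bookkeeping of the rotation axes}: producing a single normalized set of rotation vector fields on the whole rescaled-parabolic cylinder $\hat{\cP}(\bar x,\bar t,L_0)$ from the pointwise families, with the axis controlled to within $O(\epsilon)$, and doing so in a way compatible with the normalization that detects non-symmetry at the center. This is exactly where Lemma 4.3 of \cite{BC} is used, and where the constant $L_0$ must be chosen large enough that the geometric series of axis-errors still sums to something small while simultaneously being large enough to capture a definite portion of the cylinder in the blow-up limit; balancing these two competing demands on $L_0$ against the fixed $\epsilon_0$ is the delicate point.
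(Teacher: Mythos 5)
Note first that the paper does not actually prove this proposition: its ``proof'' is the one-line citation to Theorem~4.4 of \cite{BC}, so the meaningful comparison is with Brendle and Choi's own argument.

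Your strategy---blow up by contradiction to the linearized problem on the shrinking cylinder and then invoke a Liouville-type rigidity for the limiting Jacobi field---is genuinely different from the route taken in \cite{BC}. There the improvement is proved directly: one writes the Jacobi-type equation satisfied by $\langle K_\alpha,\nu\rangle$, decomposes in spherical harmonics on the $S^{n-1}$ cross-section, uses the gauge freedom in the normalized set of rotation vector fields (the matrix $S\in O(n+1)$ and the center $q$) to remove the low angular modes, and then proves a quantitative interior decay of the remainder over the time interval of length $\sim L_0^2$ provided by the neck hypothesis. The factor $\tfrac12$ comes out with explicit dependence on $L_0$ and $\epsilon_0$; no compactness argument is needed.

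As sketched, your argument has a real gap at the rigidity step, and you have in fact flagged the right place to be nervous. You fix $L_0$ and send only $\epsilon_j\to 0$, so the blow-up limit $w_\alpha$ is a bounded solution of the linearized equation on a fixed, \emph{finite} parabolic neighborhood of the shrinking cylinder. On such a domain the space of bounded Jacobi fields is infinite dimensional (one may prescribe essentially arbitrary data on the initial slice), so the classification you invoke (``solutions with at most polynomial growth are spanned by normal components of ambient Killing fields'') is not available; that Liouville-type statement concerns ancient solutions on $(-\infty,0]$, which you would only reach by letting $L_0\to\infty$ along the contradicting sequence as well. More seriously, even in that stronger double blow-up the assertion that $w_\alpha\equiv 0$ ``by construction'' is unjustified: if the axis of $K_{\alpha,j}$ deviates from the limiting cylinder axis at order $\epsilon_j$, which the hypotheses permit, the rescaled limit $w_\alpha$ is precisely a nontrivial $\ell=1$ (axis-tilting) Jacobi mode, and it does not vanish. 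This makes your normalization (which forces a nontrivial limit) and your rigidity conclusion (which forces the limit to vanish) mutually incompatible rather than contradictory in the useful sense. What must actually be shown is that after re-choosing the normalized set of rotation vector fields to absorb the Killing-field components, the remaining part of the Jacobi field decays in the interior from its boundary size at scale $L_0$ to strictly less than half that size at the center; this is the genuine parabolic estimate constituting the core of \cite{BC}'s proof, and it does not follow from soft compactness. Your observations about Lemma~4.3 of \cite{BC} for axis bookkeeping and about the need for a genuine decay rather than mere rigidity are both on target, but that decay step is exactly what is missing.
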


\begin{proof}
  The proof is given in Theorem 4.4 in \cite{BC}. 
\end{proof}

The next result will be shown by slight modification of arguments in the proof of Theorem 5.2 in \cite{BC}.
The proof of Proposition \ref{prop-cap} below follows closely arguments in \cite{BC}.

\begin{proposition}[Cap Improvement \cite{BC}]
  \label{prop-cap}
  Let $L_0$ and $\epsilon_0$ be chosen as in the Neck Improvement Proposition \ref{prop-neck}.
  Then there exist a large constant $L_1 \ge 2L_0$ and a small constant $\epsilon_1 \le \frac{\epsilon_0}{2}$ with the following property.
  Suppose that $M_t$ is a mean curvature flow solution defined on   $\hat \cP(\bar{x},\bar{t},L_1)$.
  Moreover, we assume that 
  $\hat \cP(\bar{x},\bar{t},L_1)$ is, after scaling to make $H(\bar x, \bar t)=1$,  $\epsilon_1$-close in the $C^{20}$-norm to a piece of a Bowl soliton which includes the tip (where the tip lies well inside in the interior of that piece of a Bowl soliton, at a definite distance from the boundary of that piece of the Bowl soliton) %and for every $t \in (- L_1^2, 0)$ the tip the soliton lies in $B(0,L_1-1)$, 
  and that every point in $\hat \cP(\bar{x},\bar{t},L_1)$ is $\epsilon$-symmetric, where $\epsilon \le \epsilon_0$.
  Then $(\bar{x}, \bar{t})$ is $\frac{\epsilon}{2}$-symmetric.
\end{proposition}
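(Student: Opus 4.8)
The plan is to mimic the proof of Neck Improvement (Proposition~\ref{prop-neck}) adapted to the tip region, following the structure of Theorem~5.2 in \cite{BC}. The key point is that near the tip the flow is modeled, after rescaling $H(\bar x,\bar t)=1$, on a fixed piece of the Bowl soliton which is strictly convex and has an exact $SO(n)$ symmetry about the $x_1$-axis. First I would set up the argument by contradiction and by a rigidity/compactness scheme: suppose the statement fails, so there exist sequences $\epsilon^{(j)}\to 0$ (with $\epsilon\le\epsilon_0$ but, after relabeling, the conclusion failing for $\epsilon/2$) and mean curvature flows $M^{(j)}_t$ on $\hat{\cP}(\bar x^{(j)},\bar t^{(j)},L_1)$, each $\epsilon^{(j)}_1$-close to the model cap and with every point in $\hat{\cP}(\bar x^{(j)},\bar t^{(j)},L_1)$ being $\epsilon^{(j)}$-symmetric, but such that $(\bar x^{(j)},\bar t^{(j)})$ is not $\tfrac{\epsilon^{(j)}}{2}$-symmetric. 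After normalizing $H(\bar x^{(j)},\bar t^{(j)})=1$ and translating $(\bar x^{(j)},\bar t^{(j)})$ to the origin, the flows converge in $C^{20}_{loc}$ to the fixed piece of the Bowl soliton.

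The heart of the argument is the analysis of the normal components $\langle K^{(j)}_\alpha,\nu\rangle$ of the $\epsilon^{(j)}$-symmetric rotation vector fields. Writing $f^{(j)}_\alpha := \langle K^{(j)}_\alpha,\nu\rangle H$, normalize by setting $\mu^{(j)} := \sup_{\alpha}\sup_{\hat{\cP}(\cdot,L_1)}|f^{(j)}_\alpha|$; by hypothesis $\mu^{(j)}\le\epsilon^{(j)}\to 0$, and by the negation of the conclusion $\mu^{(j)}$ cannot be made $\le\epsilon^{(j)}/2$ merely by re-centering the axis, which after passing to limits forces the rescaled quantities $\hat f^{(j)}_\alpha := f^{(j)}_\alpha/\mu^{(j)}$ to have $\sup$ comparable to $1$. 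A standard computation (as in \cite{BC}, cf.\ also \cite{HK}) shows that on a mean curvature flow the quantity $\langle K,\nu\rangle$ satisfies the Jacobi-type equation
\[
\Bigl(\partial_t - \Delta_{M_t}\Bigr)\langle K,\nu\rangle = |A|^2\,\langle K,\nu\rangle,
\]
for any Killing field $K$ of $\R^{n+1}$, so the $\hat f^{(j)}_\alpha$, after accounting for the $H$ factor, satisfy a uniformly parabolic equation with smooth coefficients converging to those of the linearized operator on the Bowl soliton. By interior parabolic estimates the $\hat f^{(j)}_\alpha$ subconverge in $C^{20}_{loc}$ to limits $\hat f_\alpha$ which are Jacobi fields on the Bowl soliton cap, not all identically zero.

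The final step is the rigidity input: one must show that every such limit Jacobi field $\hat f_\alpha$ is itself the normal component of a rotation vector field about the axis of the limiting Bowl soliton — equivalently, that the only bounded solutions of the Jacobi equation on the relevant piece of the Bowl soliton which are compatible with the $\epsilon$-symmetry constraint come from the $\tfrac12 n(n-1)$-dimensional space of rotations about the $x_1$-axis (together, possibly, with translations along the axis, which are killed by the normalization of where the tip sits). Feeding this back, one can choose for large $j$ a corrected normalized set of rotation vector fields — adjusting the axis of $\mathcal{K}^{(j)}$ by an amount $O(\mu^{(j)})$ — which reduces $\sup_\alpha|\langle K_\alpha,\nu\rangle|H$ below $\epsilon^{(j)}/2$ on $\hat{\cP}(\bar x^{(j)},\bar t^{(j)},10)$, contradicting the choice of the sequence. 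I expect the main obstacle to be precisely this rigidity statement for Jacobi fields on the Bowl soliton cap: unlike the cylinder, where the spectral decomposition of the linearized operator is explicit, on the Bowl one must rule out ``hidden'' bounded Jacobi fields by exploiting strict convexity, the asymptotically paraboloidal geometry, and the fact that the domain includes the tip with a definite collar — this is the place where the argument genuinely departs from \cite{BC} and requires the modifications alluded to in the statement. The book-keeping of how the normalized rotation vector fields and their axes vary over $\hat{\cP}(\bar x,\bar t,L_1)$, controlled via Lemma~4.3 of \cite{BC}, is the other technical point that must be handled with care so that the corrected vector fields are genuinely a normalized set.
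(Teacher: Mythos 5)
Your proposal takes a genuinely different route from the paper. The paper (following Brendle--Choi) does not run a blow-up/Liouville argument on the Bowl cap. Instead it establishes the quantitative monotonicity $\frac{d}{dt}|x_0 - q_t| < 0$ (equation \eqref{eq-mon-tip}), where $q_t$ is the unique maximum of $H$ near the tip; then it proves by induction on a dyadic scale that points at distance $\sim s_* 2^{j/100}$ from $q_t$, for times $t \in [-2^{3j/100},-1]$, are $2^{-j}\epsilon$-symmetric. The induction feeds only Proposition~\ref{prop-neck} (Neck Improvement), applied away from the tip where the flow is cylindrical, and leverages the monotonicity of $|x-q_t|$ to ensure the required backward parabolic history stays inside $\hat{\cP}(\bar x,\bar t,L_1)$. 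Finally, the exponential gain on the annuli is transported inward to the tip by a maximum-principle argument applied to $f_\alpha^{(j)} = e^{2^{-j/50}t}\langle K^{(j)}_\alpha,\nu\rangle/(H-2^{-j/100})$, not by any compactness argument. In short, the paper's proof deliberately converts the cap problem into a sequence of neck problems plus a maximum principle, precisely in order to avoid the step you identify as hard.

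That step --- a Liouville-type rigidity for Jacobi fields on the Bowl cap, stating that the only bounded solutions of $(\partial_t - \Delta - |A|^2)f = 0$ compatible with the normalization are normal components of rotations about the axis --- is a real gap, and you are right to flag it as the place where your argument stalls. Unlike the cylinder, where Fourier modes give an explicit spectral decomposition of the linearized operator (this is what makes the Neck Improvement proof in \cite{BC} go through by compactness), no such explicit structure is available on the Bowl. Moreover, your limiting Jacobi fields live on a \emph{bounded} piece of the Bowl over a bounded time interval, with no boundary condition at the spatial edge, so there is no a priori reason for the solution space to be finite-dimensional, let alone equal to $\so(n)$; one would have to extract extra information from the fact that the axis of $\mathcal{K}^{(j)}$ was already chosen optimally, and from the decay of $\hat f^{(j)}_\alpha$ implied by the neck region, which in effect re-imports the dyadic/annulus estimate that the paper uses. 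You also never invoke the monotonicity \eqref{eq-mon-tip}, which is the structural fact that makes the inductive neck argument close inside a finite parabolic cylinder; its absence is a signal that the reduction to the neck region, which is the actual content of the proof, is being bypassed rather than replaced. As written, the proposal is an honest sketch with the central lemma left open; it does not constitute a proof of Proposition~\ref{prop-cap}.
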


\begin{proof}
  Without loss of generality assume $\bar{t} = 1$ and $H(\bar{x},-1) = 1$ and $\bar{x}\in \Omega_{-1}^1$.
  For the sake of the proof, keep in mind that the statement in the Proposition is of local nature and it only matters what is happening on a large parabolic cylinder $\hat{\cP}(\bar{x},-1,L_1)$, while the behavior of our solution outside of this neighborhood does not matter.
  
  The assumptions in the Proposition imply that if we take $\epsilon_1$ sufficiently small, using that the Hessian of the mean curvature around the maximum mean curvature point in a Bowl soliton is strictly negative definite (note that by our assumption %the maximum curvature point on a Bowl soliton lies in $\cP(0,0,L_1-1)$ for every $t\in (-L_1^2,0)$), and that $\hat{\cP}(\bar{x},-1,L_1)$ is $\epsilon_1$ close in $C^{(20)}$-norm to a piece of a Bowl soliton, we conclude that for each $t \in (-1-L_1^2, -1)$, 
  we may assume the maximum of $H(\cdot,t)$ in $B(\bar{x},L_1)\cap M_t$ is attained at a unique interior point $q_t \in B(\bar{x},L_1)\cap M_t$.
  Moreover, the Hessian of the mean curvature at $q_t$ is negative definite.
  Hence, $q_t$ varies smoothly in $t$.
  We now conclude that if $(x_0,t_0) \in \hat{\cP}(\bar{x},-1,L_1)$, then 
  \begin{equation}
    \label{eq-mon-tip}
    \frac{d}{dt}|x_0 - q_t| < 0, \qquad -1 - L_1^2 \le t \le t_0.
  \end{equation}
  The proof of \eqref{eq-mon-tip} is the same as the proof of Lemma 5.2 in \cite{BC}. 
  
  We claim that there exists a uniform constant $s_*$ with the property that every point $(x,t) \in \hat{\mathcal{P}}(\bar{x},-1,L_1)$ with $|x - q_t| \ge s_*$ lies on an $(\epsilon_0,10)$-neck and satisfies $|x - q_t| H(x,t) \ge 1000 L_0$.
  Indeed, knowing the behavior of the Bowl soliton, it is a straightforward computation to check previous claims are true on the Bowl soliton, with a constant for example, $2000 L_0$.
  By our assumption, $\hat{\mathcal{P}}(\bar{x},-1,L_1)$ is $\epsilon_1$-close to the Bowl soliton and hence the claims are true for our solution as well.
  
  If $|\bar{x} - q_{-1}| \ge s_*$, the Proposition follows immediately from Proposition \ref{prop-neck}.
  Thus, we may assume that $|\bar{x} - q_{-1}| \le s_*$.
  Then we have the following claim.
  
  \begin{claim}
    Suppose that $M_t$ is an ancient solution of mean curvature flow.
    Given any positive integer $j$, there exist a large constant $L(j)$
    and a small constant $\epsilon(j)$ with the following property: if the parabolic neighborhood $\hat{\mathcal{P}}(\bar{x},-1,L(j))$ is $\epsilon(j)$-close in the $C^{20}$-norm to a piece of the Bowl soliton which includes the tip, and every point in $\hat{\mathcal{P}}(\bar{x},-1,L(j))$ is $\epsilon$-symmetric, then every point $(x,t)\in \hat{\mathcal{P}}(\bar{x},-1,L(j))$ with $t \in [-2^{\frac{3j}{100}},-1]$ and $s_* 2^{\frac{j}{100}} \le |x - q_t| \le s_* 2^{\frac{j+1}{100}}$ is $2^{-j} \epsilon$-symmetric.
  \end{claim}
  
  \begin{proof}
    Assume $\kappa$ is the maximal curvature of the tip of the Bowl soliton in the statement of the Claim.
    Note that $\kappa$ may depend on $(\bar{x},\bar{t})$, but is independent of $j$.
    Define $L(j) \ge \frac{2^{\frac{1}{100}}}{1000}\, s_* + s_* 2^{\frac{j+2}{100}} + \kappa\, (2^{\frac{3(j+1)}{100}}+1) + s_*$, which choice will become apparent later.
    
    The proof is by induction on $j$ and is similar to the proof of Proposition 5.3 in \cite{BC}.
    For $j = 0$ we have $s_* \le |x - q_t| \le s_* 2^{\frac{1}{100}}$.
    By above discussion we have that $(x,t)$ lies on an $(\epsilon_0,10)$-neck and $|x-q_t| \ge \frac{1000 L_0}{H(x,t)}$.
    This implies $\frac{L_0}{H(x,t)} \le \frac{s^* 2^{\frac{1}{100}}}{1000}$ and hence $\hat{\cP}(x,t,L_0) \subset \hat{\cP}(\bar{x},-1,L_1)$ if we choose $L_1$ sufficiently big compared to $s_*$.
    Hence, every point in $\hat{\cP}(x,t,L_0)$ is $\epsilon$-symmetric and lies on an $(\epsilon_0,10)$-neck (where $\epsilon < \epsilon_0$).
    By Proposition \ref{prop-neck} we conclude $(x,t)$ is $\frac{\epsilon}{2}$-symmetric.
    
    Assume the claim holds for $j-1$.
    We want to show it holds for $j$ as well, that is,  if the parabolic neighborhood $\hat{\mathcal{P}}(\bar{x},-1,L(j))$ is $\epsilon(j)$-close in the $C^{20}$-norm to a piece of the Bowl soliton which includes the tip, and every point in $\hat{\mathcal{P}}(\bar{x},-1,L(j))$ is $\epsilon$-symmetric, then every point $(x,t)\in \hat{\mathcal{P}}(\bar{x},-1,L(j))$ with $t \in [-2^{\frac{3j}{100}},-1]$ and $s_* 2^{\frac{j}{100}} \le |x - q_t| \le s_* 2^{\frac{j+1}{100}}$ is $2^{-j} \epsilon$-symmetric.
    Suppose this is false.
    Then there exists $(x_0,t_0)$ so that $x_0\in M_{t_0}$ and $t_0\in [-2^{\frac{3j}{100}},-1]$ and $s_* 2^{\frac{j}{100}} \le |x_0 - q_{t_0}| \le s_* 2^{\frac{j+1}{100}}$ so that $(x_0,t_0)$ is not $2^{-j}\epsilon$-symmetric.
    By Proposition \ref{prop-neck}, there exists a point $(y,s) \in \hat{\cP}(x_0,t_0,L_0)$ such that either $(y,s)$ is not $2^{-j+1}\epsilon$-symmetric or $(y,s)$ does not lie at the center of an $(\epsilon_0,10)$-neck.
    Note that if we choose $L(j) \ge \frac{2^{\frac{1}{100}}}{1000}\, s_* + s_* 2^{\frac{j+2}{100}} + \kappa\, (2^{\frac{3(j+1)}{100}}+1) + s_*$, then  $(y,s) \in \hat{\cP}(\bar{x},-1,L(j-1))$.
    To see this we combine the inequalities $|y-x_0| \leq \frac{L_0}{H(x_0,t_0)}$,  $|\bar x - q_{-1}| \leq s_*$, 
    $|x_0 - q_{t_0}| \le s_* 2^{\frac{j+1}{100}}$ to conclude that
    \begin{equation*}
      \begin{split}
        |y-\bar x| &\leq |y-x_0| +  |x_0 - q_{t_0}| + |q_{t_0} - q_{-1}| + |q_{-1} - \bar x| \\&\leq   \frac{L_0}{H(x_0,t_0)} +  s_* 2^{\frac{j+1}{100}} +  \kappa \, |t_0+1| + s_* \\
        &\leq \frac{2^{\frac 1{100}}} {1000} \, s_* + s_* 2^{\frac{j+1}{100}} + \kappa \, (2^{\frac{3j}{100}} +1 )  + s_*\\
        &\le L(j-1).
      \end{split}
    \end{equation*}
    
    In view of the induction hypothesis, we conclude that $|x - q_t| \le 2^{\frac{j-1}{100}} s_*$.
    We can now follow the proof of Proposition 5.3 in \cite{BC} closely to obtain a contradiction.
    In that part of the proof one needs to use \eqref{eq-mon-tip}, which is proved in Lemma 5.2 in \cite{BC}.
    It is clear from the proof in \cite{BC} that if we take a bigger parabolic cylinder around $(\bar{x},-1)$ of size $L(j)$, in order to still have \eqref{eq-mon-tip} one needs to require that $\hat{\cP}(\bar{x},-1,L(j))$ is $\epsilon(j)$ close to a Bowl soliton, where $\epsilon(j)$ needs to be taken very small, depending on $L(j)$.
    This is clear from the proof of \eqref{eq-mon-tip} that can be found in \cite{BC}.
  \end{proof}
  
  In the following, $j$ will denote a large integer, which will be determined later.
  Moreover, assume that $L \ge L(j)$ and $\epsilon \le \epsilon(j)$. 
  Using the Claim, we conclude that for every point $(x,t)\in \{(x,t)\,\,|\,\, t\in [-2^{\frac{3j}{100}},-1], \,\,\,\, s_* 2^{\frac{j+1}{100}} \le |x - q_t| \le s_* 2^{\frac{j}{100}}\}$ there exists a normalized set of rotation vector fields $\mathcal{K}^{(x,t)} = \{K_{\alpha}^{(x,t)}\,\,\,|\,\,\, 1 \le \alpha \le \frac{n(n-1)}{2}\}$, such that $\max_{\alpha}|\langle K_{\alpha}^{(x,t)},\nu\rangle| H \le 2^{-j} \epsilon$ on $\bar{\cP}(x,t,10)$.
  Moreover, since $|x-q_t| \ge s_*$ implies $H(x,t) |x-q_t| \ge 1000 L_0$, we have that
  \[\max_{\alpha}|\langle K^{(x,t)}_{\alpha},\nu\rangle| \le \frac{2^{\frac{j+1}{100} - j} s_*}{1000 L_0}\, \epsilon \le C \, 2^{\frac{j}{10} - j} \, \epsilon, \qquad j \ge j_0,\]
  for a uniform constant $C$ that is independent of $j$ and $\epsilon$.
  Lemma 4.3 in \cite{BC} allows us to control how the axis of rotation of $\mathcal{K}^{(x,t)}$ varies as we vary the point $(x,t)$. 
  More precisely, as in \cite{BC}, if $(x_1,t_1)$ and $(x_2,t_2)$ are in $\{(x,t)\,\,|\,\, t\in [-2^{\frac{3j}{100}},-1], \,\,\,\, s_* 2^{\frac{j+1}{100}} \le |x - q_t| \le s_* 2^{\frac{j}{100}}\}$ and $(x_2,t_2) \in \hat{\cP}(x_1,t_1,1)$, then
  \begin{align*}
    & \inf_{w\in O\left(\frac{n(n-1)}{2}\right)}\, \sup_{B_{10H(x_2,t_2)^{-1}}(x_2)}\, \max_{\alpha}\left| K_{\alpha}^{(x_1,t_1)} - \sum_{\beta=1}^{\frac{n(n-1)}{2}} w_{\alpha\beta} K_{\beta}^{(x_2,t_2)}\right| \\
    &\le C 2^{-j} H(x_2,t_2)^{-1}.
  \end{align*}
  Hence, we can find a normalized set of rotation vector fields $\mathcal{K}^{(j)} = \{K_{\alpha}^{(j)}\,\,\,|\,\,\, 1 \le \alpha \le \frac{n(n-1)}{2}\}$ so that if $(x,t) \in \{(x,t)\,\,|\,\, t\in [-2^{\frac{3j}{100}},-1], \,\,\,\, s_* 2^{\frac{j+1}{100}} \le |x - q_t| \le s_* 2^{\frac{j}{100}}\}$, then,
  \[
  \inf_{w\in O\left(\frac{n(n-1)}{2}\right)}\, \max_{\alpha} \left|K_{\alpha}^{(j)} - \sum_{\beta=1}^{\frac{n(n-1)}{2}} w_{\alpha\beta} K_{\beta}^{(x,t)}\right| \le C 2^{-\frac j2},
  \]
  at the point $(x,t)$.
  From this we deduce that $\max_{\alpha}|\langle K_{\alpha}^{(j)},\nu\rangle| \le C 2^{-\frac j2}$, for all points $(x,t) \in \{(x,t)\,\,|\,\, t\in [-2^{\frac{3j}{100}},-1], \,\,\,\, s_* 2^{\frac{j+1}{100}} \le |x - q_t| \le s_* 2^{\frac{j}{100}}\}$.
  As in \cite{BC} we conclude that $\max_{\alpha} |\langle K_{\alpha}^{(j)},\nu\rangle| \le C 2^{-\frac j2}$ for all $(x,t) \in  \{(x,t)\,\,|\,\, t\in [-2^{\frac{3j}{100}},-1], \,\,\,\, s_* 2^{\frac{j+1}{100}} \le |x - q_t| \le s_* 2^{\frac{j}{100}}\}$.
  Finally, note that $\max_{\alpha}|\langle K_{\alpha}^{(j)},\nu\rangle| \le C 2^{\frac{j}{100}}$, whenever $x\in  \{x\,\,|\,\,  s_* 2^{\frac{j+1}{100}} \le |x - q_t| \le s_* 2^{\frac{j}{100}}\}$ and $t = -2^{\frac{3j}{100}}$.
  
  As in \cite{BC}, for each $\alpha\in \{1,\dots, \frac{n(n-1)}{2}\}$ we define a function $f_{\alpha}^{(j)} : \{(x,t)\,\,\,|\,\,\,  t\in [-2^{\frac{3j}{100}},-1], \,\,\,\,   |x - q_t| \le s_* 2^{\frac{j}{100}}\} \to \mathbb{R}$ by
  \[
  f_{\alpha}^{(j)} := e^{2^{-\frac{j}{50}} t} \frac{\langle K_{\alpha}^{(j)},\nu\rangle}{H - 2^{-\frac{j}{100}}}.
  \]
  The same computation as in \cite{BC} implies that by the maximum principle applied to the evolution of $f_{\alpha}^{(j)}$ we get
  \[
  \sup_{
  \{
  (x,t)\,\,\,|\,\,\, t\in [-2^{\frac{3j}{100}},-1], \,\,\,\, 
  s_* 2^{\frac{j+1}{100}} \le |x - q_t| \le s_* 2^{ \frac{j}{100} }
  \} } 
  |f_{\alpha}^{(j)}(x,t)| \le C\, 2^{-\frac j4}.
  \]
  Standard interior estimates for parabolic equations give estimates for the higher order derivatives of $\langle K_{\alpha}^{(j)},\nu\rangle$.
  
  Hence, if we choose $j$ sufficiently big, then the same reasoning as in \cite{BC} implies $(\bar{x},-1)$ is $\frac{\epsilon}{2}$-symmetric.
  Having chosen $j$ in this way, we finally define $L_1 = L(j)$ and $\epsilon_1 := \epsilon(j)$.
  Then $L_1$ and $\epsilon_1$ have the desired properties as stated in Proposition \ref{prop-cap}.
\end{proof}

The goal of the remaining part of this section is to show how we can employ Propositions \ref{prop-neck} and \ref{prop-cap} to prove Theorem \ref{thm-rot-symm}. 

Observe that by the crucial work of Haslhofer and Kleiner in \cite{HK} we know that a strictly convex $\alpha$-noncollapsed ancient solution to mean curvature flow sweeps out the whole space.
Hence,    the  
well known important result of X.J.
Wang in \cite{W}  shows that the rescaled flow, after a proper rotation of coordinates, converges, as time goes to 
$-\infty$, uniformly on compact sets, to a round cylinder of radius $\sqrt{2(n-1)}$.

This has as a consequence that $M_t\cap B_{8(n-1)\sqrt{|t|}}$ is a neck with radius $\sqrt{2(n-1)|t|}$.
The complement 
$M_t\backslash B_{8(n-1)\sqrt{|t|}}$ has two connected components, call them $\Omega_1^t$ and $\Omega_2^t$, both compact.
Thus,   for every $t$, the maximum of $H$ on  $\Omega_1^t$ is attained at least at one point
in  $\Omega_1^t$ and similarly for $\Omega_2^t$.

For every $t$, we define the {\it tip} points $p_{t}^1$ and $p_{t}^2$ as follows.
Let $p_t^k$, for $k=1,2$ be a point such that
\[
|\langle F,\nu\rangle (p_t^k,t)| = |F|(p_t^k,t) \quad \mbox{and} \quad |F|(p_t^k,t) = \max_{\Omega_k^t} |F|(\cdot,t).
\]
Denote by $d_k(t) := |F|(p_t^k,t)$, for $k\in \{1,2\}$. 

Throughout the rest of the section we will be using the next observation about possible limits of our solution around arbitrary sequence of points $(x_j,t_j)$
with $x_j \in M_{t_j}$, $t_j \to -\infty$ when rescaled by $H(x_j,t_j)$.

\begin{lemma}
  \label{lemma-possible-limits}
  Let $M_t$, $t\in (-\infty,0)$ be an Ancient Oval satisfying the assumptions in Theorem \ref{thm-rot-symm}.
  Fix a $k\in \{1,2\}$.
  Then for every sequence of points $x_j\in M_{t_j}$ and any sequence of times $t_j\to -\infty$, the rescaled sequence of solutions $F_j(\cdot,t) := H(x_j,t_j) (F(\cdot,t_j+t Q_j^{-2}) - x_j)$ subconverges to either a Bowl soliton or a shrinking round cylinder.
\end{lemma}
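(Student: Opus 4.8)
\textbf{Proof proposal for Lemma \ref{lemma-possible-limits}.}

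The plan is to combine the standard Haslhofer--Kleiner compactness theory for $\alpha$-noncollapsed flows with the convexity estimate and the classification of the resulting blow-up limits. First I would recall that since $\{M_t\}$ is an Ancient Oval satisfying the hypotheses of Theorem \ref{thm-rot-symm}, it is in particular convex (by Haslhofer--Kleiner \cite{HK}), $\alpha$-noncollapsed for some fixed $\alpha>0$, and uniformly $2$-convex. All three of these properties are scale-invariant and are preserved under the rescalings $F_j(\cdot,t) := H(x_j,t_j)\,(F(\cdot,t_j+t\,Q_j^{-2}) - x_j)$, where $Q_j := H(x_j,t_j)$. Moreover, by construction $H_{F_j}(0,0)=1$, so the curvature is normalized at the base point. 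Since the original flow has bounded curvature on compact time intervals and these bounds rescale correctly — here one uses that each $M_{t_j}$ is compact with a global curvature bound, together with the fact that, after translating $x_j$ to the origin and rescaling, the flows $F_j$ are defined on larger and larger backward time intervals as $t_j\to-\infty$ — the Haslhofer--Kleiner local curvature estimates for $\alpha$-noncollapsed flows (a global convergence theorem for the space of such flows) give uniform curvature bounds for $F_j$ on every compact subset of space-time. Hence a subsequence of $\{F_j\}$ converges smoothly on compact sets to a limit flow $F_\infty$ which is again an ancient, convex, $\alpha$-noncollapsed, uniformly $2$-convex mean curvature flow with $H_{F_\infty}(0,0)=1$, so in particular it is nonflat.

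Next I would classify this limit. The limit flow $F_\infty$ is a complete ancient noncollapsed convex solution. If it is compact, then it is itself an Ancient Oval or a shrinking sphere; but a compact limit is impossible here because the curvature ratio argument forces noncompactness of any blow-up limit taken along $t_j\to-\infty$ at the scale of $H$ — more precisely, if the limit were compact it would be a shrinking round sphere by \cite{ADS} (as the only compact noncollapsed ancient solution whose blow-up it could be), contradicting uniform $2$-convexity combined with the fact that we are rescaling by $H$ which does not blow up the whole compact surface to a point. Alternatively, and more cleanly: the limit $F_\infty$ must split off a line or be strictly convex. If it is strictly convex and noncompact, then by the Brendle--Choi classification (Theorem \ref{thm-BC}) it is the Bowl soliton. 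If it is not strictly convex, then by Hamilton's strong maximum principle for the second fundamental form it splits as a product $\R^{k}\times N$ with $N$ a lower-dimensional strictly convex ancient noncollapsed solution; uniform $2$-convexity of the limit forces $k\le 1$, and when $k=1$ the factor $N$ must be a compact strictly convex ancient noncollapsed solution in $\R^{n}$, hence a shrinking round sphere $S^{n-1}$ by \cite{ADS} (or Huisken's theorem applied to the ancient setting), giving a shrinking round cylinder $\R\times S^{n-1}$; when $k\ge 2$ the factor would violate $2$-convexity unless it degenerates, which again leaves only the cylinder. Thus $F_\infty$ is either the Bowl soliton or a shrinking round cylinder, which is the assertion.

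The main obstacle is the compactness/regularity step: one must verify that the rescaled flows $F_j$ satisfy uniform curvature bounds on fixed parabolic neighborhoods of the base point so that a smooth limit exists. This is exactly where the $\alpha$-noncollapsedness enters essentially, via the global convergence theorem of Haslhofer--Kleiner: the class of $\alpha$-noncollapsed mean curvature flows with a curvature bound at one point is compact in the smooth topology on compact subsets. One must check that the time interval of existence of $F_j$ (after recentering at $x_j$ and rescaling by $Q_j$) exhausts $(-\infty,0]$ as $j\to\infty$ — this follows because the original flow is ancient, defined on $(-\infty,T)$, so the rescaled flows are defined on $[-(T-t_j)Q_j^2,\,T_jQ_j^2)\ni 0$ with the lower endpoint tending to $-\infty$ as $t_j\to -\infty$ provided $Q_j$ does not decay too fast; and $Q_j=H(x_j,t_j)$ cannot decay faster than is compatible with noncollapsing, since on a compact convex hypersurface enclosing a region of inradius $\sim\sqrt{|t_j|}$ the mean curvature is bounded below by $c/\sqrt{|t_j|}$, so $(T-t_j)Q_j^2 \gtrsim |t_j|\cdot |t_j|^{-1} \to$ a positive constant is too weak — rather one uses that the curvature at any point of $M_{t_j}$ is at least $\gtrsim |t_j|^{-1/2}$ while the elapsed time is $\sim |t_j|$, giving a rescaled backward time span $\gtrsim 1$; to get the full $(-\infty,0]$ one instead notes that along the flow the curvature is monotone in an appropriate sense near any fixed point, or simply invokes that any smooth limit of ancient flows is again ancient by a diagonal argument once local bounds are known. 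With these points in place the lemma follows.
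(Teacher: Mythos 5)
Your overall strategy is the same as the paper's: invoke the Haslhofer--Kleiner global convergence theorem (Theorem~1.12 in~\cite{HK}) to extract a smooth subsequential limit $F_\infty$ which is ancient, convex, $\alpha$-noncollapsed, uniformly $2$-convex with $H(0,0)=1$, then run the strong-maximum-principle dichotomy (strictly convex vs.\ splits off a line) and cite Brendle--Choi in the strictly convex case. That part matches.

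The place where you deviate, and where your argument has a genuine gap, is the splitting case. After the limit splits as $\R\times N^{n-1}$, you assert that $N$ ``must be a compact strictly convex ancient noncollapsed solution in $\R^n$'' and then appeal to~\cite{ADS} or Huisken. You never justify why $N$ is compact, and neither of those references gives the needed statement: the result from~\cite{ADS} quoted in this paper concerns a closed Ancient Oval whose cylindrical rescaling limits backward to a sphere, and Huisken's theorem is a forward convergence statement for compact convex initial data, not a classification of ancient flows. The paper instead observes that uniform $2$-convexity of $\R\times N$ forces $\lambda_{\min}(N)\ge\beta H(N)$ (i.e.\ $N$ is \emph{uniformly convex}, not merely $2$-convex), and then cites Lemma~3.14 of~\cite{HK}, which classifies ancient noncollapsed uniformly convex flows as shrinking round spheres --- compactness is an output of that lemma, not an input you get to assume. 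You should extract the bound $\lambda_{\min}(N)\ge\beta H(N)$ and then appeal to that lemma (or some equivalent quantitative convexity result) rather than asserting compactness of $N$.

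Two smaller points. Your attempt to rule out a compact limit for $F_\infty$ by citing~\cite{ADS} and ``contradicting uniform $2$-convexity'' does not work as written: a shrinking round sphere \emph{is} uniformly $2$-convex, so there is no contradiction there. The correct reason the limit is noncompact is a diameter argument --- since the type-I rescaling $\bar M_\tau$ converges to a round cylinder, the extent of $M_t$ in the axial direction is $\gg\sqrt{|t|}$, while $H(x_j,t_j)\gtrsim |t_j|^{-1/2}$ by noncollapsing, so $H(x_j,t_j)\cdot\mathrm{diam}(M_{t_j})\to\infty$; the paper leaves this implicit, but if you want to fill it in, this is the argument to give. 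Finally, the long paragraph about whether the rescaled flows are defined on $(-\infty,0]$ is unnecessary and internally confused: since the original flow is defined on all of $(-\infty,T)$ and the rescaled flow $F_j(\cdot,t)$ evaluates the original flow at times $t_j+tQ_j^{-2}\le t_j<T$ for $t\le 0$, the rescaled flows are automatically defined for all $t\le 0$ with no estimate on $Q_j$ needed.
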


\begin{proof}
  By the global convergence theorem (Theorem 1.12) in \cite{HK} we have that after passing to a subsequence, the flow $M^j_t$ converges, as $j\to \infty$,  to an ancient solution 
  $M^{\infty}_t$, for $t \in (-\infty, 0]$, which is convex and uniformly 2-convex.
  Note that $H(0,0) = 1$ on the limiting manifold.
  By the strong maximum principle applied to $H$ we have that $H > 0$ everywhere on $M^{\infty}_t$, where $t\in (-\infty,0]$.
  If $M^{\infty}_t$ is strictly convex, by the classification result in \cite{BC} we have that it is a Bowl soliton.
  If the limit is not strictly convex, by the strong maximum principle it splits off a line and hence it is of the form $N_t^{n-1}\times \mathbb{R}$, where $N_t^{n-1}$ is an $n-1$-dimensional ancient solution.
  On the other hand the uniform 2-convexity assumption on our solution implies the inequality  $\lambda_{\min}({N^{n-1}_t}) \ge \beta H (N^{n-1}_t)$, 
  for a uniform constant $\beta > 0$.
  Thus,  Lemma 3.14 in \cite{HK} implies that the limiting flow $M_t^{\infty}$ is a family of round shrinking cylinders $S^{n-1}\times\mathbb{R}$.
\end{proof}

We will next show that points which are away from the tip points in both regions $\Omega_t^k$, $k=1,2$ are cylindrical. 

\begin{lemma}
  \label{lemma-cylindrical}
  Let $M_t$, $t \in (-\infty,0)$, be an Ancient Oval satisfying the assumptions of Theorem \ref{thm-rot-symm} and fix $k\in 1,2$. 
  Then,  for every $\eta > 0$ there exist $\bar{L}$ and $t_0$,  so that for all  $t \le t_0$ and $L \ge \bar{L}$ the following holds 
  \begin{equation}\label{eqn-cyl10}
    |x - p_t^k|   \ge \frac{L}{H(x,t)} \implies  \frac{\lambda_{min}}{H}(x,t) < \eta.
  \end{equation}
  We may chose $L$ so that \eqref{eqn-cyl10} holds for both $k=1,2$. 
\end{lemma}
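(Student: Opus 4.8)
\textbf{Proof proposal for Lemma \ref{lemma-cylindrical}.}

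The plan is to argue by contradiction using Lemma \ref{lemma-possible-limits} together with the classification of the possible blow-up limits. Suppose the statement fails for some fixed $\eta > 0$ and fixed $k$. Then there exist sequences $L_j \to \infty$, $t_j \to -\infty$, and points $x_j \in M_{t_j}$ with $|x_j - p_{t_j}^k| \ge L_j / H(x_j, t_j)$ but $\frac{\lambda_{\min}}{H}(x_j, t_j) \ge \eta$. Set $Q_j := H(x_j, t_j)$ and consider the rescaled flows $F_j(\cdot, t) := Q_j (F(\cdot, t_j + t Q_j^{-2}) - x_j)$. By Lemma \ref{lemma-possible-limits}, after passing to a subsequence, $F_j$ converges to either a Bowl soliton or a shrinking round cylinder $S^{n-1}\times\R$. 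On the limit, $\frac{\lambda_{\min}}{H}(0,0) \ge \eta > 0$, so the limit is \emph{strictly} convex at the origin; by the strong maximum principle (as in Lemma \ref{lemma-possible-limits}) it is strictly convex everywhere, hence must be the Bowl soliton rather than a cylinder.

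The next step is to locate the tip of the limiting Bowl soliton relative to the origin. The rescaled tip points $Q_j(p_{t_j}^k - x_j)$ satisfy $|Q_j(p_{t_j}^k - x_j)| = Q_j |x_j - p_{t_j}^k| \ge L_j \to \infty$. I would argue that in the limit flow, the tip therefore "escapes to infinity": more precisely, using the a priori curvature bounds (the rescaled flows have uniformly bounded geometry in parabolic neighborhoods of the origin by \cite{HK}), and using that on the Bowl soliton the mean curvature is maximized at the tip with $H$ comparable to $1$ near the origin, one obtains that the origin sits at a point of the limiting Bowl soliton at Euclidean distance $\to \infty$ from its tip. But on a Bowl soliton, points whose distance from the tip (measured in the intrinsic or extrinsic metric, rescaled by $H$) tends to infinity become more and more cylindrical: $\lambda_{\min}/H \to 0$ there. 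This contradicts $\frac{\lambda_{\min}}{H}(0,0) \ge \eta$ on the limit. Hence the assumed failure is impossible, proving \eqref{eqn-cyl10} for the given $k$; taking the larger of the two thresholds $\bar L$ and the smaller of the two times $t_0$ handles both $k = 1, 2$ simultaneously.

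I would flesh out the "tip escapes to infinity" step carefully, since that is the main obstacle. The subtlety is that $|x_j - p_{t_j}^k|$ is measured before rescaling while the convergence is of the rescaled flows, so one must ensure that the hypothesis $|x_j - p_{t_j}^k| \ge L_j Q_j^{-1}$ really forces the rescaled tip to leave every fixed ball $B_R$ around the origin in the limit — equivalently, that no tip point can survive in a bounded region of the limiting flow. If a rescaled tip did accumulate at a finite point of the limit Bowl soliton, it would have to coincide with the Bowl soliton's own tip (the unique maximum of $H$), forcing $Q_j|x_j - p_{t_j}^k|$ to stay bounded, contradicting $L_j \to \infty$. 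This uses uniqueness/nondegeneracy of the maximum of $H$ on the Bowl, exactly as in the Cap Improvement argument above. Once this is established, the concluding step — that on the Bowl soliton $\lambda_{\min}/H \to 0$ away from the tip — is a standard and explicit property of the rotationally symmetric translating soliton (its ends are asymptotic to a shrinking cylinder), so no delicate estimate is needed there.
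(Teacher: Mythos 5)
Your opening matches the paper exactly: same contradiction setup, same rescaling by $Q_j = H(x_j,t_j)$, and the same use of Lemma \ref{lemma-possible-limits} to force the limit to be a Bowl soliton once $\lambda_{\min}/H(0,0) \ge \eta$ rules out the cylinder. But the second half has a genuine gap.

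You argue that because $Q_j|x_j - p_{t_j}^k| \ge L_j \to \infty$, "the origin sits at a point of the limiting Bowl soliton at Euclidean distance $\to \infty$ from its tip," and then conclude $\lambda_{\min}/H(0,0)$ must be small. This reasoning confuses the sequence with the limit. The limit Bowl soliton is a \emph{fixed} object normalized so that $H(0,0)=1$; since on a Bowl soliton $H(p)$ is comparable to $1/\mathrm{dist}(p,\text{tip})$ for large distances, the origin necessarily lies at a \emph{bounded} distance from the limit's tip. The distance from the origin to the limit's tip cannot "tend to infinity"—it is a single number. The escaping rescaled points $Q_j(p_{t_j}^k - x_j)$ simply never reach that tip; they tell you nothing about where the limit's tip sits relative to the origin. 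In particular, the limit's tip can perfectly well be supplied by the \emph{other} tip $p_{t_j}^{3-k}$, whose rescaled images $\bar p_j^{3-k} = Q_j(p_{t_j}^{3-k} - x_j)$ may (and in fact must) stay bounded. In that scenario, the origin is close to the limit's tip and there is no contradiction from your final step.

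This is exactly the case the paper's proof is built to handle, and it is what you do not address. The paper considers the other tip $p_{t_j}^{3-k}$ explicitly and argues in two steps. First, $|\bar p_j^{3-k}|$ must stay bounded by some $C_0$: if both rescaled tips escaped, convexity and the fact that the two tips lie on opposite sides of the neck-like middle would force the limit to split off a line, hence be a cylinder, which was already ruled out. Second—and this is the step your proposal has no analogue of—since $x_j \in \Omega_{t_j}^k$ and $p_{t_j}^{3-k} \in \Omega_{t_j}^{3-k}$ lie on opposite sides of the neck, and both are within rescaled distance $C_0$ of the origin, the \emph{entire neck-like region} is trapped inside a fixed compact subset of the rescaled flow. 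On the limit Bowl soliton, $\lambda_{\min}/H \ge c_0 > 0$ on any compact set; but on the neck of the ancient oval, $\lambda_{\min}/H \to 0$ as $t_j \to -\infty$. That is the contradiction. Your argument never brings in the other tip or the neck and so cannot reach it. To repair the proof you would need to introduce $p_{t_j}^{3-k}$, prove boundedness of its rescaled image via the splitting alternative, and then derive the contradiction from the trapped neck rather than from the position of the origin relative to the limit's tip.
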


\begin{proof}
  Without loss of generality we may assume that $k=1$ and we will argue by contradiction.
  If the statement is not true, 
  then there exist $L_j\to \infty$  and sequences of times $t_j\to -\infty$ and points  $x_j \in M_{t_j}$  so that
  \begin{equation}
    \label{eqn-contra}
    |x_j - p_{t_j}^1| \ge \frac{L_j}{H(x_j,t_j)} \quad \mbox{and} \quad \frac{\lambda_{\min}}{H}(x_j,t_j)  \geq \eta.
  \end{equation}
  Rescale the flow around $(x_j,t_j)$ by $Q_j := H(x_j,t_j)$ as in Lemma \ref{lemma-possible-limits} and call the rescaled manifolds $M^j_t$. 
  Then, 
  \begin{equation}
    |0 - \bar{p}_j^1|   \ge L_j \to \infty,  \qquad \mbox{as} \,\,\,\, j\to\infty
  \end{equation}
  where the origin and $\bar{p}_j^1$ correspond to $x_j$ and tip points $p_{t_j}^1$ after rescaling, resepctively.
  By Lemma \ref{lemma-possible-limits} we have that passing to a subsequence $M_t^j$ converges to either a Bowl soliton or a cylinder. 
  Since  $\frac{\lambda_{min}}{H}$ is a scaling invariant quantity, \eqref{eqn-contra} implies that on the limiting manifold we have 
  $\frac{\lambda_{min}}{H} (0,0) \geq \eta$
  which immediately excludes the cylinder.
  Thus the limiting manifold must be the Bowl soliton. 
  
  Lets look next at the tip points $p_{t_j}^2$ of our solution which lie on the other side $\Omega_{t_j}^2$ and denote by $\bar{p}_j^2$ the corresponding points
  on our rescaled solution.
  Then we must have that $ |0 - \bar{p}_j^2|   \le C_0 $ for some constant $C_0$.
  Otherwise, if  we had that $\limsup_{j \to +\infty}|0 - \bar{p}_j^2| \to +\infty$, this
  together with \eqref{eqn-contra},   the convexity of our surface, the fact that the furthest points $p_{t_j}^1$ and $p_{t_j}^2$ lie on the opposite side of a necklike piece and the splitting theorem 
  would imply that the limit of $M_t^j$ would split off a line.
  This and Lemma \ref{lemma-possible-limits} would yield the limit of $M_t^j$ would have been the cylinder which we have already ruled out. 
  In terms of our unrescaled solution $M_t$ then we conclude that 
  $ |x_j- p_{t_j}^2|   \leq \frac{C_0}{H(x_j,t_j)}$.
  Since $x_j\in \Omega_{t_j}^1$ and $p_{t_j}^2\in \Omega_{t_j}^2$, we would have that the whole neck-like region that divides the sets $\Omega_{t_j}^1$ and $\Omega_{t_j}^2$ lies at a distance less that equal to $\frac{C_0}{H(x_j,t_j)}$ from $x_j$.
  This would imply the whole neck-like region would have to lie on 
  a compact set of the Bowl soliton, implying that $\frac{\lambda_{min}}{H}  (\cdot, t_j) \geq c_0 >0$ holds for some constant $c_0$, independent of $j$.
  This is a contradiction since on the neck-like region of our solution, the scaling invariant quantity $ \lambda_{\min}  H^{-1} \to 0$ as $t_j \to -\infty$. 
  The above discussion shows that $|x - p_t^1|    \ge \frac{L}{H(x,t)}$ implies that $  \frac{\lambda_{min}}{H}(x,t) < \eta$, thus finishing the proof of the Lemma. 
  %
  %On the other hand, the scaling invariant quantities $\frac{x_j - p_{t_j}^k}{|x_j - p_{t_j}^k|}$ converge to some directions $w^k\in S^n$, for $k=1,2$.
  Recall that $|0 - \tilde{p}_{t_j}^k| \ge L_j \to \infty$ as $j\to\infty$, for $k=1,2$.
  This together with the convexity of our solution and the fact that the furthest points $p_{t_j}^1$ and $p_{t_j}^2$ lie on the opposite side of a necklike piece of our surface imply the limit $M^{\infty}_t$ contains a line.
  Hence, by Lemma \ref{lemma-possible-limits} we conclude that $M_t^{\infty}$ is a family of round shrinking cylinders $S^{n-1}\times\mathbb{R}$.
  %This contradicts \eqref{eq-contra} which should also hold for the limiting flow $M^{\infty}_t$.
\end{proof}

In the following lemma we show that mean curvatures of an Ancient Oval solution 
satisfying the assumptions of Theorem \ref{thm-rot-symm},   around the tip points on 
$\Omega_t^k$, for a fixed $k=1,2$,  are uniformly equivalent in a quantitative way.

\begin{lemma}
  \label{lem-unif-equiv-curv}
  Let $M_t$, $t \in (-\infty,0)$, be an Ancient Oval satisfying the assumptions of Theorem \ref{thm-rot-symm} and fix $k=1,2$. 
  For every $L > 0$ there exist uniform constants $c >0 $, $C < \infty$ and $t_0 \ll -1$ so that for all $t \le t_0$ we have
  \begin{equation}
    \label{eq-unif-equiv-curv}
    c \, H(p_t^k,t) \le H(x,t) \le C\, H(p_t^k,t) \quad  \mbox{{\em if}} \,\,  |x - p_t^k| < \frac{L}{H(x,t)}, \, x \in \Omega_t^k.
  \end{equation}
  We may chose $c, C$ so that \eqref{eq-unif-equiv-curv}  holds for both $k=1,2$. 
\end{lemma}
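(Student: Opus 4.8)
The argument I have in mind follows the same contradiction‑via‑compactness scheme as Lemmas~\ref{lemma-possible-limits} and~\ref{lemma-cylindrical}: the plan is to deduce \eqref{eq-unif-equiv-curv} from Lemma~\ref{lemma-possible-limits} by rescaling the flow \emph{around the point $x$ itself}, rather than around the tip. Fix $k$ and $L>0$ and suppose \eqref{eq-unif-equiv-curv} fails. Negating the statement with $c=1/j$, $C=j$, $t_0=-j$ for $j\in\mathbb{N}$ and applying the pigeonhole principle, I would obtain times $t_j\to-\infty$, points $x_j\in\Omega_{t_j}^k$ with $|x_j-p_{t_j}^k|<L/H(x_j,t_j)$, and one of the two alternatives
\[
\text{(a)}\quad \frac{H(p_{t_j}^k,t_j)}{H(x_j,t_j)}\longrightarrow 0,
\qquad\text{or}\qquad
\text{(b)}\quad \frac{H(p_{t_j}^k,t_j)}{H(x_j,t_j)}\longrightarrow +\infty,
\]
holding for all $j$ along a subsequence — alternative (a) corresponds to the upper bound in \eqref{eq-unif-equiv-curv} failing, and (b) to the lower bound failing.

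Next I would set $Q_j:=H(x_j,t_j)$ and rescale around $(x_j,t_j)$ exactly as in Lemma~\ref{lemma-possible-limits}, letting $F_j(\cdot,t):=Q_j\bigl(F(\cdot,t_j+tQ_j^{-2})-x_j\bigr)$ and denoting the resulting flow by $M^j_t$, so that $0\in M^j_0$ and $H_{M^j_0}(0)=1$. By Lemma~\ref{lemma-possible-limits}, after passing to a further subsequence $M^j_t$ converges in $C^\infty_{\mathrm{loc}}$ to an ancient flow $M^\infty_t$ which is either a Bowl soliton or a round shrinking cylinder; in particular, as in the proof of that lemma, the strong maximum principle gives $0<H<\infty$ everywhere on $M^\infty_t$.

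The crux would be the observation that, because we rescaled around $x_j$, the hypothesis $|x_j-p_{t_j}^k|<L/H(x_j,t_j)$ confines the rescaled tip points $\bar p_j^{\,k}:=Q_j(p_{t_j}^k-x_j)$ to the ball of radius $L$: indeed $|\bar p_j^{\,k}|=Q_j|x_j-p_{t_j}^k|<Q_j\cdot L/H(x_j,t_j)=L$. Hence, along a subsequence, $\bar p_j^{\,k}\to\bar p_\infty\in M^\infty_0$ with $|\bar p_\infty|\le L$, and by $C^\infty_{\mathrm{loc}}$‑convergence $H_{M^j_0}(\bar p_j^{\,k})\to H_{M^\infty_0}(\bar p_\infty)$, a finite and strictly positive number. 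On the other hand $H_{M^j_0}(\bar p_j^{\,k})=H(p_{t_j}^k,t_j)/Q_j=H(p_{t_j}^k,t_j)/H(x_j,t_j)$, which tends to $0$ in case (a) and to $+\infty$ in case (b); either way this contradicts $0<H_{M^\infty_0}(\bar p_\infty)<\infty$. This proves \eqref{eq-unif-equiv-curv} for the given $k$, and taking the smaller of the two $c$'s, the larger of the two $C$'s, and the more negative of the two $t_0$'s obtained for $k=1$ and $k=2$ gives the last claim. There is no serious obstacle here; the one point that has to be gotten right is the choice of rescaling center — rescaling at the tip would let the image of $x_j$ run off to infinity in case (b), outside the region where Lemma~\ref{lemma-possible-limits} controls the flow, whereas rescaling at $x_j$ keeps the tip in the fixed compact set where the smooth convergence is available.
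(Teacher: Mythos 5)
Your argument is correct and follows essentially the same route as the paper: rescale at $(x_j,t_j)$ by $H(x_j,t_j)$, observe that the hypothesis $|x_j-p_{t_j}^k|<L/H(x_j,t_j)$ forces the rescaled tip $\bar p_j^{\,k}$ into the ball of radius $L$, and pass to a smooth subsequential limit via Lemma~\ref{lemma-possible-limits} (or the global convergence theorem of Haslhofer--Kleiner). The paper proves the lower bound of \eqref{eq-unif-equiv-curv} in exactly this way (your case~(b)) and then obtains the upper bound by a role-swap --- once the lower bound is known, rescaling at the tip keeps the rescaled $x_j$ in a ball of radius $L/c$, so the same compactness argument applies there --- whereas your version is slightly more economical: by also invoking strict positivity of $H$ on the limit (via the strong maximum principle, as in the proof of Lemma~\ref{lemma-possible-limits}) you dispose of both failure modes with the single rescaling centered at $x_j$, avoiding the two-step structure.
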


\begin{proof}
  Let us take without loss of generality $k = 1$.
  First lets show the estimate from below.
  Assume the statement is false.
  This implies there exist a sequence of times $t_j\to -\infty$ and a sequence of constants $C_j\to \infty$ so that 
  \begin{equation}
    \label{eq-violate}
    H(p_{t_j}^1,t_j) \ge C_j\, H(x_j,t_j) \qquad \forall \,\, j
  \end{equation}
  for some $x_j \in \Omega_{t_j}^1$ such that the $|x_j - p_{t_j}^1| < \frac{L}{H(x_j,t_j)}$.
  Rescale the flow around $(x_j,t_j)$ by $Q_j := H(x_j,t_j)$.
  By the global convergence theorem 1.12 in \cite{HK}, the sequence of rescaled flows subconverges uniformly on compact sets to an ancient noncollapsed solution.
  Points $x_j$ get translated to the origin and points $p_{t_j}^1$ get translated to points $\tilde{p}_{t_j}^1$ under rescaling.
  Since by our assumption we have
  \[|0 - \tilde{p}_{t_j}^1| = H(x_j,t_j) \, |x_j - p_{t_j}^1| < L\]
  then due to uniform convergence of the rescaled flow on bounded sets we have
  \[H_j(\tilde{p}_{t_j}^1,0) \le C, \qquad j \ge j_0\]
  for a uniform constant $C < \infty$, that depends on $L$, but is independent of $j$.
  This implies
  \[ H(p_{t_j}^1,t_j) \le C\, H(x_j,t_j), \qquad j \ge j_0\]
  which contradicts \eqref{eq-violate}. 
  To prove the upper bound in \eqref{eq-unif-equiv-curv} note that the lower bound in \eqref{eq-unif-equiv-curv} that we have just proved implies $|x - p_t^1| \le \frac{L}{H(x,t)} \le \frac{L}{c\, H(p_t^1,t)}$.
  Hence, we can switch the roles of $x$ and $p_t^1$ in the proof above.
  This ends the proof of the Lemma.
\end{proof}

\begin{remark}
  Note that we can choose uniform $c >0$ and $t_0 \ll  -1$ so that the conclusion of Lemma \ref{lem-unif-equiv-curv} holds for both $k = 1$ and $k = 2$.
\end{remark}

Let $\epsilon > 0$ be a small number.
By our assumption the flow is $\alpha$-noncollapsed and uniformly 2-convex, meaning that \eqref{eqn-2convex} holds.
By the cylindrical estimate (\cite{HK}, \cite{HS}) we can find an $\eta = \eta(\epsilon, \alpha,\beta) > 0$ so that if the flow is defined in the normalized parabolic cylinder  $\hat \cP(x,t,\eta^{-1})$ and if
\[
\frac{\lambda_1}{H}(x,t) < \eta
\]
then the flow $M_t$ is $\epsilon$-close to a shrinking round cylinder $S^{n-1}\times \mathbb{R}$ near $(x,t)$.
Being $\epsilon$-close to a shrinking round cylinder near $(x,t)$ means that after parabolic rescaling by $H(x,t)$, shifting $(x,t)$ to $(0,0)$ and a rotation, the solution becomes $\epsilon$-close in the $C^{[\frac{1}{\epsilon}]}$-norm on $\cP(0,0,1/\epsilon)$ to the standard shrinking cylinder with $H(0,0) = 1$ (see for more details \cite{HK}).

\begin{proposition}
  \label{limit-tip}
  Fix a $k \in \{1,2\}$ and let $L > 0$ be any fixed constant.
  Let $M_t$ be an Ancient Oval that satisfies the assumptions of Theorem \ref{thm-rot-symm}.
  Then for any sequence of times $t_j\to -\infty$, and any sequence of points $x_j\in \Omega_{t_j}^k$ such that $|x_j - p_{t_j}^k| \le \frac{L}{H(x_j,t_j)}$, the rescaled limit around $(x_j,t_j)$ by factors $H(x_j,t_j)$ subconverges to a Bowl soliton.
\end{proposition}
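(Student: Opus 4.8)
\emph{Proof proposal.} The plan is to mimic the proof of Lemma~\ref{lemma-cylindrical}: rescale about $(x_j,t_j)$, invoke Lemma~\ref{lemma-possible-limits} to see that the blow-up limit is either a Bowl soliton or a shrinking round cylinder, and then exclude the cylinder. Without loss of generality take $k=1$, write $Q_j:=H(x_j,t_j)$ and $M^j_t:=Q_j\bigl(M_{t_j+tQ_j^{-2}}-x_j\bigr)$, so that after passing to a subsequence $M^j_t\to M^\infty_t$, an ancient noncollapsed convex uniformly $2$-convex flow which by Lemma~\ref{lemma-possible-limits} is a Bowl soliton or a shrinking round cylinder. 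The whole content of the proposition is that the cylinder cannot occur. The first reduction is to a blow-up centered exactly at the tip: since $|x_j-p_{t_j}^1|\le L/Q_j$, the rescaled tip $Q_j(p_{t_j}^1-x_j)$ stays bounded, and by Lemma~\ref{lem-unif-equiv-curv} the quantity $\mu_j:=H(p_{t_j}^1,t_j)$ is comparable to $Q_j$; hence the flow $N^j_t:=\mu_j\bigl(M_{t_j+t\mu_j^{-2}}-p_{t_j}^1\bigr)$ differs from $M^j_t$ by a uniformly bounded translation, a bounded rescaling and a bounded time-reparametrization, so if $M^\infty$ is a cylinder then so is every subsequential limit $N^\infty$ of $N^j$. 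It therefore suffices to rule out that $N^j$ subconverges to a shrinking round cylinder.

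The key step is a ``wrapping-around'' contradiction. Suppose $N^j$ subconverges to a shrinking round cylinder. Then $\lambda_1/H(p_{t_j}^1,t_j)\to 0$, so by the cylindrical estimate recalled before the statement, for every fixed small $\epsilon>0$ (depending only on $n$) and all large $j$ the hypersurface $\mu_j(M_{t_j}-p_{t_j}^1)$ is $\epsilon$-close in $C^{[1/\epsilon]}$ on $\cP(0,0,1/\epsilon)$ to the round cylinder of radius $n-1$ through the origin. In particular $M_{t_j}$ contains, through $p_{t_j}^1$, an embedded cross-sectional $(n-1)$-sphere, and its point $p'_j$ antipodal to $p_{t_j}^1$ lies on $M_{t_j}$ with
\[
p'_j=p_{t_j}^1-\frac{2(n-1)}{\mu_j}\,\nu(p_{t_j}^1)+O(\epsilon/\mu_j),\qquad \nu(p'_j)=-\nu(p_{t_j}^1)+O(\epsilon).
\]
Using that at the tip the position vector is normal, $\langle F,\nu\rangle(p_{t_j}^1)=|F|(p_{t_j}^1)=d_1(t_j)$ and $\nu(p_{t_j}^1)=p_{t_j}^1/d_1(t_j)$, one computes
\[
\langle F,\nu\rangle(p'_j)=-d_1(t_j)+\frac{2(n-1)}{\mu_j}+O\!\left(d_1(t_j)\,\epsilon\right).
\]
On the other hand the origin lies well inside the convex body $K_{t_j}$ — the neck $M_{t_j}\cap B_{8(n-1)\sqrt{|t_j|}}$ is $C^\infty$-close to a round cylinder of radius $\sqrt{2(n-1)|t_j|}$ whose axis passes through the origin — so $\langle F,\nu\rangle>0$ on all of $M_{t_j}$. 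Thus, provided $\mu_j\,d_1(t_j)\to\infty$ so that $\tfrac{2(n-1)}{\mu_j}=\tfrac{2(n-1)}{\mu_j d_1(t_j)}\,d_1(t_j)=o(d_1(t_j))$, choosing $\epsilon$ small enough forces $\langle F,\nu\rangle(p'_j)\le-\tfrac12 d_1(t_j)<0$ for large $j$, a contradiction. Note that all error terms scale with $d_1(t_j)$ while the gain term $\tfrac{2(n-1)}{\mu_j}$ is $o(d_1(t_j))$, which is what makes a \emph{fixed} $\epsilon$ enough.

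It then remains to check $\mu_j\,d_1(t_j)\to\infty$. If instead $\mu_j\,d_1(t_j)$ stayed bounded along a subsequence, then in the blow-up $N^j$ the rescaled original origin $O_j:=-\mu_j p_{t_j}^1$ would stay bounded while the rescaled neck ball $B_{8(n-1)\mu_j\sqrt{|t_j|}}(O_j)$, which separates $N^j_0$ into the two components coming from $\Omega^1_{t_j}$ and $\Omega^2_{t_j}$, would have radius $8(n-1)\,(\mu_j d_1(t_j))\,\tfrac{\sqrt{|t_j|}}{d_1(t_j)}\to 0$; here $\sqrt{|t_j|}/d_1(t_j)\to 0$ because, $\bar M_\tau$ being compact and converging by Wang's theorem to a \emph{noncompact} cylinder as $\tau\to-\infty$, the rescaled tip distances $\bar d_k(\tau)=d_k/\sqrt{T-t}$ diverge. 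But a ball of radius tending to $0$ about a bounded point cannot disconnect the smooth connected limit $N^\infty_0$, whereas it does disconnect $N^j_0$ — contradiction. Hence $\mu_j d_1(t_j)\to\infty$, the previous step applies, and $M^\infty$ must be a Bowl soliton. The main obstacle is exactly this last point together with the error bookkeeping of the previous paragraph: the local wrapping-around estimate only bites once one knows the cylindrical cross-section near the tip is short compared with the distance $d_1(t_j)$ from the tip to the center of the neck, and establishing that uses the global geometry of the Ancient Oval — the neck and Wang's backward-limit theorem — rather than purely local information near $p_{t_j}^1$.
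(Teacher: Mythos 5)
Your proposal takes a genuinely different route from the paper's. The paper rules out the cylinder as the blow-up limit at the tip through a degree-theoretic/topological argument (Lemma~\ref{lem-simple-lambda}): propagating the cylindricality estimate from the tip across all of $\Omega^1_{t_j}$ (using Lemmas~\ref{lemma-cylindrical} and \ref{lem-unif-equiv-curv} and the intermediate Claim), concluding that $\lambda_{\min}/H$ is small and the cross-sectional curvatures are nearly equal everywhere on $\Omega^1_{t_j}$, hence $\lambda_{\min}$ is a \emph{simple} eigenvalue everywhere there, and then invoking the degree argument that the cap, being a topological disc, must contain a point where $\lambda_{\min}$ is not simple. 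You instead exploit the specific pointwise geometry of the tip — that $\nu(p^1_{t_j})=p^1_{t_j}/d_1(t_j)$ — and derive a contradiction with global convexity (positivity of the support function $\langle F,\nu\rangle$, using that the neck surrounds the origin): if the surface near the tip were $\epsilon$-cylindrical, the antipodal point on the cross-section would have $\langle F,\nu\rangle(p'_j)\approx -d_1(t_j)+2(n-1)/\mu_j<0$. The wrapping-around computation and its error bookkeeping are sound, as is the reduction from a blow-up at $x_j$ to one at the tip $p^1_{t_j}$ via Lemma~\ref{lem-unif-equiv-curv}. Your argument is more elementary in avoiding the degree lemma, at the cost of relying on a scale-separation input ($\mu_j d_1(t_j)\to\infty$) that the paper's route never needs.

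That scale-separation step is where your proof has a gap. The statement ``a ball of radius tending to $0$ about a bounded point cannot disconnect the smooth connected limit $N^\infty_0$, whereas it does disconnect $N^j_0$ --- contradiction'' is not a contradiction as stated: ``disconnection'' of the compact surface $N^j_0$ is a global property, while $N^j_0 \to N^\infty_0$ only in $C^\infty_{\mathrm{loc}}$, and the topology of the compact $N^j_0$ and the noncompact cylinder $N^\infty_0$ necessarily differ, so disconnection is not transferred to the limit by local convergence alone. What does work, and is what your argument actually needs, is a quantitative incompatibility \emph{inside a fixed compact set}: if $\mu_j d_1(t_j)\le C$ while $\sqrt{|t_j|}/d_1(t_j)\to 0$ (the latter correctly deduced from Wang's theorem), then $\mu_j\sqrt{|t_j|}\to 0$, so the rescaled original neck --- which is $C^\infty$-close to a round cylinder of rescaled radius $\sim\mu_j\sqrt{2(n-1)|t_j|}\to 0$ --- sits inside $B_{C+1}(0)$. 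Its second fundamental form is $\sim 1/(\mu_j\sqrt{|t_j|})\to\infty$; equivalently, the rescaled convex body $\mu_j(K_{t_j}-p^1_{t_j})$ has inradius $\to 0$ near the bounded point $O_j$, whereas $K_\infty\cap B_{C+2}(0)$ is a solid cylinder of inradius $n-1$. Either way this contradicts the smooth local convergence to the round cylinder directly, without any global disconnection claim. With that replacement your argument is complete.
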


In the course of proving this Proposition we need the following observation.

\begin{lemma}
  \label{lem-simple-lambda}
  For all $t \ll 0$ each of the two components $\Omega_t^j$ of $M_t \setminus B(0, \sqrt{8(n-1)})$ contains at least one point at which $\lambda_{\min}$ is not a simple eigenvalue.
\end{lemma}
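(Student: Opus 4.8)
The claim is that each cap region $\Omega_t^j$ contains a point where $\lambda_{\min}$ is a non-simple eigenvalue of the shape operator (i.e.\ a point where the two smallest principal curvatures coincide). The strategy is a limiting argument combined with a topological obstruction, in the spirit of the classical fact that a smooth tangent line field on an even-dimensional sphere must vanish, or more precisely that one cannot continuously select a distinguished eigendirection over a region that caps off a sphere. First I would fix $k$ and consider the rescaled flows around the tip points $p_t^k$: by Proposition~\ref{limit-tip} (applied with $x_j = p_{t_j}^k$), the rescaled limit around $(p_{t_j}^k, t_j)$ by $H(p_{t_j}^k,t_j)$ is a Bowl soliton. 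On the Bowl soliton, which is rotationally symmetric, the shape operator at a generic point has one simple eigenvalue (in the axial direction) and one eigenvalue of multiplicity $n-1$; in particular $\lambda_{\min}$ is \emph{not} simple there when $n\ge 3$, and at the tip itself all principal curvatures agree. So the point to be made is that this non-simplicity, which holds in the limit, must already be present (somewhere) on $M_t$ for $t$ sufficiently negative.

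\textbf{Key steps.} (1) Suppose for contradiction that for a sequence $t_j\to-\infty$, $\lambda_{\min}$ is a simple eigenvalue everywhere on $\Omega_{t_j}^k$. Then over each $\Omega_{t_j}^k$ one gets a well-defined smooth line field $\ell_j$ — the $\lambda_{\min}$-eigenline — and, since $\Omega_{t_j}^k$ is a topological $n$-disk (a convex cap), one can in fact orient it and obtain a smooth unit vector field $e_j$ spanning the $\lambda_{\min}$-eigenspace. (2) By Lemma~\ref{lemma-cylindrical}, away from the tip (at extrinsic distance $\gtrsim L/H$) the cap is $\eta$-close to a cylinder, on which $\lambda_{\min}=\lambda_1$ is simple with eigendirection the axial direction; so near the ``equatorial'' boundary sphere $\partial\Omega_{t_j}^k$ the field $e_j$ is forced (up to sign) to point essentially along the axis of the cylinder, i.e.\ it is a nearly-constant vector field transverse to that boundary sphere. (3) Now pass to the limit: rescaling around $p_{t_j}^k$ by $H(p_{t_j}^k, t_j)$ and using Proposition~\ref{limit-tip}, $M_{t_j}$ converges (on the relevant scale) to the Bowl soliton; but at the tip of the Bowl the $\lambda_{\min}$-eigenspace is $(n-1)$-dimensional (all principal curvatures equal), so a smooth unit eigenvector field $e_j$ cannot converge there — more to the point, the hypothesis ``$\lambda_{\min}$ simple everywhere'' fails on the Bowl soliton itself, which is the limit. (4) To turn the limiting failure into a failure on $M_{t_j}$ for large $j$, I would argue by a Brouwer-type degree / index argument: the unit vector field $e_j$ on the $n$-disk $\Omega_{t_j}^k$ is, near the boundary, a small perturbation of a fixed nonzero vector (the cylinder axis direction), hence is homotopic on the boundary to a constant map to $S^{n-1}$, hence extends to a nonvanishing field on the disk only if that constant map is null-homotopic — which it is — so no contradiction yet from vanishing alone; the real contradiction is that the eigenvalue gap $\lambda_2 - \lambda_{\min}$ must stay bounded below by a positive constant on $\Omega_{t_j}^k$ (for $e_j$ to be well-defined and smooth with controlled derivatives), whereas in the rescaled Bowl-soliton limit this gap tends to $0$ at the tip. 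Thus for $j$ large there is a point in $\Omega_{t_j}^k$ where the gap is $< $ any prescribed $\delta>0$, and a short continuity/compactness argument (or the structure of the shape operator near a point where the gap closes) produces a point where $\lambda_{\min}$ is genuinely non-simple. This is the contradiction.

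\textbf{Main obstacle.} The delicate point is step (4): upgrading ``$\lambda_{\min}$ is asymptotically non-simple in the rescaled limit at the tip'' to ``$\lambda_{\min}$ is \emph{exactly} non-simple at some point of $M_{t_j}$ for finite $j$.'' A gap going to zero does not by itself force the gap to vanish; one needs either (a) a topological argument showing that a smooth simple-eigenvalue line field on the cap is impossible for \emph{another} reason (e.g.\ an index count: on the cap, which retracts to a point, a line field spanning a $1$-dimensional distinguished subbundle would have to be trivializable, but near the tip the would-be eigendirection, if it existed, would have to agree with the axial direction coming in from the cylindrical side along \emph{every} meridian, forcing it to be radial and hence to have a singularity at the tip — this is the ``hairy ball'' obstruction, valid because the tip is a single point and $\dim = n \ge 2$), or (b) a direct argument: since $\lambda_{\min}$ is continuous on the compact set $\Omega_{t_j}^k$ and the tip is a point where (in the limit) the shape operator is a multiple of the identity, one shows by the intermediate-value behavior of the characteristic polynomial that either $\lambda_{\min}$ is non-simple at the tip of $M_{t_j}$ itself for large $j$, or it is non-simple at some nearby interior point. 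I expect the cleanest route is (a), the hairy-ball/Poincaré–Hopf obstruction: \emph{if} $\lambda_{\min}$ were simple on all of the disk-like cap $\Omega_t^k$, the resulting eigenline field, which is forced to be (nearly) the fixed cylinder-axis direction all along $\partial\Omega_t^k$, extends to a nonvanishing line field on the $n$-disk whose boundary behavior is that of the inward radial field on $S^{n-1}\subset\R^n$; such a field cannot be extended without a zero when $n\ge 2$ by degree theory, contradicting simplicity (a simple eigenvalue has a nonvanishing eigenvector). Making the ``forced to be the axis direction on the boundary'' statement quantitative, using the $\eta$-closeness from Lemma~\ref{lemma-cylindrical} and uniform curvature equivalence from Lemma~\ref{lem-unif-equiv-curv}, is the technical heart of the argument.
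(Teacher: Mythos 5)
Your option~(a) in the ``Main obstacle'' paragraph is exactly the paper's proof, and it is correct and self-contained: assume $\lambda_{\min}$ is simple on $\Omega_t^k$, trivialize the eigenline bundle (the paper invokes simple connectedness of $M_t$, you invoke contractibility of the cap --- either works) to get a nonvanishing tangent vector field $v$, observe that the $C^2$ convergence to the round cylinder on $\bar B(0,\sqrt{8(n-1)})$ forces $v$ to be transverse (say outward-pointing) along $\partial\Omega_t^k$, identify $\Omega_t^k\cong B^n$ and trivialize $TB^n$, and conclude that $\hat v = v/|v|$ restricted to $S^{n-1}$ is homotopic to the identity, hence has degree~$\pm 1$, which is impossible since $\hat v$ extends continuously over the disk. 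That is the whole proof. The rest of your proposal should be cut, for two reasons.

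First, there is a circularity problem. Lemma~\ref{lem-simple-lambda} is stated precisely so that it can be used to prove Proposition~\ref{limit-tip} (see the contradiction at the end of that proposition's proof), so you cannot invoke Proposition~\ref{limit-tip} --- or the Bowl-soliton behavior of the rescaled limit at the tip that it asserts --- in the proof of the lemma. This rules out your steps (2), (3) and the ``gap'' argument (b); option~(b) also needs the shape operator to be a multiple of the identity at the tip, which is a rotational-symmetry fact that is not yet available at this point of the paper. The degree argument~(a) needs none of this: it only uses the convergence of $M_t$ to the cylinder on the fixed ball $B(0,\sqrt{8(n-1)})$, which is already known from Wang's theorem, and hence is not circular.

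Second, your step~(4) contains an outright error that your option~(a) later silently corrects. In step~(4) you claim that near $\partial\Omega_{t_j}^k$ the field $e_j$ is ``a small perturbation of a fixed nonzero vector (the cylinder axis direction), hence is homotopic on the boundary to a constant map to $S^{n-1}$,'' and conclude there is ``no contradiction yet from vanishing alone.'' This is false. The eigenvector field is a section of $TM_t|_{\Omega_t^k}$, not an $\R^{n+1}$-valued function, and under the trivialization $TB^n\cong B^n\times\R^n$ the (constant in $\R^{n+1}$) axial direction restricted to the boundary sphere becomes the outward (or inward) radial field, of degree $\pm 1$, not a constant map of degree $0$. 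This is exactly what makes option~(a) work. Once you make this observation in step~(4), the contradiction is immediate, and the detour through the rescaled tip limit and the gap $\lambda_2-\lambda_{\min}$ is unnecessary.
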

\begin{proof}
  Suppose $\lambda_{\min}$ is a simple eigenvalue at each point on $\Omega_t^1$.
  Then the corresponding eigenspace defines a one dimensional subbundle of the tangent bundle $TM_t$.
  Since $M_t$ is simply connected any one dimensional bundle over $M_t$ is trivial, and thus has a section $v:M_t\to TM_t$ with $v(p)\neq p$ for all $p$.
  Within the region $\bar B(0, \sqrt{8(n-1)})$ the hypersurfaces $M_t$ converge in $C^2$ to a cylinder with radius $\sqrt{2(n-1)}$, so within this region $\lambda_{\min}$ is a simple eigenvalue, and the eigenvector $v(p)$ will be transverse to the boundary $\partial\Omega_t^1$.
  We may assume that it points outward relative to $\Omega_t^1$.
  
  The component $\Omega_t^1$ is diffeomorphic with the unit ball $B^n\subset\R^n$, and under this diffeomorphism the vector field $v:\Omega_t^1\to T\Omega_t^1$ is mapped to nonzero vector field $\tilde v:B^n\to\R^n$, which points outward on the boundary $S^{n-1} = \partial B^n$.
  The normalized map $\hat v = \tilde v/|\tilde v| : S^{n-1}\to S^{n-1}$ is therefore homotopic to the unit normal, i.e.~the identity map  $\mathrm{id} : S^{n-1}\to S^{n-1}$.
  Its degree must then equal $+1$, which is impossible because $\hat v$ can be extended continuously to $\hat v = \tilde v/|\tilde v| : B^n\to S^{n-1}$.
\end{proof}

\begin{proof}[Proof of Proposition \ref{limit-tip}]
  Without any loss of generality take $k = 1$ and let $\tilde{L} > 0$ be an arbitrary fixed constant.
  Let $t_j\to -\infty$ be an arbitrary sequence of times and let $x_j\in \Omega_{t_j}^1$ be an arbitrary sequence of points such that $|x_j - p_{t_j}^1| \le \frac{\tilde{L}}{H(x_j,t_j)}$.
  Rescale our solution around $(x_j, t_j)$ by scaling factors $H(x_j,t_j)$.
  By Lemma \ref{lemma-possible-limits} we know that the sequence of our rescaled solutions subconverges to either a Bowl soliton or a round shrinking cylinder.
  If the limit is a Bowl soliton, we are done.
  Hence, assume the limit is a shrinking round cylinder, which is a situation we want to rule out.
  By Lemma \ref{lem-unif-equiv-curv} we have that for $j$ large enough, curvatures $H(p_{t_j}^1,t_j)$ and $H(x_j,t_j)$ are uniformly equivalent.
  This together with  $|x_j - p_{t_j}^1| \le \frac{\tilde{L}}{H(x_j,t_j)}$ implies that if we rescale our solution around points $(p_{t_j}^1,t_j)$ by factors $H(p_{t_j}^1,t_j)$, after taking a limit we also get a shrinking round cylinder. 
  
  Since the limit around $(p_{t_j}^1,t_j)$ is a round shrinking cylinder, for every $\epsilon > 0$ there exists a $j_0$ 
  so that for $j \ge j_0$ we have ${\displaystyle \frac{\lambda_{\min}(p_{t_j}^1,t_j) }{H(p_{t_j}^1,t_j)} < \epsilon}$. 
  In the following two claims,  $p_{t_j}^1 \in \Omega_{t_j}^1$ will be a sequence of the tip points as above, such that the limit of the sequence of rescaled solutions around $(p_{t_j}^1,t_j)$ by factors $H(p_{t_j}^1,t_j)$ is a shrinking round cylinder.  
  
  In the first claim we show the ratio $\frac{\lambda_{\min}}{H}$ can be made arbitrarily small not only at points $p_{t_j}^1$, but also at all the points that are at bounded distances away from them. \smallskip
  
  \noindent{\it Claim:}  
  %For every $j$, let $x_j$ be any maximum point of $H$ on $\Omega_{t_j}^1$ with the property that the sequence of rescaled flows around $(x_j,t_j)$ by factors $H(x_j,t_j)$ converges to a shrinking round cylinder. 
  For every $\epsilon > 0$ and every $C_0 > 0$ there exists a $j_0$ so that for $j \ge j_0$ we have 
  \begin{equation}
    \label{eqn-tj99}
    \frac{\lambda_{\min}(p, t_j)}{H(p,t_j)} < \epsilon, \qquad \mbox{whenever} \qquad |p - p_{t_j}^1| \le \frac{C_0}{H(p,t_j)} \,\,\,\, \mbox{and} \,\,\,\,  p\in \Omega_{t_j}^1.
  \end{equation}

  \begin{proof}[Proof of Claim]
    Assume the claim is not true, meaning there exist constants $\epsilon > 0$, $C_0 > 0$, a subsequence which we still denote by $t_j$ and points $p_j \in \Omega_{t_j}^1$ so that 
    \begin{equation}
      \label{eq-contra100}
      |p_j - p_{t_j}^1| \le \frac{C_0}{H(p_j,t_j)} \qquad \mbox{but} \qquad \frac{\lambda_{\min}(p_j,t_j)}{H(p_j,t_j)} \ge \epsilon.
    \end{equation}
    Consider the sequence of rescaled flows around $(p_j,t_j)$ by factors $H(p_j,t_j)$.
    Lemma \ref{lemma-possible-limits} and the second inequality in \eqref{eq-contra100} imply the above sequence subconverges to a Bowl soliton.
    On the other hand, since $|p_j - p_{t_j}^1| \le \frac{C_0}{H(p_j,t_j)}$, by Lemma \ref{lem-unif-equiv-curv}, the curvatures $H(p_j,t_j)$ and $H(p_{t_j}^1,t_j)$ are uniformly equivalent.
    This together with our assumption on $(p_{t_j}^1,t_j)$ and the first inequality in \eqref{eq-contra100}  yield the rescaled sequence around $(p_j,t_j)$ by factors $H(p_j,t_j)$ subconverges to a round shrinking cylinder at the same time and hence we get contradiction.
    This proves the Claim.
  \end{proof}
  
  Next we claim that for sufficiently big $j$, even far away from the tip points $p_{t_j}^1$ we see the cylindrical behavior.
  Assume $\bar{L}$ is big enough so that the conclusion of Lemma \ref{lemma-cylindrical} holds.
  The immediate consequence of the Lemma \ref{lemma-cylindrical}  is that for every $\epsilon > 0$ there exists a $j_0$ so that for $j \ge j_0$  we have
  \begin{equation}\label{eqn-tj100}
    \frac{\lambda_{\min}(p,t_j)}{H(p,t_j)} < \epsilon, \qquad \mbox{whenever} \qquad p \in \Omega_{t_j}^1 \,\,\,\, \mbox{and} \,\,\,\, |p - p_{t_j}^1| \ge \frac{\bar{L}}{H(p,t_j)}
  \end{equation}

  We continue proving Proposition \ref{limit-tip}. \\ 
  
  %Since $M_t\cap B_{8(n-1)\sqrt{|t|}}$ is a neck-like region, we have that for for every $\epsilon > 0$ there exists a $t_0 \ll -1$ so that for $t \le t_0$,
  %\[\frac{\lambda_{\min}}{H}(p,t) < \epsilon, \qquad \mbox{on} \qquad M_t\cap B_{8(n-1)\sqrt{|t|}}.\]
  %
  Estimates \eqref{eqn-tj99} after taking $C_0 = \bar{L}$ and \eqref{eqn-tj100} yield for every $\epsilon > 0$ there exists a $j_0$ so that for $j \ge j_0$,
  \begin{equation}
    \label{eq-cond-cyl}
    \frac{\lambda_{\min}}{H}(p,t_j) < \epsilon, \qquad \mbox{on all of} \qquad  \Omega_{t_j}^1.
  \end{equation}
  By the cylindrical estimate (\cite{HS}, \cite{HK}) we have that for every $\epsilon > 0$ there 
  exists a $j_0$ so that for $j \ge j_0$.
  \begin{equation}
    \label{eq-cross-sphere}
    \frac{|\lambda_p - \lambda_q|}{H}(p,t_j) < \epsilon, \qquad \mbox{for all} \qquad n \ge p, q \ge 2,
  \end{equation}
  on $\Omega_{t_j}^1$.
  
  For small enough $\epsilon>0$ the conditions~\eqref{eq-cond-cyl} and~\eqref{eq-cross-sphere} imply that $\lambda_{\min}$ is a simple eigenvalue, hence contradicting Lemma \ref{lem-simple-lambda}.
  This finishes the proof of Proposition \ref{limit-tip}.
\end{proof}

\begin{lemma}
  \label{lemma-tip-Bowl}
  Let $M_t$, $t \in (-\infty,0)$, be an Ancient Oval satisfying the assumptions of Theorem \ref{thm-rot-symm} and fix $k=1,2$. 
  Then for every $\epsilon > 0$ there exist uniform constants $\rho_0 < \infty$ and $t_0 \ll -1$, so that for every $t \le t_0$ we have that $\hat{\cP}(p_t^k,t,\rho_0)$ is $\epsilon$-close to a piece of a Bowl soliton that includes the tip. 
\end{lemma}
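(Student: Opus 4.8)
The plan is to argue by contradiction using compactness, much as in the proof of Proposition \ref{limit-tip}. Suppose the statement fails for some fixed $\epsilon>0$. Then for every $\rho_0<\infty$ and every $t_0\ll-1$ there is some $t\le t_0$ at which $\hat{\cP}(p_t^k,t,\rho_0)$ is \emph{not} $\epsilon$-close in $C^{20}$ (or whatever fixed regularity we have quantified $\epsilon$-closeness in) to any piece of a Bowl soliton containing the tip. Diagonalizing, I would extract a sequence $t_j\to-\infty$ and radii $\rho_j\to\infty$ so that $\hat{\cP}(p_{t_j}^k,t_j,\rho_j)$ is not $\epsilon$-close to such a Bowl piece. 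Fix $k$; without loss of generality $k=1$.

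Next I would rescale the flow around the tip points $(p_{t_j}^1,t_j)$ by the factors $Q_j:=H(p_{t_j}^1,t_j)$, producing flows $M^j_t$ with $H=1$ at the image of the tip at the new time $0$. Applying the global convergence theorem (Theorem 1.12 of \cite{HK}) as in Lemma \ref{lemma-possible-limits}, after passing to a subsequence $M^j_t$ converges, smoothly on compact subsets of space-time, to an ancient, convex, uniformly $2$-convex, noncollapsed limit flow $M^\infty_t$ with $H(0,0)=1$; by Lemma \ref{lemma-possible-limits} this limit is either a shrinking round cylinder or a Bowl soliton. The cylinder must be ruled out: by Proposition \ref{limit-tip} (applied with $L$ replaced by any fixed constant, using that $|p_{t_j}^1-p_{t_j}^1|=0$), the rescaled limit around $(p_{t_j}^1,t_j)$ by the factors $H(p_{t_j}^1,t_j)$ subconverges to a Bowl soliton. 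Hence $M^\infty_t$ is a Bowl soliton, and moreover the origin $0\in M^\infty_0$ is precisely the image of the tip $p_{t_j}^1$. By Lemma \ref{lem-unif-equiv-curv} the curvature is uniformly comparable on bounded intrinsic balls around the tip, so the points $p_{t_j}^1$ converge (after rescaling) to a well-defined point of the limit Bowl soliton; since the tip is characterized by $|F|=|\langle F,\nu\rangle|=\max_{\Omega^1}|F|$, that limit point is the tip of the Bowl soliton, and it lies in the interior at a definite distance from the boundary of any fixed piece once $\rho_j$ is large.

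Now I would turn smooth convergence on compact sets into the quantitative conclusion. Given the target $\epsilon>0$, smooth convergence $M^j\to M^\infty$ on the fixed compact parabolic region corresponding to $\hat{\cP}(p_{t_j}^1,t_j,\rho_0)$ — with $\rho_0$ chosen large but fixed so that this region, rescaled to $H(\mathrm{tip})=1$, sits well inside a fixed piece of the Bowl soliton that contains the tip at a definite interior distance — yields that for $j$ large, $\hat{\cP}(p_{t_j}^1,t_j,\rho_0)$ is, after rescaling to make $H$ equal to $1$ at the tip, $\epsilon$-close in $C^{20}$ to that piece of the Bowl soliton. (Here one also uses Lemma \ref{lem-unif-equiv-curv} to replace rescaling by $H$ at the base point with rescaling by $H$ at the tip, the two being uniformly equivalent on the relevant neighborhood.) For all $j$ large enough we have $\rho_j\ge\rho_0$, so $\hat{\cP}(p_{t_j}^1,t_j,\rho_0)\subset\hat{\cP}(p_{t_j}^1,t_j,\rho_j)$ and the former being $\epsilon$-close to a Bowl piece containing the tip contradicts the failure of the statement for $(p_{t_j}^1,t_j)$. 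This contradiction proves the lemma; the same $\rho_0$, $t_0$ work for both $k=1,2$ by the remark following Lemma \ref{lem-unif-equiv-curv} and by running the argument for each $k$.

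The main obstacle I anticipate is the bookkeeping that converts abstract smooth subconvergence into a statement about a \emph{fixed} normalized parabolic cylinder $\hat{\cP}(p_t^k,t,\rho_0)$ being $\epsilon$-close to a Bowl soliton piece of a definite size: one must choose $\rho_0$ depending only on $\epsilon$ (and the dimension, $\alpha$, $\beta$), show the tip stays a definite distance inside, and track that the comparison Bowl soliton can be taken uniform along the sequence. The geometric input ruling out the cylinder — Proposition \ref{limit-tip} together with Lemma \ref{lem-unif-equiv-curv} — is already in hand, so this lemma is essentially a packaging of those results rather than a new estimate.
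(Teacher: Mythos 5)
Your overall strategy is the same as the paper's: argue by contradiction, rescale around $(p_{t_j}^k,t_j)$ by $H(p_{t_j}^k,t_j)$, invoke Lemma~\ref{lemma-possible-limits} and Proposition~\ref{limit-tip} to obtain a Bowl soliton limit, and then turn smooth subconvergence on compact sets into the quantitative $\epsilon$-closeness for a fixed $\rho_0$. The one step that does not hold up is the claim that the rescaled images of $p_{t_j}^k$ converge to the \emph{tip} of the limiting Bowl soliton, justified by the characterization $|F|=|\langle F,\nu\rangle|=\max_{\Omega^k}|F|$. That characterization involves the position vector $F$ relative to the original ambient origin, which after translating $p_{t_j}^k$ to the new origin and rescaling by $H(p_{t_j}^k,t_j)$ has escaped to infinity (its rescaled distance is $d_k(t_j)\,H(p_{t_j}^k,t_j)\to\infty$), so it produces no pointwise condition that persists on the limit. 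You also cannot appeal to the axis of rotational symmetry, since Lemma~\ref{lemma-tip-Bowl} sits inside the proof of Theorem~\ref{thm-rot-symm} where rotational symmetry is what is being established. And the tip of a Bowl soliton is its maximum-curvature point, while it is neither established nor needed here that $p_t^k$ maximizes $H$ on $\Omega_k^t$. The paper's proof sidesteps all of this by asserting only that the rescaled $p_{t_j}^k$ lies at a \emph{uniformly bounded} distance $C_0$ from the Bowl soliton's tip, which does follow from the Bowl-limit of Proposition~\ref{limit-tip} together with the curvature comparability of Lemma~\ref{lem-unif-equiv-curv}; one then takes $\rho_0$ to be a fixed multiple of $C_0$ so that the corresponding Bowl piece contains a definite neighborhood of the tip. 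If you replace your ``is the tip'' claim with this bounded-distance statement, the rest of your argument goes through and coincides with the paper's.
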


\begin{proof}
  First of all observe that by Proposition \ref{limit-tip} it is easy to argue that for every $\epsilon > 0$ and any $\rho_0 < \infty$ there exists a $t_0 \ll -1$ so that for $t \le t_0$, the parabolic cylinder $\hat{\cP}(p_t^k,t,\rho_0)$ is $\epsilon$-close to a piece of a Bowl soliton.
  The point of this lemma is to show that we can find $\rho_0$ big enough, but uniform in $t \le t_0 \ll -1$ so that the piece of the Bowl soliton above includes the tip. 
  
  To prove the statement we argue by contradiction.
  Assume the statement is not true, meaning there exist an $\epsilon > 0$, a sequence $\rho_j\to \infty$  and a sequence $t_j\to -\infty$ so that $\hat{\cP}(p_{t_j}^k,t_j,\rho_j)$ is $\epsilon$-close to a piece of Bowl soliton that does not include the tip.
  Rescale the solution around $(p_{t_j}^k,t_j)$ by factors $H(p_{t_j}^k,t_j)$.
  By Proposition \ref{limit-tip} we know that the rescaled solution subconverges to a piece of a Bowl soliton.
  Hence there exists a uniform constant $C_0$ so that the origin that lies on the limiting Bowl soliton and corresponds after scaling, to the points $(p_{t_j}^k,t_j)$,  is at distance $C_0$ from the tip of the soliton (which is the point of maximum curvature).
  This implies there exist points $q_{t_j} \in \Omega_{t_j}^k$ so that $|q_{t_j} - p_{t_j}^k| \le \frac{2C_0}{H(p_{t_j}^k,t_j)}$ for $j \ge j_0$, with the property that the points $q_{t_j}$ converge to the tip of the Bowl soliton.
  Furthermore,  for sufficiently big $j \ge j_0$, parabolic cylinders $\hat{\cP}(p_{t_j}^k,t_j,3C_0)$ are $\epsilon$-close to a piece of the Bowl soliton that includes the tip.
  That contradicts our assumption that for every $j$, $\hat{\cP}(p_{t_j}^k,t_j,\rho_j)$ is $\epsilon$-close to a piece of Bowl soliton that does not include its tip.
\end{proof}

Finally we show the crucial for our purposes proposition below which says that every point on $M_t$ has a parabolic neighborhood of uniform size, around which it is either close to a Bowl soliton or to a round shrinking cylinder. 
\begin{prop} 
  \label{lem-either-or}
  Let $M_t$ be an Ancient Oval that is uniformly two convex.
  Let  $\epsilon_0, \epsilon_1$, $L_0, L_1$ be the constants from Propositions \ref{prop-neck} and \ref{prop-cap} and let $\epsilon \le \min\{\epsilon_0,\epsilon_1\}$.
  Then, there exists $t_0 \ll -1$, depending on these constants, with the following property: for every $(\bar x, \bar t)$ with $\bar x \in M_{\bar t}$ and $\bar{t} \le t_0$,  either $\hat{\mathcal{P}} (\bar x, \bar t, L_0)$ lies
  on an $(\epsilon,10)$-neck or every point in  $\hat{\mathcal{P}} (\bar x, \bar t, L_1)$ is,  after scaling by 
  $H(\bar x,\bar t)$,  $\epsilon$-close in the $C^{20}$-norm to a piece of a Bowl soliton which includes the tip. 
\end{prop}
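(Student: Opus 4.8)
The plan is to argue by contradiction and use a dichotomy based on the position of $(\bar x, \bar t)$ relative to the two tip points $p_{\bar t}^k$, $k=1,2$. Fix $\eta > 0$ small (to be chosen in terms of $\epsilon$ via the cylindrical estimate recalled just above) and let $\bar L$, $t_0$ be the constants from Lemma \ref{lemma-cylindrical} applied to this $\eta$. The key observation is that the condition ``$|\bar x - p_{\bar t}^k| \ge \bar L / H(\bar x, \bar t)$ for both $k=1,2$'' puts $(\bar x, \bar t)$ in the neck region, while failure of this for some $k$ forces $(\bar x, \bar t)$ to be within a bounded number of curvature scales of that tip, hence controlled by Lemma \ref{lemma-tip-Bowl}. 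So the two alternatives in the statement correspond exactly to these two cases.

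First I would treat the \emph{neck case}: suppose $|\bar x - p_{\bar t}^k| \ge \bar L/H(\bar x, \bar t)$ for both $k$. By Lemma \ref{lemma-cylindrical} we get $\lambda_{\min}/H(\bar x, \bar t) < \eta$. To upgrade this to the statement that \emph{every} point of $\hat{\mathcal P}(\bar x, \bar t, L_0)$ lies on an $(\epsilon, 10)$-neck, I would argue that the same distance bound (up to adjusting $\bar L$ by an additive constant of order $L_0$) holds for every $(x,t) \in \hat{\mathcal P}(\bar x, \bar t, L_0)$: indeed $|x - \bar x| \le L_0/H(\bar x, \bar t)$ by definition of $\hat{\mathcal P}$, the tip points $p_t^k$ move a bounded amount over the time interval of length $L_0^2 H(\bar x,\bar t)^{-2}$ (by \eqref{eq-mon-tip}-type monotonicity, or more simply by the a priori estimates from \cite{ADS}), and curvature is comparable on $\hat{\mathcal P}(\bar x,\bar t,L_0)$ by standard parabolic estimates since the flow is noncollapsed. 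Hence each such $(x,t)$ satisfies $\lambda_{\min}/H(x,t) < \eta$ after perhaps shrinking $\eta$ slightly, and then the cylindrical estimate recalled before the statement of this proposition (from \cite{HS}, \cite{HK}), applied at scale $\eta^{-1} \ge 10 L_0$, shows each such point lies at the center of an $(\epsilon, 10)$-neck. This gives the first alternative.

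For the \emph{cap case}, suppose instead that $|\bar x - p_{\bar t}^k| < \bar L/H(\bar x, \bar t)$ for some $k$. By Lemma \ref{lem-unif-equiv-curv}, $H(\bar x, \bar t)$ and $H(p_{\bar t}^k, \bar t)$ are uniformly comparable, so $\bar x$ is within a bounded number of curvature scales of $p_{\bar t}^k$ measured at the tip scale as well. By Lemma \ref{lemma-tip-Bowl}, for the given $\epsilon$ there is a uniform $\rho_0$ and $t_0 \ll -1$ so that $\hat{\mathcal P}(p_t^k, t, \rho_0)$ is $\epsilon$-close (after rescaling by $H(p_t^k,t)$) to a piece of the Bowl soliton including the tip. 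Choosing $\rho_0$ large enough compared to $\bar L$, $L_1$ and the curvature-comparison constants from Lemma \ref{lem-unif-equiv-curv}, the cylinder $\hat{\mathcal P}(\bar x, \bar t, L_1)$ is contained in $\hat{\mathcal P}(p_{\bar t}^k, \bar t, \rho_0)$, and after rescaling by $H(\bar x, \bar t)$ instead of $H(p_{\bar t}^k, \bar t)$ — which only changes things by a bounded factor — it is $\epsilon$-close (after possibly replacing $\epsilon$ by $C\epsilon$ and then renaming) in $C^{20}$ to a piece of the Bowl soliton which includes the tip, with the tip a definite distance from the boundary. This yields the second alternative, completing the dichotomy.

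The main obstacle is the uniformity of all constants in $\bar t \le t_0$: Lemmas \ref{lemma-cylindrical}, \ref{lem-unif-equiv-curv}, \ref{lemma-tip-Bowl} each supply constants that \emph{a priori} could depend on the sequence of times, and one must make sure that $\bar L$, $\rho_0$, and the curvature-comparison constants can all be chosen once and for all (which they can, since each of those lemmas is stated with ``there exist uniform constants $\ldots$ and $t_0 \ll -1$ so that for all $t \le t_0$''). The secondary technical point is matching the two rescaling normalizations — rescaling by $H(\bar x, \bar t)$ versus by $H(p_{\bar t}^k, \bar t)$ — and the two notions of closeness ($C^{20}$ versus $C^{[1/\epsilon]}$), which is handled by the uniform curvature comparison of Lemma \ref{lem-unif-equiv-curv} together with standard interior parabolic estimates, at the cost of harmless constant factors in $\epsilon$ that are absorbed by choosing $\epsilon \le \min\{\epsilon_0, \epsilon_1\}$ small at the outset.
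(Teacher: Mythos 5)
Your dichotomy on the tip distance $|\bar x - p_{\bar t}^k|$ vs.\ $\bar L/H(\bar x, \bar t)$ is the right organizing idea, and your treatment of the \emph{cap case} (second alternative) is essentially the paper's argument: combine Lemma \ref{lem-unif-equiv-curv} with Lemma \ref{lemma-tip-Bowl} and reconcile the two rescaling normalizations. That part is fine.

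Your \emph{neck case} has a gap, and it is an avoidable one. To get that every point of $\hat{\cP}(\bar x, \bar t, L_0)$ lies on a neck, you try to propagate the bound $\lambda_{\min}/H<\eta$ from $(\bar x,\bar t)$ to every $(x,t)\in\hat{\cP}(\bar x,\bar t,L_0)$ by showing that the distance-to-tip lower bound persists, and for that you appeal to ``\eqref{eq-mon-tip}-type monotonicity, or more simply \ldots the a priori estimates from \cite{ADS}.'' Neither is available here: \eqref{eq-mon-tip} is established only in Proposition \ref{prop-cap} \emph{under the assumption} that a large parabolic neighborhood is already $\epsilon_1$-close to a Bowl soliton (exactly the opposite hypothesis of the neck case), so invoking it here is circular; and the a priori estimates of \cite{ADS} are proved under rotational ($O(1)\times O(n)$ or $O(n)$) symmetry, which is precisely what Theorem \ref{thm-rot-symm} --- the result this proposition is feeding into --- is trying to establish. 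Moreover, even if the distance bound propagated, you would then need the cylindrical estimate at each such $(x,t)$ separately, and hence the flow to be defined on the still larger cylinder $\hat\cP(x,t,\eta^{-1})$ around each one. The paper sidesteps all of this by a simple choice of scale: fix $\bar\epsilon\ll\min\{\epsilon_0,\epsilon_1, L_0^{-1}\}$ at the outset, take $\delta=\delta(\bar\epsilon)$ from the cylindrical estimate, and choose $\bar L$, $t_0$ so that Lemma \ref{lemma-cylindrical} gives $\lambda_{\min}/H(\bar x,\bar t)<\delta$. Then the cylindrical estimate applied \emph{once}, at $(\bar x,\bar t)$, already produces $\bar\epsilon$-closeness to the shrinking round cylinder on the parabolic cylinder $\cP(0,0,\bar\epsilon^{-1})$ after rescaling, and since $\bar\epsilon^{-1}\gg L_0$ this single neighborhood contains $\hat\cP(\bar x,\bar t,L_0)$ and then some; that every point of $\hat\cP(\bar x,\bar t,L_0)$ lies on an $(\epsilon,10)$-neck can then be read off directly, with no pointwise propagation and no estimate on tip motion. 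I recommend you replace your propagation argument by this single large-scale application of the cylindrical estimate.
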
 

\begin{proof}
  Recall that as a consequence of Hamilton's Harnack estimate our ancient solution satisfies $H_t \geq 0$.
  This implies there exists a uniform constant $C_0$ so that
  \begin{equation}
    \label{eq-H}
    \max_{M_t} H(\cdot, t) \le C_0, \qquad t \le t_0.
  \end{equation}
  Let $\bar{\epsilon} \ll \min\{\epsilon_0, \epsilon_1, L_0^{-1}\}$.
  For this $\bar{\epsilon} > 0$ find a $\delta = \delta(\bar{\epsilon})$ as in Theorem 1.19 in \cite{HK} (see also \cite{HS} for the similar estimate) so that if
  \begin{equation}
    \label{eq-delta}
    \frac{\lambda_{\min}}{H}(p,t) < \delta
  \end{equation}
  and the flow is defined in $\hat \cP(p,t,\delta^{-1})$, then the solution $M_t$ is $\bar{\epsilon}$-close to a round cylinder around $(p,t)$, in the sense that a rescaled flow by $H(p,t)$ around $(p,t)$ is $\bar{\epsilon}$-close on $\cP(0,0,{\bar{\epsilon}}^{-1})$ to a round cylinder with $H(0,0) = 1$. 
  Take $\delta > 0$ as in \eqref{eq-delta}.
  For this $\delta$ choose $\bar{L}$ sufficiently big and $t_0 \ll -1$ so that Lemma \ref{lemma-cylindrical} holds (after we take $\eta$ in the Lemma to be equal to $\delta$).
  
  \smallskip 
  Let $(\bar{x},\bar{t})$ be such that $\bar{x} \in M_{\bar{t}}$ and $\bar{t} < t_0$.
  Then either $\bar{x}\in M_{\bar{t}}\cap B_{8(n-1)\sqrt{|t|}}$,  or $\bar{x} \in \Omega_{\bar{t}}^1$, or $\bar{x} \in \Omega_{\bar{t}}^2$.
  In the first case that has been already discussed above, for $-\bar{t}$ sufficiently large, we know that $M_{\bar{t}}\cap B_{16(n-1)\sqrt{|\bar{t}|}}$ is neck-like and hence there exists $t_0 \ll -1$ so that for $t \le t_0$
  \[\max_{M_{\bar{t}}\cap B_{16(n-1)\sqrt{|\bar{t}|}}} \frac{\lambda_{\min}}{H} < \delta\]
  where $\delta$ is as in \eqref{eq-delta}.
  Thus every point $\bar{x}\in M_{\bar{t}}\cap B_{8(n-1)\sqrt{|t|}}$ has the property that every point in $\hat{\cP}(\bar{x},\bar{t},L_0)$ lies at the center of an $(\epsilon,10)$-neck.
  
  We may assume from now on, with no loss of generality, that $\bar{x}\in \Omega_{\bar{t}}^1$, since the discussion for $\bar{x}\in \Omega_{\bar{t}}^2$ is equivalent. 
  We either have  $|\bar{x} -  p_{\bar{t}}^1| \ge \frac{\bar{L}}{H(\bar{x},\bar{t})}$, or  $|\bar{x} - p_{\bar{t}}^1| \le \frac{\bar{L}}{H(\bar{x},\bar{t})}$. 
  In the first case,  Lemma \ref{lemma-cylindrical} gives that   ${\displaystyle \frac{\lambda_{\min}}{H}(\bar{x},\bar{t}) < \delta}.$ 
  As discussed above,  the cylindrical estimate then implies that the rescaled flow $H(\bar{x},\bar{t}) (F_{\bar{t} + H(\bar{x},\bar{t})^{-2} t} - \bar{x})$ is $\bar{\epsilon}$-close to the round cylinder with $H(0,0) = 1$, in a parabolic cylinder  $\cP(0,0,\bar{\epsilon}^{-1})$.
  It is straightforward then to conclude that every point in the normalized cylinder $\hat{\cP}(\bar{x},\bar{t},L_0,)$ lies on an $(\epsilon,10)$-neck, where we use that $L_0 \ll {\bar{\epsilon}}^{-1}$ and $\bar{\epsilon} \ll \epsilon$.
  
  \medskip 
  
  Assume now that  $\bar x \in \Omega_{\bar t}^1$ and $|\bar{x} - p_{\bar{t}}^1| \le \frac{\bar{L}}{H(\bar{x},\bar{t})}$. 
  Combining this with Lemma \ref{lem-unif-equiv-curv} and Lemma \ref{lemma-tip-Bowl}  yield we can find a sufficiently large but uniform constant $L_1$  and constant $t_0 \ll -1$, so that  for $\bar{t} \le t_0$ we have that $\hat{\cP}(\bar{x},\bar{t},L_1)$ is $\epsilon_1$-close to a piece of a Bowl soliton that also includes its tip.
\end{proof}

We can now conclude the proof of Theorem \ref{thm-rot-symm}. 

\begin{proof}[Proof of Theorem \ref{thm-rot-symm}]
  Let $L_0, L_1, \epsilon_0, \epsilon_1$ be chosen so that Propositions \ref{prop-neck} and \ref{prop-cap} hold.
  Let $\bar{\epsilon} \ll \epsilon := \min\ (\epsilon_0, \epsilon_1) $.
  Let $t_0 \ll -1$ be as in Proposition \ref{lem-either-or} so that for every $(\bar x, \bar t)$ with $\bar x \in M_{\bar t}$ and $\bar{t} \le t_0$, either $\hat P (\bar x, \bar t, L_0)$ lies
  on an $(\bar{\epsilon},10)$-neck (and hence on an $(\epsilon_0,10)$-neck, since $\bar{\epsilon} \le \epsilon_0$), or every point in $\hat P (\bar x, \bar t, L_1)$ is, after scaling,   $\bar{\epsilon}$-close in the $C^{20}$-norm to a piece of the Bowl soliton 
  which includes the tip (and hence is also $\epsilon_1$ close, since $\epsilon_1 \le \bar{\epsilon}$).
  Note that the axis of symmetry of this Bowl soliton may depend on the point $(\bar x, \bar t)$. 
  
  Above implies that every point $(\bar{x},\bar{t})$, for $\bar{x} \in M_{\bar{t}}$ and $\bar{t}\le t_0$, lies in a parabolic neighborhood of uniform size, that is after scaling, $\bar{\epsilon}$ close to a rotationally symmetric surface (either a round cylinder or a Bowl soliton).
  Hence, it follows that if we choose $\bar{\epsilon}$ sufficiently small relative to $\epsilon$, then $(\bar{x},\bar{t})$ is $\epsilon$-symmetric (defined as in Definition \ref{def-normalized}).
  After applying Propositions \ref{prop-neck} and \ref{prop-cap} we then conclude that  $(\bar{x},\bar{t})$ is $\frac{\epsilon}{2}$-symmetric, for all $\bar{x}\in M_{\bar{t}}$ and all $\bar{t} \le T$.
  Iterative application of Propositions \ref{prop-neck} and \ref{prop-cap} yields that $(\bar{x},\bar{t})$ is $\frac{\epsilon}{2^j}$-symmetric, for all $\bar{x}\in M_{\bar{t}}$, $\bar{t} \le t_0$  and all $j\geq 1$.
  Letting $j \to +\infty$ we finally conclude that  $M_t$ is rotationally symmetric for all $t \le t_0$ which also implies that $M_t$ is rotationally symmetric for all $t  \in (-\infty,0)$.
\end{proof}

\section{Outline of the proof of Theorem \ref{thm-main}} \label{sec-regions}

Since the proof of Theorem \ref{thm-main} is quite involved,
in this preliminary section we will give an outline of the main steps in the proof of the classification result in the presence of rotational symmetry.
Our method is based on a priori estimates for various distance functions between two given ancient solutions in appropriate coordinates and measured in weighted $L^2$ norms.
We need to consider two different regions: the {\em cylindrical} region and the {\em tip} region.
Note that the tip region will be divided in two sub-regions: the {\em collar} and the {\em soliton} region.
These are pictured in Figure~\ref{fig-three-regions} below.
In what follows, we will define these regions, review the equations in each region and define appropriate weighted $L^2$ norms with respect to which we will prove coercive type estimates in the subsequent sections.
At the end of the section we will give an outline of the proof of Theorem \ref{thm-main}.

\smallskip Let $M_{1}(t), M_2(t)$  be two rotationally symmetric ancient oval solutions satisfying the assumptions of Theorem \ref{thm-main}.
Being surfaces of rotation, they are each determined by a function $U = U_i(x, t)$, ($i=1,2$), which satisfies the equation
\begin{equation}
  \label{eq-MCF}
  U_t = \frac{U_{xx}} {1+U_x^2} - \frac{n-1} {U}.
\end{equation}

In the statement of Theorem \ref{thm-main} we claim the uniqueness of  any two Ancient Ovals  up to dilations and translations.
In fact since equation \eqref{eq-MCF} is invariant under translation in time, translation in space and also under cylindrical dilations in space-time, each solution $M_i(t)$ gives rise to a three parameter family of solutions
\begin{equation}
  \label{eq-two-parameters}
  M_i^{\alpha \beta \gamma}(t) = e^{\gamma/2} \, \varPhi_{\alpha}( M_i(e^{-\gamma}(t - \beta))),
\end{equation}
where $\varPhi_{\alpha}$ is a rigid motion, that is just the translation of the hypersurface along $x$ axis by value $\alpha$.
The theorem claims the following: \emph{ given two ancient oval solutions we can find $\alpha, \beta, \gamma$ and $t_0 \in \R$ such that
\[
M_1(t) = M_2^{\alpha\beta\gamma}(t), \qquad \mbox{for} \,\, t \leq t_0.
\]}
The profile function $U_i^{\alpha\beta\gamma}$ corresponding to the modified solution $M_i^{\alpha\beta\gamma}(t)$ is given by
\begin{equation}
  \label{eq-Ualphabeta}
  U_i^{\alpha\beta\gamma} (x, t) = e^{\gamma/2} U_i\Bigl (e^{-\gamma/2}(x-\alpha), e^{-\gamma} (t - \beta) \Bigr).
\end{equation}
We rescale the solutions $M_i(t)$ by a factor $\sqrt{-t}$ and introduce a new time variable $\tau = -\log(-t)$, that is, we set
\begin{equation}
  \label{eq-type-1-blow-up}
  M_i(t) = \sqrt{-t} \, \bar M_i(\tau), \qquad \tau:= - \log (-t).
\end{equation}
These are again $O(n)$ symmetric with profile function $u$, which is related to $U$ by
\begin{equation}
  \label{eq-cv1}
  U(x,t) = \sqrt{-t}\, u(y, \tau), \qquad y=\frac x{ \sqrt{-t}}, \quad \tau=-\log (-t).
\end{equation}
If the $U_i$ satisfy the MCF equation \eqref{eq-MCF}, then the rescaled profiles $u_i$ satisfy~\eqref{eq-u}, i.e.
\[
\frac{\pd u}{\pd \tau} = \frac{u_{yy}}{1+u_y^2} - \frac y2 \, u_y - \frac{n-1}{u}+ \frac u2.
\]
Translating and dilating the original solution $M_i(t)$ to $M_i^{\alpha\beta\gamma}(t)$ has the following effect on $u_i(y,\tau)$:
\begin{equation}
  \label{eq-ualphabeta}
  u_i^{\alpha\beta\gamma}(y, \tau) = \sqrt{1+\beta e^{\tau}} u_i\Bigl( \frac{y-\alpha e^{\frac{\tau}{2}}} {\sqrt{1+\beta e^\tau} }, \tau+\gamma-\log\bigl(1+\beta e^\tau\bigr) \Bigr).
\end{equation}

To prove the uniqueness theorem we will look at the difference $U_1 - U_2^{\alpha\beta\gamma}$, or equivalently at $u_1 - u_2^{\alpha\beta\gamma}$.
The parameters $\alpha, \beta, \gamma$  will be chosen so that the projections of $u_1 - u_2^{\alpha\beta\gamma}$ onto positive eigenspace (that is spanned by two independent eigenvectors) and zero eigenspace of the linearized operator $\cL$ at the cylinder are equal to zero at time $\tau_0$, which will be chosen sufficiently close to $-\infty$. 
%To simplify the notation, throughout our paper we will simply denote $ U_2^{\alpha\beta\gamma}$ by $U_2$ and $u_2^{\alpha\beta\gamma}$ by $u_2$.
Correspondingly, we denote the difference $U_1 - U_2^{\alpha\beta\gamma}$ by $U_1-U_2$ and $u_1 - u_2^{\alpha\beta\gamma}$ by $u_1-u_2$.  
What we will actually observe is that the parameters $\alpha, \beta$ and $\gamma$  can be chosen to lie in a certain range, which allows our main estimates to hold without having to keep track of these parameters during the proof.
In fact, we will show in Section \ref{sec-conclusion} that for a given small $\epsilon >0$ there exists $\tau_0 \ll -1 $ sufficiently negative for which we have
\begin{equation}
  \label{eq-asymp-par1}
  \alpha \le \epsilon \frac{e^{-\tau_0/2}}{|\tau_0|}, \qquad   \beta \leq \epsilon \, \frac{e^{-\tau_0}}{|\tau_0|},  \qquad \gamma \leq \epsilon \, |\tau_0|
\end{equation}
and our estimates hold for $(u_1 - u_2^{\alpha\beta\gamma})(\cdot,\tau)$, $\tau \leq \tau_0$.
This inspires the following definition.
\begin{definition}[Admissible triple of parameters $(\alpha, \beta,\gamma)$]
  \label{def-admissible}
  We say that the triple of parameters $(\alpha, \beta,\gamma)$ is admissible with respect to time $\tau_0$ if they satisfy \eqref{eq-asymp-par1}.
\end{definition}

\begin{figure}
  \includegraphics{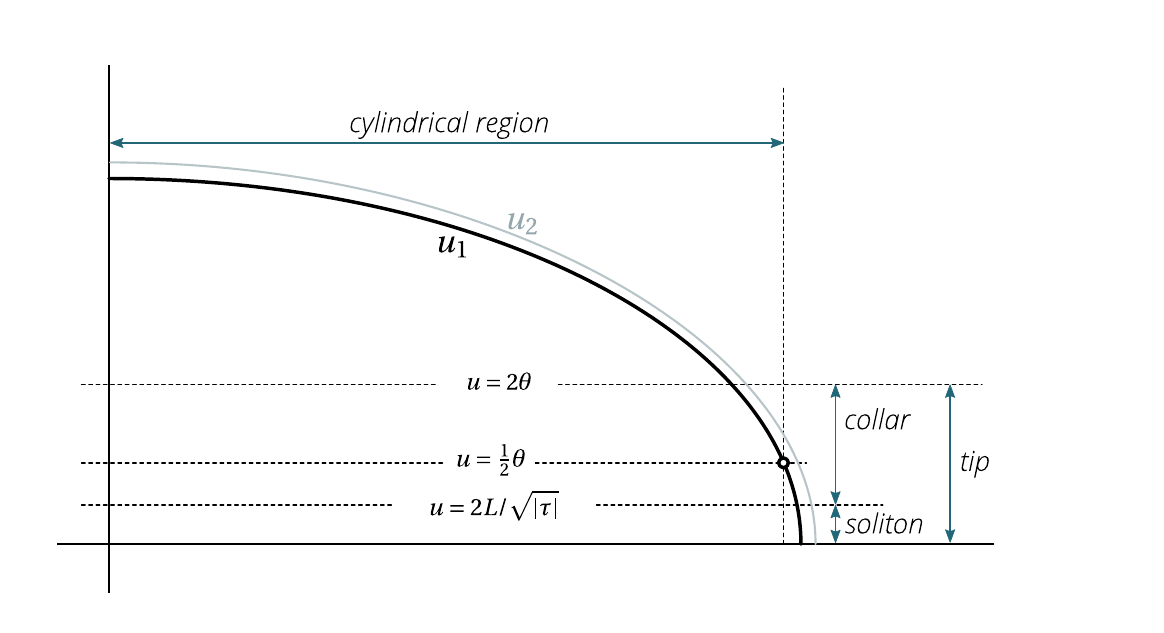}
  \caption{\textbf{The three regions. }  The \emph{cylindrical region} consists of all points with $u_1(y, \tau)\geq \frac12\theta$; the \emph{tip region} contains all points with $u_1(y, \tau)\leq 2\theta$ and is subdivided into the \emph{collar}, in which $u_1\geq 2L/\sqrt{|\tau|}$, and the \emph{soliton region}, where $u_1\leq 2L/\sqrt{|\tau|}$. }
  \label{fig-three-regions}
\end{figure}

\medskip
We will next define different regions and outline how we treat each region.

\subsection{The cylindrical region}\label{subsec-cylindrical}
For a given $\tau \leq \tau_0$ and constant $\theta$ positive and small, the cylindrical region is defined by
\[
\cyl_{\theta} = \bigl\{(y, \tau) : u_1(y,\tau) \ge \frac{\theta}{2} \bigr\}
\]
(see Figure~\ref{fig-three-regions}).
We will consider in this region a \emph{cut-off function $\varphi_\cyl(y,\tau)$} with the following properties:
\[
(i) \,\, \supp \varphi_\cyl \Subset \cyl_{\theta} \qquad (ii)\,\, 0 \leq \varphi_\cyl \leq 1 \qquad (iii) \,\, \varphi_\cyl \equiv 1 \mbox{ on } \cyl_{2\theta}.
\]
The solutions $u_i$, $i=1,2$, satisfy equation \eqref{eq-u}.
Setting
\[
w:=u_1-u_2^{\alpha\beta\gamma} \qquad \text{ and } \qquad w_\cyl:= w\, \varphi_\cyl
\]
we see that $w_\cyl$ satisfies the equation
\begin{equation}\label{eqn-w100}
  \frac{\pd}{\pd\tau} w_\cyl = \cL[w_\cyl] + \cE[w,\varphi_\cyl]
\end{equation}
where the operator $\cL$ is given by
\[
\cL = \partial_y^2 - \frac y2 \partial_y + 1
\]
and where the error term $\cE$ is described in detail in Section~\ref{sec-cylindrical}.
We will see that
\[
\cE[w,\varphi_\cyl] = \cE(w_\cC) + \bar \cE[w,\varphi_\cyl]
\]
where $\cE(w_\cC) $ is the error introduced due to the nonlinearity of our equation and is given by \eqref{eq-E10} and $\bar \cE[w,\varphi_\cyl]$ is the error introduced due to the cut off function $\varphi_\cC$ and is given by \eqref{eq-bar-E} (to simplify the notation we have set $u_2:=u_2^{\alpha\beta\gamma}$).  \smallskip

The differential operator $\cL$ is a well studied self-adjoint operator on the Hilbert space $\hilb := L^2(\R, e^{-y^2/4}dy)$ with respect to the norm and inner product
\begin{equation}\label{eqn-normp0}
  \|f\|_{\hilb}^2 = \int_\R f(y)^2 e^{-y^2/4}\, dy, \qquad \langle f, g \rangle = \int_\R f(y)g(y) e^{-y^2/4}\, dy.
\end{equation}
\smallskip

We split $\hilb$ into the unstable, neutral, and stable subspaces $\hilb_+$, $\hilb_0$, and $\hilb_-$, respectively.
The unstable subspace $\hilb_+$ is spanned by all eigenfunctions with positive eigenvalues (in this case $\hilb_+$ is spanned by a constant function equal to $\psi_0 = 1$, that corresponds to eigenvalue 1 and by a linear function $\psi_1 = y$, that corresponds to eigenvalue $\frac 12$, that is, $\hilb_+$ is two dimensional).
The neutral subspace $\hilb_0$ is the kernel of $\cL$, and is the one dimensional space spanned by $\psi_2 = y^2-2$.
The stable subspace $\hilb_-$ is spanned by all other eigenfunctions.
Let $\pr_\pm$ and $\pr_0$ be the orthogonal projections on $\hilb_\pm$ and $\hilb_0$.  \smallskip

For any function $f : \R \times (-\infty,\tau_0]$, we define the cylindrical norm
\[
\|f\|_{\hilb,\infty}(\tau) = \sup_{\sigma \le \tau} \Bigl(\int_{\sigma - 1}^{\sigma} \|f(\cdot,s)\|_{\hilb}^2\, ds\Bigr)^{\frac12}, \qquad \tau \leq \tau_0
\]
and we will often simply set
\begin{equation}
  \label{eqn-normp}
  \|f\|_{\hilb,\infty}:= \|f\|_{\hilb,\infty}(\tau_0).
\end{equation}
\medskip

In the course of proving necessary estimates in the cylindrical region we define yet another Hilbert space $\hv$ by
\[
\hv = \{f\in\hilb : f, f_y \in \hilb\},
\]
equipped with a norm
\[
\|f\|_\hv^2 = \int_{\R} \{f(y)^2 + f'(y)^2\} e^{-y^2/4}dy.
\]
We will write
\[
( f, g )_\hv = \int_\R \{f'(y)g'(y) + f(y)g(y)\} e^{-y^2/4} dy,
\]
for the inner product in $ \hv $.

Since we have a dense inclusion $ \hv \subset \hilb $ we also get a dense inclusion $ \hilb \subset \hv^* $ where every $ f\in\hilb $ is interpreted as a functional on $ \hv $ via
\[
g\in \hv \mapsto \langle f, g \rangle.
\]
Because of this we will also denote the duality between $ \hv $ and $ \hv^* $ by
\[
(f, g) \in \hv\times \hv^* \mapsto \langle f, g \rangle .
\]
Similarly as above define the cylindrical norm
\[
\|f\|_{\hv,\infty}(\tau) = \sup_{\sigma\le\tau}\Bigl(\int_{\sigma-1}^{\sigma} \|f(\cdot,s)\|_{\hv}^2\, ds\Bigr)^{\frac12},
\]
and analogously we define the cylindrical norm $\|f\|_{\hv^*,\infty}(\tau)$ and set for simplicity $\| f\|_{\hv^*,\infty}:=\|f\|_{\hv^*,\infty}(\tau_0)$.

\smallskip In Section \ref{sec-cylindrical} we will show a coercive estimate for $w_\cyl$ in terms of the error $E[w, \varphi_\cyl]$.
However, as expected, this can only be achieved by removing the projection $\pr_0 w_\cyl$ onto the kernel of $\cL$, generated by $\psi_2$.
More precisely, setting
\[
\hat w_\cyl := \pr_+ w_\cyl + \pr_- w_\cyl = w_\cyl - \pr_0 w_\cyl
\]
we will prove that for any $\epsilon >0$ there exist $\theta > 0$ and $\tau_0 \ll 0$ such that the following bound holds
\begin{equation}\label{eqn-cylindrical100}
  \| \hat w_\cyl \|_{\hv,\infty} \leq C \, \|E[w,\varphi_\cyl]\|_{\hv^*,\infty}
\end{equation}
provided that $\pr_+ w_\cyl(\tau_0) =0$.
In fact, we will show in Proposition \ref{lem-rescaling-components-zero} that the parameters $\alpha, \beta$ and $\gamma$ can be adjusted so that for $w^{\alpha\beta\gamma}:= u_1 - u_2^{\alpha\beta\gamma}$, we have
\begin{equation}\label{eqn-projections}
  \pr_+ w_\cyl(\tau_0) =0 \qquad \mbox{and} \qquad \pr_0 w_\cyl(\tau_0) =0.
\end{equation}
Thus \eqref{eqn-cylindrical100} will hold for such a choice of $\alpha, \beta, \gamma$ and $\tau_0 \ll 0$.
The condition $\pr_0 w_\cyl(\tau_0) =0$ is essential and will be used in Section \ref{sec-conclusion} to give us that $w^{\alpha\beta\gamma} \equiv 0$.
In addition, we will show in Proposition \ref{lem-rescaling-components-zero} that $\alpha$, $\beta$ and $\gamma$ can be chosen to be admissible according to our Definition \ref{def-admissible}.

The norm of the error term $\|E[w,\varphi_\cyl]\|_{\hv^*,\infty}$ on the right hand side of \eqref{eqn-cylindrical100} will be estimated in Section \ref{sec-cylindrical}, Lemmas \ref{lem-error1-est} and \ref{lem-error-bar}.
We will show that given $\epsilon >0$ small, there exists a  $\tau_0 \ll -1$ such that
\begin{equation}\label{eqn-error-111}
  \|E[w,\varphi_\cyl]\|_{\hv^*,\infty} \leq \epsilon \, \big ( \| w_\cC \|_{\hv,\infty} + \| w\, \chi_{D_\theta}\|_{\hv, \infty} \big ).
\end{equation}
where $D_\theta:= \{ (y,\tau): \,\, \theta/2 \leq u_1(y,\tau) \leq \theta \}$ denotes the support of the derivative of $\varphi_{\cC}$.
Combining \eqref{eqn-cylindrical100} and \eqref{eqn-error-111} yields the bound
\begin{equation}\label{eqn-cylindrical1}
  \| \hat w_\cyl \|_{\hv,\infty} \le \epsilon \, (\|w_\cyl\|_{\hv,\infty} + \| w\, \chi_{D_\theta} \|_{\hilb,\infty}),
\end{equation}
holding for all $\epsilon >0$ and $\tau_0 := \tau_0(\epsilon) \ll -1$.

\smallskip To close the argument we need to estimate $\| w \, \chi_{D_\theta}\|_{\hilb,\infty}$ in terms of $\|w_\cyl \|_{\hv,\infty}$.
This will be done by considering the tip region and establishing an appropriate \emph{a priori} bound for the difference of our two solutions there.

\subsection{The tip region}\label{subsec-tip}
The tip region is defined by
\[
\tip_{\theta} = \{(u, \tau):\,\, u_1 \le 2\theta, \, \tau \leq \tau_0\}
\]
(see Figure~\ref{fig-three-regions}).
In the tip region we switch the variables $y$ and $u$ in our two solutions, with $u$ becoming now an independent variable.
Hence, our solutions $u_1(y,\tau) $ and $u_2^{\alpha\beta\gamma}(y,\tau)$ become $Y_1(u,\tau)$ and $Y_2^{\alpha\beta\gamma}(u,\tau)$.
In this region we consider a {\em cut-off function $\varphi_T(u)$} with the following properties:
\begin{equation}\label{eqn-cutofftip}
  (i) \,\, \supp \varphi_T \Subset \tip_{\theta} \qquad (ii) \,\, 0 \leq \varphi_T \leq 1 \qquad (iii) \,\, \varphi_T \equiv 1, \,\, \mbox{on} \,\, \tip_{\theta/2}.
\end{equation}

Both functions $Y_1(u, \tau), Y_2^{\alpha\beta\gamma}$ satisfy the equation
\begin{equation}
  \label{eqn-Y}
  Y_\tau =\frac{Y_{uu}}{1+Y_u^2} + \frac{n-1} {u} Y_u + \frac{1} {2}\bigl(Y-uY_u \bigr ).
\end{equation}
It follows from \eqref{eqn-Y} that the difference $W:= Y_1 - Y_2^{\alpha\beta\gamma}$ satisfies
\begin{equation}
  \label{eqn-WW}
  W_\tau = \frac{W_{uu}} {1+ Y_{1u}^2} +\Bigl(\frac{n-1} {u} - \frac{u} {2} + D \Bigr) \, W_u  + \frac{1} {2} W,
\end{equation}
where
\[
D := -\frac{(Y_2^{\alpha\beta\gamma})_{uu}\, (Y_{1u} + Y_{2u}^{\alpha\beta\gamma})}{(1 + (Y_{1u})^2\, (1 + (Y_{2u}^{\alpha\beta\gamma})^2)}.
\]

Our next goal is to define an appropriate weighted $L^2$ norm
\[
\| W(\tau) \| := \Bigl ( \int_0^\theta W^2(u,\tau) \, e^{\mu(u,\tau)} \, du \Bigr )^{1/2}
\]
in the tip region $\tip_\theta$, by defining the weight $\mu(u,\tau)$.
To this end we need to further distinguish between two regions in $\tip_{\theta}$: for $L >0$ sufficiently large to be determined in Section \ref{sec-tip}, we define the {\it collar} region to be the set
\[
\collar_{\theta,L} := \Bigl\{y \mid \frac{L}{\sqrt{|\tau|}} \le u_1 (y,\tau) \le 2\theta\Bigr\}
\]
and the {\it soliton} region to be the set
\[
\mathcal{S}_L := \Bigl\{y \mid 0 \le u_1(y,\tau) \le \frac{L}{\sqrt{|\tau|}}\Bigr\}
\]
(see Figure~\ref{fig-three-regions}).
It will turn out later that one can regard the term $D$ in \eqref{eqn-WW} as an error term in $\collar_{\theta,L}$ (since in this region $D$ can be made arbitrarily small for $\tau_0 \ll -1$ and in addition by choosing $\theta, L$ appropriately).
On the contrary, $D$ is not necessarily small in the entire {\it soliton} region $\mathcal{S}_L$ and hence its approximation needs to be taken as a part of the linear operator.

\smallskip

The soliton region is the set where our asymptotic result in Theorem \ref{thm-old} implies that the solutions $Y_1$ and $Y_2$ are very close to the Bowl soliton (after re-scaling).
The collar is the transition region between the cylindrical region and the soliton region.
Having this in mind we define the weight $\mu(u,\tau)$ on the collar region $\collar_{\theta,L}$ to be
\[
\mu(u,\tau) = - \frac{1}{4} Y_1^2(u,\tau), \qquad u \in\collar_{\theta,L}
\]
which is in correspondence (after our coordinate switch) with the Gaussian weight $e^{-y^2/4}$ which we use in the cylindrical region.

In the soliton region we will define our weight $\mu(u,\tau)$ using the Bowl soliton.
In fact, we center the solution $Y_1$ at the tip and zoom in to a length scale ${1}/{\sqrt{|\tau|}}$ by setting
\begin{equation}
  \label{eqn-Y-expansion}
  Y_1(u,\tau) = Y_1(0, \tau) + \frac{1}{\sqrt{|\tau|}}\,Z_1 \Bigl(\rho, \tau\Bigr), \qquad \rho:= u\, \sqrt{|\tau|}.
\end{equation}

By Corollary \ref{cor-old}, $Z_1(\rho,\tau)$ converges, as $\tau\to -\infty$, to the unique rotationally symmetric, translating Bowl solution $Z_0(\rho)$ with speed $\sqrt2/2$, which satisfies
\begin{equation}
  \frac{Z_{0\rho\rho}} {1+Z_{0\rho}^2} + \frac{n-1} {\rho} Z_{0\rho} + \frac12 \sqrt{2} = 0,\qquad Z_0 (0) = Z_0 '(0) = 0.
  \label{eq-Z-bar}
\end{equation}

Since $n>1$ these equations determine $Z_0$ uniquely.
For large and small $\rho$ the function $Z_0(\rho)$ satisfies
\begin{equation}
  \label{eq-Z0-asymptotics}
  Z_0(\rho) =
  \begin{cases}
    -\sqrt{2}\rho^2/4(n-1) + \cO(\log \rho) & \rho\to\infty \\[2pt]
    -\sqrt{2}\rho^2/4n +\cO(\rho^4) & \rho\to0.
  \end{cases}
\end{equation}
These expansions may be differentiated.

In order to motivate the choice for the weight in the soliton region, we formally approximate $Y_{1, u}$ in equation \eqref{eqn-WW} by ${Z_0}_\rho$ using \eqref{eqn-Y-expansion} and the convergence to the soliton.
Using also the change of variables
\[
\bW(\rho, \tau) = \frac 1{\sqrt{|\tau|}} \, W(u, \tau), \qquad \rho:= u\, \sqrt{|\tau|},
\]
it follows from \eqref{eqn-WW} that $\bW$
\[
\Bigl( \frac{\bW_\rho} {1+Z_{0\rho}^2} \Bigr)_\rho + \Bigl(\frac{n-1} {\rho} \Bigr)\bW_\rho = E[\bW],
\]
where $E[\bW]$ is the error term.

This prompts us to introduce the linear differential operator
\[
\cM = \frac{d} {d\rho} \Bigl ( \frac{1} {1+Z_{0\rho}^2} \frac{d} {d\rho} \Bigr ) + \frac{n-1} {\rho}\frac{d} {d\rho},
\]
which we can write as
\[
\cM[\phi] = e^{-m} \frac{d} {d\rho}\Bigl\{ \frac{e^{m}} {1+Z_{0\rho}^2} \, \frac{d\phi} {d\rho}\Bigr\},
\]
where $m:\R_+\to\R$ is the function given by
\[
\begin{aligned}
  m(\rho) &= (n-1)\log \rho + \int_0^\rho \frac{n-1}{s} Z_{0}'(s)^2\, ds \\
  &= (n-1)\log \rho - \frac12\sqrt2 Z_0(\rho) -\frac12 \log \bigl(1+Z_0'(\rho)^2).
\end{aligned}
\]
The operator $\cM$ is symmetric and negative definite in the Hilbert space $ {\mathcal H} =L^2\Bigl(\R_+, e^{m(\rho)}\,d\rho\Bigr) $ in which the norm is given by
\[
\|f\|_{\cH}^2 = \int_0^\infty f(\rho)^2 \, e^{m(\rho)}\,d\rho.
\]

\smallskip We will use the variable $\rho$ in the proof of the Poincar\'e inequality in Proposition \ref{prop-Poincare}, while in our main estimate~\eqref{eqn-tip2} in the tip region we will bound $W(u,\tau)$ in an appropriate weighted norm, using the $u$ variable.
We would like to define our weight $\mu(u,\tau)$ in the soliton region $\mathcal{S}_L $ to be equal more or less to $m(\rho)$.
In order to make $\mu(u,\tau)$ to be a $C^1$ function in the whole tip region $\tip_{\theta}$ we will modify $m(\rho)$ in $\mathcal{S}_L$ by adding a linear correction term, setting
\[
\mu(u,\tau) = m(\rho) + a(L,\tau ) \rho + b(L,\tau), \qquad u \in \mathcal{S}_L, \,\, \rho = u\, \sqrt{|\tau|}.
\]

It follows from our discussion above we have the following definition for the weight $\mu(u,\tau)$ in the tip region $\tip_{\theta}$:
\begin{equation}
  \label{eq-weight}
  \mu(u,\tau) :=
  \begin{cases}
    -\frac14 Y_1^2(u,\tau), & \quad u \in\collar_{\theta,L} \\
    m(\rho) + a(L,\tau)\, \rho + b(L,\tau), & \quad u \in \mathcal{S}_L.
  \end{cases}
\end{equation}
The requirement that $\mu(u, \tau)$ be $C^1$ at $u=L/\sqrt{|\tau|}$ dictates the following choice of $a$ and $b$:
\begin{align}\label{eqn-aL}
  a(L,\tau)&:= - m'(L) - \frac{1}{2\sqrt{|\tau|}} \, Y_1 Y_{1u}\Bigr|_{u=L/\sqrt{|\tau|}}
  \\
  \label{eqn-bL}
  b(L,\tau) &:= -\frac{Y_1^2(L,\tau)}{4} - m(L) + L\, m'(L) + - \frac{L}{2\sqrt{|\tau|}} \, Y_1 Y_{1u}\Bigr|_{u=L/\sqrt{|\tau|}}.
\end{align}

For a function $W : [0, 2\theta]\to\R$ and any $\tau\leq \tau_0$, we define the weighted $L^2$ norm with respect to the weight $e^{\mu(\cdot,\tau)} \, du$ by
\[
\|W\|_\tau^2 := \int_0^{2\theta} W^2(u, \tau) \, e^{\mu(u,\tau)}\, du, \qquad \tau \leq \tau_0
\]
For a function $W:[0,2\theta]\times(-\infty, \tau_0] \to \R$ we define \emph{the cylindrical norms}
\[
\|W\|_{2,\infty,\tau} = \sup_{\tau' \le \tau}|\tau'|^{-1/4}\, \Bigl(\int_{\tau'-1}^{\tau'} \|W\|_s^2\, ds\Bigr)^{\frac12}
\]
for any $\tau\leq \tau_0$.
We include the weight in time $|\tau|^{-1/4}$ to make the norms equivalent in the transition region, between the cylindrical and the tip region, as will become apparent in Lemma \ref{prop-norm-equiv}.
We will also abbreviate
\[
\|W\|_{2,\infty}:= \|W\|_{2,\infty,\tau_0}.
\]

\smallskip

For a cutoff function $\varphi_T$ as in \eqref{eqn-cutofftip}, we set $W_T(u,\tau):= W(u,\tau) \, \varphi_T$.
We will see in Section \ref{sec-tip} that the following bound holds in the tip region
\begin{equation}
  \label{eqn-tip2}
  \| W_T \|_{\hilb,\infty} \leq \frac{C}{\sqrt{|\tau_0|}} \, \| W \chi \|_{2,\infty}
\end{equation}
where $\chi$ is the cut off function that is supported in the overlap between cylindrical and tip regions, for $\chi = 1$ on an open neighborhood of the support of $\partial_u\varphi_T$.

\subsection{The conclusion}
\label{subsec-conclusion}
The statement of Theorem \ref{thm-main} is equivalent to showing there exist parameters $\alpha, \beta$ and $\gamma$ so that $u_1(y,\tau) = u_2^{\alpha\beta\gamma}(y,\tau)$, where $u_2^{\alpha\beta\gamma}(y,\tau)$ is defined by \eqref{eq-ualphabeta} and both functions, $u_1(y,\tau)$ and $u_2^{\alpha\beta\gamma}(y,\tau)$, satisfy equation \eqref{eq-u}.
We set $w:= u_1 - u_2^{\alpha\beta\gamma}$, where $(\alpha,\beta,\gamma)$ is an admissible triple of parameters with respect to $\tau_0$, such that \eqref{eqn-projections} holds for a $\tau_0 \ll -1$.
Now for this $\tau_0$, the main estimates in each of the regions, namely \eqref{eqn-cylindrical1} and \eqref{eqn-tip2} hold for $w$.
Next, we want to combine \eqref{eqn-cylindrical1} and \eqref{eqn-tip2}.
To this end we need to show that the norms of the difference of our two solutions, with respect to the weights defined in the cylindrical and the tip regions, are equivalent in the intersection between the cylindrical and the tip regions, the so called {\em transition region}.
More precisely, we will show in Section \ref{sec-conclusion} that for every $\theta > 0$ small, there exist $\tau_0 \ll 0$ and uniform constants $c(\theta), C(\theta) > 0$, so that for $\tau\le \tau_0$, we have
\begin{equation}\label{eqn-normequiv1}
  c(\theta ) \, \| W \chi_{_{[\theta, 2\theta]}} \|_{\hilb,\infty} \leq \| w \, \chi_{_{D_{2\theta}}} \|_{\hilb,\infty} \leq C(\theta) \, \| W \chi_{_{[\theta, 2\theta]}} \|_{\hilb,\infty}
\end{equation}
where $\cD_{2\theta} := \{y\,\,\, |\,\,\, \theta \le u_1(y,\tau) \le 2\theta\}$ and $\chi_{[\theta,2\theta]}$ is the characteristic function of the interval $[\theta,2\theta]$.

Combining \eqref{eqn-normequiv1} with \eqref{eqn-cylindrical1} and \eqref{eqn-tip2} finally shows that in the norm $\| w_\cyl \|_{\hv,\infty}$ what actually dominates is $\| \pr_0 w_\cyl \|_{\hv,\infty}$.
We will use this fact in Section \ref{sec-conclusion} to conclude that $w(y,\tau):= w^{\alpha\beta\gamma}(y,\tau) \equiv 0$ for our choice of parameters $\alpha, \beta$ and $\gamma$.
To do so we will look at the projection $a(\tau):= \pr_0 w_\cyl$ and define its norm
\[
\|a\|_{\hilb,\infty}(\tau)
= \sup_{\sigma \le \tau} \Bigl(\int_{\sigma - 1}^{\sigma} \|a(s)\|^2\, ds\Bigr)^{\frac12},
\qquad
\tau \leq \tau_0
\]
with $\|a\|_{\hilb,\infty}:=\|a\|_{\hilb,\infty}(\tau_0)$.

By projecting equation \eqref{eqn-w100} onto the zero eigenspace spanned by $\psi_2$ and estimating error terms by $\| a\|_{\hilb,\infty}$ itself, we will conclude in Section \ref{sec-conclusion} that $a(\tau)$ satisfies a certain differential inequality which combined with our assumption that $a(\tau_0)=0$ (that follows from the choice of parameters $\alpha, \beta$, $\gamma$  so that \eqref{eqn-projections} hold) will yield that $a(\tau)=0$ for all $\tau \leq \tau_0$.
On the other hand, since $\| a\|_{\hilb,\infty} $ dominates the $\| w_\cyl \|_{\hilb,\infty}$, this will imply that $w_\cyl \equiv 0$, thus yielding $w \equiv 0$, as stated in Theorem \ref{thm-main}.

\begin{remark}
  \label{rem-symmetry}
  Note that our evolving hypersurface has  $O(n)$ symmetry and can be represented as in \eqref{eq-O1xOn-symmetry}.
  Due to asymptotics proved in Theorem \ref{thm-old}, when considering the tip region, it is enough to consider our solutions and prove the estimates only around $y = \bar{d}_1(\tau)$, where after switching the variables as in \eqref{eqn-Y-expansion}, we have $\rho \ge 0$.
  There we have $Z(\rho,\tau) \le 0$ and $Z_{\rho} \le 0$.
  We also have $Z_{\rho\rho} \le 0$, due to our convexity assumption.
  The estimates around $y = -\bar{d}_2(\tau)$ are similar.
\end{remark}

\section{A priori estimates}
Let $u(y,\tau)$ be an ancient oval solution of \eqref{eq-u} which satisfies the asymptotics in Theorem \ref{thm-old}.
In this section we will prove some further \emph{a priori} estimates on $u(y,\tau)$ which hold for $\tau \ll -1$.
These estimates will be used in the subsequent sections.
Throughout this section we will use the notation introduced in the previous section and in particular the definition of $Y(u,\tau)$ as the inverse function of $u(y,\tau)$ in the tip region and $Z(\rho,\tau)$ given by \eqref{eqn-Y-expansion}.

Before we start discussing \emph{a priori} estimates for our solution $u(y,\tau)$, we recall a corollary of Theorem \ref{thm-old} that will be used throughout the paper, especially in dealing with the tip region.

\begin{corollary}[Corollary of Theorem \ref{thm-old}]
  \label{cor-old}
  Let $M_t$ be any ancient oval satisfying the assumptions of Theorem \ref{thm-old}.
  Consider the tip region of our solution as in part (iii) of Theorem \ref{thm-old} and switch the coordinates around the tip region as in formula \eqref{eqn-Y-expansion}.
  Then, $Z(\rho,\tau)$ converges, as $\tau\to -\infty$, uniformly smoothly to the unique rotationally symmetric translating Bowl solution $Z_0(\rho)$ with speed $\sqrt2/2$.
\end{corollary}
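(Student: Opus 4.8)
The plan is to read the corollary off from part (iii) of Theorem~\ref{thm-old}, once the two ways of rescaling near the tip have been matched. First I would unwind the change of variables \eqref{eqn-Y-expansion}: near the tip $\bar M_\tau$ is the graph $\{\,y = Y(\|y'\|,\tau)\,\}$ over a small disk in the $\R^n$ factor, with tip $\bar p_\tau = (Y(0,\tau),0)$, and translating $\bar p_\tau$ to the origin and then dilating by $\sqrt{|\tau|}$ turns this graph into $\{\,\tilde y = Z(\|\tilde y'\|,\tau),\ \|\tilde y'\|\le 2\theta\sqrt{|\tau|}\,\}$. In other words $Z(\cdot,\tau)$ is the inverse-function profile of the hypersurface $\sqrt{|\tau|}\,(\bar M_\tau-\bar p_\tau)$. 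Using the type-I rescaling $M_t=\sqrt{-t}\,\bar M_\tau$, $\tau=-\log(-t)$, together with $p_t=\sqrt{-t}\,\bar p_\tau$, this hypersurface equals $\frac{\sqrt{|\tau|}}{\sqrt{-t}}\,(M_t-p_t)$.

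Next I would compare this with the rescaled flow at the tip from Theorem~\ref{thm-old}(iii), whose time-$0$ slice is $\tilde M_t(0)=\lambda(t)\,(M_t-p_t)$ with $\lambda(t)=H(p_t,t)=H_{\max}(t)$. By the sharp rate for $H_{\max}$ recorded there one has, with $|\tau|=\log(-t)$, that $\lambda(t)\sqrt{-t}=\sqrt{|\tau|/2}\,(1+o(1))$ as $t\to-\infty$, so
\[
\sqrt{|\tau|}\,(\bar M_\tau-\bar p_\tau)=\frac{\sqrt{|\tau|}}{\sqrt{-t}\,\lambda(t)}\,\tilde M_t(0)=\bigl(\sqrt2+o(1)\bigr)\,\tilde M_t(0).
\]
Since Theorem~\ref{thm-old}(iii) asserts that $\tilde M_t(0)$ converges, uniformly smoothly on compact subsets of $\R^{n+1}$, to the unit-speed Bowl soliton, and since the prefactor converges to $\sqrt2$, the surfaces $\sqrt{|\tau|}\,(\bar M_\tau-\bar p_\tau)$ converge uniformly smoothly on compact sets to $\sqrt2$ times the unit-speed Bowl, i.e.\ to the rotationally symmetric translating Bowl of speed $\tfrac{\sqrt2}{2}$ (dilating the unit-speed Bowl by a factor $\mu$ produces a Bowl soliton of speed $1/\mu$). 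Note also that the $\rho$-domain $[0,2\theta\sqrt{|\tau|}]$ of $Z(\cdot,\tau)$ exhausts $[0,\infty)$ as $\tau\to-\infty$, so the limit is the complete soliton, not a piece of it.

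It remains to translate smooth convergence of the hypersurfaces into smooth convergence of their profiles. On any fixed compact range of $\rho$ the surfaces in play are, near the tip, $O(n)$-invariant convex caps that are uniformly graphical over the $\R^n$ factor (the tip being a smooth point with horizontal tangent plane, with curvature normalized to $H=1$ there), so on such a range $C^\infty$ convergence of the hypersurfaces is equivalent to $C^\infty$ convergence of the graph functions. Since the graph function of $\sqrt{|\tau|}\,(\bar M_\tau-\bar p_\tau)$ is exactly $Z(\cdot,\tau)$ and that of $\sqrt2$ times the unit-speed Bowl is the inverse-function profile $Z_0$ of the speed-$\tfrac{\sqrt2}{2}$ Bowl, we conclude $Z(\cdot,\tau)\to Z_0$ uniformly smoothly on compact subsets of $[0,\infty)$. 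Finally $Z_0$ satisfies the translator equation \eqref{eq-Z-bar}, and it is the unique such solution because for $n>1$ that second-order ODE is determined by the data $Z_0(0)=Z_0'(0)=0$. (The argument is carried out at one tip; the other is handled identically, cf.\ Remark~\ref{rem-symmetry}.)

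The one step that genuinely needs care is the matching of scales in the second paragraph: one must keep track of the centering at the tip, of the factor $\sqrt{-t}$ coming from the type-I rescaling, and of the time change $\tau=-\log(-t)$, and must use the precise rate $H_{\max}(t)\sqrt{-t}\sim\sqrt{|\tau|/2}$ — not merely that $H_{\max}$ blows up relative to the neck curvature — in order to see that the residual dilation tends to $\sqrt2$ rather than to $1$. That factor $\sqrt2$ is exactly what converts the unit-speed Bowl of Theorem~\ref{thm-old}(iii) into the speed-$\tfrac{\sqrt2}{2}$ Bowl of the corollary. The remaining upgrade from geometric convergence to convergence of the profiles is routine given the uniform convexity and curvature normalization near the tip.
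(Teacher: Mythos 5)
Your proposal is correct and takes essentially the same route as the paper's own proof: both compare the $Z$-rescaling at the tip to the $H_{\max}$-normalized blow-up of Theorem~\ref{thm-old}(iii), use the sharp asymptotic $H_{\max}(t)\sqrt{-t}=(1+o(1))\sqrt{|\tau|/2}$ to identify the residual dilation factor $\sqrt2$, and conclude the limit is the Bowl translator of speed $\sqrt2/2$. Your extra care in converting smooth convergence of hypersurfaces into smooth convergence of the profile $Z(\cdot,\tau)$ and in noting that the $\rho$-domain exhausts $[0,\infty)$ is a sound bookkeeping elaboration of what the paper leaves implicit.
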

\begin{proof}
  According to the asymptotic description of the tip-region from \cite{ADS} (see part (iii) of Theorem~\ref{thm-old}) the family of hypersurfaces that we get by translating the tip of $M_t$ to the origin and then rescaling so that the maximal mean curvature becomes equal to one, converges to the translating Bowl soliton with velocity equal to one.
  
  In defining $Z(\rho, \tau)$ by
  \[
  Y(u,\tau) = Y(0,\tau) + \frac{1}{\sqrt{|\tau|}}\, Z(\rho,\tau)
  \]
  we have in fact translated the tip to the origin, and rescaled the surface $M_t$, first by a factor $1/\sqrt{|t|} = e^{\tau/2}$ (the cylindrical rescaling \eqref{eq-type-1-blow-up} which leads to $u(y,\tau)$ or equivalently $Y(u,\tau)$,  and then by the factor $\sqrt{|\tau|}$ from \eqref{eqn-Y-expansion}.
  These two rescalings together shrink $M_t$ by a factor $\sqrt{|t| / \log|t|}$.
  Since by Theorem \ref{thm-old} the maximal mean curvature at the tip satisfies 
  \[
  H_{\max}(t) =  (1+o(1)) \sqrt{\frac {\log|t|}{2 |t|}}
  \]
  the hypersurface of rotation given by $z=Z(\rho, \tau)$ has maximal mean curvature 
  $ H_{\max} (t) \cdot \sqrt{{|t|}/{ \log |t|} } =  \sqrt 2/2 + o(1).$  It therefore converges to the unique rotationally symmetric, translating Bowl solution $Z_0(\rho)$ with speed $\sqrt2/2$, which satisfies equation \eqref{eq-Z-bar}.
\end{proof}

Next we prove a Proposition that will play an important role in obtaining the coercive type estimate \eqref{eqn-tip2} in the tip region.

\begin{prop}\label{pro-Sigurd} Let $u$ be an ancient oval solution of  \eqref{eq-u} which satisfies the assumptions and conclusion of
  Theorem \ref{thm-old}.
  Then, there exists $\tau_0 \ll -1$ for which we have $(u^2)_{yy}(y,\tau) \le 0$, for all $\tau \leq \tau_0$.
\end{prop}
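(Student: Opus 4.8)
The plan is to set $v:=u^2$ and to show that $P:=v_{yy}=(u^2)_{yy}$ is nonpositive for $\tau\le\tau_0$. Multiplying \eqref{eq-u} by $2u$ and using $v_{yy}=2u_y^2+2uu_{yy}$ together with $\tfrac{-2u_y^2}{1+u_y^2}=-2+\tfrac{2}{1+u_y^2}$ shows that $v$ solves
\[
v_\tau=\frac{v_{yy}+2}{1+u_y^2}-\frac y2\,v_y+v-2n,\qquad u_y^2=\frac{v_y^2}{4v},
\]
which has the form $v_\tau=a(v_y,v)\,v_{yy}+\tilde b(v_y,v,y)$ with $a=\bigl(1+\tfrac{v_y^2}{4v}\bigr)^{-1}>0$. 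Differentiating this identity twice in $y$, every term in the resulting equation for $P$ carries a factor of $P$, $P_y$ or $P_{yy}$ except for the single inhomogeneous term $2a_{vv}v_y^2$ (the explicit $y$-dependence only contributes $-\tfrac y2 P_y$). The key elementary point is that $a_{vv}\le0$ everywhere: a short computation gives
\[
a_{vv}=\frac{v_y^2}{4v^3(1+u_y^2)^2}\Bigl(\frac{2u_y^2}{1+u_y^2}-2\Bigr)\le0
\]
because $\tfrac{2u_y^2}{1+u_y^2}\le2$. Thus $P$ satisfies $P_\tau=aP_{yy}+B\,P_y+C\,P+2a_{vv}v_y^2$, with $B,C$ bounded on any region bounded away from the tips and on bounded $\tau$-intervals, and with nonpositive last term; in particular, at an interior spatial maximum of $P$ at which $P=0$ one has $P_\tau\le0$, so $\{P\le0\}$ is preserved by the forward (increasing $\tau$) evolution provided $P\le0$ also on the lateral boundary formed by the two tips.

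Next I would feed in the asymptotics of Theorem~\ref{thm-old}. \emph{Near the tips:} after the substitution \eqref{eqn-Y-expansion} one has $P=(u^2)_{yy}=\dfrac{Z_\rho-\rho Z_{\rho\rho}}{Z_\rho^3}$, and by Corollary~\ref{cor-old} this converges, as $\tau\to-\infty$, to $\dfrac{Z_{0\rho}-\rho Z_{0\rho\rho}}{Z_{0\rho}^3}$. A short barrier argument for \eqref{eq-Z-bar} shows that $\rho\mapsto Z_{0\rho}(\rho)/\rho$ is non-increasing: with $\psi:=-Z_{0\rho}/\rho\ge0$, equation \eqref{eq-Z-bar} gives $\rho\psi'=\tfrac{\sqrt2}{2}\bigl(1+\rho^2\psi^2\bigr)-\psi\bigl(n+(n-1)\rho^2\psi^2\bigr)=:F$, and if $F$ had a first zero at some $\rho_0>0$ then $\psi'(\rho_0)=0$ and, since then $\psi(\rho_0)=\tfrac{\sqrt2}{2}\,\tfrac{1+\rho_0^2\psi(\rho_0)^2}{n+(n-1)\rho_0^2\psi(\rho_0)^2}<\tfrac{\sqrt2}{2(n-1)}$, one computes $F'(\rho_0)=\rho_0\psi(\rho_0)^2\bigl(\sqrt2-2(n-1)\psi(\rho_0)\bigr)>0$, contradicting $F>0$ on $(0,\rho_0)$; since also $F>0$ for small $\rho>0$ (one more order in \eqref{eq-Z-bar}–\eqref{eq-Z0-asymptotics}), $F>0$ throughout. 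Hence the limiting profile of $P$ equals $-\tfrac{2}{n+2}<0$ at $\rho=0$ and is strictly negative for $\rho>0$, so $P<0$ in a fixed $\rho$-neighborhood of each tip for all $\tau\le\tau_0$; consequently the spatial maximum of $P(\cdot,\tau)$ is attained at an interior point bounded away from the tips. \emph{Away from the tips:} parts (i)–(ii) of Theorem~\ref{thm-old} and the \emph{a priori} estimates give $P=-\tfrac{2(n-1)}{|\tau|}+o(|\tau|^{-1})$ in the cylindrical and intermediate regions, so $\sup_y P(\cdot,\tau)\to0^-$ as $\tau\to-\infty$; in particular $P(\cdot,\tau)\le0$ on the whole profile along a sequence $\tau_j\to-\infty$.

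Finally I would close by a forward maximum principle. If the proposition failed there would be $\tau_1\le\tau_0$ with $\sup_y P(\cdot,\tau_1)>0$; choose $\tau_*<\tau_1$ with $P(\cdot,\tau_*)\le0$. On the fixed interval $[\tau_*,\tau_1]$ the coefficient $C$ is bounded above by some $c_0$, so $\widetilde P:=e^{-c_0\tau}P$ satisfies $\widetilde P_\tau=a\widetilde P_{yy}+B\widetilde P_y+(C-c_0)\widetilde P+e^{-c_0\tau}\,2a_{vv}v_y^2$ with $C-c_0\le0$ and nonpositive source. Since $\widetilde P\le0$ on the parabolic boundary of $[\tau_*,\tau_1]\times(\text{domain})$ — on $\{\tau=\tau_*\}$ by the choice of $\tau_*$, and at the two tips by the previous step — and since the spatial maximizer of $\widetilde P(\cdot,\tau)$ is interior and bounded away from the tips, the maximum principle (applied to $\tau\mapsto\max_y\widetilde P(\cdot,\tau)$) yields $\widetilde P\le0$, hence $P\le0$, on $[\tau_*,\tau_1]$ — contradicting $\sup_y P(\cdot,\tau_1)>0$.

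I expect the main obstacle to be the behavior near the tips, where $u_y\to\pm\infty$ and the equation for $P$ degenerates: one genuinely needs Theorem~\ref{thm-old} both to know that $P$ is \emph{strictly} negative near the tips (via the Bowl ODE computation above) and to control $\sup_y P$ in the transition/collar region, and patching these facts with the preserved-sign structure of the equation for $P$ is the delicate part.
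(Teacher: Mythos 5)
Your source-term computation is correct and reproduces the key sign in the paper's Lemma~\ref{lemma-help}: in the $(x,t)$ variables, setting $Q_{xx}=0$ in the right-hand side of \eqref{eq-qxxpde} gives $-16U_x^4/\big(Q(1+U_x^2)^3\big)\le 0$, which is exactly your $2a_{vv}v_y^2$. Your ODE argument for the Bowl profile (that $\psi:=-Z_{0\rho}/\rho$ is nondecreasing) is also correct and is equivalent to the paper's Lemma~\ref{lemma-help2}. However, there is a genuine gap in how you close the argument. Your forward maximum principle needs a time $\tau_*$ at which $P:=(u^2)_{yy}\le0$ on the \emph{entire} profile, and you derive this from Theorem~\ref{thm-old}. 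But the cylindrical and intermediate asymptotics of that theorem only control $P$ where $u\gtrsim\theta>0$, while the tip asymptotics (Corollary~\ref{cor-old}) only control it at the scale $u\lesssim L/\sqrt{|\tau|}$; in the collar $L/\sqrt{|\tau|}\lesssim u\lesssim\theta$ Theorem~\ref{thm-old} gives no pointwise control on $(u^2)_{yy}$. So the assertion that $\sup_y P(\cdot,\tau)\to 0^-$, or even that $P\le0$ on the whole profile along some sequence $\tau_j\to-\infty$, is precisely the hard case of the proposition, not a consequence of the asymptotics. It also cannot be borrowed from Corollary~\ref{lemma-Sigurd}, whose proof itself invokes Proposition~\ref{pro-Sigurd}.

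The paper closes by a different route that sidesteps the collar entirely. From the monotonicity of $\max(u^2)_{yy}$ (which your preserved-sign structure re-derives), it assumes $\max_y (u^2)_{yy}(\cdot,\tau_j)\ge c>0$ along $\tau_j\to-\infty$ and argues by contradiction: Claims~\ref{claim-uy} and~\ref{claim-uy2} show $u_y$ is asymptotically small at the maximizers and at the half-height points, the convexity-based gradient estimate of Lemma~\ref{lemma-help3} then forces $u_y$ to be uniformly small on a long spatial interval around the maximizers, and a parabolic rescaling around them therefore converges to a flat cylinder with $(\hat U^2)_{\bar x\bar x}(0,0)=0$, contradicting scaling invariance of $(u^2)_{yy}$ and $c>0$. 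Your plan would need a compactness and rescaling step of this kind, rather than a direct appeal to Theorem~\ref{thm-old} in the collar, to be complete.
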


The proof of this Proposition will combine a contradiction argument based on scaling and the following maximum principle lemma.
\begin{lemma}
  \label{lemma-help}
  Under the assumptions of Proposition \ref{pro-Sigurd}, there exists time $\tau_0 \ll -1$ such that
  \[ \max_{\bar M_\tau} (u^2)_{yy}(\cdot, \tau) >0 \quad \mbox{and} \quad \tau \leq \tau_0 \implies \frac {d}{d\tau} \max (u^2)_{yy} (\cdot, \tau) \leq 0.
  \]
\end{lemma}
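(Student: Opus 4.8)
The plan is to derive the evolution equation for the quantity $q := (u^2)_{yy}$ under the rescaled flow \eqref{eq-u} and then apply the maximum principle at an interior maximum. First I would compute: writing $v := u^2$, a direct calculation from \eqref{eq-u} shows that $v$ satisfies an equation of the form
\[
v_\tau = a(y,\tau)\, v_{yy} + b(y,\tau)\, v_y + c(y,\tau)\, v + (\text{lower order}),
\]
where $a = (1+u_y^2)^{-1} > 0$; the crucial structural feature is that the zeroth order coefficient acting on $v$ reflects the fact that the cylinder $u \equiv \sqrt{2(n-1)}$ is a steady state, so the "$+u/2$" and "$-(n-1)/u$" terms combine favorably. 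Differentiating this equation twice in $y$ produces an equation for $q = v_{yy}$ of the form
\[
q_\tau = a\, q_{yy} + \tilde b(y,\tau)\, q_y + \tilde c(y,\tau)\, q + R,
\]
where $\tilde c$ picks up the $a_{yy}$, $b_y$ contributions and the "error" $R$ collects terms that are quadratic in lower-order derivatives of $v$. The key point I would need to establish is that at a spatial maximum point $y_*(\tau)$ of $q(\cdot,\tau)$ with $q(y_*,\tau) > 0$, one has $q_y = 0$, $q_{yy} \le 0$, and — this is the heart of the matter — that $\tilde c(y_*,\tau)\, q(y_*,\tau) + R(y_*,\tau) \le 0$.

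The order of steps I would carry out: (1) do the bookkeeping to get the PDE for $q$ explicitly, keeping careful track of signs; (2) observe that $\bar M_\tau$ is closed so the maximum of $q(\cdot,\tau)$ is attained at an interior point $y_*(\tau)$ (near the tips $q \to -\infty$ since $u \to 0$ there and $(u^2)_{yy}$ is dominated by the concavity of the profile, or alternatively one restricts to the region where $q > 0$ which by the asymptotics of Theorem \ref{thm-old} is contained in a fixed compact $y$-interval for $\tau$ very negative); (3) at $y_*(\tau)$ use Hamilton's trick (the derivative of the max of a smooth family) to get $\frac{d}{d\tau}\max q \le q_\tau(y_*,\tau)$, then plug in $q_y = 0$, $q_{yy} \le 0$; (4) bound the remaining terms using the a priori control from Theorem \ref{thm-old}: in the cylindrical region ($|y| \le M$) the expansion $u = \sqrt{2(n-1)}(1 - (y^2-2)/4|\tau|) + o(|\tau|^{-1})$ together with its derivative estimates shows $u_y, u_{yy} = O(|\tau|^{-1})$, so $a \to 1$, $a_y, a_{yy} \to 0$, and $\tilde c \to$ a strictly negative limit (coming from the eigenvalue structure of $\cL$ — indeed $\cL$ applied to $v_{yy}$-type quantities is dominated by a negative eigenvalue once we have killed the unstable and neutral modes; more precisely the relevant coefficient limits to something like $-1 < 0$ as $\tau \to -\infty$ because $v_{yy}$ corresponds to high Hermite modes). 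Hence for $\tau \le \tau_0$ with $\tau_0$ sufficiently negative, $\tilde c(y_*,\tau) < 0$ while $R(y_*,\tau)$ is a sum of products of the small quantities $v_y, v_{yyy}/(1+u_y^2)$, etc., hence negligible compared with $|\tilde c|\, q$, unless $q$ itself is small — but if $q$ is small the claim is trivial, and if $q$ is bounded below by a positive constant then $\tilde c q + R \le 0$ for $\tau_0 \ll -1$.

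The main obstacle I anticipate is controlling the region near the tips, where $u_y \to \pm\infty$, $u \to 0$, and the coefficients $a = (1+u_y^2)^{-1}$, $(n-1)/u$, etc., degenerate: there the naive estimates break down and one cannot simply say $\tilde c < 0$. The resolution should be that the positivity set $\{q > 0\}$ of $(u^2)_{yy}$, for $\tau$ sufficiently negative, never reaches the tip region — this is exactly where one invokes Corollary \ref{cor-old} and the soliton asymptotics \eqref{eq-Z0-asymptotics}: on the Bowl soliton $Z_0$, a computation with $v = u^2 = (Y^{-1})$-type coordinates shows $(u^2)_{yy} < 0$ strictly in the tip/soliton region (the profile of the Bowl is "more concave" than the cylinder there), with a margin that survives the $C^2_{loc}$-convergence of Corollary \ref{cor-old}; similarly in the collar region the asymptotics of Theorem \ref{thm-old}(ii) give $u^2 \approx (n-1)(2 - z^2)$ which is concave in $y$. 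Thus the maximum of $q$, if positive at all, sits in the fixed-size cylindrical window where the clean estimates above apply, and the argument closes. Packaging this region-by-region dichotomy cleanly — and making the "$\tilde c < 0$ with a definite gap" statement precise using the explicit first-order asymptotics — is the technical crux, but it is exactly the kind of estimate already available from \cite{ADS} / Theorem \ref{thm-old}.
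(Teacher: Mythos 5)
Your overall scheme matches the paper's at the structural level: derive the evolution equation for $q=(u^2)_{yy}$, apply the maximum principle at an interior spatial maximum (using Hamilton's trick), and rule out that the maximum migrates to the tips by invoking the soliton asymptotics of Corollary \ref{cor-old}. The paper works with the unrescaled profile $Q:=U^2$, noting that $(U^2)_{xx}=(u^2)_{yy}$ is scaling invariant, which removes the extra drift and zeroth-order terms you would carry around; your rescaled version is workable since those extra terms are easily seen to drop out at a critical point of $q$, but it is less clean.

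The genuine gap is in step (4), where you locate the heart of the matter. You posit an equation $q_\tau = a\,q_{yy}+\tilde b\,q_y+\tilde c\,q+R$ with $R$ "quadratic in lower-order derivatives", and plan to close the argument by showing $\tilde c<0$ (attributed to "the eigenvalue structure of $\cL$", "high Hermite modes", or coefficients limiting to negative values as $\tau\to-\infty$) and $R$ negligible. This mislocates the mechanism. What actually happens is that differentiating the porous-medium-type equation for $Q$ twice produces, at a critical point of $Q_{xx}$, a right-hand side that is a genuine \emph{cubic} polynomial in $Q_{xx}$:
\[
\bigl(Q_{xx}\bigr)_t-\frac{(Q_{xx})_{xx}}{1+U_x^2}
= -\,\frac{1}{2Q(1+U_x^2)^3}\,(Q_{xx}+2)\,(Q_{xx}-2U_x^2)\,\bigl[(1-3U_x^2)Q_{xx}-8U_x^2\bigr].
\]
The sign of this expression is \emph{not} controlled by any coefficient becoming negative as $\tau\to-\infty$; it is controlled by the pointwise convexity identity
\[
Q_{xx}-2U_x^2 = 2UU_{xx} \le 0,
\]
which holds everywhere for all time (no asymptotics needed), together with the elementary observation that $Q_{xx}>0$ forces the bracket $(1-3U_x^2)Q_{xx}-8U_x^2$ to be negative (a short case analysis on $3U_x^2\lessgtr 1$, or just the inequality $Q_{xx}<2U_x^2$ directly). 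In the factorization you propose, the "$\tilde c$" you would extract — namely $-\frac{(Q_{xx}+2)(Q_{xx}-2U_x^2)(1-3U_x^2)}{2Q(1+U_x^2)^3}$ — is actually \emph{positive} in the cylindrical region, and the conclusion is only reached through cancellation between $\tilde c\,q$ and your "$R$", driven precisely by the convexity inequality. So the asymptotics of Theorem \ref{thm-old} play no role in the sign analysis at the interior maximum; they are only used to show $(u^2)_{yy}<0$ near the tips (where the paper gets the explicit bound $(u^2)_{yy}\le -\frac{1}{2+n}$ from the expansion of $Z_0$ near $\rho=0$). Also, your remark "if $q$ is small the claim is trivial" is not correct: the Lemma asserts $\frac{d}{d\tau}\max q\le 0$ whenever $\max q>0$, regardless of its magnitude, and the argument must cover that case. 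You do get the tip region right, but the step you identify as the "technical crux" would not close as you describe it.
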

\begin{proof}
  For the proof of this lemma it is more convenient to work in the original scaling $(x,t,U(x,t))$ (see equation \eqref{eq-u-original}) and is related to $(y,\tau,u(y,\tau))$ via the change of variables \eqref{eq-cv1}.
  Set $Q(x,t):=U^2(x,t)$.
  The inequality we want to show is scaling invariant,
  namely   $(U^2)_{xx}(x,t) = (u^2)_{yy}(y,\tau)$.
  Hence, it is sufficient to show that there exists $t_0 \ll -1$ such that
  \[
  \max_{M_t} Q_{xx}(\cdot, t) >0 \quad \mbox{and} \quad t \leq t_0 \implies \frac {d}{d t} \max_{M_t} Q_{xx} (\cdot, t) \leq 0.
  \]
  To this end, we will apply the maximum principle to the evolution of $Q_{xx}$.
  Since $U$ satisfies \eqref{eq-u-original}, a simple calculation shows that
  \[
  Q_t = \frac{ 4QQ_{xx} - 2Q_x^2 }{ 4Q + Q_x^2 } - 2(n-1).
  \]
  Differentiate this equation with respect to $x$ to get
  \begin{equation}
    \label{eq-qxpde}
    \begin{split}
      Q_{xt} &= \frac{4QQ_{xxx}}{4Q+Q_x^2}
      - (4QQ_{xx} - 2Q_x^2) \frac{4Q_x+2Q_xQ_{xx}}{(4Q+Q_x^2)^2} \\
      &= \frac{4QQ_{xxx}}{4Q+Q_x^2} -(4QQ_{xx}-2Q_x^2)(Q_{xx}+2) \frac{2Q_x}{(4Q+Q_x^2)^2}.
    \end{split}
  \end{equation}
  We differentiate again, but this time we only consider points where $Q_{xx}$ is either maximal or minimal, so that $Q_{xxx}=0$.
  Note that
  \begin{equation}
    (4QQ_{xx}-2Q_x^2)_x = 4QQ_{xxx} = 0 \qquad \text{and}\qquad (Q_{xx}+2)_x = Q_{xxx} = 0,
  \end{equation}
  at those points.
  Also,
  \begin{equation*}
    \begin{split}
      \Bigl(\frac{2Q_x}{(4Q+Q_x^2)^2}\Bigr)_x &=
      \frac{2Q_{xx}(4Q+Q_x^2) - 2(4Q_x+ 2 Q_xQ_{xx})(2Q_x)}{(4Q+Q_x^2)^3} \\
      &= 2\; \frac{(4Q-3Q_x^2) Q_{xx} - 8 Q_x^2}{(4Q+Q_x^2)^3} \\
      &= 2\frac{4Q-3Q_x^2}{(4Q+Q_x^2)^3} \Bigl(Q_{xx} - \frac{8Q_x^2}{4Q-3Q_x^2}\Bigr).
    \end{split}
  \end{equation*}
  Using these facts we now differentiate \eqref{eq-qxpde}.
  This leads us to
  \begin{equation*}
    \begin{split}
      Q_{xxt} &- \frac{4QQ_{xxxx}}{4Q+Q_x^2}\\
      &= - (Q_{xx}+2)(4QQ_{xx}-2Q_x^2) \cdot 2\frac{4Q-3Q_x^2}{(4Q+Q_x^2)^3} \Bigl(Q_{xx} - \frac{8Q_x^2}{4Q-3Q_x^2}\Bigr),
    \end{split}
  \end{equation*}
  holding at the maximal or minimal points of $Q_{xx}$.
  Recall that since $Q = U^2$, we have $Q_x^2 = 4U^2U_x^2$.
  Thus the previous equation becomes
  \begin{equation}
    \begin{split}
      \bigl(Q_{xx}\bigr)_t &- \frac{\bigl(Q_{xx}\bigr)_{xx}}{1+U_x^2}\\
      &= - \frac{2}{4Q} (Q_{xx}+2)(Q_{xx}-2U_x^2) \Bigl(Q_{xx} - \frac{8U_x^2}{1-3U_x^2}\Bigr)\frac{1-3U_x^2}{(1+U_x^2)^3}.
      \label{eq-qxxpde}
    \end{split}
  \end{equation}
  
  We will now use \eqref{eq-qxxpde} to conclude that at a maximum point of $Q_{xx}$, such that $Q_{xx} > 0$, we have
  \begin{equation}\label{eqn-Qxx}
    \bigl( Q_{xx}\bigr )_t - \frac{\bigl(Q_{xx}\bigr)_{xx}}{1+U_x^2} \leq 0.
  \end{equation}
  
  Since the equation becomes singular at the tip of the surface, we will
  first show that very near the tip we have $Q_{xx} <0$.
  After going to the $y$ variable and setting $q(y,\tau):=u^2(y,\tau)$,  we have $Q_{xx}=q_{yy}$, where after switching coordinates
  \begin{equation}\label{eqn-qyy10}
    q_{yy} = 2\, (u u_{yy} + u_y^2) = 2\Bigl( -u\,\frac{Y_{uu}}{Y_u^3} + \frac{1}{Y_u^2}\Bigr) = \frac{2}{Z_{\rho}^3}\, (Z_{\rho} - \rho Z_{\rho\rho}).
  \end{equation}
  Since by Corollary \ref{cor-old} we have that $Z(\rho,\tau)$ converges
  uniformly smoothly, as $\tau\to -\infty$, on the set $\rho \leq 1$, to the translating soliton $Z_0(\rho)$, it will be sufficient to show that ${\displaystyle \frac{2}{Z_{\rho}^3}\, ( {Z_0}_{\rho} - \rho {Z_0}_{\rho\rho}) <0}$ near $\rho=0$.
  Since $Z_0$ is a smooth function, this can be easily seen using the Taylor expansion of $Z_0$ near the origin.
  Let $Z_0(\rho) = a \, \rho^2 + b\, \rho^2 + o(\rho)$, as $\rho \to 0$.
  A direct calculation using \eqref{eq-Z-bar} shows that ${\displaystyle a=-\frac 1{2\sqrt{2}n}}$ and ${\displaystyle b=-\frac{\sqrt{2}}{16 n^3 (2+n)}}$, implying that
  \begin{equation}\label{eqn-Qxx10}
    \frac{2}{Z_{\rho}^3}\, ( {Z_0}_{\rho} - \rho {Z_0}_{\rho\rho}) = \frac 1{(2a\rho)^3} \, \frac{\sqrt{2} \, \rho^3}{ 2 n^3 (2+n)} + o(1) = - \frac{2}{2+n} + o(1)
  \end{equation}
  as $\rho \to 0$.
  We conclude that for $\tau \leq \tau_0 \ll -1$ and $\rho$ sufficiently close to zero we have
  \begin{equation}
    \label{eq-Qxx-neg}
    Q_{xx} =q_{yy} \leq - \frac{1}{2+n} < 0.
  \end{equation}
  
  We will now show that at a maximum point where $Q_{xx} > 0$, \eqref{eqn-Qxx} holds.
  By \eqref{eq-Qxx-neg} we know this point cannot be at the tip, and hence all derivatives are well defined at the maximum point of $Q_{xx}$.
  At such a point $Q_{xx}+2 >0$.
  We also have $Q_{xx}= 2UU_{xx}+2U_x^2$, so convexity of the surface implies $ Q_{xx}-2U_x^2 = 2UU_{xx} <0 $ on the entire solution.
  Thus, it is sufficient to show that when $Q_{xx} >0$,
  \begin{equation}\label{eqn-Qxx2}
    \Bigl ( Q_{xx}- \frac{8U_x^2}{1-3U_x^2} \Bigr)\frac{1-3U_x^2}{(1+U_x^2)^3} \leq 0
  \end{equation}
  holds.
  To this end, we will look at the two different cases, when $3U_x^2 <1$ or $3U_x^2 \geq 1$.
  When $3U_x^2 <1$, we also have ${\displaystyle \frac{ 8U_x^2 }{ 1-3U_x^2 } > 8 U_x^2 \geq 2 U_x^2}$, hence ${\displaystyle Q_{xx} - \frac{ 8U_x^2 }{ 1-3U_x^2 } < 0}$ implying that \eqref{eqn-Qxx2} holds.
  In the region where $3U_x^2 \geq 1$, we have ${\displaystyle Q_{xx} - \frac{ 8U_x^2 }{ 1-3U_x^2 } \geq 0}$, thus \eqref{eqn-Qxx2} holds as well.
  We conclude from both cases that at a maximum point where $Q_{xx} > 0$, \eqref{eqn-Qxx} holds.
  
\end{proof}

Let $Z_0$ be the translating Bowl soliton which satisfies \eqref{eq-Z-bar} and the asymptotics \eqref{eq-Z0-asymptotics}.
Recall that we have $Z_0(0) = (Z_0)_\rho(0) =0$, and the sign conventions $(Z_0)_\rho(\rho) < 0$ and $(Z_0)_{\rho\rho} (\rho) < 0$, for $\rho >0$ (see Remark \ref{rem-symmetry}), which also imply that $Z_0(\rho) < 0$ for $\rho >0$.
By Corollary \ref{cor-old} we have $\lim_{\tau \to -\infty} Z(\rho,\tau) =Z_0(\rho) $, smoothly on compact sets in $\rho$.
Thus \eqref{eqn-qyy10} implies that ${\displaystyle q_{yy} \sim \frac{2}{(Z_0)_{\rho}^3}\, ((Z_0)_{\rho} - \rho (Z_0)_{\rho\rho})}$ for $\tau \leq \tau_0 \ll -1$.
In the proof of the previous lemma we have shown that this quantity is negative near the origin $\rho=0$.
We will next show that it remains negative for all $\rho >0$.

\begin{lemma}\label{lemma-help2} On the translating Bowl soliton
  $Z_0(\rho)$ which satisfies equation \eqref{eq-Z-bar} we have
  \[
  \frac{2}{(Z_0)_{\rho}^3}\, \big ((Z_0)_{\rho} - \rho (Z_0)_{\rho\rho} \big )< 0
  \]
  for any $\rho \geq 0$.
\end{lemma}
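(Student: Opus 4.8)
The plan is to reduce the claimed inequality to a sign statement about a single auxiliary function and then to establish that sign by a first-zero (maximum principle) argument on the ODE \eqref{eq-Z-bar}. Since $(Z_0)_\rho(\rho)<0$ for $\rho>0$ (which follows from \eqref{eq-Z-bar} together with $Z_0(0)=(Z_0)_\rho(0)=0$, or from Remark~\ref{rem-symmetry} and strict convexity), we have $(Z_0)_\rho^3<0$ there, so for $\rho>0$ the assertion
\[
\frac{2}{(Z_0)_\rho^3}\bigl((Z_0)_\rho-\rho(Z_0)_{\rho\rho}\bigr)<0
\]
is equivalent to $G(\rho):=(Z_0)_\rho-\rho(Z_0)_{\rho\rho}>0$. (At $\rho=0$ the expression is a removable $0/0$, whose value was already found to be negative in \eqref{eqn-Qxx10}.) Thus it suffices to prove $G>0$ on $(0,\infty)$.

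First I would record the two facts that launch the argument. From the Taylor expansion $Z_0(\rho)=a\rho^2+b\rho^4+o(\rho^4)$ with $a=-\tfrac1{2\sqrt2\,n}$ and $b=-\tfrac{\sqrt2}{16\,n^3(2+n)}$ (as used in the proof of Lemma~\ref{lemma-help}) one computes $G(\rho)=-8b\,\rho^3+o(\rho^3)$, and since $-8b>0$ this gives $G(0)=0$ and $G(\rho)>0$ for all small $\rho>0$. Moreover $Z_0$ is smooth on $[0,\infty)$, the apparent singularity of \eqref{eq-Z-bar} at the origin being removed by the initial conditions.

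The key computation is an identity for $G'$. Differentiating directly, $G'=-\rho\,(Z_0)_{\rho\rho\rho}$. Writing \eqref{eq-Z-bar} as $(Z_0)_{\rho\rho}=-(1+(Z_0)_\rho^2)\bigl(\tfrac{n-1}{\rho}(Z_0)_\rho+\tfrac{\sqrt2}{2}\bigr)$, differentiating once more, and using \eqref{eq-Z-bar} again to replace $\tfrac{n-1}{\rho}(Z_0)_\rho+\tfrac{\sqrt2}{2}$ by $-\tfrac{(Z_0)_{\rho\rho}}{1+(Z_0)_\rho^2}$, a short calculation yields
\[
(Z_0)_{\rho\rho\rho}=\frac{2\,(Z_0)_\rho\,(Z_0)_{\rho\rho}^2}{1+(Z_0)_\rho^2}+\frac{(n-1)\bigl(1+(Z_0)_\rho^2\bigr)}{\rho^2}\,G .
\]
Now suppose $G$ had a zero in $(0,\infty)$ and let $\rho_0>0$ be the smallest one, so that $G>0$ on $(0,\rho_0)$ and $G(\rho_0)=0$, hence $G'(\rho_0)\le 0$, i.e.\ $(Z_0)_{\rho\rho\rho}(\rho_0)\ge 0$. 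But at $\rho_0$ the $G$-term in the displayed identity drops out, so $(Z_0)_{\rho\rho\rho}(\rho_0)=\dfrac{2\,(Z_0)_\rho(\rho_0)\,(Z_0)_{\rho\rho}(\rho_0)^2}{1+(Z_0)_\rho(\rho_0)^2}\le 0$, and in fact it is \emph{strictly} negative: if $(Z_0)_{\rho\rho}(\rho_0)=0$, then $G(\rho_0)=0$ would force $(Z_0)_\rho(\rho_0)=0$, contradicting $\rho_0>0$. This contradiction shows $G>0$ on all of $(0,\infty)$, which is exactly the lemma.

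The argument is short; the only places needing care are the algebra producing the identity for $(Z_0)_{\rho\rho\rho}$ — arranged so that the $G$-term appears with a manifestly nonnegative coefficient and hence disappears favorably at $\rho_0$ — and the remark that $(Z_0)_{\rho\rho}$ cannot vanish at a first zero of $G$. I do not expect any genuine obstacle beyond this bookkeeping.
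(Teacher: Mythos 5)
Your proof is correct, and it takes a genuinely different (and in fact tighter) route than the paper. The paper proves this lemma by flipping coordinates, setting $x=Z_0(\rho)$, $\rho=U_0(x)$, $Q=U_0^2$, and observing that the target quantity equals $Q_{xx}$. It then shows $Q_{xx}<0$ near $x=0$ via \eqref{eqn-Qxx10}, shows $Q_{xx}<0$ for $|x|\geq\ell$ by invoking the asymptotics of $Z_0$ at infinity, and closes the gap on $[-\ell,-\eta]$ by a maximum-principle argument on the second-order equation for $Q_{xx}$ derived as in Lemma~\ref{lemma-help}. Your argument stays in the $\rho$ variable, reduces the claim to $G(\rho):=(Z_0)_\rho-\rho(Z_0)_{\rho\rho}>0$ using $(Z_0)_\rho<0$, and establishes the algebraic identity
\[
(Z_0)_{\rho\rho\rho}=\frac{2\,(Z_0)_\rho\,(Z_0)_{\rho\rho}^2}{1+(Z_0)_\rho^2}+\frac{(n-1)\bigl(1+(Z_0)_\rho^2\bigr)}{\rho^2}\,G,
\]
which I verified by direct differentiation of \eqref{eq-Z-bar} and back-substitution. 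Combined with $G'=-\rho\,(Z_0)_{\rho\rho\rho}$ and the positivity of $G$ for small $\rho$ (from the Taylor coefficient $-8b=\tfrac{\sqrt2}{2n^3(2+n)}>0$), a first-zero argument immediately gives a sign contradiction. This is more economical than the paper's route in two respects: it does not require the coordinate flip or the resulting loss of $C^1$ regularity at the tip, and it does not require the large-$\rho$ asymptotics \eqref{eq-Z0-asymptotics} as a second boundary barrier, since the first-zero argument covers all of $(0,\infty)$ in one stroke once the sign is secured near the origin. The small-$\rho$ verification in your argument plays the same role as \eqref{eqn-Qxx10} in the paper's proof. One minor point of bookkeeping: your equivalence between the lemma's statement and $G>0$ is justified for $\rho>0$; at $\rho=0$ the statement should be read as the limiting value, which is negative by \eqref{eqn-Qxx10}, as you note.
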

\begin{proof}
  The proof simply follows from the maximum principle in a similar manner as the proof of Lemma \ref{lemma-help}.
  To use the calculations from before we need to flip the coordinates.
  Setting $x=Z_0(\rho)$, after we flip coordinates we have $\rho=U_0(x)$ for some function $U_0>0$.
  Since we have assumed above that $Z_0 \leq 0$, we also have that $x \leq 0$.
  Setting $Q:=U_0^2$ we find that ${\displaystyle Q_{xx} = \frac{2}{(Z_0)_{\rho}^3}\, \big ((Z_0)_{\rho} - \rho (Z_0)_{\rho\rho} \big )}$, hence it is sufficient to show that $Q_{xx} <0$ for $x < 0$.
  
  A direct calculation shows that $U_0$ satisfies the equation
  \[
  \frac{(U_0)_{xx}} {1+(U_0)_x^2} - \frac{n-1} {U} = \frac{\sqrt 2}{2} \, (U_0)_x.
  \]
  Note that in addition to $U >0$ for $x<0$, we have $(U_0)_x = 1/(Z_0)_\rho <0$ and $(U_0)_{xx} = - (Z_0)_{\rho\rho}/(Z_0)_\rho^3 <0$.
  Also since $(U_0)_x \to -\infty$ as $x \to 0$ the function $U_0$ fails to be a $C^1$ function near $x=0$.
  However this is not a problem since we have shown in the proof of the previous Lemma that \eqref{eqn-Qxx10} holds, implying that $Q_{xx} <0$ for $|x| \leq \eta$, if $\eta$ chosen sufficiently small.
  In addition a direct calculation where we use that $Z_0(\rho)$ satisfies the asymptotics
  \[
  Z_0(\rho) = -\frac{\rho^2}{2\sqrt{2}(n-1)} + \log\rho + o(\log\rho), \qquad \mbox{as}\,\,\,\rho\to \infty,
  \]
  as shown in by Proposition 2.1 in \cite{AV}, leads to
  \[
  Q_{xx} = \frac{2}{(Z_0)_{\rho}^3}\, \big ((Z_0)_{\rho} - \rho (Z_0)_{\rho\rho} < 0,
  \]
  for $\rho$ sufficiently large which is equivalent to $|x| \geq \ell $ with $\ell $ sufficiently large.
  
  We will now use the maximum principle to conclude that $Q_{xx} <0$ for $\eta < |x| < \ell$.
  Similarly to the computation in the proof of the previous lemma, after setting $Q:=U_0^2$, we find that
  \[
  \frac{ 4QQ_{xx} - 2Q_x^2 }{ 4Q + Q_x^2 } - 2(n-1) = \frac{\sqrt 2}{2} \, Q_x.
  \]
  After we differentiate twice in $x$,  following the same calculations as in the proof of Lemma \ref{lemma-help},  we find that $Q_{xx}$ satisfies the equation
  \[
  \begin{split}
    \frac{\sqrt 2}2 \, & \Bigl(Q_{xx}\Bigr)_x - \frac{\bigl(Q_{xx}\bigr)_{xx}}{1+(U_0)_x^2}\\
    &= - \frac{2}{4Q} (Q_{xx}+2)(Q_{xx}-2(U_0)_x^2) \Bigl(Q_{xx} - \frac{8 (U_0) _x^2}{1-3(U_0)_x^2}\Bigr)\frac{1-3(U_0)_x^2}{(1+(U_0)_x^2)^3}.
  \end{split}
  \]
  Assume that $Q_{xx}$ assumes a {\em positive} maximum at some point $x_0 \in [-\ell, -\eta]$.
  Arguing exactly as in Lemma \ref{lemma-help} we conclude that at a maximum point of $Q_{xx}$ where $Q_{xx} >0$, we have
  \[
  - \frac{2}{4Q} (Q_{xx}+2)(Q_{xx}-2U_x^2) \Bigl(Q_{xx} - \frac{8U_x^2}{1-3U_x^2}\Bigr)\frac{1-3U_x^2}{(1+U_x^2)^3} \leq 0.
  \]
  On the other hand at this point we also have that $Q_{xxx} =0$ and $Q_{xxxx} \leq 0$.
  If $Q_{xxxx}(x_0) <0$ at the maximum point $x_0$ we have reached a contradiction.
  If $Q_{xxxx}(x_0)=0$, then by replacing $Q_{xx}$ by $Q_{xx} - \epsilon (x-x_0)^2$ where $\epsilon = \epsilon (\eta, \ell) >0$ and sufficiently small, then $Q_{xx} - \epsilon (x-x_0)^2$ also attains its maximum at point $x_0$, where now
  \[
  \frac{\sqrt 2}2 \, \Bigl(Q_{xx} - \epsilon (x-x_0)^2 \Bigr)_x - \frac{\bigl(Q_{xx} - \epsilon (x-x_0)^2 \bigr)_{xx}}{1+(U_0)_x^2} >0
  \]
  leading again to contradiction.
  Hence, $Q_{xx}$ cannot achieve a positive maximum on $[-\ell,-\eta]$ finishing the proof of our lemma.
\end{proof}

For the purpose of the next lemma we consider $U(x,t)$ to be a solution to the unrescaled mean curvature flow equation \eqref{eq-u-original}.

\begin{lemma}\label{lemma-help3}
  If the hypersurface $M_{t_0}$ defined by $U(\cdot,t_0)$ encloses the interval $(x_0-2\ell, x_0+2\ell) \times \{0\}$, and if this interval is sufficiently long in the sense that
  \[
  \ell \geq \sqrt{2n+1}\,\, U(x_0, t_0),
  \]
  then
  \[
  U(x, t) + \ell| \, U_{x}(x, t)| \leq 8 \sqrt{2n+1} \, U(x_0, t_0),
  \]
  for all $x\in [x_0-\ell, x_0+\ell]$ and $t\in (t_0- U(x_0, t_0)^2, t_0]$.
\end{lemma}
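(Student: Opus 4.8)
The plan is a barrier argument built on two ingredients: the convexity of the Ancient Oval $M_t$ (so that $U(\cdot,t)$ is concave on the interval where it is positive and the solid convex bodies $K_t:=\{(x,x')\in\R\times\R^n:|x'|\le U(x,t)\}$ are nested, $K_{t_0}\subseteq K_t$ for $t\le t_0$), and the avoidance principle applied forward in time. After a translation I may assume $x_0=0$; write $\rho:=U(0,t_0)$. Since $K_{t_0}$ already contains the segment $[-2\ell,2\ell]\times\{0\}$, nestedness gives $[-2\ell,2\ell]\times\{0\}\subset K_t$, hence $U(\cdot,t)>0$ on $(-2\ell,2\ell)$, for every $t\le t_0$.

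I would first reduce everything to a bound on the central value $U(0,t)$. For $x\in[-\ell,\ell]$ and $t\le t_0$, concavity forces the graph of $U(\cdot,t)$ — and therefore $K_t$ — to lie below the tangent line at $x$, so $U(x,t)+U_x(x,t)(X-x)\ge0$ for all $X\in[-2\ell,2\ell]$; evaluating at $X=-2\ell$ or $X=2\ell$ according to the sign of $U_x$, and using $|{-2\ell}-x|,|2\ell-x|\ge\ell$ on $[-\ell,\ell]$, yields $\ell\,|U_x(x,t)|\le U(x,t)$. A routine slope–monotonicity chord estimate for the concave function $U(\cdot,t)$ (together with $U(\pm2\ell,t)\ge0$) gives $U(x,t)\le\tfrac32\,U(0,t)$ on $[-\ell,\ell]$. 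Combining,
\[
U(x,t)+\ell\,|U_x(x,t)|\le 2U(x,t)\le 3\,U(0,t),\qquad x\in[-\ell,\ell],\ t\le t_0,
\]
so it suffices to prove $U(0,t)<2\sqrt{2n}\,\rho$ for $t\in(t_0-\rho^2,t_0]$.

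The key step is to bound $U(0,t)$ by an inscribed-ball/avoidance argument, run \emph{forward} in time because that is the only direction in which comparison is available. Fix $t_1\in(t_0-\rho^2,t_0]$ and set $A':=U(0,t_1)$; suppose $A'\ge 2\sqrt{2n}\,\rho$ and aim for a contradiction. Since $K_{t_1}$ is convex and contains both $[-2\ell,2\ell]\times\{0\}$ and the cross-section $\{0\}\times\overline{D_{A'}}$ (the closed disk of radius $A'$ in $\R^n$), it contains their convex hull, the double cone $\{(x,x'):|x|\le2\ell,\ |x'|\le(1-|x|/2\ell)A'\}$, which in turn contains the ball $B(0,r)$ centered at the origin with $r=2\ell A'/\sqrt{4\ell^2+A'^2}$. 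The hypothesis $\ell\ge\sqrt{2n+1}\,\rho$ enters precisely here: an elementary computation with $A'\ge2\sqrt{2n}\,\rho$ and $\ell^2\ge(2n+1)\rho^2$ gives $r^2>2n\rho^2$, so the shrinking ball $B(0,\sqrt{r^2-2n(t-t_1)})$ does not vanish before $t_0$ (as $t_0-t_1<\rho^2<r^2/2n$). By the avoidance principle this ball stays inside $K_t$ for $t\in[t_1,t_0]$; intersecting with $\{x=0\}$ at $t=t_0$ forces $\rho=U(0,t_0)\ge\sqrt{r^2-2n(t_0-t_1)}$, hence $r^2<(2n+1)\rho^2$. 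But $r^2=2\ell A'\,\big/\sqrt{4\ell^2+A'^2}$ is increasing in $\ell$, so $r^2\ge 4(2n+1)\rho^2A'^2\big/\big(4(2n+1)\rho^2+A'^2\big)$, and $r^2<(2n+1)\rho^2$ then gives $3A'^2<4(2n+1)\rho^2$, i.e. $A'<\tfrac2{\sqrt3}\sqrt{2n+1}\,\rho<2\sqrt{2n}\,\rho$ — contradicting $A'\ge 2\sqrt{2n}\,\rho$. Thus $U(0,t)<2\sqrt{2n}\,\rho$ on $(t_0-\rho^2,t_0]$, and combined with the previous display,
\[
U(x,t)+\ell\,|U_x(x,t)|\le 3\,U(0,t)<6\sqrt{2n}\,\rho\le 6\sqrt{2n+1}\,\rho<8\sqrt{2n+1}\,\rho .
\]

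The hardest point is the backward-in-time character of the statement: the hypothesis concerns $M_{t_0}$ but the conclusion concerns $t\le t_0$, and mean curvature flow only propagates comparisons forward — this is what forces the indirect formulation above, in which one traps a ball inside $K_{t_1}$ and flows it forward to $t_0$. Everything else (the inscribed ball in the double cone, the chord estimate, the tangent-line estimate for $|U_x|$, and checking that $\ell\ge\sqrt{2n+1}\,\rho$ makes the trapped ball survive the whole interval of length $\rho^2$) is elementary, and the constant $8$ is a comfortable rounding of the $6\sqrt{2n}/\sqrt{2n+1}\le6$ the argument actually produces.
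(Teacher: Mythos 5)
Your proof is correct, and it rests on exactly the same two ingredients the paper uses: (i) the concavity of $U(\cdot,t)$ together with the persistence of the enclosed segment under backward MCF, which gives $\ell\,|U_x(x,t)|\le U(x,t)$ and a chord bound relating $U(x,t)$ to $U(0,t)$ for $|x|\le\ell$; and (ii) a forward-in-time avoidance argument with a shrinking round sphere. The only real difference lies in how the sphere comparison is set up. In the paper, one normalizes $U(x_0,t_0)=1$, observes that $M_0$ touches $\bar B_1(0,0)$, and deduces via avoidance that $M_t$ must meet $B_{\sqrt{1-2nt}}(0,0)$ for each earlier $t$, which produces a point $x'$ with $|x'|\le\sqrt{2n+1}$ and $U(x',t)\le\sqrt{2n+1}$; the chord estimate then yields $U(0,t)\le 2\sqrt{2n+1}$. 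You instead argue by contradiction: assuming $U(0,t_1)$ is large, you inscribe a ball in the double cone spanned by the enclosed segment and the cross-section $\{0\}\times\bar D_{U(0,t_1)}$, flow it forward to $t_0$, and derive a contradiction with $U(0,t_0)=\rho$. Both are anchored at $t_0$ and both flow spheres forward; yours exploits the segment containment twice (once for concavity, once to build the cone), whereas the paper uses the segment only for the concavity step and needs no explicit inscribed ball. Your version also sharpens the constant to $6\sqrt{2n}$, whose roundup to $8\sqrt{2n+1}$ matches the statement. One small remark worth recording: when the inscribed ball is only tangent to $M_{t_1}$ rather than strictly interior, one should appeal to the weak comparison principle for nested convex bodies (or the strong maximum principle ruling out a persistent interior tangency) to conclude the shrinking ball remains in $K_t$; this is the same open/closed-set subtlety that is implicit in the paper's "must intersect" step.
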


\begin{proof}
  After translation in space and time, and after cylindrical rescaling of the solution $M_t$ we may assume that $U(x_0,t_0)=1$, $x_0=t_0=0$.
  The assumption on $\ell$ then simply reduces to $\ell\geq \sqrt{2n+1}$.
  
  Since the hypersurfaces $M_t$ are convex they expand in backward time under MCF.
  Thus, if $M_0$ encloses the line segment $[-2\ell, +2\ell] \times \{0\}$ (i.e.~if $U(x, 0)$ is defined for $|x|\leq 2\ell$), then so does $M_t$ for all $t<0$.
  
  For now we ignore the fact that $M_t$ evolves by MCF, and merely consider the consequences of convexity for the hypersurface at some time $t\in(-1,0]$.
  
  If $x\mapsto U(x, t)$ is a non negative concave function that is defined for $|x|\leq 2\ell$, then we have
  \begin{equation}
    \label{eq-U-convex-estimate}
    \tfrac12 U(0, t) \leq U(x,t)\leq 2 U(0, t)\,\, \text{ for } \,\, |x|\leq \ell \,\, \text{ and } \, -1<t\leq 0
  \end{equation}
  (see Figure~\ref{fig-cylindrical}).
  The concavity of $x\mapsto U(x, t)$ also implies that at any $x\in\R$ for which $U (\cdot, t)$ is defined on the whole interval $(x-\ell, x+\ell)$, one has the derivative estimate
  \[
  |U_x(x, t)| \leq \frac{U(x,t)} {\ell}
  \]
  (see again Figure~\ref{fig-cylindrical}).
  Combined with \eqref{eq-U-convex-estimate} this leads us to 
  \begin{equation}
    \label{eq-U-convex-derivative-estimate}
    |U_x(x, t)| \leq 2\, \frac{U(0,t)}{\ell} \,\, \text{ for } \ |x|\leq \ell \ \text{ and } \, -1<t\leq 0.
  \end{equation}
  
  We now recall that $M_t$ is a solution to MCF.
  Since $U(0,0)=1$, the hypersurface $M_0$ intersects the closed ball $\bar B_1(0,0)$.
  By the maximum principle for MCF, the hypersurface $M_t$ must intersect $B_{\sqrt{1-2nt}}(0,0)$ for all $t\in(-1, 0]$.
  It follows that for each $t\in(-1, 0]$ there is an $x'$, with $|x'|\leq \sqrt{1+2n}$, for which $U(x', t) \leq \sqrt{1+2n}$.
  If we assume that $\ell \geq \sqrt{1+2n}$, then \eqref{eq-U-convex-estimate} implies that $U(0, t) \leq 2 U(x', t) \leq 2\sqrt{1+2n}$.
  Applying this estimate to \eqref{eq-U-convex-estimate} and \eqref{eq-U-convex-derivative-estimate} we find
  \[
  |U(x,t)|\leq 4\sqrt{2n+1}, \qquad \ell |U_x(x, t)| \leq 4\sqrt{2n+1},
  \]
  when $|x|\leq \ell$ and $-1<t\leq 0$.
  
  \begin{figure}[t]\centering
    \includegraphics[width=\textwidth]{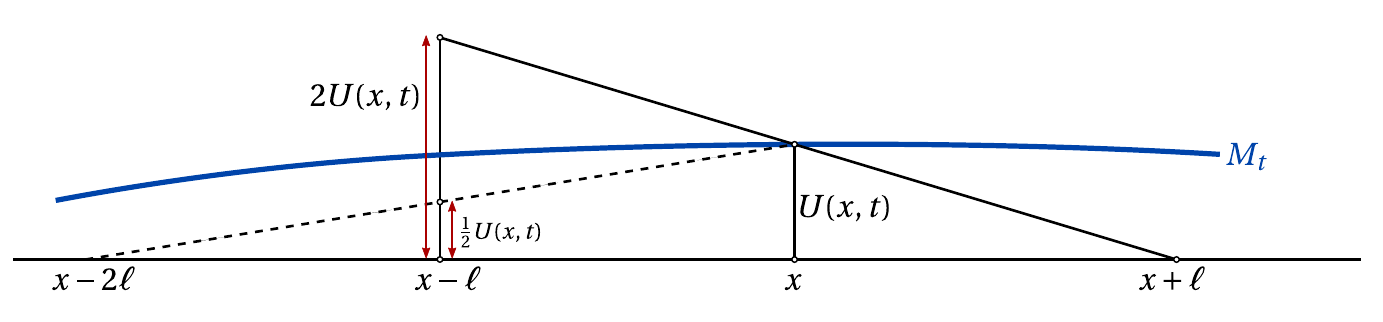}
    
    \includegraphics[width=\textwidth]{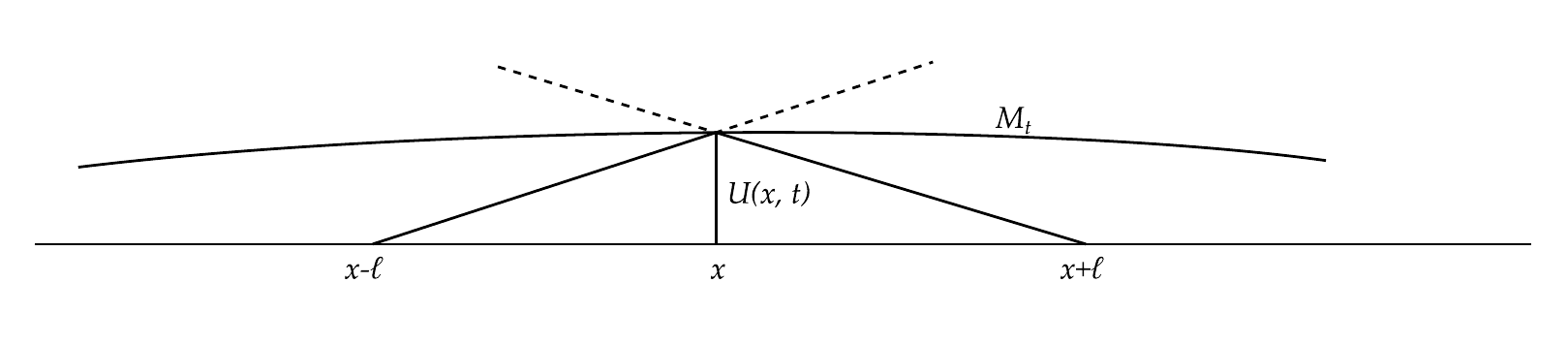}
    \caption{\textbf{Top: } If $M_t$ is a rotationally symmetric convex hypersurface that encloses the line segment $\{(x', 0) \mid x-2\ell \leq x'\leq x+2\ell\}$, then one has $\frac12 U(x,t) \leq U(x', t) \leq 2U(x, t)$ whenever $x-\ell\leq x'\leq x+\ell$.  \textbf{Bottom: } If $M_t$ encloses the line segment $\{(x', 0) \mid x-\ell\leq x'\leq x+\ell\}$, then at $x$ one has $|U_x(x,t)| \leq U(x, t)/\ell$.}
    \label{fig-cylindrical}
  \end{figure}
  
\end{proof}

We will now proceed to the proof of Proposition \ref{pro-Sigurd}.
\begin{proof}[Proof of Proposition \ref{pro-Sigurd}]
  We will argue by contradiction.
  Assuming that our claim doesn't hold, we can find a decreasing sequence $\tau_j \to -\infty$ and points $(y_j,\tau_j)$ such that $q_{yy} (y_j,\tau_j) = \max_{\bar M_{\tau_j }} q_{yy} (\cdot,\tau_j) > 0.$ By the symmetry of our surface, we may assume without loss of generality that $y_j >0$.
  It follows from Lemma \ref{lemma-help} that the sequence $\{ q_{yy} (y_j,\tau_j) \}$ is non-increasing, implying that
  \begin{equation}\label{eq-qyy}
    q_{yy} (y_j,\tau_j) := \max_{\bar M_{\tau_j }} q_{yy} (\cdot,\tau_j) \geq c >0, \qquad \forall i.
  \end{equation}
  
  We need the following two simple claims.
  
  \begin{claim}\label{claim-uy}
    Set $ \delta_j:= |u_y(y_j,\tau_j)|$, where $(y_j,\tau_j)$ as in \eqref{eq-qyy}.
    Then, we have
    \[
    \lim_{\tau_j \to -\infty} \delta_j = 0.
    \]
  \end{claim}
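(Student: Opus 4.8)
The plan is to pin down where the maximizing point $(y_j,\tau_j)$ can lie and then exploit convexity. Recall $q:=u^2$ and that $(y_j,\tau_j)$ realizes $\max_{\bar M_{\tau_j}}q_{yy}\ge c>0$. First I would rule out that $(y_j,\tau_j)$ lies in the cylindrical region: by parts (i)--(ii) of Theorem~\ref{thm-old} together with the a priori $C^2$-estimates of this section, on $\{u\ge\theta/2\}$ one has, in the variable $z=y/\sqrt{|\tau|}$, the expansion $q_{yy}=(u^2)_{yy}=\tfrac{2}{|\tau|}\bigl(\bar u_z^2+\bar u\,\bar u_{zz}\bigr)+o(|\tau|^{-1})$, and since $V(z):=\sqrt{(n-1)(2-z^2)}$ satisfies $V'(z)^2+V(z)V''(z)\equiv-(n-1)$, this forces $q_{yy}\le-\tfrac{n-1}{|\tau|}<0$ there for $\tau\le\tau_0(\theta)$. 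Hence, for $j$ large, $u(y_j,\tau_j)<\theta/2$; in particular $(y_j,\tau_j)$ lies in the tip region, where the inverse profile $Y(u,\tau)$ and the function $Z(\rho,\tau)$ from \eqref{eqn-Y-expansion} are available.

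Next I would show $\rho_j:=u(y_j,\tau_j)\sqrt{|\tau_j|}\to\infty$. By \eqref{eqn-qyy10} we have $q_{yy}=\tfrac{2}{Z_\rho^3}(Z_\rho-\rho Z_{\rho\rho})$, which by Corollary~\ref{cor-old} converges, uniformly on every compact $\rho$-interval, to $\tfrac{2}{(Z_0)_\rho^3}\bigl((Z_0)_\rho-\rho(Z_0)_{\rho\rho}\bigr)$; by Lemma~\ref{lemma-help2} this limit is strictly negative for all $\rho\ge0$. Therefore $q_{yy}<0$ on each fixed set $\{\rho\le R\}$ once $\tau\le\tau_0(R)$, and since $q_{yy}(y_j,\tau_j)\ge c>0$ the numbers $\rho_j$ cannot remain bounded, so $\rho_j\to\infty$.

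Finally, differentiating $u(Y(u,\tau),\tau)=u$ in $u$ gives $u_y(y_j,\tau_j)=1/Y_u=1/Z_\rho(\rho_j,\tau_j)$, so $\delta_j=1/|Z_\rho(\rho_j,\tau_j)|$. By convexity, $Z_{\rho\rho}\le0$ (Remark~\ref{rem-symmetry}) and $Z_\rho(0,\tau)=0$, so $\rho\mapsto|Z_\rho(\rho,\tau)|$ is nondecreasing; hence for any fixed $R$ and all $j$ with $\rho_j\ge R$ we get $|Z_\rho(\rho_j,\tau_j)|\ge|Z_\rho(R,\tau_j)|\to|(Z_0)_\rho(R)|$ as $\tau_j\to-\infty$ (Corollary~\ref{cor-old}). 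Thus $\liminf_j|Z_\rho(\rho_j,\tau_j)|\ge|(Z_0)_\rho(R)|$ for every $R$, and since $|(Z_0)_\rho(R)|\to\infty$ as $R\to\infty$ by the (differentiated) asymptotics \eqref{eq-Z0-asymptotics}, we conclude $|Z_\rho(\rho_j,\tau_j)|\to\infty$, i.e. $\delta_j\to0$.

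The step I expect to be the main obstacle is the first one: extracting genuine $C^2$ control of $u$ uniformly across the whole cylindrical region (up to the edge $z=z_\theta<\sqrt2$ where $u=\theta/2$) from Theorem~\ref{thm-old}, which literally asserts only $C^0$ convergence in part (ii); one must invoke the differentiated form of those asymptotics and the a priori estimates established here. An alternative route to the final slope estimate that avoids the $Z$-coordinate computation is to apply Lemma~\ref{lemma-help3} at the original-variables point $(x_j,t_j)$ with $\ell_j$ comparable to the distance from $x_j$ to the nearer tip: since $\rho_j\to\infty$ this distance dominates $U(x_j,t_j)$, and Lemma~\ref{lemma-help3} then yields $\delta_j=|U_x(x_j,t_j)|\le 8\sqrt{2n+1}\,U(x_j,t_j)/\ell_j\to0$; but the direct convexity argument above seems cleaner.
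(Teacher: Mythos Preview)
Your proof is correct, and its core (steps 2 and 3) is exactly the paper's argument run in the contrapositive direction: the paper argues by contradiction that if $\delta_j\ge\delta>0$ then $|Z_\rho(\rho_j,\tau_j)|\le 1/\delta$, which by monotonicity of $Z_\rho$ and the convergence $Z\to Z_0$ forces $\rho_j\le\rho_\delta$ bounded, whence Lemma~\ref{lemma-help2} (together with the smooth convergence on compact $\rho$-sets) gives $q_{yy}<0$ there, contradicting \eqref{eq-qyy}. You instead go directly: $q_{yy}\ge c>0$ forces $\rho_j\to\infty$ via Lemma~\ref{lemma-help2}, and then monotonicity of $|Z_\rho|$ plus $|(Z_0)_\rho(R)|\to\infty$ gives $\delta_j\to0$. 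Same ingredients, same key lemma.

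Your step 1 (ruling out the cylindrical region via the intermediate asymptotics) is unnecessary, and you have correctly identified it as the weak link: it would require $C^2$ convergence in part (ii) of Theorem~\ref{thm-old}, which is not stated there. The paper sidesteps this entirely. Once you write $\delta_j=1/|Z_\rho(\rho_j,\tau_j)|$, the flipped coordinates $Y$ and $Z$ are defined on the whole half $\{y>y(\tau)\}$ (where $u_y\neq 0$), so your step 2 already covers all possible locations of $(y_j,\tau_j)$: if $\rho_j$ stayed bounded you would land in the soliton region and get $q_{yy}<0$; if $\rho_j\to\infty$ you proceed to step 3. No separate treatment of the cylindrical region is needed.
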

  
  \begin{proof}[Proof of Claim \ref{claim-uy}] Indeed, if our claim doesn't hold this means that there exists a subsequence, which may be assumed without loss of generality to be the sequence $\tau_j$ itself, for which $ |u_y(y_j,\tau_j)| \geq \delta >0$.
    However, after flipping the coordinates and using the change of variables
    \[
    Y(u,\tau) = Y(0, \tau) + \frac{1}{\sqrt{|\tau|}}\,Z \Bigl(\rho, \tau\Bigr), \qquad \rho = \sqrt{|\tau|}\, u
    \]
    we find that for $u_j=u(y_j, \tau_j), \rho_j = \sqrt{|\tau_j|}\, u_j$ we have
    \[
    |u_y(y_j,\tau_j)| = \frac 1{|Y_u(u_j,\tau_j)|} = \frac 1{|Z_\rho(\rho_j,\tau_j)|} \geq \delta \implies |Z_\rho(\rho_j,\tau_j)| \leq \frac 1{\delta}.
    \]
    The monotonicity of $Z_\rho(\rho,\tau)$ in $\rho$ and the convergence $\lim_{\tau \to \infty} Z(\rho, \tau) = Z_0(\rho)$ smoothly on any compact set in $\rho$, imply that $\rho_j \leq \rho_{\delta}$, where $\rho_{\delta}$ is the point at which $|(Z_0)_\rho(\rho_\delta)|=2/\delta$.
    We may assume, without loss of generality that $\delta $ is small, which means that $\rho_\delta$ is large.
    The asymptotics \eqref{eq-Z0-asymptotics} for $Z_0(\rho)$ as $\rho \to \infty$, give that $|(Z_0)_\rho(\rho) | \sim {\rho}/({\sqrt{2} (n-1)})$, as $\rho \to +\infty$, implying that by choosing $\delta$ sufficiently small we have $2/ \delta = |(Z_0)_\rho(\rho_\delta)| \sim {\rho_\delta}/({\sqrt{2} (n-1)})$, or equivalently $\rho_\delta \sim 2\sqrt{2}(n-1)/\delta$.
    Since $\rho_j \leq \rho_\delta$, we conclude that the points $(\rho_j, \tau_j, Z(\rho_j,\tau_j))$, or equivalently the points $(y_j,\tau_j, u(y_j,\tau_j))$, belong to the soliton region where we know that $q_{yy} <0$ by Lemma \ref{lemma-help2}, contradicting our assumption \eqref{eq-qyy}.
  \end{proof}
  
  \begin{claim}\label{claim-uy2}
    Let $y_j^1>0$ be the point for which ${\displaystyle u(y_j^1,\tau_j) = \frac 12 u(y_j,\tau_j)}$, where $(y_j,\tau_j)$ as in \eqref{eq-qyy}.
    If $\bar \delta_j:= |u_y(y_j^1,\tau_j)|$, then
    \[
    \lim_{j \to +\infty}\bar \delta_j =0.
    \]
  \end{claim}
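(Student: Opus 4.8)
The plan is to prove Claim \ref{claim-uy2} by essentially the same scaling/contradiction scheme used in Claim \ref{claim-uy}, but now using the derivative estimates of Lemma \ref{lemma-help3} to control the profile on an interval around $y_j$ of length comparable to the cylindrical scale. Recall from \eqref{eq-qyy} that $q_{yy}(y_j,\tau_j) = \max q_{yy}(\cdot,\tau_j) \ge c > 0$, and from Claim \ref{claim-uy} that $\delta_j = |u_y(y_j,\tau_j)| \to 0$. Writing $q = u^2$, we have $q_{yy} = 2(uu_{yy} + u_y^2)$, so at $(y_j,\tau_j)$ the bound $q_{yy} \ge c$ together with $u_y \to 0$ forces $u\, u_{yy}(y_j,\tau_j) \ge c/2 - \delta_j^2 \ge c/4$ for $j$ large; in particular $u(y_j,\tau_j)$ is bounded below away from $0$ (since $u_{yy} \le 0$ by convexity, $u u_{yy}$ bounded below forces $u$ bounded away from zero — more precisely $u(y_j,\tau_j) \ge c/(4|u_{yy}|)$, and $|u_{yy}|$ is bounded on the relevant region by the a priori estimates). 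So $(y_j,\tau_j)$ lies in the cylindrical part, at a definite $u$-height $u(y_j,\tau_j)\in[c_0,C_0]$.

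First I would translate back to the original (unrescaled) variables via \eqref{eq-cv1}, so that $q_{yy}(y_j,\tau_j) = Q_{xx}(x_j,t_j)$ with $Q=U^2$, $x_j = y_j e^{-\tau_j/2}$, $t_j = -e^{-\tau_j}$. The height bound becomes $U(x_j,t_j)\asymp e^{-\tau_j/2}\to\infty$. Next I apply Lemma \ref{lemma-help3} with $x_0 = x_j$, $t_0 = t_j$: since $M_t$ is convex and, by the asymptotics of Theorem \ref{thm-old} (part (ii)), the surface at this height encloses a line segment of length far exceeding $\sqrt{2n+1}\,U(x_j,t_j)$ around $(x_j,0)$ — indeed the cross-sectional radius near $y=0$ is $\asymp e^{-\tau_j/2}\sqrt{|\tau_j|}$ times larger in the $z$-variable picture, so the enclosed $x$-interval has length $\asymp e^{-\tau_j/2}\sqrt{|\tau_j|} \gg e^{-\tau_j/2}$ — the hypothesis $\ell \ge \sqrt{2n+1}\,U(x_j,t_j)$ holds with $\ell \asymp e^{-\tau_j/2}\sqrt{|\tau_j|}$. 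Lemma \ref{lemma-help3} then gives $U(x,t) + \ell|U_x(x,t)| \le 8\sqrt{2n+1}\,U(x_j,t_j)$ on $[x_j-\ell,x_j+\ell]\times(t_j - U(x_j,t_j)^2, t_j]$, hence $|U_x| \le C\,U(x_j,t_j)/\ell \asymp 1/\sqrt{|\tau_j|} \to 0$ uniformly on that interval.

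Now I would rescale: set $\tilde U_j(x,t) = U(x_j,t_j)^{-1} U\bigl(x_j + U(x_j,t_j)x,\ t_j + U(x_j,t_j)^2 t\bigr)$, a sequence of MCF solutions with $\tilde U_j(0,0)=1$, $(\tilde U_j)_x(0,0) = e^{\tau_j/2}\delta_j/(\text{const})\to 0$... actually $(\tilde U_j)_x(0,0) = U(x_j,t_j)/U(x_j,t_j)\cdot U_x = U_x(x_j,t_j)$ which by the above is $\to 0$, and more importantly $|(\tilde U_j)_x|\to 0$ locally uniformly in space-time by the Lemma \ref{lemma-help3} bound. By standard interior estimates for MCF (or for the quasilinear equation \eqref{eq-u-original}, which becomes uniformly parabolic once $|U_x|$ is bounded and $U$ is bounded above and below), $\tilde U_j$ subconverges in $C^\infty_{loc}$ to a limit $\tilde U_\infty$ solving \eqref{eq-u-original} with $(\tilde U_\infty)_x \equiv 0$, i.e. a shrinking cylinder $\tilde U_\infty(t) = \sqrt{1 - 2(n-1)t}$. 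In particular $(\tilde U_\infty)_{xx} \equiv 0$, so $\tilde Q_\infty := \tilde U_\infty^2$ has $(\tilde Q_\infty)_{xx} \equiv 0$, contradicting $(\tilde Q_j)_{xx}(0,0) = Q_{xx}(x_j,t_j) = q_{yy}(y_j,\tau_j) \ge c > 0$ (this quantity is scaling invariant). This contradiction shows that in fact $u(y_j,\tau_j)$ cannot stay bounded away from $0$ along a subsequence; but since we argued it must, we instead conclude that the hypothesis of Claim \ref{claim-uy2} is the real content — namely, we must show $\bar\delta_j = |u_y(y_j^1,\tau_j)| \to 0$ where $u(y_j^1,\tau_j) = \tfrac12 u(y_j,\tau_j)$. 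For this, note $y_j^1$ lies on the same cross-sectional level $\asymp \tfrac12 u(y_j,\tau_j)$, so back in $x$-variables $x_j^1$ is within distance $\le \ell$ of $x_j$ (by convexity, halving the height happens within a controlled distance, again by the enclosed-segment argument), hence the Lemma \ref{lemma-help3} estimate applies at $x_j^1$ too and gives $|U_x(x_j^1,t_j)| \le C/\sqrt{|\tau_j|}\to 0$, i.e. $\bar\delta_j = |u_y(y_j^1,\tau_j)| = |U_x(x_j^1,t_j)| \to 0$.

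The main obstacle I anticipate is verifying cleanly that $x_j^1$ (the point where the height is halved) lies within the interval $[x_j-\ell, x_j+\ell]$ on which Lemma \ref{lemma-help3}'s conclusion holds, with $\ell$ the same cylindrical-scale length; this requires quantifying, via Theorem \ref{thm-old}(ii) and convexity (concavity of the profile $U$), that the profile does not drop from $u(y_j,\tau_j)$ to $\tfrac12 u(y_j,\tau_j)$ too fast — equivalently a lower bound on the horizontal distance over which $U$ halves, which follows from the derivative bound $|U_x|\le C U(x_j,t_j)/\ell$ itself (so the halving distance is $\gtrsim \ell/(2C)$, still comparable to $\ell$, and one simply enlarges $\ell$ by a fixed factor at the start). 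A secondary technical point is justifying the $C^1$ (indeed $C^\infty_{loc}$) convergence of the rescaled flows to the cylinder despite the degeneration — but this is exactly where the uniform gradient bound from Lemma \ref{lemma-help3} makes \eqref{eq-u-original} uniformly parabolic on the relevant compact sets, so Schauder/Krylov–Safonov estimates apply in the standard way.
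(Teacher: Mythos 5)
Your argument has a fatal sign error at the start and then reaches for the wrong tool. From $q_{yy}(y_j,\tau_j)\ge c$ and $\delta_j\to 0$ you deduce $u\,u_{yy}(y_j,\tau_j)\ge c/4>0$; but you also cite $u_{yy}\le 0$ (concavity of the convex profile), and together with $u>0$ that forces $u\,u_{yy}\le 0$, directly contradicting the inequality you just wrote. The claimed consequence $u(y_j,\tau_j)\ge c/(4|u_{yy}|)$ does not follow (it would require $u_{yy}>0$), so the height lower bound underpinning your whole cylindrical-region/Lemma~\ref{lemma-help3} scheme is never established; the ``circularity'' you notice mid-proof is the symptom. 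Moreover Lemma~\ref{lemma-help3} is not the right tool here: Claim~\ref{claim-uy2} must rule out the possibility that $(y_j^1,\tau_j)$ lies in the soliton region, and there the lemma's output does not decay. Indeed, if $\rho_j^1:=\sqrt{|\tau_j|}\,u(y_j^1,\tau_j)$ stays bounded, the largest admissible $\ell$ at $x_j^1$ (roughly half the distance to the tip) satisfies $U(x_j^1,t_j)/\ell\asymp 1/\rho_j^1$ by the $Z_0$ asymptotics, so the gradient bound is only $\cO(1)$, consistent with the fact that $|u_y(y_j^1,\tau_j)|=1/|Z_\rho(\rho_j^1,\tau_j)|$ is genuinely bounded below in that region.

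The paper's proof of Claim~\ref{claim-uy2} is a short adaptation of Claim~\ref{claim-uy} and uses neither Lemma~\ref{lemma-help3} nor a blow-up to the cylinder. Assume $\bar\delta_j\ge\delta>0$ along a subsequence. Running the coordinate-flip argument from Claim~\ref{claim-uy} at $y_j^1$ gives $|Z_\rho(\rho_j^1,\tau_j)|\le 1/\delta$, and the monotonicity of $Z_\rho$ together with the smooth convergence to $Z_0$ and its quadratic growth forces $\rho_j^1\le L$ for some uniform $L$, i.e.~$u(y_j^1,\tau_j)\le L/\sqrt{|\tau_j|}$. Then $u(y_j,\tau_j)=2u(y_j^1,\tau_j)\le 2L/\sqrt{|\tau_j|}$, so the maximum point $(y_j,\tau_j)$ itself lies in the soliton region, where Lemma~\ref{lemma-help2} forces $q_{yy}(y_j,\tau_j)<0$ for $\tau_j\ll-1$, contradicting~\eqref{eq-qyy}. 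Lemma~\ref{lemma-help3} and the blow-up to a cylinder are reserved for the \emph{final} step of Proposition~\ref{pro-Sigurd}, after Claims~\ref{claim-uy} and~\ref{claim-uy2} have already supplied the gradient decay $\bar\delta_j\to 0$ needed to verify its hypotheses.
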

  
  \begin{proof}[Proof of Claim \ref{claim-uy2}]
    We will again argue by contradiction.
    If our claim doesn't hold this means that there exists a subsequence, which may be assumed without loss of generality to be the sequence $\tau_j$ itself, for which $ |u_y^1(y_j^1,\tau_j)| \geq \delta >0$.
    Then, arguing as in the previous claim implies that $(y_j^1,\tau_j,u(y_j^1,\tau_j))$ belong to the tip region which means that $u(y_j^1,\tau_j) \leq L / \sqrt{|\tau|} $, for some uniform number $L >0$.
    Since ${\displaystyle u(y_j^1,\tau_j) = \frac 12 u(y_j,\tau_j)}$, we also have $u(y_j,\tau_j) \leq 2\, L / \sqrt{|\tau|} $, implying that the points $(y_j,\tau_j,u(y_j,\tau_j))$ belong to the tip region as well and by Lemma \ref{lemma-help2} we must have $q_{yy}(y_j,\tau_j) <0$, for $\tau_j \ll -1$ contradicting our assumption \eqref{eq-qyy}.
  \end{proof}
  
  We will now conclude the proof of the proposition.
  Let $(y_j,\tau_j)$ be our maximum points for $q_{yy}$ as in \eqref{eq-qyy} and let $y_j^1$ be the points for which ${\displaystyle u(y_j^1,\tau_j) = \frac 12 u(y_j,\tau_j)}$, as in the previous claim.
  Since $u_y <0$, we must have $ y_j^1 > y_j$ and by the concavity of $u$ we obtain
  \begin{equation}\label{eqn-est100}
    \frac 12 u(y_j,\tau_j) = u(y_j,\tau_j) - u(y_j^1,\tau_j) \leq |u_y (y_j^1, \tau_j)|\, |y_j - y_j^1| = \bar \delta_j \, |y_j - y_j^1|.
  \end{equation}
  
  We will now use a rescaling argument to reach a contradiction.
  To this end it is more convenient to work in the original variables, rescaling the solution $U(x,t)$.
  Denote by $(x_j,t_j)$, $(x_j^1,t_j)$ the points corresponding to $(y_j,\tau_j)$, $(y_j^1,\tau_j)$, respectively.
  Setting
  \[
  U_j( \bar x, \bar t) = \frac 1{\alpha_j} U(x_j+ \alpha_j \, \bar x , t_j + \alpha_j^2 \, \bar t), \qquad \alpha_j := U(x_j,t_j)
  \]
  it follows that all $U_j$ satisfy the same equation \eqref{eq-u-original} with $U_j(0,0)=1$.
  Denote by $\bar x_j^1$ the point at which $x_j + \alpha_j \, \bar x_j^1 = x_j^1$.
  Since, by \eqref{eqn-est100} we have
  \[
  \frac 12 U(x_j,t_j) \leq \delta_j \, |x_j - x_j^1|
  \]
  in terms of the rescaled solutions we have
  \[
  \frac 12 \alpha_j \, U_j(0,0) \leq \bar \delta_j \, |x_j - (x_j + \alpha_j \, \bar x_j^1 ) | = \bar \delta_j \, \alpha_j \, | \bar x_j^1|,
  \]
  where $x_j + \alpha_j \, \bar x_j^1 = x_j^1$.
  Thus, defining the length $l_j$ so that $2 l_j:= x_j^1 >0$, it follows that
  \[
  2l_j= \bar x_j^1 \geq \frac{1}{2 \bar \delta_j}.
  \]
  
  Now consider $U_j$ on the interval $[-2 l_j, 2 l_j]=[-\bar x_j^1, \bar x_j^1]$ and apply Lemma \ref{lemma-help3}.
  Let us verify that the assumptions of the lemma hold.
  The rescaled surface, defined through the rescaled width function $U_j(\cdot,0)$ encloses the interval $(-\bar{x}_j^1, \bar{x}_j^1)$ and
  \[
  \bar{x}_j^1 = 2l_j \ge \frac{1}{2\bar{\delta}_j} \to \infty,
  \]
  as $j\to\infty$, where we have used Claim \ref{claim-uy2}.
  Moreover,
  \[
  l_j \ge \sqrt{2n+1} = \sqrt{2n+1} U_j(0,0), \quad \mbox{for}  \,\, j \gg 1. 
  \]
  We can now apply lemma \ref{lemma-help3} to conclude 
  \[
  |(U_j)_{\bar{x}}(\cdot,\bar{t})| \le \frac{8\sqrt{2n+1}}{l_j} \le C\, \bar{\delta}_j \ll -1, \quad \mbox{for}  \,\, j \gg 1,
  \]
  for all $\bar t \in [-1,0]$ and $|\bar x | \leq l_j=1/(4 \bar \delta_j)$, in particular for $ |\bar x | \leq 2$.
  Thus, on the cube $Q_2:= \{ | \bar x| \leq 2, \,\, -1 \leq t \leq 0 \}$ we have $U \geq 1/2$ (since $|(U_j)_{\bar x}(\cdot, \bar t)| \leq C\, \bar \delta_j \ll-1$).
  By standard cylindrical estimates, passing if necessary to a subsequence $j_k$, we conclude that $\lim_{j_k \to +\infty} U_{j_k} = \hat U$ on the cube $Q_1:= \{ | \bar x| \leq 1, \,\, -1/4 \leq t \leq 0 \}$, where $\hat U$ still solves equation \eqref{eq-u-original} and satisfies $\hat U(0,0)=1$ and $\hat U_{\bar x}(\bar x,0)=0$ for $\bar x \in [-1,1]$.
  This in particular implies that $\hat U_{\bar x\bar x}(0,0) =0$, thus $\hat Q_{\bar x \bar x}(0,0) := (\hat U^2)_{\bar x\bar x} (0,0)=0$.
  On the other hand, since the quantity $Q_{xx}$ is scaling invariant, we have
  \[
  \lim_{j_k \to +\infty} (Q_{j_k})_{\bar x \bar x} (0,0) = \lim_{j_k \to +\infty} Q_{xx} (x_{j_k},t_{j_k}) = \lim_{j_k \to +\infty} q_{xx} (y_{j_k},\tau_{j_k}) \geq c >0,
  \]
  where in the last inequality we used our assumption \eqref{eq-qyy}.
  This is a contradiction, finishing the proof of the Proposition.
\end{proof}

In the rotationally symmetric case that we consider here, the principal curvatures of our hypersurface are given by
\[
\lambda_1 = -\frac{ u_{yy}}{(1+u_y^2)^{3/2}} \qquad \mbox{and} \qquad \lambda_2 = \cdots = \lambda_n =\frac{1}{u \, (1 + u_y^2)^{1/2}}.
\]
In \cite{ADS} we showed that on our Ancient Ovals $M_t$ we have
\[
\lambda_1 \le \lambda_2.
\]
We also showed $\lambda_1 = \lambda_2$ at the tip of the Ancient Ovals, at which the mean curvature is maximal as well.
The quotient
\[
R:= \frac{\lambda_1}{\lambda_2} = \frac{ U\, U_{xx}}{1+U_x^2} = \frac{ u\, u_{yy}}{1+u_y^2}
\]
is a scaling invariant quantity and in some sense measures how close we are to a cylinder, in a given region and at a given scale.
It turns out that this quotient can be made arbitrarily small {\em outside} the {\em soliton } region $ S_L(\tau) := \big \{y \mid 0 \le u(y,\tau) \le \frac{L}{\sqrt{|\tau|}}\big\}$, by choosing $L \gg 1$ and $\tau \leq \tau_0 \ll -1$.
This is shown next.

\begin{proposition}
  \label{prop-ratio-small}
  For every $\eta > 0$, there exist $L \gg 1$ and $\tau_0 \ll -1$ so that
  \[
  \frac{\lambda_1}{\lambda_2}(y,\tau) < \eta, \qquad \mbox{if} \,\,\, u(y,\tau) > \frac{L}{\sqrt{|\tau|}} \,\,\, \mbox{and } \, \,\, \tau \le \tau_0.
  \]
\end{proposition}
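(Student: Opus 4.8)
The plan is to argue by contradiction. Suppose the statement fails: then there are $\eta>0$ and sequences $L_j\to\infty$, $\tau_j\to-\infty$, $y_j$ with $u(y_j,\tau_j)>L_j/\sqrt{|\tau_j|}$ and $\frac{\lambda_1}{\lambda_2}(y_j,\tau_j)\ge\eta$. By the $O(n)$-symmetry we may take $y_j\ge0$; set $t_j=-e^{-\tau_j}$ and let $X_j\in M_{t_j}$ be the corresponding point. Throughout I will use that $\lambda_1=-u_{yy}/(1+u_y^2)^{3/2}$, $\lambda_2=1/(u\sqrt{1+u_y^2})$, that $\lambda_1\le\lambda_2$ on $M_t$ (from \cite{ADS}), and that $R:=\lambda_1/\lambda_2$ is scale invariant.

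First I would dispose of the \emph{cylindrical case}, where $u(y_j,\tau_j)\ge c_0>0$ along a subsequence. If $y_j$ is bounded this is immediate from Theorem~\ref{thm-old}(i), which gives $u_{yy}(y_j,\tau_j)=O(|\tau_j|^{-1})$ and hence $R(y_j,\tau_j)=O(|\tau_j|^{-1})\to0$. If $y_j\to\infty$, put $z_j=y_j/\sqrt{|\tau_j|}$; since $u$ is bounded above by $\sqrt{2(n-1)}+o(1)$ and $u(\cdot,\tau)$ is concave (hence decreasing past the bounded location of its maximum), Theorem~\ref{thm-old}(ii) forces $\limsup_j|z_j|<\sqrt2$, and on $\{|z|\le\sqrt2-\delta\}$ equation~\eqref{eq-u} is uniformly parabolic (with $u$ bounded above and away from $0$), so interior estimates upgrade the convergence $\bar u(\cdot,\tau)\to\sqrt{(n-1)(2-z^2)}$ to $C^2_{loc}$; this gives $u_y(y_j,\tau_j)=O(|\tau_j|^{-1/2})$, $u_{yy}(y_j,\tau_j)=O(|\tau_j|^{-1})$, hence again $R(y_j,\tau_j)\to0$. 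Either way we contradict $R(y_j,\tau_j)\ge\eta$.

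The main work is the \emph{collar/tip case}: $\epsilon_j:=u(y_j,\tau_j)\to0$, so that $\rho_j:=\epsilon_j\sqrt{|\tau_j|}>L_j\to\infty$ and $X_j\in\Omega^2_{t_j}$ for $j$ large. Here I would rescale the flow around $(X_j,t_j)$ by $Q_j:=H(X_j,t_j)$. By Lemma~\ref{lemma-possible-limits} the rescaled flows subconverge (smoothly on compact sets) to a Bowl soliton or to a shrinking round cylinder; the cylinder is ruled out because $R$ is scale invariant, vanishes on a cylinder, and $R(X_j,t_j)=R(y_j,\tau_j)\ge\eta$. So the limit is a Bowl soliton $\Sigma$, normalised so $H_\Sigma=1$ at the image $p_\infty$ of $X_j$. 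On $\Sigma$ one has $\lambda_1/\lambda_2\to0$ at infinity (the Bowl is asymptotically a paraboloid), so $R_\Sigma(p_\infty)\ge\eta$ places $p_\infty$ at bounded distance $D(\eta)$ from the tip of $\Sigma$, where moreover $1\le H_\Sigma(\mathrm{tip})\le\kappa_0(\eta)<\infty$ and $\lambda_{\min}/H=1/n$ (the tip is umbilic). Passing back to $M_{t_j}$: for $j$ large there is $\tilde p_j\in\Omega^2_{t_j}$ (preimage of $\Sigma$'s tip) with
\[
|X_j-\tilde p_j|\le\frac{2D(\eta)}{Q_j},\qquad H(\tilde p_j,t_j)\le2\kappa_0(\eta)Q_j,\qquad \frac{\lambda_{\min}}{H}(\tilde p_j,t_j)\to\frac{1}{n}.
\]
Applying Lemma~\ref{lemma-cylindrical} with $\eta'=\tfrac{1}{2n}$, the bound $\lambda_{\min}/H(\tilde p_j,t_j)>\eta'$ yields $|\tilde p_j-p^2_{t_j}|<\bar L/H(\tilde p_j,t_j)$ for $j$ large; combining the three displayed estimates gives $|X_j-p^2_{t_j}|\le C(\eta)/H(X_j,t_j)$, and then Lemma~\ref{lem-unif-equiv-curv} forces $H(X_j,t_j)\ge c(\eta)H(p^2_{t_j},t_j)=(c(\eta)+o(1))\lambda(t_j)$, with $\lambda(t_j)=H_{\max}(t_j)=(1+o(1))\sqrt{|\tau_j|/(2|t_j|)}$ by Theorem~\ref{thm-old}(iii). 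On the other hand, since $\lambda_1\le\lambda_2$ and $\sqrt{1+u_y^2}\ge1$,
\[
H(X_j,t_j)\le n\lambda_2(X_j,t_j)\le\frac{n}{\sqrt{|t_j|}\,\epsilon_j}=\frac{n\sqrt{|\tau_j|}}{\rho_j\sqrt{|t_j|}}=\frac{(1+o(1))\sqrt2\,n}{\rho_j}\,\lambda(t_j).
\]
Comparing the two bounds gives $\rho_j\le C'(\eta)$, contradicting $\rho_j\to\infty$, and the proof is complete.

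I expect the main obstacle to be exactly this collar/tip case. The difficulty is that the two ends of the collar are governed by different, non-overlapping asymptotic regimes — the cylindrical expansion of Theorem~\ref{thm-old}(i)--(ii) where $u$ is of order $\theta$, and the Bowl-soliton profile of Theorem~\ref{thm-old}(iii) where $u\sim L/\sqrt{|\tau|}$ — and neither is uniform throughout the collar. The blow-up provided by Lemma~\ref{lemma-possible-limits} is what bridges the gap; the key point extracted from it is that a Bowl blow-up at a point with $u\sqrt{|\tau|}\to\infty$ would manufacture a spurious ``tip'', a point $\tilde p_j$ with $H(\tilde p_j,t_j)\asymp H_{\max}(t_j)/\rho_j\ll H_{\max}(t_j)$, which cannot happen because $\tilde p_j$ is forced (via Lemmas~\ref{lemma-cylindrical} and~\ref{lem-unif-equiv-curv}) into a bounded-curvature neighbourhood of the genuine tip $p^2_{t_j}$.
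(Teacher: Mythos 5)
Your proof is correct, but it takes a genuinely different route from the paper's. The paper works entirely within the rotationally symmetric framework: after showing that $|u_y(y_j,\tau_j)|\to0$ (Claim~\ref{claim-uy5}, essentially because $\rho_j\to\infty$ forces $|Z_\rho|\to\infty$ at the corresponding tip-scale point) and the analogous smallness of $|u_y|$ at the nearby point where $u$ halves, it rescales the \emph{unrescaled} profile by the \emph{radius} $\alpha_j=U(x_j,t_j)$, invokes the barrier/concavity estimate of Lemma~\ref{lemma-help3} to get locally uniform derivative bounds for the rescaled profiles on an interval of length $\gtrsim 1/\bar\delta_j\to\infty$, and passes to a subsequential limit $\hat U$ with $\hat U_{\bar x}\equiv 0$ on $[-1,1]$, hence $\hat U_{\bar x\bar x}(0,0)=0$; scale-invariance of $R$ then gives $0=\hat R(0,0)\ge\eta$, a contradiction. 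You instead rescale by the \emph{mean curvature} $H(X_j,t_j)$, use Lemma~\ref{lemma-possible-limits} (and so the Haslhofer--Kleiner global convergence theorem together with the Brendle--Choi classification) to obtain a Bowl limit with $X_j$ at bounded distance from its tip, and then use Lemmas~\ref{lemma-cylindrical} and~\ref{lem-unif-equiv-curv} to pin $X_j$ into a bounded-curvature neighbourhood of a genuine tip, forcing $H(X_j,t_j)\asymp H_{\max}(t_j)$ and therefore $\rho_j$ bounded. Your approach is conceptually clean and reuses the Section~2 machinery efficiently, at the cost of invoking the heavy blow-up classification; the paper's is more elementary and self-contained, which matches its placement among the intrinsic a priori estimates of Section~4. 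Two small remarks: the reduction to $y_j\ge0$ is not an $O(n)$-symmetry (that group fixes the $x_1$-axis), though this does not affect your argument since you only ever refer to the tip in the same component as $X_j$; and your cylindrical sub-case can be settled immediately from the uniform interior bound $|u_{yy}|\le\bar C(\theta)/\sqrt{|\tau|}$ recorded at~\eqref{eq-apriori-bounds}, which makes the $C^2_{loc}$ upgrade of the intermediate-region convergence unnecessary.
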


\begin{proof} The proof is by contradiction in the spirit of Proposition \ref{pro-Sigurd} but easier.
  Assuming that our proposition doesn't hold, this means that there is an $\eta >0$ and sequences $\tau_j \to -\infty$, $L_j \to +\infty$ and points $(y_j,\tau_j)$ for which we have
  \begin{equation}
    \label{eqn-ratio}
    \frac{\lambda_1}{\lambda_2}(y_j,\tau) \geq \eta >0 \quad \mbox{and} \quad u(y_j,\tau_j) > \frac{L_j}{\sqrt{|\tau_j|}}.
  \end{equation}
  
  \begin{claim}\label{claim-uy5}
    Set $ \delta_j:= |u_y(y_j,\tau_j)|$, where $(y_j,\tau_j)$ as in \eqref{eqn-ratio}.
    Then, we have
    \[
    \lim_{\tau_j \to -\infty} \delta_j = 0.
    \]
  \end{claim}
  
  \begin{proof} [Proof of Claim \ref{claim-uy5}] This claim is shown in a very similar away as Claim \ref{claim-uy} in Proposition \ref{pro-Sigurd}.
    Arguing by contradiction, if our claim doesn't hold this means that there exists a subsequence, which may be assumed without loss of generality to be the sequence $\tau_j$ itself, for which $ |u_y(y_j,\tau_j)| \geq \delta >0$.
    Arguing exactly as in the proof of Claim \ref{claim-uy} we conclude that the points $(y_j,\tau_j)$ satisfy $\sqrt{|\tau_j|} \, u(y_j,\tau_j) \leq C/{\delta} $, for an absolute constant $C$, contradicting that $u(y_j,\tau_j) > {L_j}/{\sqrt{|\tau_j|}}$ with $L_j \to +\infty$.
  \end{proof}
  
  We will now use the same rescaling argument as in Proposition \ref{pro-Sigurd} to reach a contradiction.
  Working again in the original variables, we rescale the solution $U(x,t)$, setting
  \[
  U_j( \bar x, \bar t) = \frac 1{\alpha_j} U(x_j+ \alpha_j \, \bar x , t_j + \alpha_j^2 \, \bar t), \qquad \alpha_j := U(x_j,t_j),
  \]
  where $(x_j,t_j)$ are the points in the original variables corresponding to $(y_j,\tau_j)$.
  The same argument as before, based now on Claim \ref{claim-uy5} instead of Claim \ref{claim-uy} (note that Claim \ref{claim-uy2} still holds in our case) allows us to pass to the limit and conclude that passing to a subsequence $j_k$ we have $U_{j_k} \to \hat U$, smoothly on compact sets.
  The limit $\hat U$ still solves equation \eqref{eq-u-original} and satisfies $\hat U(0,0)=1$ and $\hat U_{\bar x}(\bar x,0)=0$ for $\bar x \in [-1,1]$.
  This in particular implies that $\hat U_{\bar x\bar x}(0,0) =0$.
  On the other hand, the ratio ${\displaystyle R:= \frac{ \lambda_1}{\lambda_2}}$ is scaling invariant, which means that
  \[
  \hat R(0,0) = \lim_{j_k \to +\infty} R_{j_k} (0,0) = \lim_{j_k \to +\infty} R (x_{j_k},t_{j_k}) = \lim_{j_k \to +\infty} R(y_{j_k},\tau_{j_k}) \geq \eta >0,
  \]
  where in the last inequality we used our assumption \eqref{eqn-ratio}.
  This is a contradiction, since at the point $(0,0)$ we also have
  \[
  \hat R (0,0) = \frac{U\, U_{\bar x \bar x} (0,0)}{1 + \hat U_{\bar x}^2(0,0)}=0,
  \]
  therefore finishing the proof of the Proposition.

\end{proof}

We will finally use the convexity estimate shown in Proposition \ref{pro-Sigurd} to show the estimates in the next two Corollaries which will play a crucial role in estimating various terms in the tip region $\mathcal{T}_\theta$, in Section \ref{sec-tip}.
The first  Corollary concerns with an estimate which holds in the collar region $\collar_{\theta,L}$, as defined in \ref{subsec-tip}.

\begin{corollary}
  \label{lemma-Sigurd}
  Let $u$ be an ancient oval solution of \eqref{eq-u} which satisfies the asymptotics in Theorem \ref{thm-old}.
  Then, for $0 < \theta \ll 1$ and $L \gg 1$ large, there exists $\epsilon(\theta,L)$ small and a $\tau_0 \ll -1$ for which we have
  \[
  \Bigl|1 + \frac{1}{2(n-1)} Y \, \frac{u}{Y_u} \Bigr | < \epsilon(\theta,L) \quad \mbox{in} \, \, \collar_{\theta,L}, \quad \mbox{for} \,\, \tau \le \tau_0.
  \]
  Moreover, for $L \gg 1$ and $\theta \ll 1$, we can choose $\epsilon := \max\{4\theta^2, c(n) L^{-1}\}$.
\end{corollary}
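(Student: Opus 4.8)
The plan is to estimate the quantity $1 + \frac{1}{2(n-1)} Y \frac{u}{Y_u}$ separately in the collar region by exploiting two facts: first, that in the collar the solution is very close to the shrinking cylinder of radius $\sqrt{2(n-1)}$, so that $Y_1(u,\tau)$ is essentially the cylinder profile; and second, the convexity estimate $(u^2)_{yy}\le 0$ from Proposition~\ref{pro-Sigurd}, which after switching coordinates controls the sign and size of $Y_u$. First I would rewrite the expression in terms of $q:=u^2$. Since $u/Y_u = u\,u_y$ and $q_y = 2u\,u_y$, we have $Y\,u/Y_u = \tfrac12 Y q_y$, but more usefully, working directly with the inverse function, note $\frac{d}{du}\bigl(\tfrac12 Y^2\bigr) = Y Y_u$, so that $1 + \frac{1}{2(n-1)} Y \frac{u}{Y_u} = 1 + \frac{u}{2(n-1) Y Y_u}$; multiplying through, the claim is equivalent to showing $\bigl| 2(n-1) Y Y_u + u \bigr|$ is small compared to $|2(n-1) Y Y_u|$, i.e.~that $Y Y_u \approx -\frac{u}{2(n-1)}$, which is exactly the relation satisfied by the cylinder profile $Y_{\mathrm{cyl}}(u) = \sqrt{2(n-1)-u^2}$ (for which $Y Y_u = -u$... wait, normalization: on the rescaled cylinder $u\equiv\sqrt{2(n-1)}$ is constant in $y$, so one must instead use the self-similar cylinder solution $u = \sqrt{2(n-1)(1-?)}$; the correct model here is that $-\tfrac14 Y_1^2$ is the analogue of the Gaussian weight, so the identity to verify is $\bigl(-\tfrac14 Y_1^2\bigr)' = -\tfrac12 Y_1 Y_{1u}$ matching $m'$, and the relevant smallness is $Y_1 Y_{1u} + \frac{u}{2(n-1)}\cdot(\text{something})$).

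Concretely, the cleanest route is: (1) In the collar $\collar_{\theta,L}$ we have $L/\sqrt{|\tau|}\le u_1\le 2\theta$, so $u_1$ is small and, by Theorem~\ref{thm-old}(i) together with the a priori estimates, $Y_1(u,\tau)$ is uniformly close (in $C^1$ on this range of $u$) to $\sqrt{2(n-1)}$ — more precisely, $Y_1(u,\tau) = \sqrt{2(n-1)}\,(1 + o(1))$ and $Y_{1u}(u,\tau)$ is small but nonzero, with $|Y_{1u}|$ controlled below by the convexity estimate. (2) Differentiating the defining PDE, or simply using that on the exact shrinking cylinder solution $y = Y(u)$ satisfies $Y_{uu}/(1+Y_u^2) + \frac{n-1}{u}Y_u + \tfrac12(Y - uY_u) = 0$, and the stationary-in-the-middle behavior forces $\tfrac12 Y - \frac{n-1}{u}\cdot(\cdots)$... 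Rather than chase the exact ODE, I would use the second-order information: from $(u^2)_{yy}\le0$ we get, after the coordinate switch (cf.~\eqref{eqn-qyy10}), $Z_\rho - \rho Z_{\rho\rho}\le 0$ hence $(Y_1^2)_{uu}\le 0$ in $u$; combined with $Y_1(0,\tau)$ being the maximal value, concavity of $u\mapsto Y_1^2(u,\tau)$ pins down $Y_1 Y_{1u}$ to leading order. (3) The key computation is then that $2(n-1) Y_1 Y_{1u} \approx -u\cdot\frac{2(n-1)}{Y_1^2}\cdot Y_1^2 /(2(n-1)) $ — i.e.~I would show $Y_1 Y_{1u} = -\frac{u}{2(n-1)}\,Y_1^2\cdot\frac{1}{2(n-1)}(1+o(1))$; tracking the normalization, the middle value $Y_1^2\to 2(n-1)$ makes the product collapse to the claimed identity with error $o(1)$.

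For the quantitative bound $\epsilon := \max\{4\theta^2, c(n)L^{-1}\}$: the $4\theta^2$ contribution comes from the upper end of the collar, where $u_1\le 2\theta$ and the deviation of $Y_1$ from the cylinder is governed by the first nontrivial term in the expansion of the cylinder profile, which is quadratic in $u$, giving an error $\sim u^2\le 4\theta^2$ after dividing by the $2(n-1)$ normalization. The $c(n)L^{-1}$ contribution comes from the lower end $u_1\ge L/\sqrt{|\tau|}$, i.e.~$\rho\ge L$: here one uses Corollary~\ref{cor-old} and the asymptotics \eqref{eq-Z0-asymptotics} of the Bowl soliton $Z_0$, namely $Z_0(\rho) = -\sqrt2\rho^2/4(n-1) + O(\log\rho)$ with $(Z_0)_\rho \sim -\sqrt2\rho/2(n-1)$, to check that the expression $1 + \frac{1}{2(n-1)}Y\frac{u}{Y_u}$, evaluated on the soliton-rescaled profile, is $O(1/\rho) = O(1/L)$ for $\rho\gtrsim L$, with constant depending only on $n$. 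For intermediate $\rho\in[L, c\theta\sqrt{|\tau|}]$ one interpolates, using that $Z(\rho,\tau)\to Z_0(\rho)$ uniformly smoothly on compact $\rho$-sets (Corollary~\ref{cor-old}) together with the a priori derivative estimates, so the $\tau$-dependent error is absorbed by taking $\tau_0\ll -1$.

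\textbf{The main obstacle} will be step (3): making rigorous the passage from "$Y_1$ is close to the cylinder and to the Bowl soliton respectively in the two ends of the collar" to a \emph{uniform} bound valid across the entire collar, including the transition at $\rho\sim L$ and at $u\sim\theta$, without circular dependence on the estimates being developed in Section~\ref{sec-tip}. The delicate point is that $Y_{1u}$ appears in the \emph{denominator}, so one needs a lower bound $|Y_{1u}|\gtrsim u/\sqrt{|\tau|}$ (equivalently $|Z_\rho|\gtrsim \rho$ for large $\rho$, and $|Z_\rho|\gtrsim\rho$ near $0$ too), which must be extracted from the convexity estimate of Proposition~\ref{pro-Sigurd} and the soliton asymptotics uniformly in $\tau\le\tau_0$; any sign degeneracy of $Y_{1u}$ would be fatal, so confirming $Y_{1u}<0$ strictly throughout the collar (which follows from strict convexity of $M_t$ and Remark~\ref{rem-symmetry}) is the crucial preliminary. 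Once that lower bound is in hand, the rest is bookkeeping with the two asymptotic expansions and the choice of $\epsilon$.
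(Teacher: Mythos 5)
Your proposal correctly assembles the right ingredients (Proposition~\ref{pro-Sigurd}, the intermediate-region asymptotics from Theorem~\ref{thm-old}, and the Bowl soliton asymptotics via Corollary~\ref{cor-old} and \eqref{eq-Z0-asymptotics}), and you correctly identify where the contributions $4\theta^2$ and $c(n)L^{-1}$ should come from. However there are two concrete problems.

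First, there is an algebra error near the start: you rewrite $\frac{1}{2(n-1)}Y\,\frac{u}{Y_u}$ as $\frac{u}{2(n-1)\,Y Y_u}$, but $Y\cdot\frac{u}{Y_u}=\frac{Yu}{Y_u}\neq\frac{u}{YY_u}$. The correct manipulation (and the one the paper uses) is $\frac{u}{Y_u}=u\,u_y$ and $Y=y$, so $Y\,\frac{u}{Y_u}=y\,u\,u_y=\tfrac12\,y\,(u^2)_y$, reducing the claim to $1-\epsilon\le-\tfrac{1}{4(n-1)}\,y\,(u^2)_y\le1+\epsilon$. Your subsequent deduction ``$YY_u\approx -\tfrac{u}{2(n-1)}$'' is off by a factor of order $|\tau|$ (on the approximate cylinder one has $YY_u\approx-\tfrac{u|\tau|}{n-1}$), so the bookkeeping downstream would not close.

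Second, and more seriously, your plan for the middle of the collar does not work. You correctly flag this as ``the main obstacle'' but then propose to ``interpolate'' using the smooth convergence $Z(\rho,\tau)\to Z_0(\rho)$ on compact $\rho$-sets; that convergence does not reach $\rho\sim\theta\sqrt{|\tau|}$, which is unbounded as $\tau\to-\infty$, so it cannot bridge the collar. The paper's key mechanism is precisely the monotonicity of $-(u^2)_y$ in $y$, which is exactly what $(u^2)_{yy}\le0$ from Proposition~\ref{pro-Sigurd} gives. Since $u$ decreases with $y$ in the tip region, this means $-(u^2)_y$ is sandwiched throughout $\collar_{\theta,L}$ between its values at $u=2\theta$ (controlled by the intermediate asymptotics, giving the $4\theta^2$) and at $u=L/\sqrt{|\tau|}$ (controlled by the soliton asymptotics, giving the $c(n)L^{-1}$). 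No interpolation through the middle is needed — the monotonicity does it in one stroke. Your reference to ``$(Y_1^2)_{uu}\le0$'' is not what $(u^2)_{yy}\le0$ implies and is not what is used; the relevant statement is $(u^2)_y$ being monotone in $y$.
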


\begin{proof}
  Assume for the moment we have $(u^2)_{yy} \le 0$.
  We need to show that ${\displaystyle 1-\epsilon \leq - \frac 1{2(n-1)} \frac{Y\, u}{Y_u} \leq 1+ \epsilon}$ in the considered region which is equivalent to
  \begin{equation}\label{eqn-crucialY2}
    1-\epsilon \leq - \frac 1{4(n-1)} y\, (u^2)_y \leq 1+ \epsilon.
  \end{equation}
  Now since at $u=2\theta$ we have
  \[
  y \approx \sqrt{|\tau|} \, \sqrt{ 2 - \frac{u^2}{n-1}} = \sqrt{ 2|\tau|} \sqrt{1- \frac{2 \theta^2}{n-1}} \approx \sqrt{ 2|\tau|} \, \big ( 1- \frac{ \theta^2}{n-1} \big ),
  \]
  it follows that at $u=2\theta$ and for $\theta$ small, $y \geq \sqrt{ 2|\tau|} \, ( 1- 2 \theta^2 ).$ Hence, in the considered region ${L}/{\sqrt{|\tau|}} \leq u \leq 2\theta$, we have
  \begin{equation}\label{eqn-y12}
    \sqrt{2|\tau|} (1- 2 \theta^2) \leq y \leq \sqrt{2|\tau|}.
  \end{equation}
  Next, using the inequality $-(u^2)_{yy} \geq 0$ which was shown in Proposition \ref{pro-Sigurd}, we can estimate
  \[
  - (u^2)_{y}\bigr|_{u=2\theta} 
  \leq -(u^2)_{y} 
  \leq - (u^2)_{y}\bigr|_{u=L/\sqrt{|\tau|}}.
  \]
  Our intermediate region asymptotics from Theorem \ref{thm-old} imply that at $u=2\theta$,
  \[
  - (u^2)_{y} |_{u=2\theta} = 2 (n-1) \frac{y}{|\tau|} \approx \frac{2 \sqrt{2} (n-1)}{\sqrt{|\tau|}} ( 1- \frac{\theta^2}{n-1} ).
  \]
  On the other hand, the asymptotics in the tip region give us that at $u=L/\sqrt{|\tau|}$, we have
  \[
  - (u^2)_{y}|_{u=L/\sqrt{|\tau|}} = - \frac{2u}{Y_u}|_{u=L/\sqrt{|\tau|}} \approx \frac{2L}{\sqrt{|\tau|}} \frac 1{Z_\rho(L,\tau)}.
  \]
  The smooth convergence $\lim_{\tau\to -\infty} Z(\rho,\tau) = Z_0(\rho)$, together with the asymptotics \eqref{eq-Z0-asymptotics} imply that for $L \gg1$, we have
  \[
  Z_\rho(L,\tau) \geq \frac{L - c}{\sqrt{2} (n-1)},
  \]
  for a fixed constant $c=c(n)$, hence
  \[
  - (u^2)_{y}|_{u=L/\sqrt{|\tau|}} \leq \frac{2L}{\sqrt{|\tau|}} \, \frac{\sqrt{2} (n-1)}{L-c} = \frac{2 \sqrt {2} (n-1)}{\sqrt{|\tau|}} \, (1+ \epsilon),
  \]
  for $\epsilon = c/L$, for another fixed constant $c=c(n)$.
  We conclude that
  \begin{equation}\label{eqn-u12}
    \frac{2 \sqrt{2} (n-1)}{\sqrt{|\tau|}} ( 1- \frac{\theta^2}{n-1} ) \leq -(u^2)_{y} \leq \frac{2 \sqrt {2} (n-1)}{\sqrt{|\tau|}} \, (1+ \epsilon).
  \end{equation}
  Combining \eqref{eqn-y12} and \eqref{eqn-u12} yields
  \[
  (1- 2 \theta^2)\, ( 1- \frac{\theta^2}{n-1} ) \leq - \frac 1{4(n-1)} y\, (u^2)_y \leq (1+\epsilon),
  \]
  which yields \eqref{eqn-crucialY2} for $\epsilon := \max (4\theta^2, c(n)\, L^{-1})$ and $L\gg1$, $\theta \ll1$.
\end{proof}

\begin{remark}
  \label{cor-sigurd}
  It is an easy consequence of Corollary \ref{lemma-Sigurd} that for $0 < \theta \ll 1$ small and $L \gg 1$ large, there exists a $\tau_0 \ll -1$ for which we have
  \begin{equation}
    \label{eq-cor-Sigurd}
    (1 - \epsilon)\,\frac{Y}{2(n-1)} 
    < \frac{|Y_u|}{u} 
    < (1 + \epsilon)\, \frac{Y}{2(n-1)} 
    \quad \mbox{in} \,\, \collar_{\theta,L}, 
    \quad \mbox{for} \,\, \tau \le \tau_0
  \end{equation}
  with $\epsilon:= \max\{4\theta^2, c(n) L^{-1}\}$ small.
  From now on we denote by the same symbol $\epsilon =\epsilon(\theta,L)$ a constant that is small for $\theta \ll 1$ and $L \gg 1$, but may differ from line to line.
\end{remark}

We will next show that if $u_i$, $i=1,2$ are two solutions as in Theorem \ref{thm-main}, then $Y_1(u,\tau)$ and $Y_2(u,\tau)$ are comparable to each other in the whole tip region $\tip_{\theta}$.

\begin{corollary}
  Let $u_i(y,\tau)$, $i=1,2$ be two solutions as in Theorem \ref{thm-main}, and let $Y_i(u,\tau)$, $i=1,2$ be the corresponding solutions in flipped coordinates.
  Then, for every $\epsilon > 0$ there exist $0 < \theta \ll 1$ small and $\tau_0 \ll -1$ so that
  \begin{equation}
    \label{eq-comp-der}
    1 - \epsilon < \frac{Y_{1u}}{Y_{2u}} < 1 + \epsilon \quad \, \mbox{in} \,\,\, \tip_{\theta}, \quad \mbox{for} \,\, \tau \le \tau_0.
  \end{equation}
\end{corollary}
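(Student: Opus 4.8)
The plan is to combine the derivative asymptotics for a single solution with the explicit behavior of the Bowl soliton in the soliton region, and then patch across the collar. The statement compares $Y_{1u}$ and $Y_{2u}$, so it suffices to show that each ratio $Y_{iu}/g$ is $1+o(1)$ for a suitable common reference quantity $g = g(u,\tau)$, uniformly in $\tip_\theta$ for $\tau\le\tau_0$; then the ratio $Y_{1u}/Y_{2u}$ is squeezed to $1$. I would split $\tip_\theta$ into the collar $\collar_{\theta,L}$ and the soliton region $\mathcal S_L$, handling each separately.

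In the collar, Corollary~\ref{lemma-Sigurd} (together with Remark~\ref{cor-sigurd}) already gives, for $i=1,2$,
\[
(1-\epsilon)\,\frac{Y_i}{2(n-1)} < \frac{|Y_{iu}|}{u} < (1+\epsilon)\,\frac{Y_i}{2(n-1)} \quad \text{in } \collar_{\theta,L},\ \tau\le\tau_0,
\]
with $\epsilon=\epsilon(\theta,L)$ small. Hence it remains to compare $Y_1$ and $Y_2$ themselves in the collar. But $Y_i(u,\tau)$ is, via \eqref{eq-cv1}, just (a rescaling of) the $x$-coordinate of the point on $M_i(t)$ at height $u$, and the intermediate-region asymptotics of Theorem~\ref{thm-old}(i) pin down $u$ (equivalently $y$) as a function of the rescaled height with error $o(|\tau|^{-1})$: at height $u$ one has $y\approx\sqrt{|\tau|}\sqrt{2-u^2/(n-1)}$ for both solutions, so $Y_1/Y_2 \to 1$ uniformly on $\collar_{\theta,L}$ as $\tau\to-\infty$. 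Combining this with the two displayed pinching inequalities yields $(1-\epsilon')<Y_{1u}/Y_{2u}<(1+\epsilon')$ on the collar (using that $Y_{iu}<0$ throughout, by Remark~\ref{rem-symmetry}).

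In the soliton region, I would use the change of variables \eqref{eqn-Y-expansion}, $Y_i(u,\tau)=Y_i(0,\tau)+\tfrac{1}{\sqrt{|\tau|}}Z_i(\rho,\tau)$ with $\rho=u\sqrt{|\tau|}$, so that $Y_{iu}(u,\tau)=Z_{i\rho}(\rho,\tau)$. By Corollary~\ref{cor-old}, $Z_i(\cdot,\tau)\to Z_0$ smoothly on compact $\rho$-sets, hence $Z_{i\rho}(\rho,\tau)\to Z_{0\rho}(\rho)$ uniformly on $0\le\rho\le 2L$; since $Z_{0\rho}(\rho)<0$ is bounded away from $0$ for $\rho$ in a compact subset of $(0,\infty)$, and near $\rho=0$ we have $Z_{0\rho}(\rho)=-\frac{\sqrt2}{2n}\rho(1+o(1))$ (from \eqref{eq-Z0-asymptotics}) so that the ratio $Z_{1\rho}/Z_{2\rho}$ extends continuously to $1$ at $\rho=0$, we conclude $Z_{1\rho}/Z_{2\rho}\to 1$ uniformly on $0\le\rho\le 2L$, i.e.\ $Y_{1u}/Y_{2u}=1+o(1)$ on $\mathcal S_L$ for $\tau\le\tau_0$. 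Choosing $\tau_0$ so negative that both the collar estimate and the soliton estimate give error below $\epsilon$, and noting that the two regions overlap near $u=L/\sqrt{|\tau|}$ where both estimates are valid, we obtain \eqref{eq-comp-der} on all of $\tip_\theta$.

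The main obstacle is the uniformity near the tip $\rho\to 0$: there $Y_{iu}=Z_{i\rho}\to 0$, so one is comparing two quantities both tending to zero and must control the ratio rather than the difference. This is handled by the sharp leading-order behavior $Z_{i\rho}(\rho,\tau)=-\frac{\sqrt2}{2n}\rho+O(\rho^3)$ inherited from \eqref{eq-Z0-asymptotics} via the smooth convergence in Corollary~\ref{cor-old}: the leading coefficients agree for $i=1,2$, so the ratio is $1+O(\rho^2)+o_\tau(1)$, which is $1+o(1)$ uniformly for $\rho$ in a fixed neighborhood of $0$ once $\tau_0$ is sufficiently negative. A secondary technical point is checking that the collar and soliton estimates are genuinely compatible on the overlap $u\sim L/\sqrt{|\tau|}$; this follows because there $\rho\sim L$ is in the compact range where Corollary~\ref{cor-old} applies, and simultaneously $Y_i\sim\sqrt{2|\tau|}(1+o(1))$, $u/Y_{iu}\sim \sqrt{|\tau|}\,u/(L\,Z_{0\rho}(L))$, so both descriptions reduce to the same Bowl-soliton asymptotics.
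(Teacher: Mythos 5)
Your argument is correct and follows essentially the same route as the paper: split $\tip_\theta$ into the collar and soliton regions; in the collar, use Remark~\ref{cor-sigurd} together with the pinching $Y_1/Y_2\approx 1$ coming from the intermediate-region asymptotics; in the soliton region, use the smooth convergence $Z_i\to Z_0$ and the fact that $Z_{i\rho}(0,\tau)=Z_{0\rho}(0)=0$ to control the ratio near $\rho=0$. The paper implements the latter step slightly more explicitly, writing $|Z_{i\rho}-Z_{0\rho}|\le\eta\rho$ by the mean value theorem and supplying a uniform lower bound $|Z_{i\rho}|\ge a\rho$ from the asymptotics of $Z_0$ at both $\rho\to 0$ and $\rho\to\infty$, but the content is the same as your continuous-extension argument.
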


\begin{proof}
  We begin by observing that \eqref{eq-comp-der} holds in the collar region $\collar_{\theta,L}$, which is an immediate consequence of \eqref{eq-cor-Sigurd} and  ${\displaystyle 1 -\epsilon < \frac{Y_1}{Y_2} < 1 + \epsilon}$, which holds in the considered region. 
  
  Hence, we only need to show that \eqref{eq-comp-der} holds in the soliton region $S_L$.
  Let $Z_i$, $i=1,2$, be the functions defined in terms of $Y_i$ by \eqref{eqn-Y-expansion}.
  Then $\lim_{\tau\to -\infty} Z_i(\rho,\tau) = Z_0(\rho)$, uniformly smoothly on compact sets in $\rho$, for both $i\in \{1,2\}$.
  Write
  \begin{equation}
    \label{eq-Y1u-Y2u}
    \Bigl|\frac{Y_{1u}}{Y_{2u}} - 1\Bigr| = \Bigl|\frac{Z_{1\rho}}{Z_{2\rho}} - 1\Bigr| \le \frac{|Z_{1\rho} - Z_{0\rho}|}{|Z_{2\rho}|} + \frac{|Z_{2\rho} - Z_{0\rho}|}{|Z_{2\rho}|}.
  \end{equation}
  By the smoothness of $Z_i(\rho,\tau)$, $Z_0(\rho)$ around the origin we have $(Z_i)_\rho(0,\tau) = (Z_0)_\rho (0) = 0$ and hence,
  \begin{equation}
    \label{eq-diff-Z-small}
    |(Z_i(\rho,\tau) - Z_0(\rho))_{\rho}| \le \sup_{\rho\in [0,2L]} |(Z_i(\rho,\tau) - Z_0(\rho))_{\rho\rho}|\, \rho < \eta \, \rho
  \end{equation}
  if $\tau \le \tau_0 \ll -1$ is sufficiently small and close to negative infinity ($\eta > 0$ a small positive number to be chosen below).
  This also implies
  \[
  |Z_{i\rho}(\rho,\tau)| \ge |(Z_{0\rho}(\rho)| - \eta \, \rho.
  \]
  On the other hand, by the asymptotics for $Z_0(\rho)$ we have the
  \[
  \lim_{\rho\to 0} \frac{Z_{0\rho}}{\rho} = -\frac{1}{ \sqrt{2} \, n} \qquad \mbox{and} \qquad \lim_{\rho\to +\infty} \frac{Z_{0\rho}}{\rho} = -\frac{1}{\sqrt{2}\, (n-1)}.
  \]
  Let
  \[
  2a := \min\Biggl\{\frac{1}{2\sqrt{2}\, (n-1)}, \min_{\rho\in [\rho_1,\rho_2] } \, \frac{|Z_{0\rho}|}\rho\Biggr\},
  \]
  where $\rho_1 > 0$ is close to zero and $\rho_2 > 0$ is very large, so that ${\displaystyle \frac{|Z_{0\rho}|}\rho > \frac{1}{2\sqrt{2} (n-1)} }$, for $\rho \le \rho_1$ or $\rho \ge \rho_2$.
  By the definition of $a$ we have ${\displaystyle \frac{|Z_{0\rho}|}\rho \ge 2 a > 0}$ for all $\rho$.
  Choosing ${\displaystyle 0 < \eta < \min\{a, \frac{\epsilon}{2}\, a\}}$ we can make
  \[
  |Z_{i\rho}(\rho,\tau)| \ge a \, \rho, \qquad \rho\in [0,L], \quad \tau\le \tau_0 \ll -1.
  \]
  Combining this, \eqref{eq-Y1u-Y2u} and \eqref{eq-diff-Z-small} yields
  \[
  \Bigl|\frac{Z_{1\rho}}{Z_{2\rho}} - 1\Bigr| < 2\, \frac{\eta }{a } \leq \epsilon, \qquad \rho\in [0,L], \quad \tau\le \tau_0 \ll -1.
  \]
  This concludes the proof of the Corollary.
\end{proof}

\medskip
\section{The cylindrical region}
\label{sec-cylindrical}

Let $u_1(y,\tau)$ and $u_2(y,\tau)$ be the two solutions to equation \eqref{eq-u} as in the statement of Theorem \ref{thm-main} and let $u_2^{\alpha\beta\gamma}$ be defined
by \eqref{eq-ualphabeta}.
In this section we will estimate the difference $w:= u_1-u_2^{\alpha\beta\gamma}$ in the cylindrical region 
$\cyl_{\theta} = \{y\,\,\, |\,\,\, u_1(y,\tau) \ge {\theta}/{2}\, \}$, for a given number $\theta > 0$ small  and any $\tau \leq \tau_0 \ll -1$.
Recall all the definitions and notation introduced in Section \ref{subsec-cylindrical}.
Before we state and prove the main estimate in the cylindrical region we give a remark that a reader should be aware of throughout the whole section.

\begin{remark}
  \label{rem-cylindrical}
  Recall that we write simply $u_2(y,\tau)$ for $u_2^{\alpha\beta\gamma}(y,\tau)$, where
  \[
  u_2^{\alpha\beta\gamma}(y,\tau) = \sqrt{1+\beta e^{\tau}}\, u_2\Bigl(\frac{y}{\sqrt{1+\beta e^{\tau}}}, \tau + \gamma - \log(1 + \beta e^{\tau})\Bigr),
  \]
  is still a solution to \eqref{eq-u} and simply write $w(y,\tau)$ for $w^{\alpha\beta\gamma}(y,\tau) := u_1(y,\tau) - u_2^{\alpha\beta\gamma}(y,\tau)$.
  As it has been already indicated in Section \ref{subsec-conclusion}, we will choose $\alpha=\alpha(\tau_0)$, $\beta = \beta(\tau_0)$ and $\gamma = \gamma(\tau_0)$ (as it will be explained in Section \ref{sec-conclusion}) so that the projections $\pr_+ w_\cyl(\tau_0) = \pr_0 w_\cyl(\tau_0) = 0$, at a suitably chosen $\tau_0 \ll -1$.
  In Section \ref{sec-conclusion} we show the pair $(\alpha,\beta,\gamma)$ is admissible with respect to $\tau_0$, in the sense of Definition \ref{def-admissible}, if $\tau_0$ is sufficiently small.
  That will imply all our estimates that follow are independent of parameters $\alpha, \beta, \gamma$, as long as they are admissible with respect to $\tau_0$, and will hold for $u_1(y,\tau) - u_2^{\alpha\beta\gamma}(y,\tau)$, for $\tau\le \tau_0$ (as explained in section \ref{sec-regions}).
\end{remark}

Our goal in this section is to prove that the bound~\eqref{eqn-cylindrical1} holds as stated next.

\begin{prop}\label{prop-cylindrical}
  For every $\epsilon > 0$ and $\theta > 0$ small there exists a $\tau_0 \ll -1$ so that if $w(y,\tau)$ is a solution to \eqref{eqn-ww} for which $\pr_+w_\cyl(\tau_0) = 0$, then we have
  \[
  \| \hat w_\cyl \|_{\hv,\infty}
  \leq
  \epsilon\, \big(\| w_\cyl \|_{\hv,\infty} + \|w\, \chi_{D_{\theta}}\|_{\hilb,\infty}\big),
  \]
  where $D_{\theta} := \{y\,\,\,|\,\,\, \theta/2 \le u_1(y,\theta) \le \theta\}$ and $\hat{w}_C = \pr_- w_\cyl + \pr_+ w_\cyl$.
\end{prop}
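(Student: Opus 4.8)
The plan is to establish the coercive estimate \eqref{eqn-cylindrical1} by combining two ingredients: an energy estimate for the truncated difference $w_\cyl$ with respect to the linearized operator $\cL$, and a careful estimate of the error term $\cE[w,\varphi_\cyl]$. The starting point is the PDE \eqref{eqn-w100} satisfied by $w_\cyl$, which splits as $\partial_\tau w_\cyl = \cL w_\cyl + \cE(w_\cC) + \bar\cE[w,\varphi_\cyl]$. First I would record precisely the two pieces of the error: the nonlinear error $\cE(w_\cC)$ is quadratically small because on $\cyl_\theta$ the quantities $u_y$, $u_{yy}$, and $u-\sqrt{2(n-1)}$ are all controlled (by the a priori estimates and Theorem~\ref{thm-old}), so $\|\cE(w_\cC)\|_{\hv^*}$ can be absorbed with a small constant into $\|w_\cyl\|_{\hv}$ after choosing $\theta$ small and $\tau_0$ negative; and the cutoff error $\bar\cE[w,\varphi_\cyl]$ is supported in $D_\theta = \{\theta/2 \le u_1 \le \theta\}$, hence contributes only $\|w\,\chi_{D_\theta}\|_{\hilb}$ up to constants (using that $\varphi_\cyl$ and its derivatives are bounded, and that a gradient term can be moved onto the test function via the $\hv^*$ duality). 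This is essentially the content of Lemmas~\ref{lem-error1-est} and~\ref{lem-error-bar}, which I would invoke, giving \eqref{eqn-error-111}.

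Next comes the abstract coercivity step, \eqref{eqn-cylindrical100}: if $w_\cyl$ solves $\partial_\tau w_\cyl = \cL w_\cyl + E$ with $\pr_+ w_\cyl(\tau_0) = 0$, then $\|\hat w_\cyl\|_{\hv,\infty} \le C\|E\|_{\hv^*,\infty}$, where $\hat w_\cyl = w_\cyl - \pr_0 w_\cyl$. I would prove this by projecting the equation onto $\hilb_+$, $\hilb_0$, $\hilb_-$. On $\hilb_-$, $\cL \le -\tfrac12$ (the largest negative eigenvalue being $-1/2$ from $\psi_3$), so a standard weighted energy/Gronwall argument on $\int_{\sigma-1}^\sigma$ windows gives exponential decay of $\pr_- w_\cyl$ backward in time controlled by $E$; the subtlety is the $\hv$ vs.\ $\hilb$ discrepancy, handled by testing the equation against $\pr_- w_\cyl$ itself and using $\langle \cL f, f\rangle = -\|f_y\|_\hilb^2 + \|f\|_\hilb^2$ to recover the $\hv$-norm, then interpreting $E \in \hv^*$. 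On $\hilb_+$, the two positive modes are unstable going forward but the condition $\pr_+ w_\cyl(\tau_0) = 0$ together with integration of the scalar ODEs $a_\pm' = \mu_\pm a_\pm + \langle E, \psi_i\rangle$ from $\tau_0$ backward lets us bound $\pr_+ w_\cyl$ on $(-\infty,\tau_0]$ by $\|E\|_{\hv^*,\infty}$ — here one must check the ODE is integrated in the stable direction (backward in $\tau$ the positive eigenvalues become decaying), which is exactly why the initial condition is imposed at $\tau_0$ and not at $-\infty$. The $\pr_0$ component is deliberately excluded, which is why only $\hat w_\cyl$ appears.

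Finally I would combine: feeding \eqref{eqn-error-111} into \eqref{eqn-cylindrical100} and noting $\|w\,\chi_{D_\theta}\|_{\hv,\infty} \le C\|w\,\chi_{D_\theta}\|_{\hilb,\infty}$ where the extra derivative is again controlled on $D_\theta$ by interior parabolic estimates for \eqref{eq-u} (since $u_1$ stays uniformly bounded away from $0$ and from $\infty$ on a neighborhood of $D_\theta$, standard Schauder/$L^2$ interior estimates convert the $\hv$-norm of $w$ on $D_\theta$ into the $\hilb$-norm of $w$ on a slightly larger set, still inside $\cyl_\theta$, hence dominated by $\|w_\cyl\|_{\hv,\infty}$ plus $\|w\,\chi_{D_\theta}\|_{\hilb,\infty}$ itself with a reshuffling of $\theta$). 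Choosing $\theta$ small and then $\tau_0 \ll -1$ makes all absorbable constants smaller than any prescribed $\epsilon$, yielding the stated bound.

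I expect the main obstacle to be the rigorous treatment of the cutoff error $\bar\cE[w,\varphi_\cyl]$ in the $\hv^*$-norm: because $\varphi_\cyl$ depends on $\tau$ through $u_1$, its $\tau$-derivative produces a term $w\,\partial_\tau\varphi_\cyl$ which one must show is genuinely supported in $D_\theta$ and of the right size, and the term $2 w_y \partial_y\varphi_\cyl + w\,\partial_y^2\varphi_\cyl$ must be handled in the negative norm $\hv^*$, meaning one integrates by parts against a test function $g \in \hv$, landing derivatives on $g$ and $\varphi_\cyl$ — this requires quantitative control of $|\partial_y\varphi_\cyl|$, $|\partial_y^2\varphi_\cyl|$, $|\partial_\tau\varphi_\cyl|$ in terms of the geometry of the level set $\{u_1 = \theta\}$, which in turn uses the a priori gradient bounds $|u_{1y}| \le C$ on $\cyl_\theta$ from the previous section. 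Keeping track of the various small parameters ($\theta$, $L$, $\tau_0$) so that the chain of inequalities closes with a single $\epsilon$ is the bookkeeping heart of the argument.
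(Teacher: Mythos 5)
Your overall plan is the one the paper follows: establish the abstract coercive estimate for the linear equation $\partial_\tau f - \cL f = g$ with the terminal condition $\pr_+ f(\tau_0)=0$ (Lemmas~\ref{lem-linear-cylindrical-estimates} and \ref{lem-linear-cylindrical-estimates-sup-L2-version} in the paper), then estimate $\|\cE[w_\cyl]\|_{\hv^*}$ by $\epsilon\|w_\cyl\|_\hv$ (Lemma~\ref{lem-error1-est}) and $\|\bar\cE[w,\varphi_\cyl]\|_{\hv^*}$ by $\tfrac{\bar C(\theta)}{\sqrt{|\tau_0|}}\|w\chi_{D_\theta}\|_\hilb$ (Lemma~\ref{lem-error-bar}), and combine. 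Your description of the projected energy estimates, including the $\pr_-$ piece via $\langle f,\cL f\rangle\le -c\|f\|_\hv^2$ and the $\pr_+$ piece being integrated from the terminal time $\tau_0$ backwards so that the positive eigenvalues become damping, is correct and matches Lemma~\ref{lem-linear-cylindrical-estimates}; the refinement to a $\sup$ over unit time windows is Lemma~\ref{lem-linear-cylindrical-estimates-sup-L2-version}.

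The one place where you go astray is the last paragraph. The inequality you invoke there,
\[
\|w\,\chi_{D_\theta}\|_{\hv,\infty} \le C\,\|w\,\chi_{D_\theta}\|_{\hilb,\infty},
\]
is false as stated (the $\hv$-norm contains an extra derivative, so the inequality runs the other way). You appear to be trying to reconcile the $\hv$-norm that appears in the outline equation \eqref{eqn-error-111} with the $\hilb$-norm in the Proposition's conclusion; but \eqref{eqn-error-111} is best read as having $\hilb$ there, since the proof of Lemma~\ref{lem-error-bar} directly produces the $\hilb$-norm of $w\chi_{D_\theta}$: every term in $\bar\cE[w,\varphi_\cyl]$ either carries no derivative on $w$, or the derivative is moved off $w$ and onto the cutoff or the test function using that $\partial_y:\hilb\to\hv^*$ and $y:\hilb\to\hv^*$ are bounded (Lemma~\ref{lem-bounded-first-order}). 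Consequently the detour through interior parabolic estimates is unnecessary — and, as you yourself note, it would land you on a slightly larger set, so it does not close cleanly and risks a circular reshuffling of $\theta$. Drop that paragraph and rely on Lemma~\ref{lem-error-bar} directly: once $\pr_+ w_\cyl(\tau_0)=0$ is imposed, \eqref{eq-first-step} plus Lemmas~\ref{lem-error1-est} and \ref{lem-error-bar} immediately give
\[
\|\hat w_\cyl\|_{\hv,\infty} \le C_*\Bigl(\epsilon\,\|w_\cyl\|_{\hv,\infty} + \frac{\bar C(\theta)}{\sqrt{|\tau_0|}}\|w\chi_{D_\theta}\|_{\hilb,\infty}\Bigr),
\]
which is the Proposition after taking $\tau_0$ sufficiently negative.
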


\smallskip

The rest of this section will be devoted to the proof of Proposition \ref{prop-cylindrical}.
To simplify the notation for the rest of the section we will simply denote $u_2^{\alpha\beta\gamma}$ by $u_2$ and set $w:=u_1-u_2$.
The difference $w$ satisfies
\begin{equation}
  \label{eqn-ww}
  w_\tau = \frac{w_{yy}}{1+u_{1y}^2} -\frac{(u_{1y}+u_{2y})u_{2yy}}{(1+u_{1y}^2)(1+u_{2y}^2)} w_y -\frac y2 w_y + \frac 12 w +\frac{n-1}{u_1u_2} w
\end{equation}
which we can rewrite as
\begin{equation}
  w_\tau = \cL w + \cE w
  \label{eq-w-linear}
\end{equation}
in which $ \cL = \pd_y^2 - \frac y2 \pd_y + 1$ is as above, and where $ \cE $ is given by
\begin{equation}
  \label{eq-E10}
  \cE[\phi] = -\frac{u_{1y}^2}{1+u_{1y}^2} \phi_{yy} -\frac{(u_{1y}+u_{2y})u_{2yy}}{(1+u_{1y}^2)(1+u_{2y}^2)} \phi_y +\frac{2(n-1) - u_1u_2}{2u_1u_2}\phi.
\end{equation}

\subsection{The operator $ \cL $}
We recall the definition of the Hilbert spaces $\hilb$, $\hv$ and $\hv^*$ are given in Section \ref{subsec-cylindrical}.
The formal linear operator
\[
\cL = \pd_y^2 - \frac y2 \pd_y + 1 = -\pd_y^* \pd_y + 1
\]
defines a bounded operator $\cL:\hv\to\hv^*$, meaning that for any $f\in \hv$ we have that $Lf\in \hv^*$ is the functional given by
\[
\forall \phi\in \hv : \langle \cL f, \phi\rangle = \int_{\R} \bigl ( -f_y\phi_y + f\phi\bigr ) \, e^{-y^2/4}\, dy.
\]
By integrating by parts one verifies that if $f\in C^2_c$, one has
\[
\langle f, \phi \rangle = \int_\R \bigl ( f_{yy} - \frac y2 f_y + f\bigr ) \phi \,e^{-y^2/4}dy,
\]
so that the weak definition of $ \cL f $ coincides with the classical definition.

\subsection{Operator bounds and Poincar\'e type inequalities}

The following inequality was shown in Lemma 4.12 in \cite{ADS}.

\begin{lemma} \label{lem-Poincare} For any $f\in \hv$ one has
  \[
  \int_\R y^2 f(y)^2 e^{-y^2/4} dy \leq C \int_\R \big ( f(y)^2 + f_y(y)^2 \big ) \, e^{-y^2/4} dy,
  \]
  which implies that the multiplication operator $ f\mapsto yf $ is bounded from $ \hv $ to $ \hilb $, i.e.
  \[
  \|yf\|_\hilb \leq C \|f\|_\hv,
  \]
  for all $ f\in \hv $.
\end{lemma}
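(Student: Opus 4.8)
The plan is to prove the weighted inequality for $f \in C_c^\infty(\R)$ by integrating by parts against the Gaussian weight, and then to pass to a general $f \in \hv$ by density together with Fatou's lemma. Write $\rho(y) = e^{-y^2/4}$ and note the crucial identity $\rho'(y) = -\tfrac{y}{2}\rho(y)$, equivalently $y\,\rho(y) = -2\rho'(y)$. This is what makes the Gaussian weight so well adapted to $\cL$, and it is the only structural fact needed.

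For $f \in C_c^\infty(\R)$ the identity above gives
\[
\int_\R y^2 f(y)^2 \rho\, dy = -2\int_\R y f(y)^2 \rho'\, dy = 2\int_\R \bigl(y f^2\bigr)' \rho\, dy = 2\int_\R f^2 \rho\, dy + 4\int_\R y f f_y\, \rho\, dy,
\]
where the boundary terms at $\pm\infty$ vanish because $f$ has compact support. By Young's inequality $4|y f f_y| \le \tfrac12 y^2 f^2 + 8 f_y^2$, so after absorbing the term $\tfrac12\int_\R y^2 f^2\rho$ into the left-hand side one obtains
\[
\int_\R y^2 f^2 \rho\, dy \le 4\int_\R f^2\rho\, dy + 16\int_\R f_y^2\rho\, dy \le 16\,\|f\|_\hv^2 .
\]

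To remove the smoothness assumption, recall that $C_c^\infty(\R)$ is dense in $\hv$. Given $f \in \hv$, pick $f_n \in C_c^\infty(\R)$ with $f_n \to f$ in $\hv$; in particular $f_n \to f$ in $\hilb = L^2(\R, \rho\,dy)$, hence, after passing to a subsequence, $f_n \to f$ almost everywhere. By Fatou's lemma and the bound just established for each $f_n$,
\[
\int_\R y^2 f^2 \rho\, dy \le \liminf_{n\to\infty} \int_\R y^2 f_n^2 \rho\, dy \le 16\,\liminf_{n\to\infty}\|f_n\|_\hv^2 = 16\,\|f\|_\hv^2 ,
\]
which is the asserted inequality with $C = 16$. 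The mapping statement $\|yf\|_\hilb \le C\|f\|_\hv$ (with $C=4$) is then immediate, since $\|yf\|_\hilb^2 = \int_\R y^2 f^2\rho\,dy$ by definition of $\hilb$.

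The computation itself is entirely routine; the only place that calls for a little care is the justification of the integration by parts, which is precisely why I carry it out first for compactly supported smooth functions and then invoke density of $C_c^\infty(\R)$ in $\hv$ and Fatou, rather than attempting the integration by parts directly for an arbitrary element of $\hv$ (where one would otherwise have to argue separately that the boundary contribution $y f^2 \rho$ tends to $0$ along suitable sequences $y\to\pm\infty$). No genuine obstacle arises, and the constant is harmless.
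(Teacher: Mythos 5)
Your proof is correct. The paper does not actually give a proof of Lemma~\ref{lem-Poincare} here; it simply cites Lemma~4.12 of \cite{ADS}. Your integration-by-parts argument against the Gaussian weight (using $y\,e^{-y^2/4} = -2\,(e^{-y^2/4})'$, Young's inequality to absorb the cross term, and density of $C_c^\infty$ in $\hv$ plus Fatou to handle the general case) is the standard derivation of such weighted inequalities and is the same approach used in \cite{ADS}, so there is no meaningful divergence to report.
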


As a consequence we have the following two lemmas:

\begin{lemma} \label{lem-bounded-first-order} The following operators are bounded both as operators from $\hv$ to $\hilb$ and also as operators from $\hilb$ to $\hv^*$:
  \[
  f\mapsto yf,\quad f\mapsto \pd_y f, \quad f\mapsto \pd_y^* f = \bigl(-\pd_y + \frac y2\bigr)f,
  \]
  where $\pd_y^*$ is the formal adjoint of the operator $\pd_y$, it satisfies $\langle f, \pd_y^* g\rangle = \langle \pd_yf, g\rangle$ for all $f,g\in\hv$.
\end{lemma}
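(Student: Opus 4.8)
The plan is to deduce everything from the Poincaré-type inequality in Lemma \ref{lem-Poincare} together with elementary manipulations of the Gaussian weight $e^{-y^2/4}$. I would treat the three operators in turn, and observe that by the definition of the dual norm $\|\cdot\|_{\hv^*}$, proving an operator is bounded $\hilb\to\hv^*$ is the \emph{same} as proving its formal adjoint is bounded $\hv\to\hilb$; since $f\mapsto yf$ is self-adjoint and $\pd_y$, $\pd_y^*$ are mutually adjoint with respect to the weighted inner product, the $\hilb\to\hv^*$ statements for all three operators follow immediately from the three $\hv\to\hilb$ statements. So it suffices to prove: $\|yf\|_\hilb\le C\|f\|_\hv$, $\|\pd_y f\|_\hilb\le C\|f\|_\hv$, and $\|\pd_y^* f\|_\hilb\le C\|f\|_\hv$.

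First, $\|yf\|_\hilb\le C\|f\|_\hv$ is precisely the content of Lemma \ref{lem-Poincare}, so nothing is needed there. Second, $\|\pd_y f\|_\hilb^2=\int_\R f_y^2\,e^{-y^2/4}\,dy\le \|f\|_\hv^2$ directly from the definition of the $\hv$-norm, so this operator even has norm $\le 1$. Third, for $\pd_y^* f=-f_y+\tfrac y2 f$ I would use the triangle inequality in $\hilb$:
\[
\|\pd_y^* f\|_\hilb \le \|f_y\|_\hilb + \tfrac12\|yf\|_\hilb \le \|f\|_\hv + \tfrac{C}{2}\|f\|_\hv,
\]
where the first term is bounded as in the second operator and the second term invokes Lemma \ref{lem-Poincare} again. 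This gives the claimed bound with $C' = 1 + C/2$.

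Finally, to pass from these estimates on a dense class (say $C^2_c(\R)$, or Schwartz functions) to all of $\hv$, one notes that $C_c^\infty$ is dense in $\hv$ in the $\hv$-norm, and each of the three operators, being bounded on the dense subspace with the stated constants, extends uniquely to a bounded operator on $\hv$; one then checks that this extension agrees with the weak/distributional definition of the operator, which is routine since convergence in $\hv$ implies convergence of $f$ and $f_y$ in $\hilb$ and hence (after multiplying by the locally bounded weight) in $L^2_{\mathrm{loc}}$. The $\hilb\to\hv^*$ statements are then obtained by duality as explained above: for $f\in\hilb$ and $\phi\in\hv$, $|\langle yf,\phi\rangle| = |\langle f, y\phi\rangle|\le \|f\|_\hilb\|y\phi\|_\hilb\le C\|f\|_\hilb\|\phi\|_\hv$, and similarly $|\langle \pd_y f,\phi\rangle| = |\langle f,\pd_y^*\phi\rangle|\le C\|f\|_\hilb\|\phi\|_\hv$ and $|\langle \pd_y^* f,\phi\rangle| = |\langle f,\pd_y\phi\rangle|\le \|f\|_\hilb\|\phi\|_\hv$.

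There is no real obstacle here; the only point requiring a little care is the density/extension argument and the verification that the abstract extension coincides with the natural (weak) definition of each operator, but this is standard. The substantive input — the bound $\|yf\|_\hilb\le C\|f\|_\hv$ — is already supplied by Lemma \ref{lem-Poincare}.
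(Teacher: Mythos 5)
Your proof is correct and follows essentially the same route as the paper: boundedness of $f\mapsto yf$ from $\hv$ to $\hilb$ comes from Lemma~\ref{lem-Poincare}, boundedness of $\pd_y$ from $\hv$ to $\hilb$ comes directly from the definition of the $\hv$-norm, $\pd_y^*$ is handled by combining these two, and all the $\hilb\to\hv^*$ statements are obtained by duality. The added remarks on density and extension are a minor elaboration of what the paper leaves implicit.
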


\begin{lemma}\label{lem-bounded-second-order}
  The following operators are bounded from $\hv$ to $\hv^*$:
  \[
  f\mapsto y^2f, \quad f\mapsto y\pd_y f, \quad f\mapsto \pd_y^2 f.
  \]
\end{lemma}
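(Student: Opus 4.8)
The plan is to deduce all three boundedness statements from the Poincaré-type inequality of Lemma \ref{lem-Poincare} together with the first-order bounds of Lemma \ref{lem-bounded-first-order}, working entirely through the weak (duality) pairing against test functions $\phi \in \hv$. Recall that $g \in \hv^*$ is controlled by $\|g\|_{\hv^*} = \sup\{|\langle g,\phi\rangle| : \|\phi\|_\hv \le 1\}$, so for each operator $T$ it suffices to bound $|\langle Tf, \phi\rangle|$ by $C\|f\|_\hv \|\phi\|_\hv$ for all $f,\phi \in \hv$ (or, by density, for $f,\phi \in C^\infty_c$, which also legitimizes the integrations by parts below).

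First I would handle $f \mapsto \pd_y^2 f$. For $f,\phi \in C^\infty_c$, integrating by parts once gives
\[
\langle \pd_y^2 f, \phi\rangle = \int_\R f_{yy}\,\phi\, e^{-y^2/4}\,dy = -\int_\R f_y \bigl(\phi_y - \tfrac{y}{2}\phi\bigr) e^{-y^2/4}\,dy = -\langle f_y, \phi_y\rangle + \tfrac12\langle f_y, y\phi\rangle .
\]
Here every blank-line-free display is a single equation. The first term is bounded by $\|f_y\|_\hilb\|\phi_y\|_\hilb \le \|f\|_\hv\|\phi\|_\hv$, and the second by $\tfrac12\|f_y\|_\hilb\|y\phi\|_\hilb \le C\|f\|_\hv\|\phi\|_\hv$ using Lemma \ref{lem-Poincare} to absorb the factor $y$ on $\phi$. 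Hence $\|\pd_y^2 f\|_{\hv^*} \le C\|f\|_\hv$. For $f \mapsto y\pd_y f$ I would write, again for $f,\phi\in C^\infty_c$,
\[
\langle y f_y, \phi\rangle = \langle f_y, y\phi\rangle \le \|f_y\|_\hilb\,\|y\phi\|_\hilb \le C\,\|f\|_\hv\,\|\phi\|_\hv,
\]
where the last step is Lemma \ref{lem-Poincare} applied to $\phi$; alternatively one may note $y\pd_y = (y f)_y - f$ and invoke Lemma \ref{lem-bounded-first-order} for $\pd_y:\hilb\to\hv^*$ acting on $yf\in\hilb$ plus the trivial inclusion $\hilb\hookrightarrow\hv^*$. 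For $f \mapsto y^2 f$ the cleanest route is to factor it as the composition of $f\mapsto yf$, which is bounded $\hv\to\hilb$ by Lemma \ref{lem-Poincare}, followed by $g\mapsto yg$, which is bounded $\hilb\to\hv^*$ by Lemma \ref{lem-bounded-first-order}; composing gives $\|y^2 f\|_{\hv^*}\le C\|yf\|_\hilb\le C\|f\|_\hv$.

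The only point requiring a little care — and the step I would flag as the main (mild) obstacle — is justifying the integrations by parts and the density reductions at the level of the weak formulation: $C^\infty_c$ is dense in $\hv$, each of the three operators is continuous from $C^\infty_c$ (in the $\hv$-norm) into $\hv^*$ by the estimates above, so each extends uniquely to a bounded operator $\hv\to\hv^*$, and one checks that this extension agrees with the distributional action of $y^2f$, $yf_y$, $\pd_y^2 f$ on test functions. Everything else is a routine application of Cauchy–Schwarz in $\hilb$ together with Lemma \ref{lem-Poincare}, so no further difficulty is expected.
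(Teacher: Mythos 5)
Your proof is correct and is essentially the paper's argument: the paper's proof is a one-line observation that each operator factors as a composition of two operators from Lemma~\ref{lem-bounded-first-order}, one going $\hv\to\hilb$ and one going $\hilb\to\hv^*$; you use exactly this factorization for $y^2f$, and for $y\pd_y f$ and $\pd_y^2 f$ you instead unroll the same duality estimates explicitly via pairing against $\phi\in\hv$ and one integration by parts, which amounts to the same bounds. The density/extension point you flag is standard and is not discussed in the paper either, so there is no gap.
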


\begin{proof}[Proof of Lemmas \ref{lem-bounded-first-order} and \ref{lem-bounded-second-order}]
  By definition of the norms in $\hv$ and $\hilb$ the operator $\pd_y$ is bounded from $\hv$ to $\hilb$, and by duality its adjoint $\pd_y^* = -\pd_y + \frac y2$ is bounded from $\hilb$ to $\hv^*$.
  
  The Poincar\'e inequality from Lemma~\ref{lem-Poincare} implies directly that $f\mapsto yf$ is bounded from $\hv$ to $\h$.
  By duality the same multiplication operator is also bounded from $ \h $ to $ \hv^* $; i.e.~for every $ f\in \h $ the product $ y f $ defines a linear functional on $ \hv $ by $ \langle yf, \phi \rangle = \langle f, y\phi \rangle $ for every $ \phi\in \hv $.
  We get
  \[
  \|y\, f \|_{\hv^*} \leq C \|f\|_\hilb,
  \]
  for all $ f\in\hilb $.
  
  Composing the multiplications $y : \hv \to \h $ and $ y:\h\to\hv^* $ we see that multiplication with $ y^2 $ is bounded as operator from $ \hv $ to $ \hv^* $, i.e.~for all $ f\in\hv $ we have $ y^2f\in \hv^* $, and
  \[
  \|y^2\, f \|_{\hv^*} \leq C^2 \|f\|_\hv.
  \]
  Since $y:\hv \to \h$ and $\pd_y:\hv\to\h$ are both bounded operators, we find that $\pd_y^* = - \pd_y^*+\frac y2$ also is bounded from $\hv$ to $\hv$.
  By duality again, it follows that $\pd_y$ is bounded from $\h$ to $\hv^*$.
  This proves Lemma~\ref{lem-bounded-first-order}.
  
  Each of the operators in Lemma~\ref{lem-bounded-second-order} is the composition of two operators from Lemma~\ref{lem-bounded-first-order}, so they are also bounded.
\end{proof}

More generally, to estimate the operator norm of multiplication with some function $ m:\R\to\R $, seen as operator from $ \hv $ to $ \h $, we have
\[
\|m\,f\|_\hilb \leq \sup_{y\in\R} \frac{|m(y)|}{1+|y|} \; \|f\|_\hv.
\]

Indeed the following lemma can be easily shown.

\begin{lemma}
  Let $m:\R\to\R$ be a measurable function, consider the multiplication operator $\cM : f\mapsto mf$.
  Then, the following hold:
  
  $\cM:\h\to\h$ is bounded if $m\in L^\infty(\R)$, and $\|\cM\|_{\h\to\h} \leq \|m\|_{L^\infty}$.
  
  $\cM:\hv\to\h$ is bounded if and only if $\cM:\h\to\hv^*$ is bounded.
  Both operators are bounded if $(1+|y|)^{-1}m(y)$ is bounded, and
  \[
  \|\cM\|_{\h\to\hv^*} = \|\cM\|_{\hv\to\h} \leq C\,\mathrm{ess~sup}_{y\in\R} \frac{|m(y)|}{1+|y|}.
  \]
  Finally, $\cM$ is a bounded operator from $ \hv $ to $ \hv^* $ if $(1+|y|)^{-2}m(y)$ is bounded, and the operator norm is bounded by
  \[
  \|\cM\|_{\hv\to\hv^*} \leq \mathrm{ess~sup}_{y\in\R} \frac{|m(y)|}{(1+|y|)^2} .
  \]
\end{lemma}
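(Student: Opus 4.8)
The plan is to prove the three (or four) assertions in turn, each reducing to an elementary estimate once the right factorization of $m$ is chosen; nothing beyond the Poincar\'e inequality of Lemma~\ref{lem-Poincare} and elementary Hilbert space duality is needed. The first bound is immediate: for $f\in\h$ one has $\|\cM f\|_\h^2=\int_\R m(y)^2f(y)^2e^{-y^2/4}\,dy\le\|m\|_{L^\infty}^2\|f\|_\h^2$, so $\cM:\h\to\h$ is bounded with norm at most $\|m\|_{L^\infty}$.

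For the equivalence of boundedness of $\cM:\hv\to\h$ and of $\cM:\h\to\hv^*$ with equal norms, I would observe that, since $m$ is real-valued, $\cM$ is formally self-adjoint for the $\h$-inner product: $(\cM f,g)_\h=(f,\cM g)_\h$ whenever the integrals converge. Under the Riesz identifications $\h^*\cong\h$ and $(\hv^*)^*\cong\hv$, this says precisely that the operator $\cM:\h\to\hv^*$, understood as $f\mapsto\bigl(\phi\mapsto(f,\cM\phi)_\h\bigr)$, is the Hilbert-space adjoint of $\cM:\hv\to\h$ (and conversely). Since a bounded operator between Hilbert spaces and its adjoint are bounded simultaneously with the same norm, the claimed equivalence and the equality $\|\cM\|_{\h\to\hv^*}=\|\cM\|_{\hv\to\h}$ follow at once.

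It then remains to prove the two quantitative estimates. For the map $\hv\to\h$, write $m(y)=\frac{m(y)}{1+|y|}\cdot(1+|y|)$, so that for $f\in\hv$,
\[
\|\cM f\|_\h\le\Bigl\|\tfrac{m}{1+|\cdot|}\Bigr\|_{L^\infty}\,\bigl\|(1+|y|)f\bigr\|_\h\le\Bigl\|\tfrac{m}{1+|\cdot|}\Bigr\|_{L^\infty}\bigl(\|f\|_\h+\|yf\|_\h\bigr),
\]
and then $\|yf\|_\h\le C\|f\|_\hv$ by Lemma~\ref{lem-Poincare} while $\|f\|_\h\le\|f\|_\hv$; this gives the stated bound (with a universal constant $C$), and by the second paragraph the same bound holds for $\|\cM\|_{\h\to\hv^*}$. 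For the map $\hv\to\hv^*$, take $f,\phi\in\hv$ and split $m=\frac{m}{(1+|y|)^2}\,(1+|y|)\,(1+|y|)$; then by Cauchy--Schwarz in $\h$,
\[
|\langle\cM f,\phi\rangle|=\Bigl|\int_\R\tfrac{m}{(1+|y|)^2}\,(1+|y|)f\,(1+|y|)\phi\,e^{-y^2/4}\,dy\Bigr|\le\Bigl\|\tfrac{m}{(1+|\cdot|)^2}\Bigr\|_{L^\infty}\bigl\|(1+|y|)f\bigr\|_\h\bigl\|(1+|y|)\phi\bigr\|_\h,
\]
and each factor $\|(1+|y|)f\|_\h$ is controlled by $\|f\|_\hv$ exactly as above. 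This yields $\|\cM\|_{\hv\to\hv^*}\le C\,\|m/(1+|\cdot|)^2\|_{L^\infty}$; the universal constant is harmless in the applications, where $m$ always stands for a coefficient bounded by a small multiple of such a weight.

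I do not expect a genuine obstacle: the whole lemma is a repackaging of one elementary weighted inequality together with the $\hv$--$\hv^*$ duality. The one point that deserves care is the self-adjointness/duality step in the second paragraph --- one must fix the precise meaning of ``$\cM:\h\to\hv^*$'' (namely $f\mapsto(f,\cM\,\cdot\,)_\h$) so that its identification with the adjoint of $\cM:\hv\to\h$ is exact, and the operator norms genuinely coincide rather than merely being comparable; everything else is bookkeeping with the factorizations of $m$.
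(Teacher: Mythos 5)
Your proof is correct, and it supplies the argument that the paper omits (the paper merely remarks that the lemma ``can be easily shown,'' relying implicitly on the Poincar\'e inequality of Lemma~\ref{lem-Poincare} and the duality already used in Lemmas~\ref{lem-bounded-first-order}--\ref{lem-bounded-second-order}). Your factorizations of $m$ and your identification of $\cM:\h\to\hv^*$ with the Banach-space adjoint of $\cM:\hv\to\h$ are exactly the intended route; the one small point where you deviate from the printed statement is that your bound on $\|\cM\|_{\hv\to\hv^*}$ carries a universal constant $C$, whereas the paper writes none. Since the same constant already appears in Lemma~\ref{lem-Poincare} (and hence in $\|yf\|_\hilb\le C\|f\|_\hv$), your version is the honest one, and you are right that the constant is immaterial in every subsequent application.
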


\subsection{Eigenfunctions of $ \cL $}
There is a sequence of polynomials $ \psi_n(y) = y^n+\cdots$ that are eigenfunctions of the operator $ \cL $.
The $ n^{\rm th} $ eigenfunction has eigenvalue $ \lambda_n = 1 - \frac n2 $.
The first few eigenfunctions are given by
\[
\psi_0(y) = 1, \quad \psi_1(y) = y, \quad \psi_2(y) = y^2-2
\]
up to scaling.
%We only consider the even eigenfunctions because because by assumption our solutions are even functions.

The functions $ \{\psi_n : n\in\mathrm{N}\} $ form an orthogonal basis in all three Hilbert spaces $ \hv$, $\h$ and $\hv^*$.
The three projections $ \pr_\pm $ and $ \pr_0 $ onto the subspaces spanned by the eigenfunctions with negative/positive, or zero eigenvalues are therefore the same on each of the three Hilbert spaces.
Since $ \psi_2 $ is the eigenfunction with eigenvalue zero, they are given by
\[
\pr_+f = \sum_{j=0}^1 \frac{\langle \psi_j, f\rangle }{\langle \psi_j, \psi_j\rangle}\psi_j,\quad \pr_-f = \sum_{j=3}^\infty \frac{\langle \psi_j, f\rangle }{\langle \psi_j, \psi_j\rangle}\psi_j,\quad \pr_0f = \frac{\langle \psi_2, f\rangle }{\langle \psi_2, \psi_2\rangle}\psi_2.
\]

\subsection{Estimates for ancient solutions of the linear cylindrical equation}

In this section we will give energy type estimates for ancient solutions $ f:(-\infty, \tau_0] \to \hv $ of the linear cylindrical equation
\begin{equation}
  \label{eqn-linear1}
  \frac{df}{d\tau} - \cL f(\tau) = g(\tau).
\end{equation}

\begin{lemma}
  \label{lem-linear-cylindrical-estimates}
  Let $ f:(-\infty, \tau_0] \to \hv $ be a bounded solution of \eqref{eqn-linear1}.
  Then there is a constant $ C <\infty $ that does not depend on $ f $, such that
  \[
    \sup_{\tau\leq\tau_0}\| \hat f(\tau)\|_\hilb^2 +\frac 1C \int_{-\infty}^{\tau_0} \|\hat f(\tau)\|_\hv^2\, d\tau 
    \leq \|f_+(\tau_0)\|_\hilb^2 + C \int_{-\infty}^{\tau_0} \|\hat g(\tau)\|_{\hv^*}^2\, d\tau
  \]
  where $ f_+ = \pr_+ f $ and $ \hat f = \pr_+f + \pr_- f$.
\end{lemma}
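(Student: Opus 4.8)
The plan is to split $f$ along the orthogonal eigenbasis $\{\psi_n\}$ of $\cL$ and estimate the unstable and stable parts separately; the neutral part $\pr_0 f$ never enters, since the asserted inequality only involves $\hat f=\pr_+f+\pr_-f$ and $\hat g:=\pr_+g+\pr_-g$. Because $\pr_\pm$ and $\pr_0$ are spectral projections for $\cL$, they commute with $\cL$ and with $d/d\tau$, so $f_\pm:=\pr_\pm f$ solves \eqref{eqn-linear1} with $g$ replaced by $g_\pm:=\pr_\pm g$; moreover $\|\hat f\|_\hilb^2=\|f_+\|_\hilb^2+\|f_-\|_\hilb^2$, $\|\hat f\|_\hv^2=\|f_+\|_\hv^2+\|f_-\|_\hv^2$, and similarly $\|g_\pm\|_{\hv^*}\le\|\hat g\|_{\hv^*}$. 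Hence it suffices to prove the two bounds for $f_+$ and $f_-$ and add them.

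First I would treat the stable part. The algebraic input is the coercivity estimate $\langle\cL\phi,\phi\rangle\le-\tfrac15\|\phi\|_\hv^2$ for all $\phi\in\hilb_-$. This follows by writing $\phi=\sum_{n\ge3}c_n\psi_n$, using $\langle\cL\psi_n,\psi_n\rangle=\lambda_n\|\psi_n\|_\hilb^2$ with $\lambda_n=1-\tfrac n2\le-\tfrac12$, together with the identity $\langle\cL\psi_n,\psi_n\rangle=\|\psi_n\|_\hilb^2-\|(\psi_n)_y\|_\hilb^2$, which gives $\|\psi_n\|_\hv^2=(2-\lambda_n)\|\psi_n\|_\hilb^2$; then $\lambda_n+\tfrac15(2-\lambda_n)\le0$ for every $n\ge3$. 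Differentiating $\|f_-(\tau)\|_\hilb^2$ — justified by the standard Lions--Magenes identity for the Gelfand triple $\hv\subset\hilb\subset\hv^*$ — and absorbing $2\langle g_-,f_-\rangle\le2\|g_-\|_{\hv^*}\|f_-\|_\hv$ by Young's inequality yields
\[
\frac{d}{d\tau}\|f_-\|_\hilb^2=2\langle\cL f_-,f_-\rangle+2\langle g_-,f_-\rangle\le-\tfrac15\|f_-\|_\hv^2+5\|g_-\|_{\hv^*}^2 .
\]
Since $\|f_-\|_\hv\ge\|f_-\|_\hilb$ this gives $\frac{d}{d\tau}\bigl(e^{\tau/5}\|f_-\|_\hilb^2\bigr)\le5e^{\tau/5}\|g_-\|_{\hv^*}^2$; integrating from $T$ to $\tau\le\tau_0$ and letting $T\to-\infty$, the boundary term $e^{T/5}\|f_-(T)\|_\hilb^2$ vanishes because $f$ is bounded in $\hv$, which produces $\sup_{\tau\le\tau_0}\|f_-(\tau)\|_\hilb^2\le C\int_{-\infty}^{\tau_0}\|g_-\|_{\hv^*}^2\,d\tau$, and reintegrating the differential inequality over $(-\infty,\tau_0]$ then also gives $\int_{-\infty}^{\tau_0}\|f_-\|_\hv^2\,d\tau\le C\int_{-\infty}^{\tau_0}\|g_-\|_{\hv^*}^2\,d\tau$.

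Next I would treat the unstable part, which lives in the finite‑dimensional space $\hilb_+=\mathrm{span}\{\psi_0,\psi_1\}$ on which $\cL\ge\tfrac12$ and on which $\|\cdot\|_\hilb$, $\|\cdot\|_\hv$, $\|\cdot\|_{\hv^*}$ are all equivalent. Here the equation is run \emph{backward} from $\tau_0$: Duhamel's formula gives
\[
f_+(\tau)=e^{(\tau-\tau_0)\cL}f_+(\tau_0)-\int_\tau^{\tau_0}e^{(\tau-s)\cL}g_+(s)\,ds ,\qquad \tau\le\tau_0,
\]
and, using $\|e^{(\tau-s)\cL}|_{\hilb_+}\|\le e^{(\tau-s)/2}$ for $s\ge\tau$ together with norm equivalence on $\hilb_+$, one obtains $\|f_+(\tau)\|_\hilb\le e^{(\tau-\tau_0)/2}\|f_+(\tau_0)\|_\hilb+C\int_\tau^{\tau_0}e^{(\tau-s)/2}\|g_+(s)\|_{\hv^*}\,ds$. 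The supremum bound follows from Cauchy--Schwarz, and the $L^2$‑in‑$\tau$ bound from Young's convolution inequality with the kernel $r\mapsto e^{-r/2}\mathbf 1_{\{r\ge0\}}$ (of $L^1$‑norm $2$). Adding the $f_+$ and $f_-$ estimates, and using $\|g_\pm\|_{\hv^*}\le\|\hat g\|_{\hv^*}$ and the orthogonal splittings of $\|\hat f\|_\hilb^2$ and $\|\hat f\|_\hv^2$, yields the claim. I expect the only genuinely delicate points to be the justification of the energy identity for $\hv$‑valued solutions of the $\hv^*$‑valued equation and the passage $T\to-\infty$ in the stable estimate — this is exactly where the hypothesis that $f$ be bounded in $\hv$ is used — while the rest is routine spectral bookkeeping.
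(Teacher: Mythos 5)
Your proposal is correct and follows essentially the same strategy as the paper: split off the stable and unstable spectral components, run an energy (resp.\ backward energy) estimate for each, and add. The technical executions differ only slightly---you use an explicit integrating factor to justify dropping the boundary term at $-\infty$ for $f_-$ (which the paper does implicitly), and you phrase the unstable estimate via Duhamel's formula rather than a forward-integrated differential inequality, but these are equivalent; your explicit verification that $\|\psi_n\|_\hv^2=(2-\lambda_n)\|\psi_n\|_\hilb^2$ gives the coercivity constant is a nice self-contained touch.
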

\begin{proof}
  This is a standard cylindrical estimate applied to the infinite time domain $ (-\infty, \tau_0] $.
  Since the operator $ \cL $ commutes with the projections $ \pr_\pm$ we can split $ f(\tau) $ into its $ \pr_+ $ and $ \pr_- $ components, and estimate these separately.
  
  Applying the projection $ \pr_- $ to both sides of the equation $ f_\tau - \cL f = g $ we get
  \[
  f_-'(\tau) = \cL f_-(\tau) + g_-(\tau),
  \]
  where $g_-(\tau) = \pr_- g(\tau)$.
  This implies
  \[
  \frac 12\frac{d}{d\tau} \|f_-\|_\hilb^2 = \langle f_-, \cL f_- \rangle + \langle f_-, g_- \rangle.
  \]
  Using the eigenfunction expansion of $ f_- $ we get
  \[
  \langle f_-, \cL f_- \rangle \leq - C \|f_-\|_\hv^2.
  \]
  We also have
  \[
  \langle f_-, g_- \rangle \leq \|f_-\|_\hv \; \|g_-\|_{\hv^*} \leq \frac C2 \|f_-\|_\hv^2 + \frac {1}{2C}\|g_-\|_{\hv^*}^2.
  \]
  We therefore get
  \[
  \frac 12\frac{d}{d\tau} \|f_-\|_\hilb^2 \leq -\frac C2 \|f_-\|_\hv^2 + \frac 1 {2C} \|g_-\|_{\hv^*}^2.
  \]
  Integrating in time over the interval $ (-\infty, \tau]$ then leads to
  \[
  \frac12 \|f_-(\tau)\|_\hilb^2 + \frac C2 \int _{-\infty}^{\tau} \|f_-(\tau')\|_\hv^2 \; d\tau' \leq \frac 1{2C} \int_{-\infty}^\tau \|g_-(\tau')\|_{\hv^*}^2 \;d\tau'.
  \]
  Taking the supremum over $ \tau\leq \tau_0 $ then gives us the $ \pr_- $ component of \eqref{eq-basic-cylindrical-estimate}.
  For the other component, $ f_+(\tau) = \pr_+ f$, we have
  \[
  \langle f_+, \cL f_+ \rangle \geq C \|f_+\|_\hv^2.
  \]
  A similar calculation then leads to
  \[
  \frac 12\frac{d}{d\tau} \|f_+\|_\hilb^2 \geq \frac C2 \|f_+\|_\hv^2 - \frac 1 {2C} \|g_+\|_{\hv^*}^2.
  \]
  Integrating this over the interval $ [\tau, \tau_0] $  introduces the boundary term $ \|f_+(\tau_0)\|_\hilb^2 $, and gives us the estimate
  \[
  \frac12 \|f_+(\tau)\|_\hilb^2 + \frac C2 \int _{\tau}^{\tau_0} \|f_+(\tau')\|_\hv^2 \; d\tau' \leq \frac12 \|f_+(\tau_0)\|_\hilb^2 + \frac 1{2C} \int_{\tau}^{\tau_0} \|g_+(\tau')\|_{\hv^*}^2 \; d\tau'.
  \]
  Adding the estimates for $ \pr_+ f $ and $ \pr_-f $ yields \eqref{eq-basic-cylindrical-estimate}.
\end{proof}

\begin{lemma}
  \label{lem-linear-cylindrical-estimates-sup-L2-version}
  Let $f:(-\infty, \tau_0] \to \hv$ be a bounded solution of equation \eqref{eqn-linear1}.
  If $T>0$ is sufficiently large, then there is a constant $C_{\star}$ such that
  \begin{equation}
    \begin{split}
      \sup_{\tau\leq\tau_0}\| \hat f(\tau)\|_\hilb^2
      +\frac 1{C_{\star}} \sup_{n\geq 0} \int_{I_n} &\|\hat f(\tau)\|_\hv^2\, d\tau \\
      &\leq \|f_+(\tau_0)\|_\hilb^2 + C_\star \sup _{n\geq 0}\int_{I_n} \|\hat g(\tau)\|_{\hv^*}^2\, d\tau,
      \label{eq-basic-cylindrical-estimate}
    \end{split}
  \end{equation}
  where $I_n$ is the interval $ I_n = [\tau_0-(n+1)T, \tau_0-nT] $ and where $ f_+ = \pr_+ f $ and $ \hat f = \pr_+f + \pr_- f$.
  
\end{lemma}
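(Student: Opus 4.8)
The plan is to derive Lemma~\ref{lem-linear-cylindrical-estimates-sup-L2-version} from the already-established Lemma~\ref{lem-linear-cylindrical-estimates} by a localization-in-time argument. The point is that the two statements differ only in the norm used on the right-hand side: Lemma~\ref{lem-linear-cylindrical-estimates} controls things in terms of $\int_{-\infty}^{\tau_0}\|\hat g\|_{\hv^*}^2\,d\tau$, whereas here we want control by the weaker quantity $\sup_{n\ge0}\int_{I_n}\|\hat g\|_{\hv^*}^2\,d\tau$, i.e.\ a uniform bound on the $g$-energy over windows of length $T$ rather than a finite total energy. The mechanism that makes this possible is the \emph{exponential dichotomy} built into the proof of the previous lemma: the $\pr_-$-component decays forward in time and the $\pr_+$-component decays backward in time, so contributions to $\hat f$ at time $\tau_0$ coming from the source $g$ in the window $I_n$ are damped by a factor $e^{-cnT}$. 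Summing a geometric series in $n$ then converts a uniform per-window bound into a global bound, at the cost of the constant $C_\star$ (which will depend on $T$ through the factor $(1-e^{-cT})^{-1}$, hence the hypothesis that $T$ be sufficiently large so that the decay rate $c$ from Lemma~\ref{lem-linear-cylindrical-estimates} can be used with a fixed constant).

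Concretely, first I would split $g = g_+ + g_- + g_0$ and, since the estimate only involves $\hat f = \pr_+ f + \pr_- f$, reduce to the $\pr_\pm$ components as in the proof of Lemma~\ref{lem-linear-cylindrical-estimates}. For the $\pr_-$ part, rewrite $f_-(\tau)$ by Duhamel against the semigroup $e^{(\tau-\sigma)\cL}$ restricted to $\hilb_-$, where $\|e^{s\cL}\|_{\hilb_-\to\hilb_-}\le e^{-cs}$ for $s\ge0$ (with $c = \tfrac12$, the gap to the first stable eigenvalue); decompose the integral $\int_{-\infty}^\tau$ over the windows $I_n$, apply Cauchy--Schwarz on each window using the smoothing estimate of $\cL$ from $\hv^*$ to $\hilb$, and sum the resulting geometric series $\sum_n e^{-cnT}\big(\int_{I_n}\|g_-\|_{\hv^*}^2\big)^{1/2}$ bounded by $(1-e^{-cT})^{-1}\sup_n(\int_{I_n}\|g_-\|_{\hv^*}^2)^{1/2}$. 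Squaring gives the $\sup_\tau\|f_-(\tau)\|_\hilb^2$ bound, and integrating the differential inequality $\tfrac12\tfrac{d}{d\tau}\|f_-\|_\hilb^2\le -\tfrac C2\|f_-\|_\hv^2+\tfrac1{2C}\|g_-\|_{\hv^*}^2$ over a single window $I_n$, using the just-obtained pointwise bound at the right endpoint, yields the $\sup_n\int_{I_n}\|f_-\|_\hv^2$ bound. For the $\pr_+$ part one argues symmetrically, integrating backward from $\tau_0$: here the boundary term $\|f_+(\tau_0)\|_\hilb^2$ appears, and the backward-decay of $e^{s\cL}$ on $\hilb_+$ (again a spectral gap, now below the smallest positive eigenvalue $\tfrac12$) lets the same geometric-series argument go through.

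I would then assemble the $\pr_+$ and $\pr_-$ estimates, absorbing the various absolute constants into a single $C_\star = C_\star(T)$, to obtain \eqref{eq-basic-cylindrical-estimate}. A small technical point to handle carefully is the boundedness hypothesis on $f$: since $f:(-\infty,\tau_0]\to\hv$ is bounded, the Duhamel representation of $f_-$ on $(-\infty,\tau]$ is justified (the contribution of the ``initial data at $-\infty$'' vanishes by the backward instability of $\hilb_-$ under $\cL$), and likewise $f_+$ is recovered from its value at $\tau_0$ without spurious growing terms.

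The main obstacle I expect is bookkeeping the decay rates and window length correctly: one needs the spectral gap constant $c$ from Lemma~\ref{lem-linear-cylindrical-estimates} (equivalently the coercivity constant $C$ in $\langle f_\mp,\cL f_\mp\rangle \lessgtr \mp C\|f_\mp\|_\hv^2$) to dominate so that $e^{-cT}<1$ with room to spare, which is exactly why the statement requires $T$ to be ``sufficiently large''; and one must make sure the smoothing bound $\|e^{s\cL}g\|_\hilb \le C s^{-1/2}\|g\|_{\hv^*}$ near $s=0$ is integrable on each window so that the per-window Cauchy--Schwarz estimates are finite and uniform in $n$. None of this is deep, but getting a clean single constant $C_\star$ out of it is the part that requires care.
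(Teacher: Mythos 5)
Your approach is genuinely different from the paper's. The paper's proof never invokes semigroup smoothing or a Duhamel representation: it multiplies $f$ by time cut-off functions $\eta_n$ supported on $J_n = I_{n+1}\cup I_n\cup I_{n-1}$, applies the already-proved Lemma~\ref{lem-linear-cylindrical-estimates} to each truncation $f_n=\eta_n f$, and then absorbs the extra forcing $\eta_n'f$ on the left using $|\eta_n'|\le 2/T$ and the hypothesis that $T$ is large (so that $3C/T^2\le 1/(2C)$). In particular, ``$T$ sufficiently large'' is needed there purely for absorption, not for convergence of a geometric series. Your explanation that $T$ large is needed so that $e^{-cT}<1$ ``with room to spare'' is therefore off the mark: $e^{-cT}<1$ for every $T>0$, and your Duhamel route, if it closed, would actually prove the lemma for all $T>0$, which signals the reasoning isn't quite aligned with the target.

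There is also a concrete technical gap in the step you flag at the end as needing ``care.'' You propose to bound $\|f_-(\tau)\|_\hilb$ by applying Cauchy--Schwarz window-by-window to
$\int_{-\infty}^\tau \|e^{(\tau-s)\cL}g_-(s)\|_\hilb\,ds$
using $\|e^{s\cL}\|_{\hv^*\to\hilb}\le C s^{-1/2}$. On the window containing $\tau$ this requires the kernel $s^{-1/2}e^{-cs}$ to lie in $L^2(0,T)$, i.e.\ $\int_0^T s^{-1}e^{-2cs}\,ds<\infty$, which is false. The $s^{-1/2}$ singularity is integrable, but after squaring it is not, so the per-window Cauchy--Schwarz estimate is infinite precisely for $n=m$ (and $n=m+1$). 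This is the well-known fact that one cannot pass from $g_-\in L^2_t\hv^*$ to $f_-\in L^\infty_t\hilb$ by Duhamel plus smoothing alone; the endpoint pointwise bound must come from the energy identity. The fix, which you gesture at when you mention ``integrating the differential inequality over a single window,'' is to use the energy inequality of Lemma~\ref{lem-linear-cylindrical-estimates} on the local window and reserve Duhamel/decay only for propagating the endpoint value across far windows. If you restructure the argument that way it does close, and it is a legitimate alternative to the paper's cut-off argument, but it is more machinery-heavy (analytic semigroup, smoothing estimates on $\hilb_\pm$) for the same conclusion, and as written the plan contains a step that would fail.
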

\begin{proof}
  To simplify notation we assume in this proof that $\pr_0f(\tau)=0$, i.e.~that $\hat f(\tau) = f(\tau)$ for all $\tau$.
  Likewise we assume that $\hat g(\tau) = g(\tau)$ for all $\tau\leq \tau_0$.
  
  Choose a large number $T>0$ and let $\eta\in C^\infty_c(\R)$ be a smooth cut-off function with $\eta(t)=1$ for $t\in[-T,0]$, $\supp\eta \subset(-2T, +T)$.
  We may assume that
  \begin{equation}
    \label{eq-cut-off-f-derivative-bound}
    |\eta'(\tau)| \leq \frac 2 T\text{ for all }\tau\in\R.
  \end{equation}
  For any integer $n\geq 0$ we consider
  \[
  f_n(\tau) = \eta_n(\tau)f(\tau), \quad\text{where}\quad \eta_n(\tau) = \eta(\tau - \tau_0 + nT).
  \]
  \begin{figure}[b]
    \includegraphics{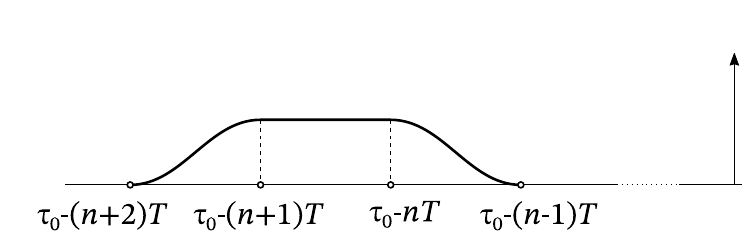}
    \caption{The cut off function $\eta_n(\tau)$, and the intervals $I_n$ and $J_n$.}
  \end{figure}
  The cut-off function $\eta_n$ satisfies $\eta_n(\tau)=1$ for $\tau\in I_n$, and $\supp\eta_n \subset J_n$, where, by definition,
  \[
  J_n = I_{n+1}\cup I_n \cup I_{n-1}.
  \]
  The function $f_n$ is a solution of
  \[
  f_n'(\tau) -\cL f_n(\tau) = \eta_n'(\tau) f(\tau) + \eta_n(\tau) g(\tau).
  \]
  If $n\geq 1$, then we can apply Lemma~\ref{lem-linear-cylindrical-estimates} to $f_n$, with $f_n(\tau_0) = 0$.
  Since $f_n$ and $f$ coincide on $I_n$, we get
  \begin{align*}
    \sup_{\tau\in I_n}\|f(\tau)\|_\hilb^2 +\frac1{C} \int_{I_n} \|f\|_{\hv}^2 d\tau &\leq \sup_{\tau\in J_n}\|f_n(\tau)\|_\hilb^2
    +\frac1{C} \int_{J_n} \|f_n\|_{\hv}^2 d\tau \\
    &\leq C \int_{J_n} \|\eta_n' f +\eta_n g\|_{\hv^*}^2 d\tau.
  \end{align*}
  Here $C$ is the constant from Lemma~\ref{lem-linear-cylindrical-estimates}.
  Using $(a+b)^2\leq 2(a^2+b^2)$ and also our bound \eqref{eq-cut-off-f-derivative-bound} for $\eta_n'(\tau)$ we get
  \[
  \sup_{\tau\in I_n}\|f(\tau)\|_\hilb^2 +\frac1{C} \int_{I_n} \|f\|_{\hv}^2 d\tau \leq C \int_{J_n} \biggl\{\frac 2{T^2}\| f \|_{\hv^*}^2 + \| g \|_{\hv^*}^2 \biggr\} d\tau.
  \]
  It follows that
  \begin{equation}
    \label{eq-supinfty-proof-1}
    \begin{split}
      \sup_{\tau\in I_n}\|f(\tau)\|_\hilb^2 + \frac1{C} \int_{I_n} &\|f\|_{\hv}^2 d\tau \\
      &\leq \frac{3C}{T^2} \sup_k \int_{I_k} \| f\|_{\hv^*}^2 d\tau + 3C\sup_k \int_{I_k} \| g \|_{\hv^*}^2 d\tau.
    \end{split}
  \end{equation}
  For $n=0$ the truncated function $f_n(\tau)$ is not defined for $\tau > \tau_0$ and we must use an estimate on $J_0 = I_1\cup I_0$.
  We apply Lemma~\ref{lem-linear-cylindrical-estimates} to the function $f_0(\tau) = \eta_0(\tau)f(\tau)$:
  \begin{equation} \label{eq-supinfty-proof-2}
    \begin{split}
      \sup_{\tau\in I_0}\|f(\tau)\|_\hilb^2 &
      +\frac1{C} \int_{I_0} \|f\|_{\hv}^2 d\tau\\
      &\leq \sup_{\tau\leq\tau_0}\|f_0(\tau)\|_\hilb^2 +\frac1{C} \int_{-\infty}^{\tau_0} \|f_0\|_{\hv}^2 d\tau
      \\
      &\leq \|f_+(\tau_0)\|_\hilb^2 + C \int_{-\infty}^{\tau_0} \|\eta_0'f+\eta_0 g\|_{\hv^*}^2 d\tau
      \\
      &\leq \|f_+(\tau_0)\|_\hilb^2 + 2C \int_{I_1} (\eta_0')^2 \|f\|_{\hv^*}^2 d\tau + 2C \int_{J_0} \|g\|_{\hv^*}^2 d\tau
      \\
      &\leq \|f_+(\tau_0)\|_\hilb^2 + \frac{2C}{T^2} \sup_k \int_{I_k} \| f\|_{\hv^*}^2 d\tau + 2C\sup_k \int_{I_k} \| g \|_{\hv^*}^2 d\tau.
    \end{split}
  \end{equation}
  Combining \eqref{eq-supinfty-proof-1} and \eqref{eq-supinfty-proof-2}, and taking the supremum over $n$, yield
  \[
  \begin{split}
    \sup_{\tau\leq \tau_0}\|f(\tau)\|_\hilb^2
    &+\frac1{C}\sup_{n}\int_{I_n} \|f\|_{\hv}^2 d\tau \\
    &\leq \|f_+(\tau_0)\|_\hilb^2 +\frac{3C}{T^2} \sup_k \int_{I_k} \| f\|_{\hv^*}^2 d\tau + 3C\sup_k \int_{I_k} \| g \|_{\hv^*}^2 d\tau.
  \end{split}
  \]
  Since $\|u\|_\hilb \leq \|u\|_\hv$ for all $u\in\hv$, it follows by duality that $\|u\|_{\hv^*} \leq \|u\|_\hilb$ for all $u\in\h$, and thus we have $\|f(\tau)\|_{\hv^*} \leq \|f(\tau)\|_{\hv}$.
  Therefore
  \[
  \begin{split}
    \sup_{\tau\leq \tau_0}\|f(\tau)\|_\hilb^2
    &+\frac1{C}\sup_{n}\int_{I_n} \|f\|_{\hv}^2 d\tau \\
    &\leq \|f_+(\tau_0)\|_\hilb^2 +\frac{3C}{T^2} \sup_k \int_{I_k} \| f\|_{\hv}^2 d\tau + 3C\sup_k \int_{I_k} \| g \|_{\hv^*}^2 d\tau.
  \end{split}
  \]
  At this point we assume that $T$ is so large that $3C/T^2 \leq 1/{2C}$, which lets us move the terms with $f$ on the right to the left hand side of the inequality:
  \[
  \sup_{\tau\leq \tau_0}\|f(\tau)\|_\hilb^2 +\frac1{2C}\sup_{n}\int_{I_n} \|f\|_{\hv}^2 d\tau \leq \|f_+(\tau_0)\|_\hilb^2 + 3C\sup_k \int_{I_k} \| g \|_{\hv^*}^2 d\tau.
  \]
\end{proof}

\subsection{$L^2$-Estimates for the error terms} 
The two solutions $u_1, u_2$ of equation \eqref{eq-u} that we are considering are only defined for \(y^2 \leq (2+o(1))|\tau|\).
This follows from the asymptotics in our previous work \cite{ADS} (see also Theorems  \ref{thm-old} and \ref{thm-O1}) where it was also shown that they satisfy the asymptotics 
\[
u(y, \tau) = \sqrt{(n-1)\, (2-z^2)} + o(1), \qquad \mbox{as}\,\, \tau\to-\infty
\]
uniformly in $z$, where  $ {\displaystyle  z = \frac{y}{\sqrt{|\tau|}}}$. 

We have seen that $w:= u_1-u_2$ satisfies \eqref{eq-w-linear} where the error term $\cE$ is given by \eqref{eq-E10}.
We will now consider this equation only in the ``cylindrical region,'' i.e.~the region where
\[
u > \frac{\theta}2  \qquad \text{ i.e. }\quad \frac{y}{\sqrt{|\tau|}} < \sqrt{2 - \frac{\theta^2}{4(n-1)}} + o(1).
\]
To concentrate on this region, we choose a cut-off function $\Phi\in C^\infty(\R)$ which decreases smoothly from $1$ to $0$ in the interior of the interval
\[
\sqrt{2 - \frac{\theta^2}{n-1}} < z < \sqrt{2 - \frac{\theta^2}{4(n-1)}}.
\]
With this cut-off function we then define
\[
\varphi_\cyl (y, \tau) = \Phi\Bigl(\frac{y}{|\tau|}\Bigr)
\]
and
\[
w_\cyl(y, \tau) = \varphi_\cyl(y, \tau) w(y, \tau).
\]
The cut-off function $\varphi_\cyl$ satisfies the bounds
\[
|(\varphi_\cyl)_y|^2 + |(\varphi_\cyl)_{yy}| + |(\varphi_\cyl)_{\tau}| \le \frac{\bar{C}(\theta)}{|\tau|}
\]
where $\bar{C}(\theta)$ is a constant that depends on $\theta$ and that may change from line to line in the text.
The localized difference function $w_\cyl$ satisfies
\begin{equation}
  w_{C,\tau} - \cL w_\cyl = \cE[w_\cyl] +\bar{\cE} [w,\varphi_\cyl]
  \label{eq-w-c-evolution}
\end{equation}
where the operator $\cE$ is again defined by \eqref{eq-E10} and where the new error term $\bar\cE$ is given by the commutator
\[
\bar\cE[w,\varphi_\collar] = \bigl[ \pd_\tau - (\cL +\cE), \varphi_\collar \bigr]w,
\]
i.e.
\begin{multline}
  \label{eq-bar-E}
  \bar{\cE}[w,\varphi_\cyl] = \\
  \Biggl \{\varphi_{\cyl,\tau} - \varphi_{\cyl, yy} + \frac{u_{1y}^2}{1+u_{1y}^2}\, \varphi_{\cyl, yy}
  + \frac{(u_{1y}+u_{2y})u_{2yy}}{(1+u_{1y}^2)(1+u_{2y}^2)} (\varphi_\cyl)_y + \frac y2  (\varphi_\cyl)_y\Biggr \}   w  \\
  + \biggl \{\frac{2u_{1y}^2}{1+u_{1y}^2} (\varphi_\cyl)_y - 2(\varphi_\cyl)_y\biggr\} w_y.
\end{multline}

Equation \eqref{eq-w-c-evolution} for $w_\cyl$ is not self contained because of the last term $\bar{\cE}[w,\varphi_\cyl]$, which involves $w$ rather than $w_\cyl$.
The extra non local term is supported in the intersection of the cylindrical and tip regions because all the terms in it involve derivatives of $\varphi_\cyl$, but not $\varphi_\cyl$ itself.

Let us abbreviate the right hand side in \eqref{eq-w-c-evolution} to
\[
g := \cE[w_\cyl] + \bar{\cE}[w,\varphi_\cyl].
\]
Apply Lemma \ref{lem-linear-cylindrical-estimates-sup-L2-version} to $w_\cyl$ solving \eqref{eq-w-c-evolution}, to conclude that there exist $\tau_0 \ll -1$ and constant $C_* > 0$, so that if the parameters $(\alpha, \beta, \gamma)$ are chosen to ensure that $\pr_+ w_\cyl(\tau_0) = 0$, then $\hat w_\cyl := \pr_+ w_\cyl + \pr_- w_\cyl$ satisfies the estimate
\begin{equation}
  \label{eq-first-step}
  \|\hat{w}_\cyl\|_{\hv,\infty} \le C_* \, \|g\|_{\hv^*,\infty}
\end{equation}
for all $\tau \le \tau_0$.

In the next two lemmas we focus on estimating $\|g\|_{\hv^*}$.

\begin{lemma}
  \label{lem-error1-est}
  For every $\epsilon > 0$ there exist a $\tau_0$ and a uniform constant $C$ so that for $\tau\le \tau_0$ we have
  \[
  \|\cE[w_\cyl]\|_{\hv^*} \le \epsilon\, \|w_\cyl\|_{\hv}.
  \]
\end{lemma}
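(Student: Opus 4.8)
The plan is to bound the three terms of $\cE[w_\cyl]$ in \eqref{eq-E10} separately, converting small pointwise bounds on their coefficients into small $\hv\to\hv^*$ operator norms by means of the boundedness results of Lemmas~\ref{lem-bounded-first-order}--\ref{lem-bounded-second-order} and the multiplication lemma that follows them. The first step is to record the needed coefficient bounds on $\supp w_\cyl$. Since $w_\cyl=\varphi_\cyl w$ and $\varphi_\cyl$ is supported in the region where $z:=y/\sqrt{|\tau|}$ stays below $\sqrt{2-\theta^2/(4(n-1))}+o(1)$, which is a compact subinterval of $(-\sqrt2,\sqrt2)$, the intermediate-region asymptotics of Theorem~\ref{thm-old}(ii), together with their smooth ($C^k$) improvement and the extension to the $O(n)$-symmetric setting of Theorem~\ref{thm-O1}, apply uniformly there. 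Writing $\bar u_i(z,\tau)=u_i(z\sqrt{|\tau|},\tau)$ and using $\bar u_i(\cdot,\tau)\to\sqrt{(n-1)(2-z^2)}$ in $C^2$ on this interval, one obtains, for $i=1,2$ and $\tau\le\tau_0(\theta)$ sufficiently negative,
\[
|u_{iy}|\le\frac{C(\theta)}{\sqrt{|\tau|}},\qquad |u_{iyy}|\le\frac{C(\theta)}{|\tau|},\qquad u_1u_2\ge c(\theta)>0,
\]
on $\supp w_\cyl$, and moreover $|2(n-1)-u_1u_2|\le C(\theta)(1+y^2)/|\tau|+o(1)$ there (the $o(1)$ being uniform as $\tau\to-\infty$).

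With these in hand, the zeroth order term $\frac{2(n-1)-u_1u_2}{2u_1u_2}w_\cyl$ is handled by the last lemma of the previous subsection: its coefficient $m$ obeys $\sup_y(1+|y|)^{-2}|m(y,\tau)|\le C(\theta)/|\tau|+o(1)$ on $\supp w_\cyl$, so multiplication by $m$ maps $\hv$ to $\hv^*$ with norm $\le C(\theta)/|\tau|+o(1)$. The first order term $-\frac{(u_{1y}+u_{2y})u_{2yy}}{(1+u_{1y}^2)(1+u_{2y}^2)}(w_\cyl)_y$ has coefficient bounded in $L^\infty$ by $C(\theta)/|\tau|^{3/2}$ (a product of a factor $\lesssim|\tau|^{-1/2}$ and a factor $\lesssim|\tau|^{-1}$); since $\pd_y:\hv\to\hilb$ is bounded, multiplication by a bounded function is bounded on $\hilb$, and $\|\cdot\|_{\hv^*}\le\|\cdot\|_\hilb$, this term has $\hv^*$-norm $\le C(\theta)|\tau|^{-3/2}\|w_\cyl\|_\hv$.

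The second order term $-\frac{u_{1y}^2}{1+u_{1y}^2}(w_\cyl)_{yy}$ needs slightly more care, since multiplication by a merely bounded function does not act on $\hv^*$. I would set $m=\frac{u_{1y}^2}{1+u_{1y}^2}$ and write $m(w_\cyl)_{yy}=\pd_y\bigl(m(w_\cyl)_y\bigr)-m_y(w_\cyl)_y$, with $m_y=\frac{2u_{1y}u_{1yy}}{(1+u_{1y}^2)^2}$, so that $\|m\|_{L^\infty}\le C(\theta)/|\tau|$ and $\|m_y\|_{L^\infty}\le C(\theta)/|\tau|^{3/2}$ on $\supp w_\cyl$. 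Then $m(w_\cyl)_y\in\hilb$ with norm $\le C(\theta)|\tau|^{-1}\|w_\cyl\|_\hv$, and boundedness of $\pd_y:\hilb\to\hv^*$ gives the bound $\le C(\theta)|\tau|^{-1}\|w_\cyl\|_\hv$ for the first piece, while the second piece is bounded by $\le C(\theta)|\tau|^{-3/2}\|w_\cyl\|_\hv$ exactly as in the first order term. Summing the three contributions yields $\|\cE[w_\cyl]\|_{\hv^*}\le\bigl(C(\theta)/|\tau|+o(1)\bigr)\|w_\cyl\|_\hv$, which is $\le\epsilon\|w_\cyl\|_\hv$ once $\tau_0=\tau_0(\epsilon,\theta)$ is taken negative enough.

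The main obstacle I anticipate is the first step: obtaining the coefficient estimates uniformly over the whole support of $w_\cyl$, which reaches out to $|y|\sim\sqrt{|\tau|}$, rather than on fixed compact $y$-intervals. This is exactly where one must invoke the parabolic-blow-down (intermediate region) asymptotics and their derivative versions, and verify that the cutoff $\varphi_\cyl$ keeps the relevant region at a definite distance inside $\{|z|<\sqrt2\}$, so that the convergence there is uniform and may be differentiated.
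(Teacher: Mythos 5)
Your proposal follows the same basic strategy as the paper: split $\cE[w_\cyl]$ into its three terms and turn pointwise smallness of each coefficient into smallness of the corresponding $\hv\to\hv^*$ operator norm via the boundedness lemmas. The proof is essentially correct, with a few points worth flagging on how your handling differs from and compares to the paper's.

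For the zeroth-order term, the paper does \emph{not} invoke the multiplication lemma with the $(1+|y|)^{-2}$ weight as you do. Instead it writes $\|m w_\cyl\|_{\hv^*} \le \|m w_\cyl/(1+y)\|_\hilb$, splits the resulting integral into $|y|\le K$ and $|y|>K$, and for each $\epsilon$ first picks $K$ large (so the tail is small thanks to the $(y+1)^{-2}$ factor) and then $\tau_0$ negative enough (so $\sqrt{2(n-1)}-u_1$ is small on $|y|\le K$, using only the $C^0$ convergence on compact $y$-sets). Your one-line appeal to the multiplication lemma with the bound $\sup_y(1+|y|)^{-2}|m(y,\tau)| \le C(\theta)/|\tau|+o(1)$ is a cleaner packaging of the same idea, and is correct once one checks, as you do, that $2(n-1)-u_1u_2 = (n-1)y^2/|\tau| + o(1)$ uniformly on $\supp w_\cyl$.

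For the first- and second-order terms, the paper works with the a priori bound \eqref{eq-apriori-bounds} from \cite{ADS}, namely $|u_{iy}|+|u_{iyy}|+|u_{iyyy}|\le \bar C(\theta)/\sqrt{|\tau|}$ throughout $\cyl_\theta$, which gives the coefficient of the first-order term a bound of order $|\tau|^{-1}$, not $|\tau|^{-3/2}$ as you claim. Your stronger decay rate $|u_{iyy}|\le C(\theta)/|\tau|$ on all of $\supp w_\cyl$ requires the intermediate-region convergence $\bar u_i(\cdot,\tau)\to\sqrt{(n-1)(2-z^2)}$ to hold in $C^2$ uniformly on compact $z$-subintervals; Theorem~\ref{thm-old}(ii) is stated only at the $C^0$ level, so you are implicitly importing a derivative improvement that the paper never explicitly needs at this stage. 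This is not a genuine gap — the paper's weaker bound $C(\theta)/\sqrt{|\tau|}$ already makes each coefficient $O(|\tau|^{-1})$, which suffices — but to stay within what is stated you should replace your $|\tau|^{-3/2}$ by $|\tau|^{-1}$ and cite \eqref{eq-apriori-bounds} directly. Finally, your integration-by-parts treatment of the second-order term, writing $m(w_\cyl)_{yy}=\partial_y(m(w_\cyl)_y)-m_y(w_\cyl)_y$ so that only $\partial_y:\hilb\to\hv^*$ and multiplication on $\hilb$ are needed, is in fact \emph{more} careful than the paper's, which passes from $\|m(w_\cyl)_{yy}\|_{\hv^*}$ to $\|m\|_{L^\infty}\|(w_\cyl)_{yy}\|_{\hv^*}$ without comment (an inequality that really uses a bound on $m_y$ as well, by duality). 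Both routes reach the same conclusion, and your version makes the duality step explicit.
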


\begin{proof}
  Recall that
  \[\cE[w_\cyl]
  = -\frac{u_{1y}^2}{1+u_{1y}^2} (w_\cyl)_{yy} -\frac{(u_{1y}+u_{2y})u_{2yy}}{(1+u_{1y}^2)(1+u_{2y}^2)} (w_\cyl)_y +\frac{2(n-1) - u_1u_2}{2u_1u_2}w_\cyl.\] In \cite{ADS} we showed that for $\tau \leq \tau_0 \ll-1$
  \begin{equation}
    \label{eq-apriori-bounds}
    |(u_i)_y| + |(u_i)_{yy}| + |(u_i)_{yyy}| \le \frac{\bar{C}(\theta)}{\sqrt{|\tau|}}, \qquad \mbox{for} \,\,\, (y,\tau) \in \cyl_{\theta}
  \end{equation}
  where $u_i, i=1,2$ is any of the two considered solutions.
  The constant $\bar C(\theta)$ depends on $\theta$ and may change from line to line, but it is independent of $\tau$ as long as $\tau \leq \tau_0 \ll -1$.
  
  Using \eqref{eq-apriori-bounds} and Lemma \ref{lem-bounded-second-order} we have,
  \begin{equation}
    \label{eq-term-one}
    \Bigl\|\frac{u_{1y}^2}{1+u_{1y}^2} (w_\cyl)_{yy}\Bigr\|_{\hv^*} \le \frac{\bar{C}(\theta)}{|\tau|} \|(w_\cyl)_{yy}\|_{\hv^*} \le \frac{\bar{C}(\theta)}{|\tau|}\, \|w_\cyl\|_{\hv}.
  \end{equation}
  %  \textcolor{red}{TO DO: This requires an operator norm from $\hv^*\to\hv^*$ which is ok, I think, because of the $u_{yyy}$ estimates we have,, but it is not in lemma 4.5} 
  while by \eqref{eq-apriori-bounds} and Lemma \ref{lem-bounded-first-order} we have,
  \begin{equation}
    \label{eq-term-two}
    \Bigl\|\frac{(u_{1y} + u_{2y}) u_{2yy}}{(1+u_1y^2) (1+ u_{2y}^2)}\, (w_\cyl)_y\Bigr\|_{\hv^*} \le \frac{\bar{C}(\theta)}{|\tau|} \|(w_\cyl)_y\|_{\hv^*} \le \frac{\bar{C}(\theta)}{|\tau|}\, \|w_\cyl\|_{\hilb}.
  \end{equation}
  Also,
  \[
  \Bigl \|\frac{(2(n-1) - u_1 u_2)}{2u_1 u_2}\, w_\cyl\Bigr \|_{\hv^*} \le \Bigl \| \frac{(2(n-1) - u_1^2)}{2u_1 u_2}\, w_\cyl\Bigr \|_{\hv^*} + \Bigl \|\frac{(u_1 - u_2)}{2u_2}\, w_\cyl\Bigr\|_{\hv^*}.
  \]
  It is very similar to deal with either of the terms on the right hand side, so we explain how to deal with the first one next: Lemma \ref{lem-bounded-first-order}, the uniform boundedness of our solutions and the fact that $u_i \ge \theta/4$ in $\mathcal{C}$ for $i\in \{1,2\}$, give
  \[
  \begin{split}
    \Bigl \| \frac{(2(n-1) - u_1^2)}{2u_1 u_2}\, w_\cyl\Bigr\|_{\hv^*} &\le \frac{\bar{C}(\theta)}{\theta^2}\|(\sqrt{2(n-1)} - u_1) \, w_\cyl\|_{\hv^*} \\
    &\le \frac{\bar{C}(\theta)}{\theta^2}\Bigl \|\frac{(\sqrt{2(n-1)} - u_1)}{y+1}\, w_\cyl\Bigr \|_{ \hilb}.
  \end{split}
  \]
  Then, for any $K >0$ we have
  \[
  \begin{split}
    \Bigl\| \frac{(2(n-1) - u_1^2)}{2u_1 u_2}\, w_\cyl\Bigr\|_{\hv^*} &\le
    \frac{\bar{C}(\theta)}{\theta^2} \, \int_{0 \le y \le K} \frac{(\sqrt{2(n-1)} - u_1)^2}{(y+1)^2}\, w_\cyl^2 \, e^{-\frac{y^2}{4}}\, dy \\
    &+ \frac{\bar{C}(\theta)}{\theta^2}\, \int_{y \ge K} \frac{(\sqrt{2(n-1)} - u_1)^2}{(y+1)^2}\, w_\cyl^2 \, e^{-\frac{y^2}{4}}\, dy.
  \end{split}
  \]
  Now for any given $\epsilon >0$ we choose $K$ large so that ${\displaystyle \frac{\bar{C}(\theta)}{\theta^2 K^2} < \frac \epsilon{6} }$, and then for that chosen $K$ we choose a $\tau_0 \ll -1$ so that ${\displaystyle \frac{\bar{C}(\theta)}{\theta^2}(\sqrt{2(n-1)} - u_1) < \frac \epsilon{6}}$ for all $\tau \le \tau_0$ and $0 \le y \le K$ (note that here we use that $u_i(y,\tau)$ converges uniformly on compact sets in $y$ to $\sqrt{2(n-1)}$, as $\tau\to -\infty$).
  We conclude that for $\tau \geq \tau_0$
  \begin{equation}
    \label{eq-term-three}
    \Bigl\| \frac{(2(n-1) - u_1^2)}{2u_1 u_2}\, w_\cyl\Bigr\|_{\hv^*} \leq \frac{\epsilon}3 \, \|w_\cyl\|_{\hilb} \leq \frac{\epsilon}3 \, \|w_\cyl\|_{\hv}.
  \end{equation}
  Finally combining \eqref{eq-term-one}, \eqref{eq-term-two} and \eqref{eq-term-three} finishes the proof of Lemma.
\end{proof}

We will next estimate the error term $\bar{\cE}[w,\varphi_\cyl]$.

\begin{lemma}
  \label{lem-error-bar}
  There exists a $\tau_0 \ll -1$ and $\bar{C}(\theta)$ so that for all $\tau \le \tau_0$ we have
  \[
  \|\bar{\cE}[w,\varphi_\cyl]\|_{\hv^*} \le \frac{\bar{C}(\theta)}{\sqrt{|\tau_0|}}\, \|\chi_{D_{\theta}}\, w\|_{\hilb}
  \]
  where $\bar{\cE}[w,\varphi_\cyl]$ is defined by \eqref{eq-bar-E} and $\chi_{D_{\theta}}$ is the characteristic function of the set $D_\theta:= \{ \theta/2  < u < \theta \}$.
\end{lemma}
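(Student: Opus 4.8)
The plan is to estimate $\bar{\cE}[w,\varphi_\cyl]$ term by term, using the explicit formula \eqref{eq-bar-E}, the crucial fact that every coefficient involves a derivative of $\varphi_\cyl$ (hence is supported in $D_\theta$, where $\theta/2 < u_1 < \theta$), and the \emph{a priori} bounds \eqref{eq-apriori-bounds} together with the cutoff bounds $|(\varphi_\cyl)_y|^2 + |(\varphi_\cyl)_{yy}| + |(\varphi_\cyl)_\tau| \le \bar C(\theta)/|\tau|$. First I would note that since $\supp(\varphi_\cyl)_y, \supp(\varphi_\cyl)_{yy}, \supp(\varphi_\cyl)_\tau \subset D_\theta$, we may replace $w$ by $\chi_{D_\theta} w$ and $w_y$ by $\chi_{D_\theta} w_y$ throughout. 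The key structural observation is that on $D_\theta$, which sits near $z = y/\sqrt{|\tau|} \approx \sqrt{2 - \theta^2/(n-1)}$, we have $|y| \le C(\theta)\sqrt{|\tau|}$, so the Gaussian weight $e^{-y^2/4}$ and the factor $(1+|y|)$ appearing in the $\hv$-to-$\hv^*$ bounds of Lemma~\ref{lem-bounded-first-order} and the multiplication-operator lemma contribute only powers of $\sqrt{|\tau|}$ that are dominated by the $1/|\tau|$ and $1/\sqrt{|\tau|}$ gains coming from the $\varphi_\cyl$-derivatives.

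Concretely, the plan is as follows. For the zeroth-order terms (the coefficient of $w$ in \eqref{eq-bar-E}): the terms $\varphi_{\cyl,\tau}$, $\varphi_{\cyl,yy}$, and $\frac{u_{1y}^2}{1+u_{1y}^2}\varphi_{\cyl,yy}$ are each bounded in absolute value by $\bar C(\theta)/|\tau|$; the term $\frac{(u_{1y}+u_{2y})u_{2yy}}{(1+u_{1y}^2)(1+u_{2y}^2)}(\varphi_\cyl)_y$ is bounded by $\frac{\bar C(\theta)}{|\tau|}\cdot\frac{\bar C(\theta)}{\sqrt{|\tau|}}$ by \eqref{eq-apriori-bounds} and $|(\varphi_\cyl)_y| \le \bar C(\theta)/\sqrt{|\tau|}$; and the term $\frac y2 (\varphi_\cyl)_y$ is bounded, using $|y|\le C(\theta)\sqrt{|\tau|}$ on $D_\theta$, by $\bar C(\theta)\cdot\frac1{\sqrt{|\tau|}}\cdot\sqrt{|\tau|} = \bar C(\theta)$ — this is the dangerous one, but as an operator from $\hilb$ to $\hv^*$ it gains an extra factor $(1+|y|)^{-1}$, giving an effective bound $\bar C(\theta)/\sqrt{|\tau|}$. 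Applying the multiplication-operator lemma (the $\h\to\hv^*$ case), each of these contributes at most $\frac{\bar C(\theta)}{\sqrt{|\tau|}}\|\chi_{D_\theta} w\|_\hilb$. For the first-order terms (the coefficient of $w_y$): $\bigl\{\frac{2u_{1y}^2}{1+u_{1y}^2}(\varphi_\cyl)_y - 2(\varphi_\cyl)_y\bigr\}w_y$ — here $(\varphi_\cyl)_y$ is bounded by $\bar C(\theta)/\sqrt{|\tau|}$, and I would write $\partial_y(\chi\text{-localized }w)$ using that $\partial_y : \hilb \to \hv^*$ is bounded (Lemma~\ref{lem-bounded-first-order}), after integrating by parts to move the derivative off $w$ onto the (smooth, compactly supported in $D_\theta$) coefficient — this produces an extra $\varphi_\cyl$-derivative term still supported in $D_\theta$ and still bounded by $\bar C(\theta)/\sqrt{|\tau|}$. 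Summing all contributions and taking $\tau_0 \ll -1$ gives $\|\bar\cE[w,\varphi_\cyl]\|_{\hv^*} \le \frac{\bar C(\theta)}{\sqrt{|\tau_0|}}\|\chi_{D_\theta} w\|_\hilb$.

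The main obstacle I expect is the careful bookkeeping for the $\frac y2(\varphi_\cyl)_y w$ term and the $w_y$ terms, because naively $|y|$ on $D_\theta$ grows like $\sqrt{|\tau|}$ and $w_y$ is one derivative worse than $w$. The resolution in both cases is to exploit that we are measuring in the weak norm $\hv^*$ rather than $\hilb$: multiplication by $y$ (resp. the derivative $\partial_y$) is bounded $\hilb \to \hv^*$ with a norm gain, as recorded in Lemma~\ref{lem-bounded-first-order} and the subsequent multiplication lemma, so we never lose a power of $\sqrt{|\tau|}$ beyond what the $\varphi_\cyl$-derivatives supply. I would also need to be slightly careful that the support of $(\varphi_\cyl)_y$ etc.\ is genuinely contained in $\{\theta/2 < u_1 < \theta\}$ given the definition $\varphi_\cyl(y,\tau) = \Phi(y/|\tau|)$ — here one uses the uniform asymptotics $u_1(y,\tau) = \sqrt{(n-1)(2 - z^2)} + o(1)$ from Theorem~\ref{thm-old} to translate between the $z$-interval where $\Phi$ transitions and the $u_1$-interval $D_\theta$, possibly adjusting $\tau_0$ depending on $\theta$. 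With these points handled, the estimate follows by collecting the finitely many terms.
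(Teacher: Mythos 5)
Your proposal is correct and follows essentially the same route as the paper: group the coefficients of $w$ and $w_y$ in \eqref{eq-bar-E}, note they are supported in $D_\theta$ and bounded by $\bar C(\theta)/\sqrt{|\tau|}$, absorb the factor $y/2$ using that multiplication by $y$ is bounded $\hilb\to\hv^*$, and handle the $w_y$ term via the product rule and the boundedness of $\pd_y:\hilb\to\hv^*$ from Lemma~\ref{lem-bounded-first-order}, which is exactly the paper's decomposition $\|d\,w_y\|_{\hv^*}\le\|(dw)_y\|_{\hv^*}+\|w\,d_y\|_{\hv^*}$.
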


\begin{proof}
  Setting
  \[
  a(y,\tau) := \varphi_{\cyl,\tau} - \varphi_{\cyl, yy} + \frac{u_{1y}^2}{1+u_{1y}^2}\, \varphi_{\cyl, yy} + \frac{(u_{1y}+u_{2y})u_{2yy}}{(1+u_{1y}^2)(1+u_{2y}^2)} \varphi_{\cyl,y}
  \]
  and
  \[
  b(y,\tau) := (\varphi_\cyl)_y \qquad \mbox{and} \qquad d(y,\tau) := \frac{2u_{1y}^2}{1+u_{1y}^2} (\varphi_\cyl)_y - 2(\varphi_\cyl)_y
  \]
  we may write
  \begin{equation}
    \label{eq-rewriting-error}
    \bar{\cE}[w,\varphi_\cyl] = a(y,\tau) w + \frac y2 \, b(y,\tau)\, w + d(y,\tau) \, w_y.
  \end{equation}
  Note that the support of all three functions, $a(y,\tau)$, $b(y,\tau)$ and $d(y,\tau)$ is contained in $D_\theta$ and
  \[
  |a(y,\tau)| + |b(y,\tau)| + |d(y,\tau)| \le \frac{\bar{C}(\theta)}{\sqrt{|\tau|}}.
  \]
  Furthermore, by \eqref{eq-apriori-bounds} and Lemma \ref{lem-bounded-first-order} we get
  \[
  \|a(y,\tau) \, w\|_{\hv^*} \le \|a(y,\tau) w\|_{\hilb} \le \frac{\bar{C}(\theta)}{\sqrt{|\tau|}}\, \|w \chi_{D_{\theta}}\|_{\hilb},
  \]
  \[
  \|\frac y2\, b(y,\tau) w\|_{\hv^*} \le \|b(y,\tau) \, w\|_{\hilb} \le \frac{\bar{C}(\theta)}{\sqrt{|\tau|}}\, \|w \, \chi_{D_{\theta}}\|_{\hilb}
  \]
  and
  \[
  \begin{split}
    \|d(y,\tau) \, w_y\|_{\hv^*} &\le \|(d(y,\tau) w)_y\|_{\hv^*} + \|w d_y(y,\tau)\|_{\hv^*}\\
    &\le \|d(y,\tau) \, w\|_{\hilb} + \frac{\bar{C}(\theta)}{\sqrt{|\tau|}} \|w\, \chi_{D_{\theta}}\|_{\hilb} \\
    &\le \frac{\bar{C}(\theta)}{\sqrt{|\tau|}} \|w\, \chi_{D_{\theta}}\|.
  \end{split}
  \]
  The above estimates together with \eqref{eq-rewriting-error} readily imply the lemma.
\end{proof}

Finally, we now employ all the estimates shown above to conclude the proof of Proposition \ref{prop-cylindrical}.

\begin{proof}[Proof of Proposition \ref{prop-cylindrical}]
  By \eqref{eq-first-step} with $g := \cE[w_\cyl] + \bar{\cE}[w,\varphi_\cyl]$
  and using also Lemma \ref{lem-error1-est}, Lemma \ref{lem-error-bar} and the
  assumption that $\pr_+ w_\cyl (\tau_0) = 0$, we have that for every $\epsilon >
  0$ there exists a $\tau_0 \ll -1$ so that
  \[
  \|\hat{w}_C\|_{\hv,\infty}
  \le \epsilon\, \|w_\cyl\|_{\hv,\infty}
  + \frac{\bar{C}(\theta)}{\sqrt{|\tau_0|}}\,
  \|w \chi_{D_{\theta}}\|_{\hilb,\infty}.
  \]
  This readily gives the proposition.
\end{proof}

%%%%%%%%%%%%%%%%%%%%%%%%%%%%%%%%%%%%%%%%%

\section{The tip region}
\label{sec-tip}

Let $u_1(y,\tau)$ and $u_2(y,\tau)$ be the two solutions to equation \eqref{eq-u} as in the statement of Theorem \ref{thm-main} and let $u_2^{\alpha\beta\gamma}$ be defined
by \eqref{eq-ualphabeta}.
We will now estimate the difference of these solutions in the tip region which is defined by $\tip_{\theta} = \{(y,\tau)\,\,\,|\,\,\, u_1 \le 2\theta\}$, for $\theta >0$ sufficiently small, and $\tau \le \tau_0 \ll -1$, where $\tau_0$ is going to be chosen later.
In the tip region we need to switch the variables $y$ and $u$ in our both solutions, with $u$ becoming now an independent variable.
Hence, our solutions become $Y_1(u,\tau)$ and $Y_2^{\alpha\beta\gamma}(u,\tau)$.  
Define the difference $ W:=Y_1-Y_2^{\alpha\beta\gamma}$
and for a standard cutoff function $\varphi_T(u)$  as in \eqref{eqn-cutofftip}  we denote $W_T := \varphi_T\, W$.

\begin{rem}
  \label{rem-tip}
  By the change of variables \eqref{eqn-Y-expansion} and by the definition of $u_2(y,\tau) := u_2^{\alpha\beta\gamma}(y,\tau)$  as in \eqref{eq-ualphabeta}, we have that
  \[
  Z_2^{\alpha\beta\gamma}(\rho,\tau) = \sqrt{|\tau|}\, \Bigl(Y_2^{\alpha\beta\gamma}\big (\frac{\rho}{\sqrt{|\tau|}},\tau\big ) - Y_2^{\alpha\beta\gamma}(0,\tau)\Bigr)
  \]
  where
  \[
  Y_2^{\alpha\beta\gamma}(u,\tau) = \sqrt{1+\beta e^{\tau}}\, Y_2\big (\frac{u}{\sqrt{1+\beta e^{\tau}}}, \sigma\big ), \quad \sigma:= \tau+\gamma-\log(1+\beta e^{\tau}).
  \]
  Combining the above two equations yields
  \[Z_2^{\alpha\beta\gamma}(\rho,\tau) = \frac{\sqrt{|\tau|} \, \sqrt{1+\beta e^{\tau}}}{\sqrt{|\sigma|}}\, Z_2 \big (\rho\, \frac{\sqrt{|\sigma|}}{\sqrt{|\tau|}\sqrt{1+\beta e^{\tau}}}, \sigma \big ).
  \]
  Recall that $\alpha=\alpha(\tau_0)$, $\beta=\beta(\tau_0)$, $\gamma=\gamma(\tau_0)$ will be chosen in Section \ref{sec-conclusion} so that $(\alpha, \beta,\gamma)$ is admissible with respect to $\tau_0$.
  Using the fact that $Z_2(\rho,\tau)$ converges as $\tau\to -\infty$, uniformly smoothly on compact sets in $\rho$, to the Bowl soliton $Z_0(\rho)$ we have
  \[
  \begin{split}
    Z_2^{\alpha\beta\gamma}&(\rho,\tau) = (1 + o(1))\, \Bigl \{  Z_2(\rho, \sigma) + \Bigl  (  Z_2 \big (\rho\, \frac{\sqrt{|\sigma|}}{\sqrt{|\tau|}\sqrt{1+\beta e^{\tau}}}, \sigma \big ) - Z_2(\rho, \sigma) \Bigr) \Bigr \} \\
    &= (1 + o(1))\, \Bigl \{ Z_0(\rho) + o(1) + (Z_2)_\rho (\hat{\rho},\sigma) \, \rho \, \Bigl (\frac{\sqrt{\sigma|}}{\sqrt{|\tau|}\, \sqrt{1+\beta e^{\tau}}} - 1\Bigr ) \Bigr \}
  \end{split}
  \]
  where $o(1)$ denote functions that may differ from line to line, but are uniformly small for all $\tau\le \tau_0 \ll -1$ and for all $(\alpha,\beta,\gamma)$  that are admissible with respect to $\tau_0$.
  Note also that above we applied the mean value theorem, with $\hat{\rho}$ being a value in between $\rho$ and $\rho\, \frac{\sqrt{|\sigma|}}{\sqrt{|\tau|}\sqrt{1+\beta e^{\tau}}} = \rho \, (1 + o(1))$.
  By the monotonicity of $(Z_2)_\rho(\cdot,\sigma)$ in $\rho$, we see that for $\tau_0$ sufficiently small we have
  \[
  (Z_2)_\rho (\rho + \epsilon, \sigma) \le (Z_2)_\rho(\hat{\rho}, \sigma) \le (Z_2)_\rho(\rho -\epsilon, \sigma)
  \]
  for some small $\epsilon$ and all $\tau \le \tau_0$, implying
  \[
  (Z_0)_\rho (\rho+\epsilon) + o(1) \le (Z_2)_\rho (\hat{\rho}, \sigma) \le (Z_0)_\rho (\rho-\epsilon) + o(1)
  \]
  for $\tau \le \tau_0$ and $\tau_0 \ll -1$.
  All these together with $ \lim_{\tau\to -\infty} \frac{\sqrt{|\sigma|}}{\sqrt{|\tau|}\sqrt{1+\beta e^{\tau}}} = 0 $ imply that $ Z_2^{\alpha\beta\gamma}(\rho,\tau) = Z_0(\rho) + o(1)$, where $o(1)$ is a function that is, as before, uniformly small for all $\tau\le \tau_0 \ll -1$ and all $\alpha, \beta$ and $\gamma$ that are admissible with respect to $\tau_0$.
  
  Hence, it is easy to see that in all the estimates below, in this section, we can find a uniform $\tau_0 \ll -1$, independent of parameters $\alpha, \beta$ and $\gamma$ (as long as they are admissible with respect to $\tau_0$), so that all the estimates below hold for $Y_1(u,\tau) - Y_2^{\alpha\beta\gamma}(u,\tau)$, for all $\tau \le \tau_0$.
\end{rem}

Our goal in this section is to show the following bound.

\begin{prop}\label{prop-tip} There exist $\theta$ with
  $0 < \theta \ll 1$, $\tau_0 \ll -1$ and $C< +\infty$ such that
  \begin{equation}
    \label{eqn-tip}
    \| W_T \|_{2,\infty} \leq \frac{C}{|\tau_0| } \, \| W \, \chi_{_{[\theta, 2\theta]} } \|_{2,\infty}
  \end{equation}
  holds.
\end{prop}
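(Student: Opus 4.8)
\textbf{Plan for the proof of Proposition \ref{prop-tip}.}
The plan is to obtain a weighted $L^2$ energy estimate for $W_T = \varphi_T\,W$ directly from the evolution equation \eqref{eqn-WW}, using the weight $e^{\mu(u,\tau)}$ built in Section \ref{sec-regions}, and then to convert it into the claimed $\|\cdot\|_{2,\infty}$ bound by splitting the tip region $\tip_\theta$ into the collar $\collar_{\theta,L}$ and the soliton region $\mathcal{S}_L$. First I would write down the PDE satisfied by $W_T$: multiplying \eqref{eqn-WW} by $\varphi_T$ and commuting, $W_T$ satisfies the same equation with an extra error term supported in $\supp\varphi_T'$, which lies in the transition region $[\theta,2\theta]$ for $u_1$ — this is exactly where the cut-off $\chi$ of the statement lives. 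Then I would multiply by $W_T\,e^{\mu}$ and integrate $du$ over $[0,2\theta]$, integrating the $W_{uu}/(1+Y_{1u}^2)$ term by parts. The key point is that the weight $\mu$ was chosen (formula \eqref{eq-weight}, with $a(L,\tau),b(L,\tau)$ as in \eqref{eqn-aL}–\eqref{eqn-bL}) so that on the collar $e^\mu = e^{-Y_1^2/4}$ mimics the cylindrical Gaussian, while on the soliton region $\mu \approx m(\rho)$ makes the operator $\cM$ symmetric and negative definite; so in both subregions the leading-order quadratic form coming from integration by parts is coercive. One subtlety is the first-order drift $\bigl(\frac{n-1}{u} - \frac u2 + D\bigr)W_u$: in the collar $D$ is a genuine error (it can be made small by taking $L\gg1$, $\tau_0\ll-1$, $\theta$ small, by the Corollaries after Proposition \ref{pro-Sigurd}), while in the soliton region $D$ must be absorbed into $\cM$ via the change of variables $\bW = W/\sqrt{|\tau|}$, $\rho = u\sqrt{|\tau|}$ and the convergence $Z_i\to Z_0$ of Corollary \ref{cor-old}.

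The second main ingredient is a Poincaré-type inequality in the soliton region (this is the role of Proposition \ref{prop-Poincare} referenced in Section \ref{sec-regions}): the operator $\cM$ on $\cH = L^2(\R_+, e^m\,d\rho)$ controls the $L^2$ norm of $\bW$ from below by a spectral gap, so that the coercive term $-\langle \cM\bW,\bW\rangle_{\cH}$ dominates $\|\bW\|_{\cH}^2$. Here one must be careful about the boundary at $\rho = 0$ (the tip), where $Y_{1u}\to\infty$: the weight $e^m \sim \rho^{n-1}$ degenerates, but this is precisely the right degeneracy for the singular ODE, and Remark \ref{rem-symmetry} together with the sign conditions $Z_\rho<0$, $Z_{\rho\rho}<0$ let us run the argument near $\rho=0$ without boundary contributions. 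The factor $|\tau|^{-1/4}$ built into the norm $\|\cdot\|_{2,\infty,\tau}$, and the factor $1/\sqrt{|\tau|}$ relating $\bW$ to $W$, are what produce the gain $C/|\tau_0|$ on the right-hand side of \eqref{eqn-tip}: each change of scale costs or saves a power of $|\tau|^{1/2}$, and tracking them carefully through the collar/soliton splitting and through the Poincaré inequality yields the stated power.

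After the differential energy inequality of the schematic form
\[
\frac{d}{d\tau}\|W_T\|_\tau^2 + \frac{1}{C}\|W_T\|_{\hv\text{-type}}^2 \le C\,\|W\chi_{[\theta,2\theta]}\|_\tau^2 + (\text{small})\,\|W_T\|_\tau^2
\]
is established — with the "small" coefficient coming from $D$ on the collar, from $\mu_\tau$, and from the subprincipal terms, all made small by choosing $\theta$, $L$, and $\tau_0$ appropriately — I would integrate over the dyadic-type intervals $[\tau'-1,\tau']$ and take the supremum over $\tau'\le\tau_0$ exactly as in the proof of Lemma \ref{lem-linear-cylindrical-estimates-sup-L2-version}, absorbing the $\|W_T\|^2$ term on the right into the left once $\tau_0$ is sufficiently negative. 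Finally one must translate the norm appearing naturally on the left (weighted by $e^\mu\,du$) into the $\hilb,\infty$ norm claimed in the Proposition; this is the norm-equivalence statement \eqref{eqn-normequiv1} in the transition region, and near the tip it follows from $e^\mu \sim \rho^{n-1}$ and the convergence to the Bowl soliton.

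\textbf{Main obstacle.} The hardest part will be handling the soliton region near the tip $\rho=0$: the equation \eqref{eqn-WW} is genuinely singular there ($Y_{1u}\to\infty$, equivalently $u_y\to\pm\infty$), the weight $e^m$ degenerates like $\rho^{n-1}$, and one cannot treat $D$ as a small perturbation. Getting the integration by parts to close with no uncontrolled boundary term at $\rho = 0$, and proving the Poincaré inequality for $\cM$ with the correct constant (so that the spectral gap survives and produces the $1/|\tau_0|$ rather than a mere $O(1)$ bound), is where the real work lies; the collar region, by contrast, is essentially a perturbation of the cylindrical estimates already carried out in Section \ref{sec-cylindrical}.
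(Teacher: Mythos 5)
Your high-level plan matches the paper's: derive a weighted energy estimate for $W_T$ from \eqref{eqn-WW} with weight $e^{\mu}$, use the Poincar\'e inequality of Proposition \ref{prop-Poincare} for coercivity, split the tip region into collar and soliton and treat the drift $D$ differently in each, and finally convert into the $\|\cdot\|_{2,\infty}$ statement. You also correctly flag in your ``main obstacle'' paragraph that the Poincar\'e constant is what must produce the $1/|\tau_0|$ gain.

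However, your proposal contains a genuine gap in exactly that place. The schematic differential inequality you write down,
\[
\frac{d}{d\tau}\|W_T\|_\tau^2 + \frac{1}{C}\|W_T\|_{\hv\text{-type}}^2 \le C\,\|W\chi_{[\theta,2\theta]}\|_\tau^2 + (\text{small})\,\|W_T\|_\tau^2,
\]
has \emph{fixed} coercivity $1/C$ and a \emph{fixed} constant $C$ on the source term, and you then propose to conclude ``exactly as in the proof of Lemma \ref{lem-linear-cylindrical-estimates-sup-L2-version}, absorbing the $\|W_T\|^2$ term on the right into the left once $\tau_0$ is sufficiently negative.'' Absorption of a small term into a fixed-strength coercive term can only produce an $O(1)$ estimate $\|W_T\|_{2,\infty}\le C\|W\chi_{[\theta,2\theta]}\|_{2,\infty}$, never the decaying constant $C/|\tau_0|$ that the Proposition claims. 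The actual mechanism in the paper is quantitatively different in two respects. First, the Poincar\'e inequality \eqref{eq-Poincare} carries the growing prefactor $|\tau|$ on the left, and the cutoff errors carry $\frac{1}{1+Y_{1u}^2}\sim \frac{1}{|\tau|}$ in the transition region, so the correct differential inequality is \eqref{eqn-diff-ineq-tip1},
\[
\frac{d}{d\tau}\int W_T^2\,e^{\mu}\,du \le -\tfrac{c_0}{8}\,|\tau|\int W_T^2\,e^{\mu}\,du + \tfrac{C(\theta)}{|\tau|}\int (W\chi_{[\theta,2\theta]})^2\,e^{\mu}\,du,
\]
with coefficients $|\tau|$ and $1/|\tau|$, not constants. (The bad term $\mu_\tau\le \eta|\tau|$ is dominated by the $-c_0|\tau|$ coercive term, not by a fixed coercive term.) Second, one does not conclude by the time-cut-off/absorption scheme of the cylindrical section; instead one sets $F(\tau)=\int_{\tau-1}^{\tau}\|W_T\|_s^2\,ds$, derives the linear ODE $F'\le \tfrac{c_0}{16}\tau F + \tfrac{C(\theta)}{|\tau|}G$, and integrates it directly with the \emph{Gaussian} integrating factor $e^{-c_0\tau^2/32}$. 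It is precisely the quadratic-in-$\tau$ growth of this integrating factor (coming from the $|\tau|$-coefficient in the ODE) that lets the tail integral $\int_{-\infty}^{\tau}|s|\,e^{-c_0 s^2/32}\,ds/s^2$ produce the extra factor $1/|\tau|^2$ and hence the $1/|\tau_0|$ in \eqref{eqn-tip}. Without these two ingredients — the $|\tau|$-dependence of the constants and the Gaussian integrating factor argument — your plan closes only with an $O(1)$ bound, which is insufficient for the way Proposition \ref{prop-tip} is used in Section \ref{sec-conclusion} (there the factor $C(\theta)/\sqrt{|\tau_0|}$ is needed to beat the $\theta$-dependent constants). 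I would also flag that after the integration by parts, controlling the drift is not merely ``$D$ is small on the collar'' but requires the pointwise bound $\frac{(n-1)^2}{u^2}\,G^2\,(1+Y_{1u}^2)\le \eta|\tau|$ on all of $[0,2\theta]$ (proved in the Claim inside the paper's proof, using Proposition \ref{prop-ratio-small}, Corollary \ref{lemma-Sigurd}, and the $Z\to Z_0$ convergence), again with a factor $|\tau|$ that must be beaten by the Poincar\'e term.
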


To simplify the notation  throughout this section we will drop the subscript on $Y_1$ and write $Y=Y_1$ instead.
Also, we will denote $Y_2^{\alpha\beta\gamma}$ by $Y_2$.
As already explained in Section \ref{subsec-tip}, the proof of this proposition will be based on a Poincar\'e inequality for the function $W_T$ which is supported in the tip region.
These estimates will be shown to hold with respect to an appropriately chosen weight $e^{\mu(u,\tau)}\, du$,  where $\mu(u,\tau)$ is given by \eqref{eq-weight}.
We will begin by establishing various properties on the weight $\mu(u,\tau)$.
We will continue with the proof of the Poincar\'e inequality and we will finish with the proof of the Proposition.
Recall that the definitions of the {\em collar region} $\collar_{L,\theta}$ and the {\em soliton region} $\cS_L$ are given in Section \ref{subsec-tip}.

\subsection{Properties of $\mu(u,\tau)$}
In a few subsequent lemmas we show estimates for the weight $\mu(u,\tau)$, which is given by \eqref{eq-weight}.
Recall that in the soliton region $\mathcal{S}_L$ we have defined $\mu(u,\tau) := m(\rho) + a(L,\tau)\, \rho + b(L,\tau)$, where $a(L,\tau)$ and $b(L,\tau)$ are given by \eqref{eqn-aL} and \eqref{eqn-bL} respectively.
\begin{lemma}
  \label{lemma-prop-mu}
  For all sufficiently large $L$ the limit \(a_\infty(L) = \lim_{\tau\to-\infty} a(L, \tau)\) exists.
  Moreover, there is a constant $C<\infty$ such that
  \[
  |a_\infty(L)|\leq CL^{-1}.
  \]
  In particular, for every $\eta > 0$ there exist an $L_0$ so that for every $L \ge L_0$, there exists a $\tau_0 \ll -1$ such that
  \[
  |a(L,\tau)| \le \eta \text{ for all $\tau \leq \tau_0$.  }
  \]
\end{lemma}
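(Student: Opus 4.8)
The plan is to compute the limit $a_\infty(L)$ explicitly from \eqref{eqn-aL} and then read off the bound $|a_\infty(L)|\le CL^{-1}$ from the known large-$\rho$ behaviour of $Z_0$. First I would rewrite the $\tau$-dependent term in \eqref{eqn-aL} using the change of variables \eqref{eqn-Y-expansion}: since $Y_1(u,\tau)=Y_1(0,\tau)+\frac1{\sqrt{|\tau|}}Z_1(u\sqrt{|\tau|},\tau)$, at $u=L/\sqrt{|\tau|}$ we get $Y_{1u}=Z_{1\rho}(L,\tau)$ and $Y_1=Y_1(0,\tau)+Z_1(L,\tau)/\sqrt{|\tau|}$, so
\[
\frac1{2\sqrt{|\tau|}}\,Y_1Y_{1u}\Big|_{u=L/\sqrt{|\tau|}}
= \frac{Y_1(0,\tau)}{2\sqrt{|\tau|}}\,Z_{1\rho}(L,\tau)+\frac{Z_1(L,\tau)\,Z_{1\rho}(L,\tau)}{2|\tau|}.
\]
By Corollary \ref{cor-old}, $Z_1(\cdot,\tau)\to Z_0$ smoothly on compact subsets of $[0,\infty)$ as $\tau\to-\infty$, so $Z_1(L,\tau)\to Z_0(L)$ and $Z_{1\rho}(L,\tau)\to Z_0'(L)$, and the last term tends to $0$. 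Moreover $Y_1(0,\tau)=\bar d_1(\tau)$ is the tip of $\bar M_\tau$, and Theorem \ref{thm-old}(ii) (together with the fact that $u_1$ is defined only for $y^2\le(2+o(1))|\tau|$) forces $\bar d_1(\tau)/\sqrt{|\tau|}\to\sqrt2$. Hence the limit exists and
\[
a_\infty(L)=-m'(L)-\tfrac{\sqrt2}{2}\,Z_0'(L).
\]

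Next I would use the first expression for $m$ given in the excerpt, $m(\rho)=(n-1)\log\rho+\int_0^\rho\frac{n-1}{s}Z_0'(s)^2\,ds$, which yields $m'(L)=\frac{n-1}{L}\,(1+Z_0'(L)^2)$, so that
\[
a_\infty(L)=-\frac{n-1}{L}\,(1+Z_0'(L)^2)-\frac{\sqrt2}{2}\,Z_0'(L).
\]
Differentiating the large-$\rho$ asymptotics \eqref{eq-Z0-asymptotics} (these may be differentiated, as noted there) gives $Z_0'(L)=-\frac{\sqrt2\,L}{2(n-1)}+O(L^{-1})$, whence $\frac{n-1}{L}Z_0'(L)^2=\frac{L}{2(n-1)}+O(L^{-1})$, while $-\frac{\sqrt2}{2}Z_0'(L)=\frac{L}{2(n-1)}+O(L^{-1})$. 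The two $O(L)$ contributions have opposite signs and cancel, leaving $a_\infty(L)=-\frac{n-1}{L}+O(L^{-1})$, i.e. there is $C<\infty$ with $|a_\infty(L)|\le CL^{-1}$ for all $L$ large.

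Finally, for the ``in particular'' assertion, given $\eta>0$ I would first choose $L_0$ with $CL_0^{-1}<\eta/2$; then for each fixed $L\ge L_0$ the convergence $a(L,\tau)\to a_\infty(L)$ as $\tau\to-\infty$ (established above) supplies a $\tau_0\ll-1$ with $|a(L,\tau)-a_\infty(L)|<\eta/2$ for all $\tau\le\tau_0$, and therefore $|a(L,\tau)|<\eta$ there. The one genuinely delicate point is the cancellation of the $O(L)$ terms in $a_\infty(L)$: it hinges on the precise leading coefficient $-\sqrt2\rho^2/4(n-1)$ in the expansion of $Z_0$ at infinity matching the tip limit $\bar d_1(\tau)/\sqrt{|\tau|}\to\sqrt2$ (so that $\tfrac{\sqrt2}{2}\cdot\sqrt2=1$ balances the coefficient of $Z_0'(L)^2$ in $m'$), and the latter limit, though standard from \cite{ADS} and Theorem \ref{thm-O1}, must be invoked carefully; everything else is routine bookkeeping with the asymptotics.
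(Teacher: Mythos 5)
Your proof is correct and follows essentially the same route as the paper's: substitute \eqref{eqn-Y-expansion} into \eqref{eqn-aL}, pass to the limit using $Y_1(0,\tau)=\sqrt{2|\tau|}(1+o(1))$ and $Z_1\to Z_0$, identify $a_\infty(L)=-m'(L)-\tfrac{\sqrt2}{2}Z_0'(L)$, then use $m'(L)=\frac{n-1}{L}(1+Z_0'(L)^2)$ and \eqref{eq-Z0-asymptotics}. You carry out the cancellation of the two $O(L)$ terms explicitly (the paper compresses this to a one-line appeal to \eqref{eq-Z0-asymptotics}), which is worth doing since the whole point of the lemma hinges on that cancellation being exact at leading order.
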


\begin{proof}
  Recall that
  \[
  a(L,\tau) := -m'(L) - \frac{1}{2\sqrt{|\tau|}}Y\, Y_{u}\bigl({L / \sqrt{|\tau|}},\tau\bigr),
  \]
  where
  \[
  Y(u,\tau) = Y(0,\tau) + \frac{1}{\sqrt{|\tau|}}\, Z(\rho,\tau).
  \]
  Using $Y(0,\tau) = \sqrt{2|\tau|}(1 + o(1))$,
  $Y_{u}(u,\tau) = Z_{\rho}(\rho, \tau)$
  and $Z(\rho, \tau)\to Z_0(\rho)$ for $\tau\to-\infty$,
  we get
  \[
  a(L, \tau) = -m'(L) - \frac12\sqrt{2|\tau|}(1 + o(1))
  \frac{1}{\sqrt{|\tau|}} Z_\rho(L, \tau)
  \]
  so
  \[
  \lim_{\tau\to-\infty} a(L,\tau) 
  = a_\infty(L) 
  = -m'(L) - \frac12 \sqrt2 \, Z_0'(L).
  \]
  Since $m'(L) = \frac{n - 1}{L}\, (1 + Z_{0}'(L)^2)$, we have
  \[
  a_\infty(L) 
  = -\frac{n-1}{L} -\frac{n - 1}{L}\, Z_{0}'(L)^2 - \frac{1}{2}\sqrt{2}\, Z_{0}'(L).
  \]
  The asymptotic expansion \eqref{eq-Z0-asymptotics} for $Z_0(\rho)$ as $\rho\to\infty$ then implies \(a_\infty(L) = \cO(L^{-1})\) as \(L\to-\infty\).
\end{proof}

In the following lemma we prove further properties of $\mu(u,\tau)$ that will be used later in the text.

\begin{lemma}
  Fix $\eta > 0$ small.
  There exist
  $\theta > 0$, $L > 0$ and $\tau_0 \ll -1$ so that
  \begin{equation}\label{eqn-mut}
    \mu_\tau \leq \eta \, |\tau| \qquad \mbox{holds on} \,\,\,\, 0\le u \le 2\theta
  \end{equation}
  and
  \begin{equation}\label{eqn-mus}
    1- \eta \leq \frac{u\, \mu_u}{n-1}\, \frac{1}{1+Y_{u}^2} \leq 1+ \eta, \quad 1-\eta \le \frac{2(n-1)\,\mu_u}{u|\tau|} < 1 + \eta
  \end{equation} holds on $\collar_{\theta,L}$ and for all $\tau \le \tau_0$.
\end{lemma}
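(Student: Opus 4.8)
The plan is to split the two estimates in \eqref{eqn-mut}–\eqref{eqn-mus} according to whether $u$ lies in the collar region $\collar_{\theta,L}$ or the soliton region $\cS_L$, since $\mu$ is defined by the two different formulas in \eqref{eq-weight}, and on each piece to reduce the claim to an already-established fact. In the collar region $\mu(u,\tau) = -\frac14 Y_1^2(u,\tau)$, so $\mu_u = -\frac12 Y_1 Y_{1u}$ and $\mu_\tau = -\frac12 Y_1 Y_{1,\tau}$. Both $Y_1$ and its $u$-derivative in this region are controlled: Corollary \ref{lemma-Sigurd}, together with its Remark form \eqref{eq-cor-Sigurd}, gives $\frac{|Y_{1u}|}{u} = (1+\eps)\frac{Y_1}{2(n-1)}$ with $\eps = \eps(\theta,L) = \max\{4\theta^2, c(n)L^{-1}\}$ small. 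Plugging this into $\mu_u$ immediately yields
\[
\frac{2(n-1)\mu_u}{u} = (1+\eps)\,\tfrac14 Y_1^2,
\]
and since the intermediate-region asymptotics from Theorem \ref{thm-old} (part (i), or equivalently part (ii)) give $Y_1(0,\tau) = \sqrt{2|\tau|}(1+o(1))$ and, more to the point, $\frac14 Y_1^2(u,\tau) = \frac{|\tau|}{2}(1+o(1))$ uniformly for $u \le 2\theta$ with $\theta$ small, one gets $\frac{2(n-1)\mu_u}{u|\tau|} = 1 + o(1)$, which is the second inequality in \eqref{eqn-mus}. The first inequality in \eqref{eqn-mus} follows the same way once one writes $\frac{u\mu_u}{n-1}\frac{1}{1+Y_{1u}^2}$, uses $\mu_u = -\frac12 Y_1 Y_{1u}$ and again \eqref{eq-cor-Sigurd} to replace $-\frac12 Y_1 Y_{1u}$ by $(1+\eps)\frac{u^2 \cdot \tfrac14 Y_1^2}{(n-1)u}$... more carefully: $\frac{u\mu_u}{n-1} = \frac{-\tfrac12 u Y_1 Y_{1u}}{n-1} = (1+\eps)\frac{Y_1^2 u^2}{4(n-1)^2}$, and on the collar $Y_{1u}^2 = (1+\eps)\frac{Y_1^2 u^2}{4(n-1)^2}$ by the same corollary, so the ratio $\frac{u\mu_u}{(n-1)(1+Y_{1u}^2)}$ differs from $1$ by $O(\eps) + O(Y_{1u}^2)$; since $|Y_{1u}| \le \bar C(\theta)/\sqrt{|\tau|} \to 0$ on the collar by the a priori bounds, this is $\le \eta$ for $\theta, L^{-1}$ and $|\tau_0|^{-1}$ small. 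Choose $\theta$ small, then $L$ large, then $\tau_0 \ll -1$ in that order.

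\textbf{The bound on $\mu_\tau$.} For \eqref{eqn-mut} one must cover the whole range $0 \le u \le 2\theta$, including the soliton region. On the collar, $\mu_\tau = -\frac12 Y_1 Y_{1,\tau}$; differentiating the rescaling relation $Y_1(u,\tau) = \sqrt{2|\tau|}(1+o(1)) + \frac{1}{\sqrt{|\tau|}}Z_1(\rho,\tau)$ in $\tau$ at fixed $u$ (so $\rho = u\sqrt{|\tau|}$ also moves) and using $Y_1 = O(\sqrt{|\tau|})$, $Y_{1,\tau} = O(|\tau|^{-1/2})$ — which come from Theorem \ref{thm-old} and Corollary \ref{cor-old} together with the smooth convergence $Z_1 \to Z_0$ — gives $|\mu_\tau| = O(1)$, hence certainly $\le \eta|\tau|$. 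On the soliton region $\mu(u,\tau) = m(\rho) + a(L,\tau)\rho + b(L,\tau)$ with $\rho = u\sqrt{|\tau|}$; here $m$ is $\tau$-independent, so $\mu_\tau = m'(\rho)\,\rho_\tau + a_\tau(L,\tau)\rho + a(L,\tau)\rho_\tau + b_\tau(L,\tau)$, with $\rho_\tau = u\cdot\frac{d}{d\tau}\sqrt{|\tau|} = -\frac{\rho}{2|\tau|}$. In this region $\rho \le L$, so $\rho_\tau = O(L/|\tau|)$, and $m'(\rho)\rho = (n-1)(1+Z_0'(\rho)^2)$ plus lower order is bounded on $[0,L]$; Lemma \ref{lemma-prop-mu} gives $a(L,\tau) = O(L^{-1})$, and one checks from \eqref{eqn-aL}–\eqref{eqn-bL} that $a_\tau$ and $b_\tau$ are $O(1)$ in $\tau$ (using the a priori bounds on $Y_1, Y_{1u}$ and the smooth convergence at $u = L/\sqrt{|\tau|}$). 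Collecting terms, $|\mu_\tau| = O(L/|\tau|) + O(1) \le \eta|\tau|$ once $|\tau_0|$ is large. One must also verify continuity across $u = L/\sqrt{|\tau|}$, but $\mu$ is $C^1$ there by the choice \eqref{eqn-aL}–\eqref{eqn-bL}, so there is no boundary contribution.

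\textbf{Main obstacle.} The delicate point is the estimate of $a_\tau(L,\tau)$ and $b_\tau(L,\tau)$ and, relatedly, the behaviour of $\mu_\tau$ in the soliton region: one is differentiating quantities like $Y_1 Y_{1u}\big|_{u = L/\sqrt{|\tau|}}$ in $\tau$, which entangles the $\tau$-dependence of $Y_1$ with the motion of the evaluation point $u = L/\sqrt{|\tau|}$. Controlling these requires the full strength of the $C^1_{loc}$ (indeed $C^2_{loc}$) convergence $Z_1(\cdot,\tau) \to Z_0$ from Corollary \ref{cor-old} together with the a priori derivative bounds from \cite{ADS}; the bookkeeping of which small parameter ($\theta$, $L^{-1}$, $|\tau_0|^{-1}$) controls which error term, and the order in which they must be chosen, is where the care lies. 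Everything else is a direct substitution into \eqref{eq-weight} using Corollary \ref{lemma-Sigurd}, Remark \ref{cor-sigurd}, Lemma \ref{lemma-prop-mu}, and the asymptotics of Theorem \ref{thm-old}.
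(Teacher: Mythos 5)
Your decomposition (collar vs.\ soliton, using \eqref{eq-weight}) and the use of Corollary \ref{lemma-Sigurd} and Lemma \ref{lemma-prop-mu} match the paper's route, but there are two concrete errors that break the argument even though the conclusions happen to be correct.

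First, in the argument for the first inequality in \eqref{eqn-mus}, you have the magnitude of $Y_{1u}$ exactly backwards. Recall $Y_{1u}=1/u_{1,y}$; the a priori bound from \cite{ADS} is $|u_{1,y}|\le \bar C(\theta)/\sqrt{|\tau|}$, which gives $|Y_{1u}|\ge \sqrt{|\tau|}/\bar C(\theta)\to\infty$ on the collar, not $|Y_{1u}|\le \bar C(\theta)/\sqrt{|\tau|}\to 0$. Having derived (correctly) that $\frac{u\mu_u}{(n-1)(1+Y_{1u}^2)}=(1+O(\eps))\frac{Y_{1u}^2}{1+Y_{1u}^2}$, the mechanism that makes this close to $1$ is that $|Y_{1u}|$ is \emph{large}, so $\frac{Y_{1u}^2}{1+Y_{1u}^2}=1-\frac{1}{1+Y_{1u}^2}$ is close to $1$; the error term is $O(\eps)+O(1/Y_{1u}^2)$, not ``$O(\eps)+O(Y_{1u}^2)$ with $Y_{1u}\to 0$.'' Your version would give $\frac{Y_{1u}^2}{1+Y_{1u}^2}\to 0$, contradicting the bound you need.

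Second, for $\mu_\tau$ on the collar you write $Y_1=\sqrt{2|\tau|}(1+o(1))+|\tau|^{-1/2}Z_1(\rho,\tau)$ and invoke Corollary \ref{cor-old} to claim $Y_{1,\tau}=O(|\tau|^{-1/2})$, hence $\mu_\tau=O(1)$. But Corollary \ref{cor-old} gives $Z_1\to Z_0$ smoothly only \emph{uniformly on compact sets in $\rho$}, while on the collar $\rho=u\sqrt{|\tau|}$ runs up to $2\theta\sqrt{|\tau|}\to\infty$; there is no control on $Z_{1,\tau}$ there, and the cited sources do not give $Y_{1,\tau}=O(|\tau|^{-1/2})$ on the whole collar. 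The paper avoids this by substituting the evolution equation \eqref{eqn-Y} into $\mu_\tau=-\tfrac12 Y_1 Y_{1,\tau}$, bounding $\frac{Y_{1uu}}{1+Y_{1u}^2}$ via Proposition \ref{prop-ratio-small} (smallness of $\lambda_1/\lambda_2$) and extracting the cancellation in $\bigl(\frac{n-1}{u}-\frac u2\bigr)Y_{1u}+\frac{Y_1}{2}$ via Corollary \ref{lemma-Sigurd}; this yields $\mu_\tau=O(\eps(\theta,L)|\tau|)$, which is enough (and is weaker than your claimed $O(1)$, which remains unjustified). Your soliton-region bookkeeping of the chain-rule terms is actually more careful than the paper's own exposition, but the step ``one checks that $a_\tau$ and $b_\tau$ are $O(1)$'' is where the actual work is done in the paper: it uses \eqref{eqn-aL}, the identity $b=-m(L)-La$, and the PDE again to estimate $\tfrac{d}{d\tau}a(L,\tau)$ term by term, and that computation needs to be carried out, not just asserted.
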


\begin{proof}
  To prove \eqref{eqn-mut} we first deal with the {\it collar region} ${\collar}_{\theta,L}$.
  By \eqref{eqn-Y} and \eqref{eq-weight} we have
  \[
  \mu_{\tau} = -\frac{Y Y_{\tau}}{2} = -\frac{Y}{2}\, \Bigl(\frac{Y_{uu}}{1+Y_u^2} + \big(\frac{n-1}{u} - \frac u2\big)\, Y_u + \frac Y2\Bigr).
  \]
  By Proposition \ref{prop-ratio-small} we have that for every $\eta > 0$ there exist $\theta, L > 0$ and $\tau_0 < 0$ so that 
  ${\displaystyle 
  {\lambda_1}/{\lambda_2} < \frac{\eta}{100}}$,  on $\collar_{\theta,L}$ and for $\tau \leq \tau_0$,
  implying the bound 
  \[
  \frac{|Y_{uu}|}{1 + Y_u^2} \le \frac{\eta}{100}\, \frac{|Y_u|}{u}, \qquad \mbox{on} \,\,\,\,\collar_{\theta,L},\,\, \tau \leq \tau_0.
  \]
  Using \eqref{eq-cor-Sigurd} and the previous estimate yields
  \[
  \frac{Y}{2}\, \frac{|Y_{uu}|}{1 + Y_u^2} \le \frac{\eta}{100}\, \frac{Y^2}{4(n-1)}\, (1 + \epsilon) < \frac{\eta}{50}\, |\tau|,
  \]
  if $\theta$ is chosen sufficiently small and $L$ sufficiently big (note that we used $Y(u,\tau) \le Y(0,\tau) = \sqrt{2|\tau|}\, (1 + o_{\tau}(1))$).
  Furthermore, by Corollary \ref{lemma-Sigurd} we have
  \begin{align*}
    -\frac Y2 \Bigl\{ \Bigl(\frac{n-1}{u} - \frac u2\Bigr)\, Y_u + \frac Y2\Bigr\}
    &\le -\frac Y2 \, \Bigl(\frac{n-1}{u} Y_u + \frac Y2\Bigr) \\
    &= \frac{Y |Y_u|(n-1)}{2u}\, \Bigl(1 + \frac{u Y}{2(n-1)\, Y_u}\Bigr) \\
    &< \frac{Y |Y_u| (n-1)}{2u}\, \epsilon(\theta,L) \le \tilde{C}\, |\tau| \epsilon(\theta,L) < \frac{\eta}{2}\, |\tau|.
  \end{align*}
  We conclude that  \eqref{eqn-mut} holds in the collar region $\collar_{\theta,L}$.  
  
  To estimate $\mu_{\tau}$ in the \emph{soliton region} $\cS_L$, where $\rho \le L$, we note that \eqref{eq-weight} implies 
  \[
  \mu_{\tau} 
  = \frac{d}{d \tau}  a(L,\tau)\, \rho + \frac{d}{d\tau} b(L,\tau).
  \]
  By \eqref{eqn-aL} and \eqref{eqn-bL}, we have that $b(L,\tau) = - m(L) - L\, a(L,\tau)$ and hence, 
  \[
  | \mu_{\tau} | = \big | \frac{d}{d \tau} a(L,\tau)\, ( \rho - L ) \big | \leq L\, \big | \frac{d}{d \tau} a(L,\tau) \big |.
  \]
  Now using the definition of $a(L,\tau)$ in \eqref{eqn-aL}, we have  
  \[
  \begin{split}
    \Bigl| \frac{d}{d\tau} & \, a(L,\tau)\Bigr| = \frac{1}{4|\tau|^{3/2}}\, |Y Y_{u}| +\\
    &+ \frac{1}{2\sqrt{|\tau|}}\Bigl( |Y_{\tau} Y_{u}| + |Y Y_{u\tau}| + \frac{L}{4|\tau|^{3/2}} Y_{u}^2 + \frac{L}{4|\tau|^{3/2}} |YY_{uu}|\Bigr),
  \end{split}
  \]
  where all terms on the right hand side in above equation are computed at
  $\bigl(L/\sqrt{|\tau|},\tau\bigr)$.
  Let us estimate all these terms.
  While
  doing so we will use \eqref{eqn-Y-expansion} and the smooth convergence of
  $Z(\rho,\tau)$, as $\tau\to -\infty$, to the Bowl soliton $Z_0(\rho)$.
  For
  example,
  \[
  \frac{|Y\,Y_u|}{|\tau|^{3/2}} \le C\, \frac{|Z_{\rho}|}{|\tau|} \ll \frac{\eta}{100} \, |\tau|,
  \]
  by choosing $\tau_0 \ll -1$.
  Furthermore, using \eqref{eqn-Y}  we have
  \[
  \frac{|Y_{\tau} Y_u|}{2\sqrt{|\tau|}} = \frac{|Z_{\rho}|}{2\sqrt{|\tau|}} \Bigl|\frac{Z_{\rho\rho}\, \sqrt{|\tau|}}{1 + Z_{\rho}^2} + \frac{(n-1)\sqrt{|\tau|}}{\rho}\, Z_{\rho}^2 - \frac{\rho}{2\sqrt{|\tau|}} Z_{\rho} + \frac{Y}{2}\Bigr|,
  \]
  leading to 
  \[
  \frac{|Y_{\tau} Y_u|}{2\sqrt{|\tau|}} \le C(L) \ll \frac{\eta}{100} |\tau|,
  \]
  for $\tau \leq \tau_0 \ll -1$.
  Next,
  \[
  \frac{L}{8|\tau|^2} Y_{u}^2 = \frac{L}{8|\tau|^2} Z_{\rho}^2 \le \frac{C(L)}{|\tau|^2} \ll \frac{\eta}{100}\, |\tau|,
  \]
  and
  \[
  \frac{L|Y Y_{uu}|}{8\,|\tau|^2} \le C(L)\frac{|Z_{\rho\rho}|}{|\tau|} \ll \eta\, |\tau|,
  \]
  for $\tau\le \tau_0 \ll -1$ sufficiently small.
  Finally, differentiating equation \eqref{eqn-Y} in $u$ and using \eqref{eqn-Y-expansion} we have
  \[
  \frac{|Y_{u\tau}|}{\sqrt{|\tau|}} = \frac{1}{\sqrt{|\tau|}}\, \Bigl|\Bigl(\frac{Z_{\rho\rho}}{1 + Z_{\rho}^2}\Bigr)_{\rho}\, |\tau| + \Bigl(\Bigl(\frac{(n-1)|\tau|}{\rho} - \rho\Bigr)\, Z_{\rho}\Bigr)_{\rho} + \frac{Z_{\rho}}{2}\Bigr| \le C(L)\, \sqrt{|\tau|} \ll \frac{\eta}{100}\, |\tau|,
  \]
  for $\tau \le \tau_0 \ll -1$.
  Combining the last estimates we conclude that   \eqref{eqn-mut} holds also in the soliton region.
  Combining the two estimates in the collar and soliton regions yilelds \eqref{eqn-mut}. 
  
  To prove the first estimate in \eqref{eqn-mus} note that
  \[
  \frac{u\mu_u}{(n-1)\, (1 + Y_{u}^2)} = -\frac{u\, Y Y_{u}}{2(n-1)\, (1+ Y_{u}^2)}.
  \]
  Using \eqref{eq-cor-Sigurd} we have that for every $\eta > 0$ we can choose $\theta \ll 1$ small and $L \gg 1$ big and $\tau_0 \ll -1$ so that
  \[
  (1 - \frac{\eta}{2})\, \frac{|Y_{u}|^2}{1 + Y_{u}^2} < \frac{u\mu_u}{(n-1)\, (1 + Y_{u}^2)} < (1 + \frac{\eta}{2})\, \frac{|Y_{u}|^2}{1 + Y_{u}^2}.
  \]
  Since $|Y_{u}|$ is large in $\collar_{\theta,L}$, we get that
  \[
  1 - \eta < \frac{u\mu_u}{(n-1)} < 1 + \eta, \qquad \mbox{in} \,\,\,\,\collar_{\theta,L},
  \]
  for $\theta \ll 1$, $L \gg 1$ and $\tau \le \tau_0 \ll -1$.
  
  To prove the second estimate in \eqref{eqn-mus}, note that
  \[
  \frac{2(n-1)\, \mu_u}{u|\tau|} = \frac{(n-1)Y |Y_{u}|}{u\, |\tau|},
  \]
  and use \eqref{eq-cor-Sigurd} together with the fact that $Y = \sqrt{|\tau|}\, (\sqrt{2} + o_{\tau,\theta}(1))$, where the limit $ \lim_{\tau\to-\infty, \theta\to 0} o_{\tau,\theta}(1) = 0$.
\end{proof}

\subsection{Poincar\'e inequality in the tip region}
We will next show a {\em weighted Poincar\'e type estimate}  (with respect to weight $\mu(u,\tau)$ defined in \eqref{eq-weight})  that will be needed in obtaining the coercive type estimate \eqref{eqn-tip} in the {\em tip region}  $\tip_{\theta}$.
As we discussed earlier, near the tip we switch the variables $y$ and $u$ in both solutions, with $u$ becoming now an independent variable.

\begin{proposition}
  \label{prop-Poincare}
  There exist uniform constants $C > 0$ and $C(\theta) > 0$, independent of  $\theta$, and $\tau_0$, so that for $\theta \le \theta_0$, and $\tau \le \tau_0$, for every compactly supported function $f$ in $\tip_{\theta}$ we have
  \begin{equation}
    \label{eq-Poincare}
    |\tau|\int_0^{\theta} f^2(u) \, e^{\mu(u,\tau)}\, du \le C\,\int_0^{2\theta} \frac{f_u^2}{1 + Y_{u}^2}\, e^{\mu(u,\tau)}\, du + \int_{\theta}^{2\theta} f^2\, e^{\mu(u,\tau)}\, du.
  \end{equation}
\end{proposition}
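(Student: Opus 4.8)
The plan is to prove \eqref{eq-Poincare} separately on the soliton region $\cS_L=\{0\le u\le L/\sqrt{|\tau|}\}$ and on the collar region $\collar_{\theta,L}=\{L/\sqrt{|\tau|}\le u\le 2\theta\}$, and then to patch the two estimates across the interface $u=L/\sqrt{|\tau|}$. Throughout I may assume that $f$ vanishes near $u=2\theta$ (as is the case in the application, where $f$ will be of the form $\varphi_T W$), so that no boundary term appears at the outer edge of the tip region. In both regions the engine is the elementary vector-field identity: if $h$ is a function with $h(0)=0$ and $-(h\,e^{\mu})_u\ge c\,|\tau|\,e^{\mu}$ on the interval in question, then integration by parts gives
\begin{equation*}
  |\tau|\int f^2 e^{\mu}\,du \le \tfrac1c\!\int\! f^2\bigl(-(h e^{\mu})_u\bigr)\,du
  = \tfrac1c\Bigl[\,(\text{boundary terms})+2\!\int\! ff_u\,h\,e^{\mu}\,du\,\Bigr],
\end{equation*}
and the last integral is absorbed by Cauchy--Schwarz into $\tfrac12|\tau|\int f^2 e^{\mu}\,du+C\int \tfrac{f_u^2}{1+Y_u^2}e^{\mu}\,du$ as soon as $h^2(1+Y_u^2)\le C|\tau|$.

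On the collar I would take $h(u)=-A_0/u$ with $A_0$ a fixed constant. The crucial input is the estimate $\mu_u\asymp \tfrac{u|\tau|}{2(n-1)}$ from \eqref{eqn-mus}: since $u\ge L/\sqrt{|\tau|}$ with $L$ large, in $-(he^{\mu})_u=(A_0u^{-2}-A_0u^{-1}\mu_u)e^{\mu}$ the term $-A_0u^{-1}\mu_u\asymp -A_0|\tau|/(2(n-1))$ dominates $A_0u^{-2}\le A_0|\tau|/L^2$, so $-(he^\mu)_u\ge c|\tau|e^\mu$ holds on all of $\collar_{\theta,L}$. The absorption hypothesis holds because $|Y_u|=|Z_\rho|\le(1+\epsilon)\tfrac{uY}{2(n-1)}\le Cu\sqrt{|\tau|}$, a consequence of \eqref{eq-cor-Sigurd} together with $Y\le Y(0,\tau)=\sqrt{2|\tau|}\,(1+o(1))$. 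Both boundary terms are nonpositive --- the one at $u=2\theta$ vanishes since $f$ does, the one at $u=L/\sqrt{|\tau|}$ because $h<0$ --- so this yields the clean estimate $|\tau|\int_{L/\sqrt{|\tau|}}^{2\theta}f^2e^{\mu}\,du\le C\int_{L/\sqrt{|\tau|}}^{2\theta}\tfrac{f_u^2}{1+Y_u^2}e^{\mu}\,du$.

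For the soliton region I would pass to the variable $\rho=u\sqrt{|\tau|}\in[0,L]$ and $\tilde f(\rho)=f(\rho/\sqrt{|\tau|})$; then $e^{\mu}=e^{m(\rho)+a\rho+b}$ with $|a|\le CL^{-1}$ (Lemma \ref{lemma-prop-mu}) and, by Corollary \ref{cor-old}, $1+Z_{1\rho}^2$ is comparable to $1+Z_{0\rho}^2$ on $[0,L]$. On this fixed bounded interval I run the same integration by parts with $h(\rho)=-\rho$: since $m'(\rho)=\tfrac{n-1}{\rho}(1+Z_{0\rho}^2)>0$ one has $h_\rho+hm'=-1-(n-1)(1+Z_{0\rho}^2)\le-n$, hence $-(he^{m})_\rho\ge n\,e^{m}$, while $h^2(1+Z_{0\rho}^2)=\rho^2(1+Z_{0\rho}^2)$ stays bounded and vanishes at $\rho=0$ --- this is what renders the degeneracy $e^{m}\sim\rho^{n-1}$ at the tip harmless and avoids any need for a weighted Hardy inequality there. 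This produces a trace term at $\rho=L$; undoing the scaling (each $d\rho$ and each $\partial_\rho$ contributing a power of $\sqrt{|\tau|}$, with the constant $b=b(L,\tau)\sim-|\tau|/2$ cancelling between the two sides) gives
\begin{equation*}
  |\tau|\!\int_0^{L/\sqrt{|\tau|}}\! f^2 e^{\mu}\,du \le C(L)\sqrt{|\tau|}\, f\!\bigl(L/\sqrt{|\tau|}\bigr)^2 e^{\mu(L/\sqrt{|\tau|})} + C(L)\!\int_0^{L/\sqrt{|\tau|}}\!\frac{f_u^2}{1+Y_u^2}e^{\mu}\,du .
\end{equation*}

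It remains to add the two estimates and dispose of the interface trace term. For $u^{\ast}\in[L/\sqrt{|\tau|},2L/\sqrt{|\tau|}]$ one has $f(L/\sqrt{|\tau|})^2\le 2f(u^{\ast})^2+\tfrac{2L}{\sqrt{|\tau|}}\!\int_{L/\sqrt{|\tau|}}^{2L/\sqrt{|\tau|}}\!f_u^2\,du$; averaging in $u^{\ast}$ and using that on this sub-interval of the collar both $e^{\mu}$ and $1+Y_u^2$ vary only by $L$-dependent factors (again \eqref{eqn-mus} and $Z_1\to Z_0$), one bounds $\sqrt{|\tau|}\,f(L/\sqrt{|\tau|})^2 e^{\mu(L/\sqrt{|\tau|})}$ by $C(L)|\tau|\int_{L/\sqrt{|\tau|}}^{2L/\sqrt{|\tau|}}f^2e^{\mu}\,du+C(L)\int_{L/\sqrt{|\tau|}}^{2L/\sqrt{|\tau|}}\tfrac{f_u^2}{1+Y_u^2}e^{\mu}\,du$, and the first term is in turn swallowed by the collar estimate already proved, becoming a Dirichlet term over $[0,2\theta]$. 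Adding everything gives $|\tau|\int_0^{\theta}f^2e^{\mu}\,du\le|\tau|\int_0^{2\theta}f^2e^{\mu}\,du\le C\int_0^{2\theta}\tfrac{f_u^2}{1+Y_u^2}e^{\mu}\,du$, which is in fact stronger than \eqref{eq-Poincare} (the extra term $\int_\theta^{2\theta}f^2e^{\mu}$ only enlarges the right-hand side). The step I expect to be most delicate is exactly this patching: since $e^{\mu}$ varies across $\tip_{\theta}$ by factors of order $e^{c|\tau|}$, the trace term at $u=L/\sqrt{|\tau|}$ cannot be estimated crudely but must be reabsorbed through the collar estimate itself, and one has to keep careful track of every power of $\sqrt{|\tau|}$ produced by the change of variables $\rho=u\sqrt{|\tau|}$ and of the relation $e^{b(L,\tau)}\sim e^{-|\tau|/2}$.
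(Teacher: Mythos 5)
Your proposal is correct in outline and produces (after fixing some bookkeeping) a valid, in places slightly stronger, form of \eqref{eq-Poincare}; it shares the collar step with the paper but takes a genuinely different route in the soliton region and in the patching.

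\emph{Collar region.} Your integration by parts against $h(u)=-A_0/u$ is the same mechanism as the paper's Step~1 (the paper's manipulation of $\int \frac{f^2}{u}\mu_u e^\mu\,du$ is exactly integration by parts with $h=-1/u$), and the inputs you cite --- $\mu_u\asymp u|\tau|/(2(n-1))$ from \eqref{eqn-mus} and $|Y_u|\lesssim u\sqrt{|\tau|}$ from \eqref{eq-cor-Sigurd} --- are the right ones. Note a sign slip in your display: with $h=-A_0/u$ one gets $-(he^\mu)_u=\bigl(-A_0u^{-2}+A_0u^{-1}\mu_u\bigr)e^\mu$, not $(A_0u^{-2}-A_0u^{-1}\mu_u)e^\mu$; it is the \emph{positive} term $A_0u^{-1}\mu_u$ that dominates, which makes the conclusion $-(he^\mu)_u\ge c|\tau|e^\mu$ come out as intended. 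Also, you correctly observe that because $h<0$ the boundary term at $u=L/\sqrt{|\tau|}$ has the good sign, so your collar estimate requires no vanishing at the inner edge --- this is slightly cleaner than the paper's $\psi_1$-cutoff and is what lets you avoid the $\int_\theta^{2\theta}f^2 e^\mu$ remainder entirely.

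\emph{Soliton region.} Here you genuinely diverge from the paper. The paper's Step~2 establishes a Poincar\'e inequality on all of $[0,\infty)$ in two stages (first for functions supported in $[A,\infty)$ by a direct estimate, then extension to $[0,\infty)$ by a compactness/contradiction argument using the Euclidean Poincar\'e inequality on $B_{2L}\subset\R^n$), and then removes the infinite interval in Step~3 with a second cutoff $\psi_2$. You instead prove directly, on the fixed interval $\rho\in[0,L]$, the inequality with a trace term at $\rho=L$, using the vector field $h(\rho)=-\rho$. The key computation $\rho\, m'(\rho)=(n-1)(1+Z_{0\rho}^2)$ is right, and the observation that $h^2(1+Z_{0\rho}^2)=\rho^2(1+Z_{0\rho}^2)\to 0$ as $\rho\to 0$ is exactly what renders the degeneracy $e^m\sim\rho^{n-1}$ harmless without appealing to any Hardy-type input. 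This is shorter and more constructive than the paper's argument. One point you gloss over and should make explicit: the actual weight is $e^{\bar\mu}$ with $\bar\mu=m+a\rho+b$, so the correct quantity is $-(he^{\bar\mu})_\rho=\bigl(1+\rho m'(\rho)+a\rho\bigr)e^{\bar\mu}$, and since $a=a(L,\tau)\to a_\infty(L)\approx -(n-1)/L$ is \emph{not} negligible on $[0,L]$ (indeed $|a\rho|$ reaches $O(1)$), one must check that $1+(n-1)(1+Z_{0\rho}^2)+a\rho$ stays bounded below by a positive constant. It does --- the minimum occurs at $\rho_*=O(n^2/L)$ where $a\rho_*=O(L^{-2})$ and the first two terms give $n$, while at $\rho=L$ the negative contribution $aL\approx-(n-1)$ is overwhelmed by $(n-1)Z_{0\rho}(L)^2\sim L^2/(2(n-1))$ --- but this cancellation deserves a line.

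\emph{Patching.} Your trace argument across $u=L/\sqrt{|\tau|}$ replaces the paper's commutator-with-$\psi_2$ bookkeeping. The averaging identity for $f(L/\sqrt{|\tau|})^2$ and the observation that both $e^\mu$ and $1+Y_u^2$ vary only by $L$-controlled factors on $[L/\sqrt{|\tau|},2L/\sqrt{|\tau|}]$ (since $\mu$ changes by $O(L^2)$ there) are correct, and feeding the resulting $|\tau|\int_{L/\sqrt{|\tau|}}^{2L/\sqrt{|\tau|}}f^2 e^\mu\,du$ back into the collar estimate closes the loop. Your caution about tracking $\sqrt{|\tau|}$ factors and the $e^{b}\sim e^{-|\tau|/2}$ normalization is well placed; these are exactly where such a direct patching can silently fail, and your accounting is right. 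As you note, the outcome $|\tau|\int_0^{2\theta}f^2e^\mu\,du\le C\int_0^{2\theta}\tfrac{f_u^2}{1+Y_u^2}e^\mu\,du$ is actually a little stronger than the stated \eqref{eq-Poincare} (no $\int_\theta^{2\theta}f^2$ term needed), which is unsurprising since $f$ vanishes near $u=2\theta$.
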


\begin{proof}
  We divide the proof in several steps.
  In Step \ref{step-Poincare-pretip} we show the weighted Poincar\'e inequality for compactly supported functions in $\collar_{\theta,\frac L2}$.
  In Step \ref{step-Poincare-verytip} we show the weighted Poincar\'e inequality for compactly supported functions in $\rho = u\sqrt{|\tau|} \in [0,\infty)$.
  In Step \ref{step-final} we use cut off functions to show \eqref{eq-Poincare}.
  \begin{step} \label{step-Poincare-pretip}
    We will first derive the weighted Poincar\'e inequality in the collar region, $\collar_{\theta, \frac L2}$, for $\theta$ small, $L$ big and $\tau \ll -1$.
  \end{step} Let $f(u)$ be a compactly supported function in $\cyl_{\theta,\frac L2}$.
  We claim we have
  \begin{equation}
    \label{eq-help-111}
    1+Y_{u}^2 \leq \frac 32 u \mu_u, \qquad \mbox{in} \,\,\,\collar_{\theta,\frac L2}.
  \end{equation}
  To show \eqref{eq-help-111}, lets first consider the case when $u\in\collar_{\theta,L}$.
  By \eqref{eqn-mus} we have
  \[
  1 + Y_{u}^2 \le (1 - \eta)\, \frac{u \mu_u}{n-1} \le \frac 32\, u\mu_u, \qquad \mbox{in} \,\,\,\,\collar_{\theta,L}.
  \]
  To finish the proof of \eqref{eq-help-111} we need to check the estimate holds for $u \in \Bigl[\frac{L}{2\sqrt{|\tau|}}, \frac{L}{\sqrt{|\tau|}}\Bigr]$, or equivalently, for 
  $\rho \in [L/2, L]$ as well.
  Recall that in this region $\mu(u,\tau) = m(u\sqrt{|\tau|}) + a(L,\tau) u \sqrt{|\tau|} + b(L,\tau)$, and hence
  \[
  u \mu_u = \rho \, m_{\rho} + a(L,\tau)\, \rho = (n - 1)\, (1 + (Z_0)_{\rho}^2) + a(L,\tau)\, \rho.
  \]
  By part (b) of Lemma \ref{lemma-prop-mu} we can make $|a(L,\tau)|$ as small as we want by taking $L$ sufficiently big and $\tau \le \tau_0 \ll -1$ sufficiently small.
  Moreover, using the asymptotics for $Z_0(\rho)$ and its derivatives one concludes that for $\rho \in  [ L/2, L]$, we have
  \[
  u\mu_u \ge (n - 1) (1 - \epsilon)\, ( 1 + (Z_0)_{\rho}^2).
  \]
  On the other hand, denote by $Z(\rho,\tau)$ a solution with respect to $\rho$ variable that corresponds to $Y(u,\tau)$, via rescaling \eqref{eqn-Y-expansion}.
  By results in \cite{ADS} we know $Z(\rho,\tau)$ converges uniformly smoothly on compact sets in $\rho$ to the Bowl soliton, $Z_0(\rho)$.
  This and the fact that $Y_{u} = Z_{\rho}$ yield
  \[
  u\, \mu_u \ge (n - 1) (1 - 2\epsilon)\, (1 + Y_{u}^2),
  \]
  for $L$ sufficiently big and $\tau \le \tau_0$, where $\tau_0 \ll -1$ is sufficiently small.
  This implies
  \[
  1 + Y_{u}^2 \le \frac 32 u \mu_u, \qquad \mbox{for} \,\,\,\, u \in \Bigl[\frac{L}{2\sqrt{|\tau|}}, \frac{L}{\sqrt{|\tau|}}\Bigr],
  \]
  hence concluding the proof of \eqref{eq-help-111}.
  Using this estimate, for any $f$ that is compactly supported in $\collar_{\theta,\frac L2}$ we have
  \begin{equation}\label{eqn-muu3}
    \int \frac{f^2}{u^2} \, (1+Y_{u}^2) \, e^{\mu(u,\tau)}\, du \leq \frac 32 \int \frac{f^2}{u} \, \mu_u \, e^{\mu(u,\tau)}\, du.
  \end{equation}
  \smallskip Furthermore,
  \begin{multline}\label{eqn-poin5}
    \int \frac {f^2}{u} \, \mu_u \, e^{\mu(u,\tau)}\,du = \int \frac{f^2}{ u }\, \frac{ \partial } {\partial u} \big ( e^{\mu} \big ) \, du
    = -2\int \frac{f f_u}{u }\, e^{\mu(u,\tau)}\, du +   \int \frac {f^2} {u^2}  \, e^{\mu(u,\tau)}\, du\\
    \leq 2\, \int \frac {f^2_u}{1+Y_{u}^2} \, e^{\mu(u,\tau)}\, du + \frac 12 \int \frac{f^2}{u^2} \, (1+Y_{u}^2) \, e^{\mu(u,\tau)}\, du + \int \frac {f^2} {u^2} \, e^{\mu(u,\tau)}\, du.
  \end{multline}
  Also observe that in the considered region where $u^2 |\tau| \geq L \gg1$, using \eqref{eqn-muu3} we have
  \begin{multline*}
    \int \frac {f^2} {u^2} \, e^{\mu(u,\tau)}\, du = \int \frac {f^2}{u^2 |\tau|} \, \frac{\mu_u |\tau|}{\mu_u}\, e^{\mu(u,\tau)}\, du \leq \frac{1}{L^2} \int \frac {f^2}{u} \, \mu_u\, \frac{|\tau| u}{\mu_u} \, e^{\mu(u,\tau)}\, du
    \\
    \leq \frac 1{8} \, \int \frac {f^2}{u} \, \mu_u \,e^{\mu(u,\tau)}\, du.
  \end{multline*}
  Inserting this and \eqref{eqn-muu3} in \eqref{eqn-poin5}, finally yields
  \[
  \int \frac {f^2}{u} \, \mu_u \, e^{\mu(u,\tau)}\, du
  \leq 16\, \int \frac {f^2_u}{1+Y_{u}^2} \, e^{\mu(u,\tau)}\, du.
  \]
  If we choose $\eta < 1/2$, the previous estimate and \eqref{eqn-muu3} imply
  \begin{equation} \label{eq-poin-pretip}
    |\tau|\, \int f^2\, e^{\mu(u,\tau)}\, du \le 64(n - 1)\, \int \frac{f_u^2}{1 + Y_{u}^2}\, e^{\mu(u,\tau)}\, du,
  \end{equation}
  for any compactly supported function $f$ in $\collar_{\theta,\frac L2}$.
  Observe that the Poincar\'e constant in \eqref{eq-poin-pretip} is uniform, independent of $L$, $\theta$ and $\tau$.
  
  \begin{step}\label{step-Poincare-verytip}
    Denote by $\bar{\mu}(\rho,\tau):= \mu(u,\tau) = m(\rho) + a(L,\tau)\, \rho + b(L,\tau)$.
    We show there exists a $\delta > 0$ so that for all $f \in C_c^{\infty}([0,\infty))$, with $f'(0) = 0$ we have,
    \begin{equation}
      \label{eq-Poincare-verytip}
      \delta\, \int_0^{\infty} f^2 e^{\bar{\mu}}\, d\rho \le \int_0^{\infty} \frac{f_{\rho}^2}{1 + (Z_0)_{\rho}^2}\, e^{\bar{\mu}}\, d\rho.
    \end{equation}
  \end{step}
  
  To prove \eqref{eq-Poincare-verytip} we begin by establishing the inequality for functions supported on the interval $[A, \infty)$ for sufficiently large $A$.
  Then we argue by contradiction to extend the inequality to functions defined on $[0, \infty)$.
  
  Let $A<\infty$ be large and let and consider for $f\in C^\infty_c((A, \infty))$
  \[
  \int_A^\infty f^2 e^{\bar{\mu}} d\rho = \int_A^\infty \frac{f^2} {\bar{\mu}_\rho} de^{\bar{\mu}} = -\int_A^\infty \Bigl( \frac{2ff_\rho} {\bar{\mu}_\rho} - \bigl(\bar{\mu}_\rho^{-1}\bigr)_\rho\, f^2 \Bigr) e^{\bar{\mu}} \, d\rho.
  \]
  We use the asymptotic relation for $Z_0(\rho)$, which implies $m(\rho) = \rho^2/4(n-1) + o(\rho^2)$ and $m_\rho = \rho/2(n-1)+o(\rho)$, and part (b) of Lemma \ref{lemma-prop-mu} to conclude that $(\bar{\mu}_\rho^{-1})_\rho = -2(n-1)/\rho^2 + o(\rho^{-2})$, for large $\rho$.
  Continuing our estimate, we find for any $\epsilon>0$
  \begin{equation}
    \begin{aligned}
      \int_A^\infty f^2 e^{\bar{\mu}} d\rho & \leq \int_A^\infty \Bigl(\epsilon f^2 + \frac{1}{\epsilon}\, \frac{f_\rho^2}{\bar{\mu}_\rho^2} +\frac{C}{\rho^2} f^2
      \Bigr) \, e^{\bar{\mu}}\, d\rho\\
      & \leq \bigl(\epsilon + CA^{-2}\bigr) \int _A^\infty f^2e^{\bar{\mu}}d\rho + \frac{1}{\epsilon}\int_A^\infty \frac{f_\rho^2}{\bar{\mu}_\rho^2} e^{\bar{\mu}}d\rho.
    \end{aligned}
  \end{equation}
  Choose $\epsilon=1/4$, and let $A$ be so large that $C/A^2<1/4$, then we find
  \[
  \int f^2 e^{\bar{\mu}} d\rho \leq 8 \int_A^\infty \frac{f_\rho^2}{\bar{\mu}_\rho^2} e^{\bar{\mu}}d\rho.
  \]
  Finally, we note that for large $\rho$ both, $Z_{0\rho}$ and $\bar{\mu}_\rho$, are asymptotically proportional to $\rho$, so that $(1+Z_{0\rho}^2)^{-1} \leq C (\bar{\mu}_\rho)^{-2}$, and thus we have
  \begin{equation}
    \label{eq-Poincare-beyond-L}
    \int_A^\infty f^2 e^{\bar{\mu}} d\rho \leq C \int_A^\infty \frac{f_\rho^2} {1+Z_{0\rho}^2}\,e^{\bar{\mu}} d\rho.
  \end{equation}
  Therefore the Poincar\'e inequality holds for all $f$ supported in $[A, \infty)$.
  It is clear from the proof above that the Poincar\'e constant $C$ in \eqref{eq-Poincare-beyond-L} is a universal constant, independent of $L$.
  
  We now show that the inequality holds for all $f\in C_c^{\infty}([0,\infty))$.
  Suppose the inequality does not hold.
  Then there is a sequence of functions $f_n\in C_c^{\infty}([0,\infty))$, for which
  \[
  \int_0^\infty f_n(\rho)^2 e^{\bar{\mu}} d\rho =1, \quad\text{and}\quad \lim_{n\to\infty} \int_0^\infty \frac{f_n'(\rho)^2}{1+Z_{0\rho}^2} e^{\bar{\mu}} d\rho =0.
  \]
  Since the weight $S(\rho) := e^{\bar{\mu}}/(1+Z_{0\rho}^2)$ is a positive continuous function on $(0, \infty)$ the assumption $\int_0^\infty f_{n}'(\rho)^2 S(\rho)d\rho \to 0$ implies that $f_n$ is bounded in $H^1_{\rm loc}(\R_+)$, and thus that any subsequence has a further subsequence that converges locally uniformly.
  Moreover, any limit $f(\rho) = \lim f_{n_i}(\rho)$ must have $\int_0^\infty f'(\rho)^2 S(\rho)d\rho=0$, i.e.~must be constant, and, because $\int_0^\infty f_n^2 e^{\bar{\mu}}d\rho =1$ for all $n$, the limit must also satisfy $\int_0^\infty f(\rho)^2 e^{\bar{\mu}}d\rho \leq 1$.
  Since $\bar{\mu}\sim C\rho^2$ for large $\rho$, the only possible limit is $f(\rho)=0$.
  We conclude that if the sequence $f_n\in C_c^{\infty}([0,\infty))$ exists, then it must converge locally uniformly to $f(\rho)=0$.

  Choose $\varphi\in C^\infty([0,\infty))$ with $\varphi(\rho)=0$ for $\rho\leq L$ and $\varphi(\rho)=1$ for $\rho\geq 2L$.
  Then $\varphi f_n$ is supported in $[L,\infty)$, so that the Poincar\'e inequality \eqref{eq-Poincare-beyond-L} that we already have established implies
  \begin{align*}
    \int_0^\infty (\varphi f_n)^2 e^{\bar{\mu}} d\rho & \leq C \int_0^\infty \frac{(\varphi f_n)_\rho^2}{1+Z_{0\rho}^2}
    e^{\bar{\mu}} d\rho\\
    & \leq C \int_0^\infty \Bigl\{ \frac{\varphi_\rho^2 f_n^2}{1+Z_{0\rho}^2} + \frac{\varphi^2 f_{n\rho}^2}{1+Z_{0\rho}^2}
    \Bigr\} e^{\bar{\mu}} d\rho\\
    & \leq C\int_L^{2L} f_n^2 e^{\bar{\mu}} d\rho + C\int_0^\infty \frac{f_{n\rho}^2}{1+Z_{0\rho}^2}e^{\bar{\mu}} d\rho, 
  \end{align*}
  where we have used that $\varphi_\rho$ is supported in $[L, 2L]$.
  Since $f_n$ converges to zero uniformly on $[L,2L]$, the first integral also converges to zero.
  The second integral tends to zero by assumption, and therefore $\lim_{n\to\infty}\int \varphi^2 f_n^2\, d\bar{\mu}\, d\rho = 0 $.
  
  Next, we consider $(1-\varphi)f_n$.
  These functions are supported in $[0, 2L]$.
  On this interval we have
  \[
  c\, \rho^{n-1} \le \frac{e^{\bar{\mu}}}{1+Z_{0\rho}^2} \le e^{\bar{\mu}} \le C\, \rho^{n-1},
  \]
  for suitable constants $c < C$ (these depend on $L$, but here $L$ is fixed).
  This allows us to compare the integrals with the $L^2$ and $H_0^1$ norms on $B_{2L}(0)\subset\R^n$.
  The standard Poincar\'e inequality on $B_{2L}(0)$ implies
  \[
  \int_0^{2L} f^2 \rho^{n-1}d\rho \leq C \int_0^{2L} f_\rho^2 \rho^{n-1} d\rho,
  \]
  for all $f\in C^1([0,2L))$ with $f'(0)=f(2L)=0$.
  Thus we have
  \begin{align*}
    \int(1-\varphi)^2f_n^2\, e^{\bar{\mu}}\, d\rho
    & \leq C \int_0^{2L} (1-\varphi)^2 f_n^2 \rho^{n-1}d\rho                                             \\
    & \leq C \int_0^{2L} \bigl((1-\varphi)f_n\bigr)_\rho^2 \; \rho^{n-1} d\rho                           \\
    & =C \int_0^{2L} \bigl(\varphi_\rho f_n + (1-\varphi)f_{n\rho}\bigr)^2 \rho^{n-1} d\rho              \\
    & \leq C\int_L^{2L} \varphi_\rho^2 f_n^2 \rho^{n-1}d\rho + C \int_0^{2L} f_{n\rho}^2 \rho^{n-1}d\rho. 
  \end{align*}
  Here the first integral tends to zero because $f_n$ converges to zero uniformly on the bounded interval $[L,2L]$, while the second integral can be bounded by
  \[
  \int_0^{2L} f_{n\rho}^2 \rho^{n-1}d\rho \leq C \int_0^{2L} \frac{f_{n\rho}^2}{1+Z_{0\rho}^2} e^{\bar{\mu}} d\rho
  \]
  which also converges to zero as $n\to\infty$.
  Thus we find that $ \int (1-\varphi)^2f_n^2\, e^{\bar{\mu}}\, d\rho\to 0$ as $n\to\infty$.
  Combined with our previous estimate for $\int \varphi^2 f_n^2 e^{\bar{\mu}}\, d\rho$ we get
  \[
  \lim_{n\to\infty} \int f_n^2 e^{\bar{\mu}}\, d\rho \leq \lim_{n\to\infty} \int(\varphi)^2 f_n^2 e^{\bar{\mu}}\, d\rho + \lim_{n\to\infty} \int(1-\varphi)^2 f_n^2 e^{\bar{\mu}}\, d\rho =0.
  \]
  This contradicts the assumption $\int f_n^2 e^{\bar{\mu}}\, d\rho =1$ for all $n$.
  
  \begin{step}
    \label{step-final}
    In this step we combine \eqref{eq-poin-pretip} and \eqref{eq-Poincare-verytip}, using cut off functions, to show \eqref{eq-Poincare}.
    More precisely, there exist uniform constants $C$ and $C(\theta) > 0$, independent of $\tau \le \tau_0$, so that
    \[
    |\tau| \int_0^{\theta} f^2 e^{\mu}\, du \le C\, \int_0^{2\theta} \frac{f_u^2}{1+Y_{\theta,u}^2} \, e^{\mu}\, du + C(\theta)\, \int_{\theta}^{2\theta} f^2 \, e^{\mu}\, du.
    \]
  \end{step}

  Let $\psi_1$ be a cut off function so that $\psi_1 = 1$ for $\frac{L}{\sqrt{|\tau|}} \le u \le \theta$ and $\psi_1 = 0$ outside of $[\frac{L}{\sqrt{2|\tau|}}, 2\theta]$.
  Let $\psi_2$ be a cut off function so that $\psi_2 = 1$ for $0 \le u \le \frac{L}{\sqrt{|\tau|}}$ and $\psi_2 = 0$ for $u \ge \frac{2L}{\sqrt{|\tau|}}$.
  
  By \eqref{eq-poin-pretip} applied to $\psi_1 f$ we have
  \[
  \int_{L/\sqrt{|\tau|}}^{\theta} \frac{f^2}{u}\, \mu_u \, d\sigma \le \int \frac{(\psi_1 f)_u^2}{1 + Y_{u}^2}\, d\sigma.
  \]
  This yields
  \begin{multline*}
    \int_{L/\sqrt{|\tau|}}^{\theta} \frac{f^2}{u}\, \mu_u\, d\sigma \le C\, \int_{L/(2\sqrt{|\tau|)}}^{2\theta} \frac{f_u^2}{1+Y_{u}^2}\, d\sigma + C(\theta)\, \int_{\theta}^{2\theta} f^2\, d\sigma
    \\
    + \frac{C|\tau|}{L^2}\, \int_{L/(2\sqrt{|\tau|)}}^{L/\sqrt{|\tau|}} \frac{f^2}{1+Y_{u}^2}\, d\sigma.
  \end{multline*}
  Combining this with the second estimate in \eqref{eqn-mus} yields
  \begin{multline}
    \label{eq-Poincare-part1}
    |\tau|\, \int_{L/\sqrt{|\tau|}}^{\theta} f^2\, d\sigma
    \le C\int_{L/\sqrt{|\tau|}}^{\theta} \frac{f^2}{u}\, \mu_u\, d\sigma\\
    \le C\, \int_{L/(2\sqrt{|\tau|}}^{2\theta} \frac{f_u^2}{1+Y_{u}^2} + C(\theta)\, \int_{\theta}^{2\theta} f^2\, d\sigma + \frac{C|\tau|}{L^2}\, \int_{L/\big(2\sqrt{|\tau|}\big)}^{L/\sqrt{|\tau|}} f^2\, d\sigma.
  \end{multline}
  We can rewrite the weighted Poincar\'e inequality \eqref{eq-Poincare-verytip}, applied to $\psi_2 f$ as,
  \[
  |\tau|\, \int (\psi_2 f)^2\, e^{\mu}\, du \le C\, \int \frac{(\psi_2 f)_u^2}{1 + Y_{u}^2}\, e^{\mu}\, du
  \]
  where we use again the fact that in the considered tip region we have uniformly smooth convergence of solutions to the Bowl soliton and we can replace $Z_{0\rho}$ by $Y_{u}$.
  This implies
  \begin{equation}
    \label{eq-Poincare-part2}
    \begin{split}
      \int_0^{L/\sqrt{|\tau|}} f^2 |\tau|\, e^{\mu}\, du &\le \int (\psi_2 f)^2 |\tau|\, d\sigma \le
      C\int\frac{(\psi_2 f)_u^2}{1+Y_{u}^2}\, e^{\mu}\, du \\
      &\le C\, \int_0^{2L/\sqrt{|\tau|}} \frac{f_u^2}{1+Y_{u}^2}\, e^{\mu}\, du + C\int \frac{(\psi_2)_u^2f^2}{1+Y_{u}^2}\, e^{\mu}\, du \\
      &+ C\, \int \frac{|\psi_2||(\psi_2)_u| |f||f_u|}{1+Y_{u}^2}\, e^{\mu}\, du \\
      &\le C\, \int_0^{2L/\sqrt{|\tau|}} \frac{f_u^2}{1+Y_{u}^2}\, e^{\mu}\, du + C\int \frac{(\psi_2)_u^2f^2}{1+Y_{u}^2}\, e^{\mu}\, du
    \end{split}
  \end{equation}
  where we applied Cauchy-Schwartz inequality to the last term on the right hand side.
  Add \eqref{eq-Poincare-part1} and \eqref{eq-Poincare-part2} to get
  \begin{equation*}
    \begin{split}
      |\tau| \int_0^{\theta} f^2\, d\sigma &\le C\Bigl( \int_0^{2L/\sqrt{|\tau|}} \frac{f_u^2}{1+Y_{u}^2}\, d\sigma + \int_{L/(2\sqrt{|\tau|})}^{2\theta} \frac{f_u^2}{1+Y_{u}^2}\, d\sigma\Bigr) \\
      &+ C(\theta) \, \int_{\theta}^{2\theta} f^2\, d\sigma + \frac{C|\tau|}{L^2}\int_{L/(2\sqrt{|\tau|})}^{2L/\sqrt{|\tau|}} f^2\, d\sigma \\
    &\le C\int_0^{2\theta} \frac{f_u^2}{1+Y_{u}^2}\, d\sigma + C(\theta) \, \int_{\theta}^{2\theta} f^2\, d\sigma + \frac{C|\tau|}{L^2}\int_{L/(2\sqrt{|\tau|}}^{2L/\sqrt{|\tau|}} f^2\, d\sigma. \end{split}
  \end{equation*}
  We can absorb the last term on the right hand side into the left hand side, for $|\tau|$ large, which finally yields
  \[
  |\tau|\int_0^{\theta} f^2\, d\sigma \le C\int_0^{2\theta} \frac{f_u^2}{1+Y_{u}^2}\, d\sigma + C(\theta)\, \int_{\theta}^{2\theta} f^2\, d\sigma.
  \]
\end{proof}

\subsection{Proof of Proposition \ref{prop-tip}}
We will now conclude the proof of Proposition \ref{prop-tip}.
In order to prove the Proposition, we combine an energy estimate for the difference $W$ which will be shown below, with our  Poincar\'e inequality \eqref{eq-Poincare} (recall that $W$ has been defined at the beginning of section \ref{sec-tip}).
Let $\varphi_T(u)$ be a standard smooth cutoff function supported on $0 < u < 2\theta$, with $\varphi =1$ on $0 \leq u \leq \theta$ and $\varphi = 0$ for $u \ge 2\theta$, and let $W_T := W\, \varphi_T$.

\begin{proof}[Proof of Proposition \ref{prop-tip}]
  
  After multiplying equation \eqref{eqn-WW} by $W \varphi_T^2 \, e^{\mu}$ and integrating by parts we obtain
  \begin{equation}\label{eqn-energy-i1}
    \begin{split}
      & \frac{d}{d\tau} \Bigl ( \frac 12 \int W^2_T  \, e^{\mu}\, du \Bigr  )  = -\int \frac{W_u^2}{1+Y_{u}^2}\,  \, \varphi_T^2 \,e^{\mu}\, du\\
      &+ \int \Bigl(\frac{n-1}{u} - \frac u2 - \frac{\mu_u}{1+Y_{u}^2} + \frac{2Y_{u} (Y)_{uu}}{(1+Y_{u}^2)^2} -  \frac{(Y_2)_{uu}}{1+ Y_{2u}^2}\, \frac{Y_{u} + Y_{2u}}{1 + Y_{u}^2}\Bigr) W_u W\, \varphi_T^2  \, e^{\mu}\, du \\
      &+ 2 \int \frac 1{1+Y_{u}^2}\, W_u W \, \varphi_T\,  (\varphi_T)_u \, e^{\mu}\, du + \int W^2_T \Bigl( \frac12 + \mu_{\tau}\Bigr)\, e^{\mu}\, du.  \\
    \end{split}
  \end{equation}
  Let us write
  \[
  \Bigl(\frac{n-1}{u} - \frac u2 - \frac{\mu_u}{1+Y_{u}^2} + \frac{2Y_{u} (Y)_{uu}}{(1+Y_{u}^2)^2} - \frac{(Y_2)_{uu} (Y_{u} + Y_{2u})}{(1 + Y_{u}^2)\,(1+ Y_{2u}^2)}\Bigr) = \frac {n-1}u \,G
  \]
  where
  \begin{multline}
    \label{eq-G-def}
    G:= \Biggl \{1 - \frac{u^2}{2(n-1)} - \frac{u \mu_u}{n-1} \, \frac 1{1+Y_{u}^2}\\
    + \frac{2 Y_{u} (Y)_{uu}}{(n-1)\, (1 + Y_{u}^2)^2} - \frac{u\,(Y_2)_{uu} (Y_{u} + Y_{2u})}{(n-1)\,(1 + Y_{u}^2)\,(1+ Y_{2u}^2)}\Biggr \}.
  \end{multline}
  Denote by $C$ a uniform constant independent of $\tau$ that can vary from line to line.
  Applying Cauchy-Schwarz to the two terms of \eqref{eqn-energy-i1} involving $W_u W$ we conclude
  \begin{multline}\label{eqn-energy-i2}
    \frac{d}{d\tau} \Bigl ( \frac 12 \int W^2_T \, e^{\mu}\, du \Bigr ) \leq - \frac 12 \int \frac{W_u^2}{1+Y_{u}^2}\,\varphi_T^2 \, e^{\mu}\, du
    +  \int W^2_T \, (\frac 12 + \mu_\tau) \, e^{\mu}\, du \\
    + \int \frac{(n-1)^2}{u^2}\, G^2 W^2_T \, (1+Y_{u}^2)\, e^{\mu}\, du + 4 \int \frac {W^2}{1+Y_{u}^2} \, (\varphi_T)_u^2\, e^{\mu}\, du. 
  \end{multline}
  
  Note that the support of $(\varphi_T)_u$ is contained in the region $\{\theta \le u \le 2\theta\}$.
  By the intermediate region asymptotics in \cite{ADS} we have that in this region we have $c_1(\theta)\, \sqrt{|\tau|} \le |Y_{u}| \le C_1 \sqrt{|\tau|}$, if $\tau \le \tau_0 \ll -1$.
  By this estimate and by \eqref{eqn-mut} we deduce from \eqref{eqn-energy-i2} the differential inequality
  \begin{equation}\label{eqn-energy-i3}
    \begin{split}
      \frac{d}{d\tau} \Bigl ( \frac 12 \int W^2_T \, e^{\mu}\, du \Bigr )
      &\leq - \frac 14 \int \frac{W_u^2}{1+Y_{u}^2}\,\varphi_T^2 \, e^{\mu}\, du + \eta |\tau|\,  \int W^2_T  \, e^{\mu}\, du \\
      & \quad + \int \frac{(n-1)^2}{u^2}\, G^2 W^2_T \, (1+Y_{u}^2)\, e^{\mu}\, du + \frac{C}{|\tau|}\, \int (W\chi_{[\theta,2\theta]})^2 \, e^{\mu}\, du,
    \end{split}
  \end{equation}
  for $\eta$ small (we have also used that $|(\varphi_T)_u| \le C(\theta)$ in $\{\theta \le u \le 2\theta\}$).
  
  We will next estimate the quantity $\frac{(n-1)^2}{u^2}\, G^2$, separately in the regions $L/{\sqrt{|\tau|}} \leq u \leq 2\theta$ and $0 \leq u \leq L/{\sqrt{|\tau|}}$.

  \begin{claim}
    Fix $\eta $ small.
    There exist $\theta, L >0$ depending on $\eta$ and $\tau_0 \ll0$ such that
    \begin{equation}\label{eqn-G5} \frac{(n-1)^2}{u^2}\, G^2 \,
      (1+Y_{u}^2) \, \le \eta |\tau|
    \end{equation}
    on $0 \leq u \leq 2\theta$ and $\tau \leq \tau_0$.
    
  \end{claim}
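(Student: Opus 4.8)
The plan is to prove the pointwise bound \eqref{eqn-G5} separately on the two pieces of the tip region, the collar $\collar_{\theta,L}=\{L/\sqrt{|\tau|}\le u\le 2\theta\}$ and the soliton region $\cS_L=\{0\le u\le L/\sqrt{|\tau|}\}$, since the mechanism controlling $G$ is different on each: on the collar one uses the convexity estimate of Proposition \ref{pro-Sigurd} through Corollary \ref{lemma-Sigurd} and Remark \ref{cor-sigurd}, while on $\cS_L$ one uses the uniform smooth convergence $Z_i(\cdot,\tau)\to Z_0$ from Corollary \ref{cor-old} together with the explicit form \eqref{eq-weight} of the weight $\mu$.

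\emph{Collar region.} By Remark \ref{cor-sigurd} one has $|Y_u|/u=(1+\epsilon(\theta,L))\,Y/(2(n-1))$ with $Y\le Y(0,\tau)=\sqrt{2|\tau|}(1+o(1))$, and since $u\ge L/\sqrt{|\tau|}$ this gives $\frac{(n-1)^2}{u^2}(1+Y_u^2)\le|\tau|$ on $\collar_{\theta,L}$ once $L\gg1$ and $\tau_0\ll-1$. Thus on the collar \eqref{eqn-G5} reduces to $|G|\le\eta^{1/2}$. Write
\[
G=\Bigl(1-\frac{u\mu_u}{(n-1)(1+Y_u^2)}\Bigr)-\frac{u^2}{2(n-1)}+\frac{2uY_uY_{uu}}{(n-1)(1+Y_u^2)^2}-\frac{u(Y_2)_{uu}(Y_u+Y_{2u})}{(n-1)(1+Y_u^2)(1+Y_{2u}^2)}.
\]
The first bracket is $O(\eta)$ by the first estimate in \eqref{eqn-mus}; the second term is $O(\theta^2)$; and, using $u_{yy}=-Y_{uu}/Y_u^3$ together with $\lambda_1/\lambda_2=-uu_{yy}/(1+u_y^2)$, the last two terms equal $\tfrac{2(\lambda_1/\lambda_2)}{(n-1)(1+u_y^2)}$ for $u_1$ and an analogous expression for $u_2$ (the latter after invoking $Y_{1u}/Y_{2u}\to1$ from \eqref{eq-comp-der}), both $O(\eta)$ by Proposition \ref{prop-ratio-small} applied to $u_1$ and $u_2$. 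Choosing $\theta$ small and $L$ large depending on $\eta$, then $\tau_0\ll-1$, makes $|G|$ as small as we wish.

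\emph{Soliton region.} Pass to $\rho=u\sqrt{|\tau|}\in[0,L]$, using $Y_u=Z_\rho$, $Y_{uu}=Z_{\rho\rho}\sqrt{|\tau|}$ and the exact identity $u\mu_u=\rho m_\rho+a(L,\tau)\rho=(n-1)(1+Z_{0\rho}^2)+a(L,\tau)\rho$. In these variables
\[
\frac{(n-1)^2}{u^2}\,G^2\,(1+Y_u^2)=(n-1)^2\,|\tau|\,\frac{1+Z_{1\rho}^2}{\rho^2}\,G^2,
\]
so \eqref{eqn-G5} amounts to $(n-1)^2(1+Z_{1\rho}^2)G^2\le\eta\rho^2$ on $[0,L]$. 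Two facts close this. First, replacing $Z_1,Z_2$ by $Z_0$, dropping $a\rho$ and dropping the $O(\rho^2/|\tau|)$ term $\tfrac{u^2}{2(n-1)}$ turns $G$ into
\[
1-\frac{\rho m_\rho}{(n-1)(1+Z_{0\rho}^2)}+\frac{2\rho Z_{0\rho}Z_{0\rho\rho}}{(n-1)(1+Z_{0\rho}^2)^2}-\frac{2\rho Z_{0\rho}Z_{0\rho\rho}}{(n-1)(1+Z_{0\rho}^2)^2}\equiv0,
\]
so, factoring $\rho/((n-1)(1+Z_{1\rho}^2))$ out of the two second–derivative terms and using the uniform smooth convergence of Corollary \ref{cor-old} together with $|a(L,\tau)|\le\eta$ (Lemma \ref{lemma-prop-mu}), one gets $|G|\le(c\eta+o_\tau(1))\rho$ on $[0,L]$ with $c$ universal. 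Second, near $\rho=0$ every term of $G$ vanishes at least linearly, because $Z_{0\rho}(0)=0$ and by the small-$\rho$ expansion \eqref{eq-Z0-asymptotics}, so the estimate does not deteriorate as $\rho\to0$. Substituting $|G|\le(c\eta+o_\tau(1))\rho$ and $1+Z_{1\rho}^2\le C_L$ on $[0,L]$ yields $(n-1)^2(1+Z_{1\rho}^2)G^2\le(n-1)^2C_L(c\eta+o_\tau(1))^2\rho^2\le\eta\rho^2$ for $\tau_0\ll-1$.

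\emph{Main obstacle.} The delicate point is the soliton region: one must extract the \emph{linear}-in-$\rho$ decay of $G$, not merely its boundedness, and this requires simultaneously the exact cancellation of the two second-derivative terms in the $Z_0$-limit and the first-order vanishing of the $\mu_u$- and curvature-terms at the tip. One must then reconcile the quantifiers, checking that the $\eta$-dependent choices of $\theta$, $L$, $\tau_0$ forced by the collar analysis (through Corollary \ref{lemma-Sigurd} and Proposition \ref{prop-ratio-small}) are compatible with the constraint, coming from the weight factor $(n-1)^2(1+Z_{1\rho}^2)/\rho^2\sim|\tau|\,C_L$ in $\cS_L$, that the constant $c$ in $|G|\le c\rho$ be small relative to $C_L$. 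Matching these constraints with a single consistent choice of $\theta$, $L$, $\tau_0$ is the step that needs care; everything else is routine Taylor expansion and application of the a priori estimates.
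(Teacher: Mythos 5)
Your overall strategy is the paper's: split the tip region into the collar $\collar_{\theta,L}$ and the soliton region $\cS_L$, use the convexity estimate (Proposition~\ref{pro-Sigurd} via Corollary~\ref{lemma-Sigurd} and Remark~\ref{cor-sigurd}) together with Proposition~\ref{prop-ratio-small} on the collar, and use the convergence $Z_i\to Z_0$ on $\cS_L$. The collar argument is correct and essentially identical to the paper's (the reformulation of the two curvature terms as $\lambda_1/\lambda_2$ ratios is a nice way to state it, but it is the same estimate). You also correctly identify the key cancellation in $\cS_L$: if you freeze $Z_1=Z_2=Z_0$ and drop $a\rho$ and the $\rho^2/|\tau|$ term, the two second-derivative terms cancel and the $\mu_u$-term cancels the constant by the definition of $m$, so $G\equiv 0$.

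However, the bookkeeping you do with that cancellation does not close, and this is precisely the quantifier tension you flag but then present as resolved. You bound $|G|\leq (c\eta+o_\tau(1))\rho$ with $c$ universal, then multiply by $1+Z_{1\rho}^2\leq C_L$, obtaining $(n-1)^2 C_L(c\eta+o_\tau(1))^2\rho^2$. After taking $\tau_0\ll-1$ this is $\sim (n-1)^2 c^2 C_L\eta^2\,\rho^2$. By the asymptotics \eqref{eq-Z0-asymptotics}, $C_L\sim L^2/(2(n-1)^2)$; and Lemma~\ref{lemma-prop-mu} forces $L\gtrsim 1/\eta$ in order to get $|a(L,\tau)|\leq\eta$. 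So $C_L\eta^2\gtrsim 1$ regardless of how small you take $\eta$, and the inequality $(n-1)^2 C_L c^2\eta^2\leq \eta$ fails. No choice of $\tau_0$ rescues this, since the offending term survives the $o_\tau(1)\to 0$ limit.

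The issue is that the intermediate bound $|G|\leq(c\eta+o_\tau(1))\rho$ discards the structural factor $(1+Z_\rho^2)^{-1}$ that sits in front of the $a$-term of $G$. Keep it: the $a$-contribution is $G_a=-\rho a/\bigl((n-1)(1+Z_\rho^2)\bigr)$, and
\[
\frac{(n-1)^2}{\rho^2}\,G_a^2\,(1+Z_\rho^2)=\frac{a^2}{1+Z_\rho^2}\leq a^2\leq\eta^2,
\]
so the $L$-dependent amplification disappears entirely. The same happens for the curvature terms once you factor out $\rho/\bigl((n-1)(1+Z_\rho^2)\bigr)$ and estimate the remainder, rather than first passing to a naked bound $|G|\lesssim\rho$. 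This is exactly what the paper's $G=G_1+G_2$ decomposition accomplishes: the $m$-identity cancels $G_1$ at leading order and leaves $(Z_{0\rho}^2-Z_\rho^2)^2/(1+Z_\rho^2)\cdot(n-1)^2/\rho^2=o_\tau(1)$ with no spare power of $L$, and $G_2$ carries the $(1+Z_\rho^2)^{-1}$ and $(1+Z_\rho^2)^{-2}$ weights forward so that nothing is lost. Concretely: multiply by the target weight $\frac{(n-1)^2}{\rho^2}(1+Z_\rho^2)$ first, term by term, and only then estimate; do not pass through the pointwise bound $|G|\lesssim\rho$ followed by $1+Z_\rho^2\leq C_L$. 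With that change, your argument becomes the paper's, and the quantifier ordering $\eta\to L(\eta)\to\tau_0(\eta,L)$ is consistent.
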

  
  \begin{proof}
    We begin by establishing the bound for $L/{\sqrt{|\tau|}} \leq u \leq 2\theta$, where $L \gg1$ is large.
    By the \eqref{eq-G-def}, \eqref{eqn-mus}, Remark \ref{cor-sigurd} and the fact that $|Y_{u}|$ is large in the considered region, we have
    \begin{equation}
      \label{eq-G-small1}
      \begin{split}
        & \frac{(n-1)}{u}\, \Bigl| 1 - \frac{u\mu_u}{(n-1)\, (1 + Y_{u}^2)} - \frac{u^2}{2(n-1)}\Bigr| \, \sqrt{1 + Y_{u}^2} \\
        &\le \frac{2(n-1)\, |Y_{u}|}{u}\, (\eta + \frac{u^2}{2})
        \le (1 + \eta) (\eta + 2\theta^2)\, Y \\
        &\le 4\, \eta \sqrt{|\tau|}.
      \end{split}
    \end{equation}
    In the above estimate we also use that $Y$ is close to $\sqrt{2|\tau|}$ in the considered region, and that we can choose $\theta$ small so that $2\theta^2 < \eta$.
    Note that by Remark \ref{cor-sigurd} we have
    \[
    1 - 2\eta \le (1 - \eta)\, \frac{Y}{Y_2} \le \frac{|Y_{u}|}{|Y_{2u}|} \le (1 + \eta)\, \frac{Y}{Y_2} \le 1 + 2\eta
    \]
    since $(1 - \eta)\, \sqrt{2|\tau|} \le Y_i \le (1 + \eta)\, \sqrt{2|\tau|}$, for $i \in \{1,2\}$, in the considered region, provided that $\theta$ is small enough, $L$ is big enough and $\tau \le \tau_0$ is big enough in its absolute value.
    Furthermore, by Proposition \ref{prop-ratio-small} and Remark \ref{cor-sigurd} we have
    \[
    \Bigl|\frac{(Y_2)_{uu}}{1+Y_{2u}^2}\Bigr| \le \eta \frac{|Y_{2u}|}{u} \le \frac{\eta\, (1 + \eta)}{2(n-1)}\, Y_2 \le \frac{4\eta}{n-1}\, \sqrt{|\tau|},
    \]
    hence, implying
    \[
    \Bigl|\frac{(Y_2)_{uu}}{1+ Y_{2u}^2}\, \frac{Y_{u} + Y_{2u}}{1 + Y_{u}^2}\Bigr| \le 4\,\sqrt{2}\eta (1 + \eta)\,\sqrt{|\tau|}\, \frac{|Y_{u}|}{\sqrt{1 + Y_{u}^2}}.
    \]
    Similarly, we get
    \[
    \Bigl|\frac{2\,(Y)_{uu} Y_{u}}{(1+Y_{u}^2}\Bigr| \le 4\, \sqrt{2}\eta (1 + \eta)\,\sqrt{|\tau|}\, \frac{|Y_{u}|}{\sqrt{1 + Y_{u}^2}},
    \]
    and hence,
    \begin{equation}
      \label{eq-G-small2}
      \Bigl|\frac{(Y_2)_{uu}}{1+ Y_{2u}^2}\, \frac{Y_{u} + Y_{2u}}{1 + Y_{u}^2}\Bigr| + \Bigl|\frac{2\,(Y)_{uu} Y_{u}}{(1+Y_{u}^2}\Bigr| < 16\eta\, \sqrt{|\tau|}.
    \end{equation}
    Combining \eqref{eq-G-small1} and \eqref{eq-G-small2}, by the definition of $G$ (see \eqref{eq-G-def}), we get that \eqref{eqn-G5} holds in $\collar_{\theta,L}$, if we take $\eta < \frac{1}{544}$.
    
    For the other region, $0 \leq u \leq L/{\sqrt{|\tau|}}$, which is very near the tip, we will use the fact that our solutions $Y, Y_2$ after rescaling are close to the soliton $Z_0(\rho)$.
    Recall that in this case we have
    \[
    Y_i(u,\tau) = Y(0,\tau) + \frac 1{\sqrt{|\tau|}} \, Z_i(\rho,\tau), \qquad \rho:= u \sqrt{|\tau|}
    \]
    which gives $Y_{u} = Z_{\rho}$.
    Hence,
    \[
    \frac{(n-1)^2}{u^2}\, G^2 \, (1+Y_{u}^2) = \frac{(n-1)^2\, |\tau| }{\rho^2}\, G^2 \, (1+Z_{\rho}^2).
    \]
    Also, $\mu_u = \bar{\mu}_\rho \, \sqrt{|\tau|}$, which gives $u\, \mu_u = \rho \, \bar{\mu}_\rho = \rho\, (m_{\rho}(\rho) + a(L,\tau))$ (we use the definition of weight $\mu(u,\tau)$ given by \eqref{eq-weight}).
    Lets write $G = G_1 + G_2$, where
    \[
    G_1 = 1 - \frac{\rho^2}{2(n-1) |\tau|} - \frac{\rho \, m_\rho }{n-1} \, \frac 1{1+Z_{\rho}^2}
    \]
    and
    \[
    G_2 = -\frac{\rho\, a(L,\tau)}{(n-1)\, (1 + Z_{\rho}^2)} - \frac{\rho (Z_2)_{\rho\rho}\, (Z_{\rho} + Z_{2\rho})}{(1 + Z_{\rho}^2)\,(1 + Z_{2\rho}^2)} + \frac{2\rho\,(Z)_{\rho\rho} Z_{\rho}}{(1 + Z_{\rho}^2)^2}.
    \]
    From the definition of $m(\rho)$ we have
    \[
    1 - \frac{\rho \, m_\rho }{n-1} \, \frac 1{1+ Z_{0 \rho}^2} =0,
    \]
    which implies that
    \[
    G_1 = \frac{\rho \, m_\rho }{n-1} \Bigl ( \frac 1{1+Z_{\rho}^2} - \frac 1{1+ Z_{0\rho}^2} \Bigr ) - \frac{\rho^2}{2(n-1)|\tau|},
    \]
    and after squaring
    \[
    G_1^2 \leq \frac{2\rho^2 \, m_\rho^2 }{(n-1)^2} \Bigl ( \frac 1{1+Z_{\rho}^2} - \frac 1{1+ Z_{0\rho}^2} \Bigr )^2 + \frac{\rho^4}{2(n-1)^2|\tau|^2}.
    \]
    It follows that
    \[
    \frac{(n-1)^2\, |\tau| }{\rho^2}\, G_1^2 \, (1+Z_{\rho}^2) 
    \leq m_\rho^2 \, |\tau| \, (1+Z_{\rho}^2) 
    \Bigl ( \frac 1{1+Z_{\rho}^2} - \frac 1{1+ Z_{0\rho}^2} \Bigr )^2
    + \frac{\rho^2}{2|\tau|} \, (1+Z_{\rho}^2).
    \]
    Using that $Z(\rho,\tau)$ converges uniformly smoothly on compact sets to the soliton $Z_0(\rho)$, we have
    \begin{equation*}
      \begin{split}
        m_{\rho}^2 |\tau|\, (1+Z_{\rho}^2) 
        \Bigl ( \frac 1{1+Z_{\rho}^2} - \frac 1{1+ Z_{0\rho}^2} \Bigr )^2 
        &=\frac{(n-1)^2}{\rho^2\, (1 + Z_{\rho}^2)}\, 
        (Z_{0\rho}^2 - Z_{\rho}^2)^2 \, |\tau|\\
        &\leq \,\frac{(n-1)^2 \,(Z_{0\rho} + Z_{\rho})^2}{\rho^2} (Z_{0\rho} - Z_{\rho})^2\, |\tau| \\
        &\le C\, (Z_{0\rho} - Z_{\rho})^2\, |\tau| < \frac{\eta}{3}\, |\tau|,
      \end{split}
    \end{equation*}
    where we also used that $\frac{(n-1)^2 \,(Z_{0\rho} + Z_{\rho})^2}{\rho^2}$ is uniformly bounded for all $\tau \le \tau_0 \ll -1$ and all $\rho \in [0,L]$.
    This follows from the fact that $Z(\rho,\tau)$ uniformly smoothly converges to $Z_0(\rho)$ as $\tau\to-\infty$, for $\rho \in [0,L]$, the asymptotics of $Z_0(\rho)$ around the origin and infinity and the fact that $(Z_0)_{\rho}(0) = Z_{\rho}(0,\tau) = 0$, for all $\tau$.
    Furthermore,
    \[
    \frac{\rho^2}{2|\tau|}\, (1 + Z_{\rho}^2) \le \frac{C_1 L^2 + C_2}{2|\tau|} < \frac{\eta}{3},
    \]
    which can be achieved by taking $|\tau| \ge |\tau_0| \gg 1$ very large (relative to a fixed constant $L$).
    Finally, the fact that $|m_\rho| \leq C(L) \, \rho^{-1}$, for $\rho\in [0,L]$ and the fact that $Z(\rho,\tau)$ converges uniformly smoothly, as $\tau\to -\infty$, to the soliton $Z_0(\rho)$, implies that ${\displaystyle m_\rho^2 \, (1+Z_{\rho}^2) \Bigl ( \frac 1{1+Z_{\rho}^2} - \frac 1{1+ Z_{0\rho}^2} \Bigr )^2 }$ can be made arbitrarily small if $\tau \leq \tau_0(L)\ll-1$.
    Above estimates guarantee that
    \begin{equation}
      \label{eq-G1-est}
      \frac{(n-1)^2}{\rho^2} |\tau|\, G_1^2 (1 + Z_{\rho}^2) \le \frac{\eta}{4}\, |\tau|,
    \end{equation}
    for $\tau \le \tau_0 \ll -1$.
    
    On the other hand, by Lemma \ref{lemma-prop-mu} and the fact that both $Z(\rho,\tau)$ and $Z_2(\rho,\tau)$ converge uniformly smoothly on $\rho\in [0,L]$, as $\tau\to-\infty$, to the soliton $Z_0(\rho)$ we have that
    \begin{equation}
      \label{eq-G2-est}
      \frac{(n-1)^2}{\rho^2}\, |\tau| G_2^2 (1 + Z_{\rho}^2) \le \frac{\eta}{4}\, |\tau|.
    \end{equation}
    Combining \eqref{eq-G1-est} and \eqref{eq-G2-est} yields \eqref{eqn-G5}.
  \end{proof}
  
  We now continue the proof of Proposition \ref{prop-tip}.
  Lets insert the bound \eqref{eqn-G5} in the differential inequality \eqref{eqn-energy-i3}.
  Using also the bound $| (\varphi_T)_u \chi_{[\theta,2\theta]}| \leq C(\theta) $ we obtain
  \begin{multline*}
    \frac{d}{d\tau} \Bigl ( \frac 12 \int W^2_T \, e^{\mu}\, du \Bigr )
    \leq - \frac 14 \int \frac{(W_T)_u^2}{1+Y_{u}^2}\,e^{\mu}\, du \\
    + (2 \eta |\tau| +C) \int W^2_T \, e^{\mu}\, du + C(\theta) \, \int_{\theta}^{2\theta} W^2\, e^{\mu}\, du.
  \end{multline*}
  On the other hand, our Poincar\'e inequality says that
  \[
  \int \frac{(W_T)_u^2}{1+Y_{u}^2}\,e^{\mu}\, du + \int_{\theta}^{2\theta} W_T^2\, e^{\mu}\, du \geq c_0 \, |\tau | \int W^2_T \, e^{\mu}\, du,
  \]
  with $c_0 >0$ a constant which is uniform in $\tau$ and independent of $\theta$.
  Hence,
  \begin{equation*}
    \begin{split}
      - \frac 14 \int \frac{(W_T)_u^2}{1+Y_{u}^2}\,e^{\mu}\, du  &+  (2 \eta |\tau| +C)   \int W^2_T  \, e^{\mu}\, du \leq \\
      &- \frac {c_0}4\, |\tau|  \int W^2_T  \,e^{\mu}\, du  +  (2 \eta^2 |\tau| +C)  \int W^2_T  \, e^{\mu}\, du \\
      &\leq - \frac {c_0} 8 \int |\tau| \, W^2_T \,e^{\mu}\, du
    \end{split}
  \end{equation*}
  if $\tau \leq \tau_0$, with $\tau_0$ depending on $\eta$, $c_0$ and  $C$.
  We conclude that in the tip region $\tip_{\theta}$ the following holds
  \begin{equation}\label{eqn-diff-ineq-tip1}
    \frac{d}{d\tau} \int W^2_T \, e^{\mu}\, du \leq - \frac {c_0}8\, |\tau| \int \, W^2_T \, e^{\mu}\, du + \frac{C(\theta)}{|\tau|} \, \int (W\chi_{[\theta,2\theta]})^2\, e^{\mu}\, du.
  \end{equation}
  Define
  \[
  f(\tau) := \int W_T^2\, e^{\mu}\, du, \qquad g(\tau) := \int (W\chi_{[\theta,2\theta]})^2\, e^{\mu}\, du .
  \]
  Then equation \eqref{eqn-diff-ineq-tip1} becomes
  \[
  \frac{d}{d\tau} f(\tau) \le - \frac{c_0}{8} \, |\tau|\, f(\tau) + \frac{C(\theta)}{|\tau|}\,g(\tau).
  \]
  Furthermore, setting ${\displaystyle F(\tau) := \int_{\tau-1}^{\tau} f(s)\, ds}$ and ${\displaystyle G(\tau) := \int_{\tau-1}^{\tau} g(s)\, ds}$, we have
  \begin{equation*}
    \begin{split}
      \frac{d}{d\tau} F(\tau) = f(\tau) - f(\tau-1) &= \int_{\tau-1}^{\tau} \frac{d}{ds} f(s)\, ds \\
      &\le \frac{c_0}{8}\,\int_{\tau-1}^{\tau} s f(s)\, ds + \int_{\tau-1}^{\tau} \frac{C(\theta)}{|s|} g(s)\, ds
    \end{split}
  \end{equation*}
  implying
  \[
  \frac{d}{d\tau} F(\tau) \le \frac{c_0}{16} \, \tau \, F(\tau) + \frac{C(\theta)}{|\tau|}G(\tau).
  \]
  This is equivalent to
  \[
  \frac{d}{d\tau} \bigl( e^{-c_0 \tau ^2/32} F(\tau) \bigr) \le \frac{C(\theta)}{|\tau|} e^{-c_0 \tau^2/32}\, G(\tau).
  \]
  Since $W_T$ is uniformly bounded for $\tau \leq \tau_0 \ll -1$, it follows that $f(\tau)$ and therefore also $F(\tau)$ are uniformly bounded functions for $\tau \leq \tau_0$.
  Hence, ${\displaystyle \lim_{\tau\to-\infty} e^{-c_0 \tau^2/32} F(\tau) = 0}$, so that from the last differential inequality we get
  \begin{equation*}
    \begin{split}
      e^{-c_0 |\tau|^2/32}\, F(\tau)
      &\le C\, \int_{-\infty}^{\tau} \frac{G(s)}{s^2} (|s|\, e^{-c_0 s^2/32} )\, ds \\
      &\le \frac{C}{|\tau|^2} \, \sup_{s\le\tau} G(s) \, \int_{-\infty}^{\tau} |s|\,  e^{-c_0 s^2/32} \, ds\\
      &\le \frac{C}{|\tau|^2} \sup_{s\le\tau} G(s) \, e^{-c_0 \tau^2/32}
    \end{split}
  \end{equation*}
  with $C=C(\theta,\delta)$.
  This yields
  \[
  \sup_{s\le \tau} F(s) \le \frac{C}{|\tau|^2}\, \sup_{s\le\tau} G(s),
  \]
  or equivalently,
  \[
  \|W_T\|_{2,\infty} \le \frac{C(\theta)}{|\tau_0|} \|W \chi_{[\theta,2\theta]}\|_{2,\infty},
  \]
  therefore concluding  the proof of Proposition \ref{prop-tip}.
\end{proof}

%%%%%%%%%%%%%%%%%%%%%%%%%%%%%%%%%%%%

\section{Proofs of Theorems  \ref{thm-main-main} and \ref{thm-main}}\label{sec-conclusion}

We will now combine  Propositions \ref{prop-cylindrical} and \ref{prop-tip} to conclude the proof of our main result 
Theorem \ref{thm-main}.
Our most general result, Theorem \ref{thm-main-main} will then readily follow by combining  Theorem \ref{thm-rot-symm} and Theorem \ref{thm-main}. 

\smallskip
%In the statement of Theorem \ref{thm-main} we claim the uniqueness of  any two Ancient Ovals  up to dilations and translations.
In fact as  have seen at the beginning of Section \ref{sec-regions} that 
% each solution gives rise to a three-parameter family of solutions given by \eqref{eq-two-parameters}.
Translating and dilating the original solution has an effect on the rescaled solution rotationally symmetric solution $u(y,\tau)$ as given in formula \eqref{eq-ualphabeta}.
Thus for any two   rotationally symmetric solutions  $u_1(y,\tau), u_2(y,\tau)$ 
% of \eqref{eq-u}, we set $
%  w^{\alpha\beta\gamma}(y,\tau) := u_1(y,\tau) - u_2^{\alpha\beta\gamma}(y,\tau)$ 
%which we shortly denote by $w(y,\tau) := u_1(y,\tau) - u_2(y,\tau)$ and where $u_2^{\alpha\beta\gamma}(y,\tau)$ is given by \eqref{eq-ualphabeta}.  
Let $u_1(y,\tau)$ and $u_2(y,\tau)$ be any  two solutions to equation \eqref{eq-u} as in the statement of Theorem \ref{thm-main} and let $u_2^{\alpha\beta\gamma}$ be defined by \eqref{eq-ualphabeta}.
Our goal is to find parameters $(\alpha,\beta,\gamma)$  so that the difference 
\[
w^{\alpha\beta\gamma}:= u_1-u_2^{\alpha\beta\gamma}\equiv 0.
\]

Proposition \ref{prop-tip} says that the weighted $L^2$-norm $\|W^{\alpha\beta\gamma} \|_{2,\infty}$ of the difference of our solutions $W^{\alpha\beta\gamma}(u,\tau):=Y_1(u,\tau) - Y_2^{\alpha\beta\gamma}(u,\tau)$ (after we switch the variables $y$ and $u$) in the whole tip region $\tip_\theta$ is controlled by   $\| W^{\alpha\beta\gamma} \, \chi_{[\theta,2\theta]}\|_{2,\infty}$,  where  $\chi_{[\theta,2\theta]}(u)$ is supported in the transition region 
between the cylindrical and tip regions and is   included in the cylindrical region $ \cyl_{\theta} = \{(y, \tau) : u_1(y,\tau) \ge \theta/2\}$.
Lemma \ref{prop-norm-equiv} below says  that the norms $\| W^{\alpha\beta\gamma} \, \chi_{D_{2\theta}}\|_{2,\infty}$ and $\| w^{\alpha\beta\gamma} \, \chi_{D_{2\theta}}\|_{\hilb,\infty}$ are equivalent for every number $\theta >0$  sufficiently small (recall the definition of $\| \cdot \|_{\hilb,\infty}$ in \eqref{eqn-normp0}-\eqref{eqn-normp}). 
Therefore combining  Propositions \ref{prop-cylindrical} and \ref{prop-tip} gives the crucial estimate \eqref{eqn-w1230} which will be shown in detail in Proposition \ref{prop-cor-main} below. 
This estimate says that the norm of the difference $w^{\alpha\beta\gamma}_{\cC}$ of our solutions when restricted in the cylindrical region  is dominated by the norm of its projection of $w^{\alpha\beta\gamma}_{\cC}$  onto the zero eigenspace of the operator $\cL$ (the linearization of our equation on  the limiting cylinder).  
However,  Proposition \ref{prop-cylindrical} holds under the assumption that the projection of $w_\cyl^{\alpha\beta\gamma}$ onto the positive eigenspace  of $\cL$ is zero, that  is  $\pr_+ w_\cyl(\tau_0)^{\alpha\beta\gamma} =0$.  
Recall  that the  zero eigenspace of $\cL$ is spanned by the function $\psi_2(y) = y^2 - 2$ and the positive eigenspace is spanned by the eigenvectors  $\psi_0(y) = 1$ (corresponding to eigenvalue $1$) and  $\psi_1(y) = y$ (corresponding to eigenvalue $1/2$). 

\smallskip  
After having established that the projection onto the zero eigenspace $a(\tau):= \langle w_\cyl^{\alpha\beta\gamma}, \psi_2 \rangle$ 
dominates in the $\| w^{\alpha\beta\gamma}_{\cC} \|_{\hilb,\infty}$, 
the conclusion of Theorem \ref{thm-main} will follow  by establishing an appropriate   differential inequality for $a(\tau)$, for $\tau \leq \tau_0\ll -1$ and also having that
$a(\tau_0) = \mathcal{P}_0 w_C^{\alpha\beta\gamma}(\tau_0) = 0$ at the same time.
The above discussion shows that it is essential for our proof to have 
\begin{equation}\label{eqn-abc}
  \pr_+ w_\cyl^{\alpha\beta\gamma}(\tau_0) = \pr_0 w_\cyl^{\alpha\beta\gamma}(\tau_0) = 0.
\end{equation}
We will next show that for every $\tau_0 \ll -1$ we  can  find  parameters $\alpha=\alpha(\tau_0), \beta=\beta(\tau_0)$ and  $\gamma=\gamma(\tau_0)$ such that \eqref{eqn-abc} holds 
and we will also   give their asymptotics relative to $\tau_0$.
Let us emphasize that we need to be able {\em  for every}  $\tau_0 \ll -1$ to find parameters $\alpha, \beta, \gamma$ so that  \eqref{eqn-abc} holds,
since up to  the final step of our proof  we have to keep adjusting    $\tau_0$ by taking it even more negative  so that our estimates hold (see Remark \ref{rem-choice-par} below). 

\smallskip 

For $v_i$ related to $u_i$ by $u_i = \sqrt{2(n-1)}(1+v_i)$, the corresponding dilations by $(\alpha,\beta,\gamma)$ are given by 
\[
v_i^{\alpha\beta\gamma}(y, \tau) = \sqrt{1+\beta\,  e^{\tau}} \Bigl\{ 1+ v_i\Bigl( \frac{y - \alpha \,  e^{\tau/2}} {\sqrt{1+\beta e^\tau} }, \tau+\gamma-\log\bigl(1+\beta e^\tau\bigr) \Bigr) \Bigr\} -1.
\]
Simply write $v$ for $v_1$ and $\bv$ for $v_2^{\alpha\beta\gamma}$.

Our asymptotics in Theorem \ref{thm-old} imply that each $v_i$ satisfies the following estimates in the cylindrical region $\cyl_{\theta}$:  for any $\epsilon_0>0$ and any number $M>0$ there is a $\tau_{\epsilon_0,M} < 0$ such that
\begin{equation}\label{eqn-good3}
  v_i(y, \tau) = - \frac{y^2-2} {4|\tau|} + \frac{\epsilon(y, \tau)} {|\tau|}, \qquad \mbox{for} \,\,\, 0\leq y \leq 2M,\, \tau\leq \tau_{\epsilon_0,M}
\end{equation}
where $\epsilon(y, \tau)$ is a generic function whose definition may change from line to line, but which always satisfies
\begin{equation}
  |\epsilon(y, \tau)| \leq \epsilon_0,  \qquad \mbox{for} \,\,\, 0\leq y \leq 2M,\, \tau\leq \tau_{\epsilon_0,M}.
\end{equation}
Furthermore, by choosing $\tau_{\epsilon_0,M} \ll -1$ we also have
\begin{equation}
  0\leq -v_i(y, \tau) \leq C \, \frac{y^2}{|\tau|} \qquad \text{in } \,\,\,\, \cyl_{\theta} \cap \{ |y| \geq M \}, \, \tau\leq \tau_{\epsilon_0,M}.
  \label{eq-v-quadratic-upper-bound}
\end{equation}

We will next estimate the first three components of the truncated difference $\varphi_\cyl(\bv-v)$,
\[
\Bigl\langle \psi_j , \, \varphi_\cyl\, ( \bv - v ) \Bigr\rangle \qquad (j=0, 1, 2)
\]
where $\varphi_\cyl$ is the cut-off function for the cylindrical region $ \cyl_{\theta}$ and we will show that the coefficients $\alpha, \beta$ and $\gamma$ can be chosen so as to make these components vanish.
Instead of working directly with $\alpha, \beta$ and $\gamma$ it will be more convenient to use
\begin{equation}
  \label{btheta}
  b= \sqrt{1+\beta e^\tau} -1, \qquad \Gamma = \frac{\gamma - \log(1+\beta e^\tau)}{\tau}, \qquad A = \alpha\,  e^{\tau/2}.
\end{equation}
Then
\begin{equation}
  \bv(y, \tau) = b + (1+b)\, v_2\Bigl( \frac{y-A} {1+b} , (1+\Gamma)\tau \Bigr)
\end{equation}

Our next goal is to show the following result. 

\begin{prop}\label{lem-rescaling-components-zero}
  There is a number $ \tau_* \ll -1   $ such that for all $\tau\leq \tau_*$ there exist $ b $, $ \Gamma $ and $A$ such that the difference $w^{\alpha\beta\gamma} :=u_1 - u_2^{\alpha\beta\gamma}$ satisfies
  \[
  \langle \psi_0, \varphi_\cyl \, w^{\alpha\beta\gamma} \rangle = \langle \psi_1, \varphi_\cyl \, w^{\alpha\beta\gamma} \rangle = \langle \psi_2, \varphi_\cyl \,w^{\alpha\beta\gamma} \rangle = 0.
  \]
  In addition, the parameters $\alpha, \beta$ and $\gamma$ can be chosen so that $b$, $\Gamma$ and $A$ defined in \eqref{btheta} satisfy
  \begin{equation}\label{eq-b-Theta-bound}
    b = o\Bigl( |\tau|^{-1} \Bigr), \qquad \Gamma = o(1)\qquad \mbox{and} \qquad |A| = o(1), \qquad \mbox{as}\,\, \tau \to -\infty.
  \end{equation}
  Equivalently, this means that the triple $(\alpha, \beta, \gamma)$ is admissible with respect to $\tau$, according to our Definition \ref{def-admissible}.
\end{prop}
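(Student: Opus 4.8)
\textbf{Proof strategy for Proposition \ref{lem-rescaling-components-zero}.}
The plan is to set up a fixed-point / implicit-function argument in the three parameters $(b,\Gamma,A)$. For each fixed time $\tau\ll-1$, define the map
\[
\Psi^\tau(b,\Gamma,A) = \Bigl(\langle\psi_0,\varphi_\cyl\,w\rangle,\ \langle\psi_1,\varphi_\cyl\,w\rangle,\ \langle\psi_2,\varphi_\cyl\,w\rangle\Bigr)(\tau),
\]
where $w = u_1 - u_2^{\alpha\beta\gamma}$ and $(\alpha,\beta,\gamma)$ is recovered from $(b,\Gamma,A)$ via \eqref{btheta}. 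First I would use the asymptotics \eqref{eqn-good3} for $v_1$ and $v_2$, together with the explicit formula $\bv(y,\tau) = b + (1+b)v_2\bigl(\frac{y-A}{1+b},(1+\Gamma)\tau\bigr)$, to compute $\varphi_\cyl(\bv-v)$ to leading order in $1/|\tau|$ and in the three parameters. The key observation is that the leading-order dependence of $\bv - v$ on $b$, $\Gamma$, $A$ reproduces — up to lower-order terms — exactly the three eigenfunctions $\psi_0=1$, $\psi_2 = y^2-2$, and $\psi_1 = y$ respectively: shifting by $A$ changes the quadratic profile $-\frac{y^2-2}{4|\tau|}$ by a term proportional to $y\,A/|\tau|$ (an odd/$\psi_1$ contribution), the dilation parameter $b$ contributes a constant ($\psi_0$) plus corrections, and the time-reparametrization $\Gamma$ rescales the factor $1/|\tau|$ multiplying $y^2-2$, hence contributes along $\psi_2$. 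Thus the linearization $D\Psi^\tau$ at $(b,\Gamma,A) = (0,0,0)$ is, after multiplying rows/columns by suitable powers of $|\tau|$, approximately a fixed invertible diagonal matrix plus an error that tends to $0$ as $\tau\to-\infty$.

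Concretely, I would introduce rescaled unknowns $\tilde b = |\tau|\,b$, $\tilde\Gamma = \Gamma$, $\tilde A = A$ (and correspondingly rescale the three components of $\Psi^\tau$ by appropriate powers of $|\tau|$), so that the rescaled map $\tilde\Psi^\tau$ has a limit as $\tau\to-\infty$ whose derivative at the origin is an explicit invertible $3\times 3$ matrix. One then solves $\tilde\Psi^\tau = 0$ by the quantitative inverse function theorem (or by a contraction-mapping argument on a small ball), uniformly for $\tau\le\tau_*$ with $\tau_*$ sufficiently negative; the solution $(\tilde b(\tau),\tilde\Gamma(\tau),\tilde A(\tau))$ exists, is unique in the ball, and tends to $0$ as $\tau\to-\infty$. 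Unwinding the rescaling gives $b = o(|\tau|^{-1})$, $\Gamma = o(1)$, $A = o(1)$, which via \eqref{btheta} translates to
\[
\alpha = A\,e^{-\tau/2} = o\bigl(e^{-\tau/2}\bigr),\qquad \beta = \frac{(1+b)^2-1}{e^\tau} = o\bigl(|\tau|^{-1}e^{-\tau}\bigr),\qquad \gamma = \Gamma\tau + \log(1+\beta e^\tau) = o(|\tau|),
\]
so in particular $(\alpha,\beta,\gamma)$ satisfies \eqref{eq-asymp-par1}, i.e.\ is admissible with respect to $\tau$ in the sense of Definition \ref{def-admissible}. (If a slightly sharper control is wanted, one can improve $o(\cdot)$ to $O(\epsilon\cdot)$ by choosing $\tau_*$ depending on $\epsilon$.)

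The main obstacle I expect is making the error estimates on $D\Psi^\tau$ genuinely uniform in $\tau\le\tau_*$ while the parameters vary over a small ball — in particular controlling the contributions of the tail region $|y|\ge M$ of the cylindrical region, where \eqref{eqn-good3} is replaced by the weaker quadratic bound \eqref{eq-v-quadratic-upper-bound}, and checking that the time-shift $(1+\Gamma)\tau$ in the argument of $v_2$ does not destroy the asymptotic expansion (this is why the $o(1)$ bound on $\Gamma$ must be bootstrapped rather than assumed). These tail and time-shift terms must be shown to be lower order relative to the diagonal entries of the rescaled linearization; this is where the Gaussian weight $e^{-y^2/4}$ in $\langle\cdot,\cdot\rangle$ and the apriori bounds from \cite{ADS} (Theorem \ref{thm-old}) do the work, since they suppress the region $|y|\gtrsim\sqrt{|\tau|}$ where the profile degenerates. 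Once the linearization is shown to be a uniformly invertible perturbation of an explicit constant matrix, the fixed-point step and the admissibility bounds are routine.
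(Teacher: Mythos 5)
Your fixed-point/inverse-function-theorem strategy is a genuinely different route from the paper's argument, and it is worth spelling out the trade-off. The paper first isolates the key estimate in Lemma~\ref{lem-rescaling-effect-on-components}: a quantitative bound showing that the three projections equal the explicit ``diagonal'' expressions $b-\tfrac{A^2}{4(\Gamma+1)|\tau|}$, $\tfrac{A}{2(\Gamma+1)|\tau|}$, $\tfrac{\Gamma}{4(\Gamma+1)|\tau|}$ up to an error of size $\eta/|\tau|$ that is uniform over a ball in parameter space. It then deduces the existence of a zero not by inverting a derivative but by a topological degree argument: after rescaling, $\Phi$ restricted to $\partial\cB$ is homotopic (through maps avoiding the origin) to the injective model map $(b,\Gamma,A)\mapsto(|\tau|b-\tfrac{A^2}{4(\Gamma+1)},\tfrac{A}{2(\Gamma+1)},\tfrac{\Gamma}{4(\Gamma+1)})$, so $\deg(\Phi,\cB,0)=1$. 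This requires only continuity of $\Phi$ in the parameters plus the boundary estimate, nothing more. Your IFT/contraction approach buys uniqueness of the zero in the small ball, which the degree argument does not give, but the price is precisely the ``main obstacle'' you flag: you need differentiability of $(b,\Gamma,A)\mapsto\Psi^\tau$ and uniform control of $D\Psi^\tau$ over the ball, and that in turn forces you to estimate things like $\partial_\Gamma\bv=(1+b)\,\tau\,(\partial_\tau v_2)\bigl(\tfrac{y-A}{1+b},(1+\Gamma)\tau\bigr)$ in the Gaussian-weighted norm, i.e.\ weighted bounds on $\partial_\tau v_2$ (equivalently on $\cL v_2$ plus the nonlinearity), together with second-order parameter sensitivity for the quantitative IFT modulus. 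None of this is supplied by Lemma~\ref{lem-rescaling-effect-on-components} or by the $C^0$ asymptotics \eqref{eqn-good3} and tail bound \eqref{eq-v-quadratic-upper-bound} alone. So your approach is not wrong, but as written it has a real gap: you would need to establish these parameter-derivative estimates (which is feasible using the a priori derivative bounds \eqref{eq-apriori-bounds} from \cite{ADS} and the evolution equation for $v_2$, but is extra work), whereas the paper deliberately chooses the degree-theoretic route so that only the $C^0$ boundary estimate is needed. The proposition does not claim uniqueness of $(\alpha,\beta,\gamma)$, so nothing is lost by taking the degree route.
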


The proof of the proposition will be based on the following estimate. 

\begin{lemma} \label{lem-rescaling-effect-on-components} For every $ \eta>0 $ there exist $ \tau_\eta <0 $ such that for all $ \tau\leq \tau_\eta $, and all $ b , \Gamma, A\in \R $ with
  \[
  |b| \leq \frac{1}{|\tau|}, \qquad |\Gamma|\leq \frac12, \qquad |A| \le 1
  \]
  one has
  \begin{equation}\label{eq-rescaling-effect-on-components}
    \begin{split}
      \Bigl| \langle\hat\psi_0, \varphi_\cyl(\bv-v)\rangle - b + \frac{A^2}{4(\Gamma+1)|\tau|}\Bigr|  &+  \Bigl| \langle\hat\psi_1, \varphi_\cyl(\bv-v)\rangle - \frac{A}{2|\tau|(\Gamma+1)} \Bigr| \\&+ \Bigl| \langle\hat\psi_2, \varphi_\cyl(\bv-v)\rangle - \frac{\Gamma} {4(\Gamma+1)|\tau|} \Bigr| \leq \frac{\eta}{|\tau|}
    \end{split}
  \end{equation}
  where $ \hat\psi_j = \psi_j/\langle \psi_j, \psi_j \rangle $.
\end{lemma}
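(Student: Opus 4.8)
\textbf{The plan for Lemma \ref{lem-rescaling-effect-on-components}.}
The idea is to compute the effect of the rescaling $(b,\Gamma,A)$ on $v_i$ to first order in the small parameters, using the asymptotics \eqref{eqn-good3}. Recall that $\bv(y,\tau) = b + (1+b) v_2\bigl(\tfrac{y-A}{1+b}, (1+\Gamma)\tau\bigr)$. First I would substitute \eqref{eqn-good3} for $v_2$ evaluated at the shifted and rescaled arguments: writing $\tilde y = (y-A)/(1+b)$ and $\tilde\tau = (1+\Gamma)\tau$, we have $v_2(\tilde y, \tilde\tau) = -\tfrac{\tilde y^2 - 2}{4|\tilde\tau|} + \tfrac{\epsilon(\tilde y,\tilde\tau)}{|\tilde\tau|}$ on the relevant $y$-range. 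Expanding $|\tilde\tau|^{-1} = |\tau|^{-1}(1+\Gamma)^{-1}$ and $\tilde y^2 = (y-A)^2/(1+b)^2$, and keeping in mind that $|b|\le |\tau|^{-1}$, $|\Gamma|\le \tfrac12$, $|A|\le 1$, one gets
\[
\bv(y,\tau) - v(y,\tau) = b + \frac{1}{4|\tau|(1+\Gamma)}\Bigl( 2 - \frac{(y-A)^2}{(1+b)^2} \Bigr)(1+b) + \frac{y^2-2}{4|\tau|} + \frac{\text{error}}{|\tau|},
\]
where the error is controlled by $\epsilon_0$ plus terms that are $o(1)$ as $\tau\to-\infty$ (coming from $b|\tau|\le 1$ and from the difference of the $\epsilon$-functions). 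Expanding the quadratic $(y-A)^2 = y^2 - 2Ay + A^2$ and collecting terms in $1$, $y$, $y^2$ (i.e.\ in $\psi_0,\psi_1,\psi_2$) gives, modulo $o(1/|\tau|)$,
\[
\bv - v \approx b + \frac{A^2}{4(1+\Gamma)|\tau|} - \frac{A}{2(1+\Gamma)|\tau|}\,\psi_1 - \frac{\Gamma}{4(1+\Gamma)|\tau|}\,\psi_2 \cdot(\text{const}) + \cdots,
\]
where I am being schematic; the precise coefficients are what appear in \eqref{eq-rescaling-effect-on-components}.

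\textbf{Handling the cut-off and the tails.}
The next step is to pair with $\hat\psi_j$ and deal with the cut-off $\varphi_\cyl$. Since $\varphi_\cyl \equiv 1$ on $\cyl_{2\theta}$, which in the rescaled variable $z = y/\sqrt{|\tau|}$ is an interval $|z| \le \sqrt{2 - \theta^2/(n-1)} + o(1)$, the region where $\varphi_\cyl \ne 1$ corresponds to $|y| \gtrsim c\sqrt{|\tau|}$, and there the Gaussian weight $e^{-y^2/4}$ is superexponentially small in $|\tau|$. Combined with the quadratic upper bound \eqref{eq-v-quadratic-upper-bound} for $v_i$ on this range, the contribution of $(1-\varphi_\cyl)(\bv - v)$ to each inner product $\langle\hat\psi_j, \cdot\rangle$ is $O(e^{-c|\tau|})$, hence $o(1/|\tau|)$. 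So I may replace $\varphi_\cyl(\bv-v)$ by $\bv - v$ (on all of $\R$) up to an $o(1/|\tau|)$ error — except I must be careful that the polynomial approximation \eqref{eqn-good3} is only valid on a bounded $y$-interval $[0, 2M]$; for $|y|$ between $M$ and $c\sqrt{|\tau|}$ I use the bound \eqref{eq-v-quadratic-upper-bound} directly, noting $\int_{|y|\ge M} |\psi_j(y)|\, \tfrac{y^2}{|\tau|}\, e^{-y^2/4}\, dy$ can be made $\le \eta/|\tau|$ by taking $M = M(\eta)$ large. Then, using the orthogonality $\langle \psi_j, \psi_k\rangle = 0$ for $j \ne k$ and the normalization $\hat\psi_j = \psi_j/\langle\psi_j,\psi_j\rangle$, each pairing picks out exactly the coefficient of $\psi_j$ in the expansion above, plus a term from the constant part $b + A^2/(4(1+\Gamma)|\tau|)$ landing in the $\psi_0$-component only. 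This yields \eqref{eq-rescaling-effect-on-components} with the right-hand sides being precisely the stated leading terms. The quantifier structure works out because the $\epsilon_0$ in \eqref{eqn-good3} can be taken as small as we like by choosing $\tau_\eta = \tau_{\epsilon_0, M(\eta)}$ sufficiently negative, after first fixing $M = M(\eta)$.

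\textbf{From the Lemma to Proposition \ref{lem-rescaling-components-zero}.}
Once the Lemma is in hand, the Proposition follows by a fixed-point / continuity argument. The Lemma says that the map
\[
(b, \Gamma, A) \longmapsto |\tau|\bigl(\langle\hat\psi_0, \varphi_\cyl w^{\alpha\beta\gamma}\rangle, \langle\hat\psi_1,\varphi_\cyl w^{\alpha\beta\gamma}\rangle, \langle\hat\psi_2, \varphi_\cyl w^{\alpha\beta\gamma}\rangle\bigr)
\]
is, after multiplying by $|\tau|$ and up to an error of size $\eta$ (uniform on the box $|b|\le 1/|\tau|$, $|\Gamma|\le 1/2$, $|A|\le 1$), equal to the explicit map $(b,\Gamma,A)\mapsto \bigl(|\tau| b + \tfrac{A^2}{4(\Gamma+1)}, \tfrac{A}{2(\Gamma+1)}, \tfrac{\Gamma}{4(\Gamma+1)}\bigr)$, whose Jacobian at the origin is invertible (it is lower-triangular-ish with nonzero diagonal: $\partial/\partial(|\tau|b) \to 1$, $\partial/\partial A \to 1/2$, $\partial/\partial \Gamma \to 1/4$). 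Here one must also track the dependence of $\psi_2$-normalization; since $\langle\psi_2,\psi_2\rangle$ is a fixed nonzero constant this only rescales coefficients. By the quantitative inverse function theorem (or a degree/Brouwer argument on a small ball), for $\tau$ sufficiently negative there is a solution $(b,\Gamma,A)$ making all three components vanish, and moreover this solution lies well inside the box, in fact with $|\tau| b, A, \Gamma$ all $\to 0$ as $\tau\to-\infty$ because the ``target'' is $0$ and the error $\eta$ can be sent to $0$. Translating back via \eqref{btheta} — $\beta = (\,(1+b)^2 - 1\,)e^{-\tau}$, $\alpha = A e^{-\tau/2}$, $\gamma = \Gamma\tau + \log(1+\beta e^\tau)$ — gives $\beta = o(|\tau|^{-1}) e^{-\tau} = o(e^{-\tau}/|\tau|)$, $\alpha = o(e^{-\tau/2})$ (in fact $o(e^{-\tau/2}/|\tau|)$ is what one needs, which requires the sharper $|A| = o(1/|\tau|)$ — obtainable since the $\psi_1$ equation forces $A$ to be of the same order as the other error terms, all $o(1/|\tau|)$ once $\eta$ is pushed down), and $\gamma = \Gamma\tau + o(1) = o(|\tau|)$. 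This is exactly admissibility in the sense of Definition \ref{def-admissible}. \textbf{The main obstacle} I anticipate is bookkeeping the error terms carefully enough to get the sharp rates \eqref{eq-b-Theta-bound} — in particular squeezing $|A|$ and $|b|$ down to $o(1/|\tau|)$ rather than merely $o(1)$ — which requires knowing that the ``$\epsilon(y,\tau)/|\tau|$'' error in \eqref{eqn-good3} really can be made $o(1/|\tau|)$ after the $\tau$-limit (not just bounded by $\epsilon_0/|\tau|$), and interplay with the fact that the three equations are coupled only in a triangular fashion; the $\psi_0$ equation then determines $b$ from $A, \Gamma$, and the smallness propagates.
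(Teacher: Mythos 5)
Your proposal is correct and follows essentially the same route as the paper's proof: substitute the expansion \eqref{eqn-good3} into $\bv(y,\tau) = b + (1+b)v_2\bigl(\tfrac{y-A}{1+b},(1+\Gamma)\tau\bigr)$, expand the quadratic and the $(1+\Gamma)^{-1}$ factor, read off the coefficients of $\psi_0, \psi_1, \psi_2$, and estimate the remainder by splitting into an inner region $|y|\le M$ (using \eqref{eqn-good3}) and an outer region $M \le |y| \lesssim \sqrt{|\tau|}$ (using \eqref{eq-v-quadratic-upper-bound} together with the Gaussian decay). Your one phrasing slip --- treating $(1-\varphi_\cyl)(\bv-v)$ as if $\bv-v$ were globally defined --- is harmless since you only ever integrate where $\varphi_\cyl\neq 0$; and your observation that the degree argument only gives $|A|=o(1)$ whereas \eqref{eq-asymp-par1} apparently demands $|A|\lesssim 1/|\tau|$ is a genuine subtlety, but it concerns the deduction of Proposition \ref{lem-rescaling-components-zero} from the Lemma rather than the Lemma itself, whose proof is sound.
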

The conditions on $b$, $\Gamma$ and $A$ are met if the original parameters $\alpha, \beta$ and $\gamma$ satisfy
\[
|\alpha\,  e^{\tau/2}| \le 1, \qquad   |\beta e^\tau|\leq \frac{C}{|\tau|}, \qquad |\gamma|\leq \tfrac{1}{3} |\tau|.
\]

\begin{proof}[Proof that Lemma~\ref{lem-rescaling-effect-on-components} implies Proposition~\ref{lem-rescaling-components-zero}]
  Let $\eta_*>0$ be given, and consider the disc
  \[
  \cB = \Bigl\{ (b, \Gamma, A) \mid |\tau|^2b^2 + \Gamma^2 + A^2 \leq \eta_*^2\Bigr\}.
  \]
  On this ball we define the map $ \Phi : \cB \to\R^3 $ given by
  \[
  \Phi(b, \Gamma) =
  \begin{pmatrix}
    |\tau|\langle \hat\psi_0, \varphi_\cyl(\bv-v) \rangle\\
    |\tau|\langle \hat\psi_1, \varphi_\cyl(\bv-v) \rangle\\
    |\tau| \langle \hat\psi_2, \varphi_\cyl(\bv-v) \rangle
  \end{pmatrix}.
  \]
  The map $ \Phi $ is continuous because the solution $ \bv $ depends continuously on the parameters $b, \Gamma , A$.
  
  It follows from \eqref{eq-rescaling-effect-on-components} that if $\eta \ll\eta_*$ is chosen small enough, and if $ \tau $ is restricted to $ \tau < \tau_\eta $, with $ \tau_\eta $ defined as in Lemma~\ref{lem-rescaling-effect-on-components}, then the map $ \Phi $ restricted to the boundary of the ball $ \cB $ is homotopic to the injective map 
  \[(b, \Gamma,A) \mapsto \left(|\tau|b - \frac{A^2}{4(\Gamma+1)}, \frac{A}{2(\Gamma+1)},\frac{\Gamma}{4(\Gamma+1)}\right),\]
  through maps from $ \partial\cB $ to $ \R^3\setminus\{0\} $.
  The map $ \Phi $ from the full ball to $\R^3$ therefore has degree one, and it follows that for some $ (b',\Gamma',A')\in\cB $ one has $ \Phi(b', \Gamma',A') = 0 $.
  The fact that $(b',\Gamma',A')$ that we have just found do indeed satisfy~\eqref{eq-b-Theta-bound} follows from the definition of the disc $ \cB $.
\end{proof}

\begin{proof}[Proof of Lemma~\ref{lem-rescaling-effect-on-components}]
  
  First we consider the outer region $\cyl_{\theta} \cap \{ |y| \geq M \}$ where \eqref{eq-v-quadratic-upper-bound} implies that
  \begin{equation}\label{eq-v-bv-difference}
    \big |   \varphi_\cyl \, (\bv  - v )  \big | \leq |b| + \tilde C \frac{y^2}{|\tau|} \leq \frac{1+\tilde Cy^2} {|\tau|},  \quad\text{for all } \cyl_{\theta} \cap \{ |y| \geq M \}.
  \end{equation}
  
  For the inner region $\cyl_{\theta} \cap \{ |y| \leq M \}$, \eqref{eqn-good3} implies that for all $\tau\leq 2\, \tau_{\epsilon_0,M} \ll -1$ one has
  \begin{align*}
    \bv(y,\tau) &= b - \frac{1}{(1+b)} \frac{(y-A)^2-2(1+b)^2}{4(1+\Gamma)|\tau|}
    + \frac{2\epsilon(y,\tau)}{(1+\Gamma)|\tau|}\\
    &= b + \frac{2b+b^2}{(1+b)} \frac{1}{2(1+\Gamma)|\tau|} - \frac{1}{(1+b)} \frac{y^2-2}{4(1+\Gamma)|\tau|} \\
    &\qquad +  \frac{2A y - A^2}{4|\tau|(\Gamma+1)(b+1)} + \frac{2\epsilon(y,\tau)}{(1+\Gamma)|\tau|}.
  \end{align*}
  If $v(y, \tau)$ is the other unrescaled  solution, then we have
  \begin{align*}
    &  \bv(y, \tau) - v(y,\tau)\\
    &= b + \frac{2b+b^2}{2(1+\Gamma)(1+b)} \frac{1}{|\tau|} + \Bigl \{ 1 - \frac{1}{(1+b)} \frac{1}{(1+\Gamma)} \Bigr \}\frac{y^2-2}{4|\tau|} + \frac{2Ay - A^2}{4|\tau|(\Gamma+1)(b+1)}
    + \frac{5\epsilon(y,\tau)}{|\tau|}\\
    &= b + \frac{2b+b^2}{2(1+\Gamma)(1+b)} \frac{1}{|\tau|} + \frac{b+\Gamma+b\Gamma}{(1+\Gamma)(1+b)} \frac{y^2-2}{4|\tau|} + \frac{2Ay - A^2}{4|\tau|(\Gamma+1)(b+1)} + \frac{5\epsilon(y,\tau)}{|\tau|}.
  \end{align*}
  We conclude that in the region $ |y|\leq M$ and for $ \tau\leq 2 \tau_{\epsilon_0,M}$ we have 
  \begin{equation*} 
    \bv(y, \tau) - v(y,\tau) = b - \frac{A^2}{4(\Gamma+1)|\tau|} + \frac{\Gamma}{\Gamma+1} \frac{(y^2-2)}{4|\tau|}  + \frac{Ay}{2|\tau|(\Gamma+1)} 
    + R(y,\tau)
  \end{equation*}
  where the remainder $R$ satisfies
  \begin{equation}\label{eq-R-in-cylindrical-bound}
  |R(y,\tau)| \leq C \, \Bigl( \frac{1+y^2} {|\tau|^2} + \frac{\epsilon(y,\tau)}{|\tau|} \Bigr), \quad \mbox{on} \,\, \cyl_\theta \cap \{ |y| \leq M \}\end{equation} with $ \epsilon(y,\tau) \leq \epsilon_0$ and $C$ is a universal constant that does not depend on $\tau$ or $M$.
  
  Combining  the last bound  which holds  on $|y| \leq M$ with our first  bound  \eqref{eq-v-bv-difference}  on  $|y|\leq M$
  yields  that there exists $\tau_* \ll -1$ such that for all $y\in \mathbb{R}$ and $ \tau\leq \tau_*$, we have 
  \begin{equation}
    \label{eq-diff-expression}
    \varphi_\cyl\, \big (\bv - v\big ) = b - \frac{A^2}{4(\Gamma+1)|\tau|}   + \frac{A\, y}{2|\tau|(\Gamma+1)} + \frac{\Gamma}{\Gamma+1} \, \frac{y^2-2}{4|\tau|} 
    + R(y,\tau)
  \end{equation}
  where the new error $R$ still satisfies \eqref{eq-R-in-cylindrical-bound} when $|y|\leq M$, and
  \begin{equation}
    \label{eq-R-estimate}
    |R(y, \tau)|\leq C \, \dfrac{1+y^2} {|\tau|}, \quad \mbox{on} \,\, \cyl_\theta \cap \{ |y| \geq M \}
  \end{equation}
  for a universal constant $C$.
  
  \subsubsection*{Components of the error}
  We now estimate the inner products $\langle \psi_j, \varphi_\cyl(\bv-v)\rangle$
  \begin{align*}
    \bigl\langle \psi_j, \varphi_\cyl(\bv-v)\bigr\rangle &= \langle\psi_j, 1\rangle \Bigl (b - \frac{A^2}{4(\Gamma+1)|\tau|}\Bigr ) + \langle \psi_j,  y\rangle \, \frac{A}{2(\Gamma+1)|\tau|}\\
    &\quad  + \langle\psi_j, y^2-2\rangle \frac{\Gamma} {4(\Gamma+1)|\tau|} + \langle\psi_j, R\rangle.
  \end{align*}
  In view of the fact that $\psi_0=1$, $\psi_1 = y$ and $\psi_2=y^2-2$, we have 
  $$
  \frac{\langle\psi_0, \varphi_\cyl(\bv-v)\rangle} {\langle\psi_0,\psi_0\rangle}
  = b - \frac{A^2}{4(\Gamma+1)|\tau|}+ \frac{\langle\psi_0, R\rangle} {\langle\psi_0,\psi_0\rangle} 
  $$
  and
  
  $$\frac{\langle\psi_1, \varphi_\cyl(\bv-v)\rangle} {\langle\psi_1,\psi_1\rangle}
  =  \frac{A}{2(\Gamma+1)|\tau|} + \frac{\langle\psi_1, R\rangle} {\langle\psi_1,\psi_1\rangle}, \quad 
  \frac{\langle\psi_2, \varphi_\cyl(\bv-v)\rangle} {\langle\psi_2,\psi_2\rangle} = \frac{\Gamma} {4(\Gamma+1)|\tau|} + \frac{\langle\psi_2, R\rangle} {\langle\psi_2,\psi_2\rangle}.$$
  %     
  %    \frac{\langle\psi_0, \varphi_\cyl(\bv-v)\rangle} {\langle\psi_0,\psi_0\rangle}
  %    &= b - \frac{A^2}{4(\Gamma+1)|\tau|}+ \frac{\langle\psi_0, R\rangle} {\langle\psi_0,\psi_0\rangle} \\
  %    \frac{\langle\psi_1, \varphi_\cyl(\bv-v)\rangle} {\langle\psi_1,\psi_1\rangle}
  %    &=  \frac{A}{2(\Gamma+1)|\tau|} + \frac{\langle\psi_1, R\rangle} {\langle\psi_1,\psi_1\rangle} \\
  %    \frac{\langle\psi_2, \varphi_\cyl(\bv-v)\rangle} {\langle\psi_2,\psi_2\rangle} &= \frac{\Gamma} {4(\Gamma+1)|\tau|} + \frac{\langle\psi_2, R\rangle} {\langle\psi_2,\psi_2\rangle}.
%  \end{align*}
\smallskip

We claim that  \eqref{eq-R-in-cylindrical-bound}-\eqref{eq-R-estimate} imply that for every $ \eta>0 $ there exist $ \tau_\eta < 0 $ and $ M_\eta >0 $ such that for all $ \tau \leq \tau_\eta $ and $ M\geq M_\eta $ one has
\begin{equation}\label{eqn-finalR}
  \Bigl|\langle \psi_j, R\rangle\Bigr| \leq \frac{\eta} {|\tau|}.
\end{equation}
Indeed, to prove the above claim we notice that all three inner products can be bounded by the integral
\[
\int_0^\infty (1+ y + y^2) \, |R(y, \tau)|\, e^{-y^2/4} \, dy.
\]
Split the integral into three parts, the first from $y=0$ to $y=M$:
\begin{align*}
  \int_0^M |R|\, (1+y+y^2) \, e^{-y^2/4}\, dy
  &\leq \frac{C}{|\tau|} \int_0^M (1+y+y^2) \Bigl\{ \frac{1}{|\tau|}(1+y^2) + \epsilon \Bigr\} e^{-y^2/4} dy\\
  &\leq \frac{C}{|\tau|^2} + \frac{C\epsilon}{|\tau|}.
\end{align*}
If we choose $|\tau_{\epsilon,M}|$ so that $-\tau_{\epsilon,M} < 1/\epsilon $ then
\[
\int_0^M |R| \, (1+y+y^2)\, e^{-y^2/4} \, dy \leq \frac{C\epsilon}{|\tau|} \leq \frac{\eta}{ 2|\tau|}
\]
by taking $\epsilon$ small enough.
For the remaining part, using \eqref{eq-R-estimate} we obtain
\begin{align*}
  \int_{|y|\geq M} |R| \, (1+y + y^2)\, e^{-y^2/4}\, dy
  &\leq C\int_{M}^\infty  (1+y^2)\,  \frac{1+y^2}{|\tau|} e^{-y^2/4} \, dy \\
  &\leq \frac{C}{|\tau|} \int_M^\infty (1+y^2)^2 e^{-y^2/4} \, dy\\
  &\leq \frac{C}{|\tau|} M^3e^{-M^2/4} \leq \frac{\eta}{2 |\tau|}
\end{align*}
by chooing $ M $ sufficiently large.
The lemma now   readily follows from \eqref{eq-diff-expression}, \eqref{eq-v-bv-difference} and \eqref{eqn-finalR}.  
\end{proof}

\begin{remark}[The choice of parameters $(\alpha,\beta, \gamma)$]\label{rem-choice-par}
  We can choose $\tau_0 \ll -1$ to be any small number so that $\tau_0 \le \tau_*$, where $\tau_*$ is as in Proposition \ref{lem-rescaling-components-zero} and so that all our uniform estimates in previous sections hold for $\tau \le \tau_0$.
  Note also that having Proposition \ref{lem-rescaling-components-zero} we can decrease $\tau_0$ if necessary and choose parameters $\alpha, \beta$ and $\gamma$ again so that we still have $\pr_+ w_\cyl(\tau_0) = \pr_0 w_\cyl(\tau_0) = 0$, without effecting our estimates.
  Hence, from now on we will be assuming that we have fixed parameters $\alpha, \beta$ and $\gamma$ at some time $\tau_0 \ll -1$, to have both projections zero at time $\tau_0$.
  As a consequence of Proposition \ref{lem-rescaling-components-zero} which shows  that the parameters $(\alpha, \beta, \gamma)$ are {\em admissible}  with respect to $\tau_0$, Remark \ref{rem-cylindrical} and Remark \ref{rem-tip}, all the estimates for $w = u_1- u_2^{\alpha\beta\gamma}$ will  then hold for all $\tau \le \tau_0$, {\em independently of our choice}  of 
  $(\alpha, \beta, \gamma)$.
\end{remark}

As we pointed out above,  we need to  show next  that the norms of the difference of our two solutions with respect to the weights defined in the cylindrical and the tip regions are equivalent in the intersection between the regions, the so called {\em transition} region. 

\begin{lemma}[Equivalence of the norms in the transition region]
  \label{prop-norm-equiv}
  Let $w, W$ denote the difference of the two solutions, $w:=u_1-u_2^{\alpha\beta\gamma}$ and $W:=Y_1-Y_2^{\alpha\beta\gamma}$ in the cylindrical and tip regions respectively.
  Then, for every $\theta > 0$ small there exist $\tau_0 \ll -1$ and uniform constants $c(\theta), C(\theta) > 0$, so that for $\tau\le \tau_0$, we have
  \begin{equation}\label{eqn-normequiv}
    c(\theta ) \, \| W \chi_{_{[\theta, 2\theta]}} \|_{2,\infty} \leq \| w \, \chi_{_{D_{2\theta}}} \|_{\hilb,\infty} \leq C(\theta) \, \| W \chi_{_{[\theta, 2\theta]}} \|_{2,\infty}
  \end{equation}
  where $D_{2\theta} := \{(y,\tau):\,\,  \theta \le u_1(y,\tau) \le 2\theta\}$.
\end{lemma}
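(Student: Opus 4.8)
\textbf{Proof proposal for Lemma~\ref{prop-norm-equiv}.}
The plan is to reduce the claimed equivalence of the two norms to a pointwise comparison, at each fixed time $\tau\le\tau_0$, of the integrands defining $\|w\,\chi_{D_{2\theta}}\|_{\hilb}$ and $\|W\,\chi_{[\theta,2\theta]}\|_{\tau}$, and then to propagate this comparison through the time averages that enter the definitions of $\|\cdot\|_{\hilb,\infty}$ and $\|\cdot\|_{2,\infty}$. First I would recall that on the overlap region $D_{2\theta}=\{\theta\le u_1\le 2\theta\}$ the two solutions $u_1,u_2^{\alpha\beta\gamma}$ and their inverse functions $Y_1,Y_2^{\alpha\beta\gamma}$ are related by $u=u_i(y,\tau)\iff y=Y_i(u,\tau)$, so that the difference $w(y,\tau)=u_1-u_2^{\alpha\beta\gamma}$ and the difference $W(u,\tau)=Y_1-Y_2^{\alpha\beta\gamma}$ are related, up to a harmless error, by the mean value theorem: at a point $y$ with $u_1(y,\tau)=u$ one has $w(y,\tau)=-u_{1y}(y,\tau)\,W(u,\tau)+(\text{higher order in }W)$, or more precisely $w = W\cdot(\text{a factor comparable to }|u_{1y}|)$ since $Y_{1u}=1/u_{1y}$ and $Y_{1u},Y_{2u}$ are comparable in $\tip_\theta$ by the last Corollary of Section~3 (equation~\eqref{eq-comp-der}). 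In the transition region the a priori estimates from \cite{ADS} (also stated in Theorem~\ref{thm-old} and used repeatedly in Section~\ref{sec-cylindrical}) give $c_1(\theta)\sqrt{|\tau|}\le |u_{1y}|^{-1}=|Y_{1u}|\le C_1(\theta)\sqrt{|\tau|}$, equivalently $|u_{1y}|$ is comparable to $1/\sqrt{|\tau|}$.

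Next I would carry out the change of variables in the integrals. Fixing $\tau$, write $\int_{D_{2\theta}} w^2\,e^{-y^2/4}\,dy$ and substitute $y=Y_1(u,\tau)$, $dy=Y_{1u}\,du$, to get $\int_\theta^{2\theta} w(Y_1(u,\tau),\tau)^2\,e^{-Y_1(u,\tau)^2/4}\,Y_{1u}\,du$. Using $w\approx -u_{1y}W = -W/Y_{1u}$ this becomes $\approx \int_\theta^{2\theta} W^2\,e^{-Y_1^2/4}\,Y_{1u}^{-1}\,du$. Now compare this to the tip norm integrand $W^2 e^{\mu(u,\tau)}$ restricted to $[\theta,2\theta]$: since $[\theta,2\theta]\subset\collar_{\theta,L}$ for $\tau_0\ll-1$, the weight there is exactly $\mu(u,\tau)=-\tfrac14 Y_1^2(u,\tau)$, so the exponential factors match precisely. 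The only discrepancy is the algebraic prefactor $Y_{1u}^{-1}$, which by the a priori bound is comparable to $\sqrt{|\tau|}$ on $[\theta,2\theta]$; hence at each fixed $\tau$,
\[
c(\theta)\,|\tau|^{1/2}\int_\theta^{2\theta} W^2 e^{\mu}\,du \;\le\; \int_{D_{2\theta}} w^2 e^{-y^2/4}\,dy \;\le\; C(\theta)\,|\tau|^{1/2}\int_\theta^{2\theta}W^2 e^{\mu}\,du .
\]
Taking square roots and then the $\sup_{\sigma\le\tau}$ of the time-averaged versions, the extra factor $|\tau|^{1/4}$ is precisely the weight built into the definition of $\|W\|_{2,\infty}$ (the $|\tau'|^{-1/4}$ there), so it cancels and we are left with the asserted two-sided bound $c(\theta)\|W\chi_{[\theta,2\theta]}\|_{2,\infty}\le\|w\,\chi_{D_{2\theta}}\|_{\hilb,\infty}\le C(\theta)\|W\chi_{[\theta,2\theta]}\|_{2,\infty}$. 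One has to be slightly careful that the $\sup$ and the time average commute with the pointwise-in-$\tau$ comparison, which is immediate because the comparison constants $c(\theta),C(\theta)$ are uniform in $\tau\le\tau_0$.

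The main technical obstacle, and the step I would spend the most care on, is controlling the error in the relation $w\approx -W/Y_{1u}$: strictly $w(Y_1(u,\tau),\tau)=u_1(Y_1(u,\tau),\tau)-u_2^{\alpha\beta\gamma}(Y_1(u,\tau),\tau)=u-u_2^{\alpha\beta\gamma}(Y_1(u,\tau),\tau)$, and since $u=u_2^{\alpha\beta\gamma}(Y_2^{\alpha\beta\gamma}(u,\tau),\tau)$ this equals $u_2^{\alpha\beta\gamma}(Y_2^{\alpha\beta\gamma}(u,\tau),\tau)-u_2^{\alpha\beta\gamma}(Y_1(u,\tau),\tau)$, which by the mean value theorem is $(u_2^{\alpha\beta\gamma})_y(\xi,\tau)\cdot(Y_2^{\alpha\beta\gamma}-Y_1)(u,\tau) = -(u_2^{\alpha\beta\gamma})_y(\xi,\tau)\,W(u,\tau)$ for some $\xi$ between $Y_1$ and $Y_2^{\alpha\beta\gamma}$. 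Thus $|w|$ is comparable to $|W|$ with comparison constant comparable to $|(u_2^{\alpha\beta\gamma})_y|$, and one must check that $|(u_2^{\alpha\beta\gamma})_y(\xi,\tau)|$ is comparable to $|u_{1y}(Y_1(u,\tau),\tau)|$ and hence to $1/\sqrt{|\tau|}$ uniformly on $[\theta,2\theta]$; this follows from \eqref{eq-comp-der}, the a priori gradient bounds, and the fact that $\xi$ stays in the transition region (because $Y_1$ and $Y_2^{\alpha\beta\gamma}$ are close there, as quantified in Remark~\ref{rem-tip}). Once this uniform comparability of the prefactors is established, the rest is the bookkeeping of the change of variables and of the time averages described above. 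I expect no genuine difficulty beyond keeping the $\theta$- and $\tau$-dependence of constants transparent and invoking admissibility of $(\alpha,\beta,\gamma)$ so that all estimates are independent of the parameters.
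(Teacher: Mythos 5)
Your approach is essentially the paper's own. The paper also converts the vertical distance $w=u_1-u_2^{\alpha\beta\gamma}$ into the horizontal distance $W=Y_1-Y_2^{\alpha\beta\gamma}$ by the mean value theorem (the ``secant slope''), bounds that slope on the enlarged transition set $A_{2\theta}:=D_{2\theta}\cup\{\theta\le u_2\le 2\theta\}$ by the intermediate-region gradient estimate $|u_{2y}|\sim|\tau|^{-1/2}$, changes variables $y=Y_1(u,\tau)$ so that the Gaussian weight $e^{-y^2/4}$ matches the collar weight $e^{\mu(u,\tau)}=e^{-Y_1^2/4}$ exactly, and keeps track of the Jacobian $|u_{1y}|\sim|\tau|^{-1/2}$. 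Your concern about where the mean-value point $\xi$ lands is handled in the paper by the observation $D_{2\theta}\subset\{\theta/2\le u_2\le 3\theta\}$, so the slope estimate applies there.

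There is, however, an exponent slip that you should fix, because taken at face value it invalidates the cancellation you invoke. You correctly record $|Y_{1u}|\sim\sqrt{|\tau|}$ (equivalently $|u_{1y}|\sim|\tau|^{-1/2}$) in the transition region, but then write that the prefactor $Y_{1u}^{-1}$ in the transformed integral is comparable to $\sqrt{|\tau|}$. Since $Y_{1u}^{-1}=u_{1y}$, that prefactor is comparable to $|\tau|^{-1/2}$. Tracking the powers, the secant-slope relation gives $w^2\sim|\tau|^{-1}W^2$ and the Jacobian contributes $|Y_{1u}|\sim|\tau|^{1/2}$, so
\[
\int_{D_{2\theta}} w^2\, e^{-y^2/4}\,dy \;\sim\; |\tau|^{-1/2}\int_\theta^{2\theta} W^2\, e^{\mu(u,\tau)}\,du ,
\]
i.e.\ $|\tau|^{-1/2}$ rather than $|\tau|^{+1/2}$ as in your display. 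This is not cosmetic: with your exponent one would get $\|w\,\chi_{D_{2\theta}}\|_{\hilb}\sim|\tau|^{+1/4}\,\|W\chi_{[\theta,2\theta]}\|_\tau$, and the $|\tau'|^{-1/4}$ weight in the definition of $\|\cdot\|_{2,\infty}$ would then \emph{not} cancel, leaving a factor $|\tau|^{1/2}$ between the two sides and losing one of the two inequalities in \eqref{eqn-normequiv}. With the corrected exponent you obtain $\|w\,\chi_{D_{2\theta}}\|_{\hilb}\sim|\tau|^{-1/4}\,\|W\chi_{[\theta,2\theta]}\|_\tau$, which is absorbed exactly by the $|\tau'|^{-1/4}$ weight in $\|\cdot\|_{2,\infty}$ --- this is precisely why that weight appears in the definition. (For reference, the paper's equation~\eqref{eqn-cv} seems to carry the same kind of typo --- the inequality $c_2(\theta)\sqrt{|\tau|}\le|(u_1)_y|$ should read $c_2(\theta)/\sqrt{|\tau|}\le|(u_1)_y|$, consistent with \eqref{eq-apriori-bounds} and the intermediate-region asymptotics --- but the paper's displayed integral comparisons that follow it are nevertheless the correct ones.)
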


\begin{proof}
  To simplify the notation we  put $u_2:= u_2^{\alpha\beta\gamma}$ and $Y_2:=Y_2^{\alpha\beta\gamma}$ in this proof.
  Define $A_{2\theta} := D_{2\theta}\cup\{ (y,\tau):\,\, \theta \le u_2(y,\tau) \le 2\theta\}$.
  The convexity of both our solutions $u_1$ and $u_2$ imply that
  \begin{equation}
    \label{eq-secant}
    \min_{A_{2\theta}}\,|(u_2)_y| \le \Bigl| \frac{u_1(y,\tau) - u_2(y,\tau)} {Y_1(u,\tau) - Y_2(u,\tau)}\Bigr| \le \max_{A_{2\theta}} |(u_2)_y|.
  \end{equation}
  This easily follows from
  \[
  \frac{|u_1(y,\tau) - u_2(y,\tau)|}{|Y_1(u,\tau) - Y_2(u,\tau)|} = \frac{|u_2(Y_1(u,\tau),\tau) - u_2(Y_2(u,\tau)|}{|Y_1(u,\tau) - Y_2(u,\tau)|} = |u_{2y}(\xi,\tau)|
  \]
  where $\xi$ is a point in between $Y_1(u,\tau)$ and $Y_2(u,\tau)$.
  
  The results in \cite{ADS} (see also Theorem \ref{thm-old} in the current paper) show that by the asymptotics in the intermediate region for $u_2$, we have
  \begin{equation}
    \label{eq-u2y}
    \frac{c_1(\theta)}{\sqrt{|\tau|}} \le |u_{2y}(y,\tau)| \le \frac{C_1(\theta)}{\sqrt{|\tau|}}, \qquad \mbox{for} \,\,\,\, y\in \{y \,\,\,|\,\,\, \theta \le u_2(y,\tau) \le 2\theta\}
  \end{equation}
  for uniform constants $c_1(\theta) > 0$ and $C_1(\theta) > 0$, independent of $\tau$ for $\tau \le \tau_0$.
  On the other hand, using that $u_2$ has the same asymptotics in the intermediate region as $u_1$, it is easy to see that for $\tau \le \tau_0 \ll -1$,
  \[
  D_{2\theta} \subset \{ (y,\tau):\,\,  \frac{\theta}{2} \le u_2(y,\tau) \le 3\theta\}
  \]
  and hence
  \[
  \frac{c_1(\theta)}{\sqrt{|\tau|}} \le |u_{2y}| \le \frac{C_1(\theta)}{\sqrt{|\tau|}}, \qquad \mbox{for} \,\,\,\, y\in D_{2\theta}.
  \]
  Combining this, \eqref{eq-u2y} and \eqref{eq-secant} yields
  \begin{equation}
    \label{eq-equiv-secant}
    \frac{c_1(\theta)}{\sqrt{|\tau|}} \le \frac{|w(y,\tau)|}{|W(u,\tau)|} \le \frac{C_1(\theta)}{\sqrt{|\tau|}}
  \end{equation}
  for all $y\in D_{2\theta}$ and $\tau \le \tau_0 \ll -1$.
  See Figure \ref{fig-conversion} on the next page. 
  
  By \eqref{eq-weight} we have $\mu(u,\tau) = - Y_1^2(u,\tau)/4$ for $u \in [\theta, 2\theta]$.
  Introducing the change of variables $y = Y_1(u,\tau)$ (or equivalently $u = u_1(y,\tau)$), the inequality \eqref{eq-equiv-secant} yields
  \[
  \begin{split}
    \int_{\theta}^{2\theta} W^2 \, e^{\mu(u,\tau)}\, du = \int_{\theta}^{2\theta} W^2 \, e^{- \frac{Y_1^2(u,\tau)}4}\, du &\le C(\theta) \sqrt{|\tau|} \int_{D_{2\theta}} w^2 e^{-\frac{y^2}{4}}\, dy
  \end{split}
  \]
  where we used that $du = (u_1)_y\, dy$ and that due to our asymptotics from \cite{ADS} in the intermediate region, we have
  \begin{equation}\label{eqn-cv}
    c_2(\theta) \sqrt{|\tau|} \le |(u_1)_y| \le \sqrt{|\tau|}\, C_2(\theta), \qquad \text{for } y\in D_{2\theta}.
  \end{equation}
  In conclusion
  \[
  \| W \chi_{_{[\theta, 2\theta]}} \|_{2,\infty} \leq C(\theta) \, \| w \, \chi_{_{D_{2\theta}}} \|_{2,\infty}
  \]
  which proves one of the inequalities in \eqref{eqn-normequiv}.

  \begin{figure}
    \includegraphics{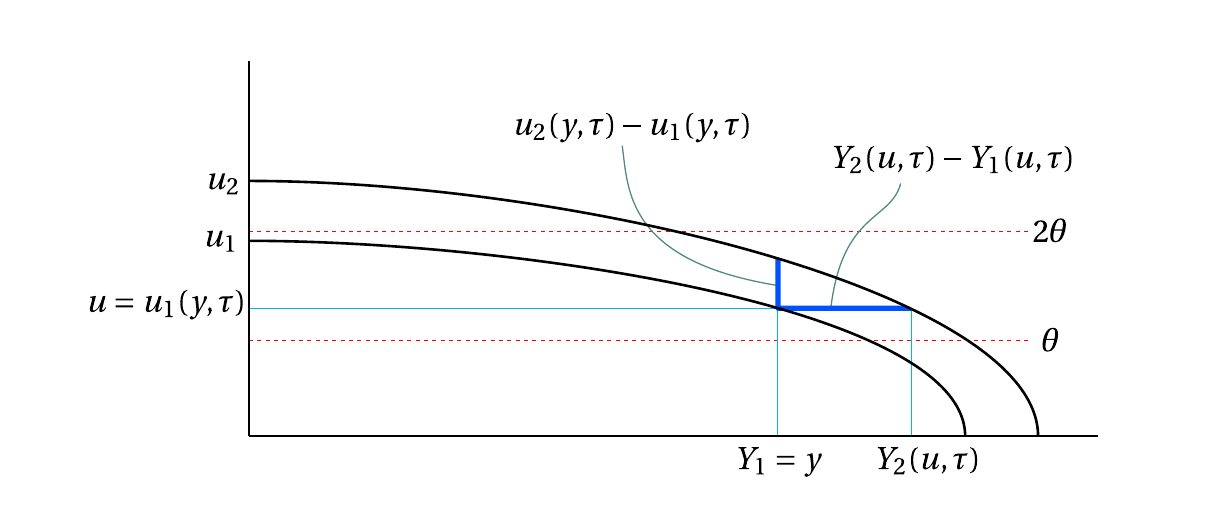}
    \caption{Converting the vertical distance $u_2(y, \tau)-u_1(y,\tau)$ to the horizontal distance $Y_2(u, \tau)-Y_1(u,\tau)$.
    Given a point $(y, u)$ on the graph of $u_1(\cdot,\tau)$ we define $Y_1 = y$, $u=u_1(y, \tau)$, $Y_2 = Y_2(u, \tau)$.
    By the Mean Value Theorem the ratio $\frac{u_2-u_1}{Y_2-Y_1}$ must equal the derivative $u_{2,y}(\tilde y,\tau)$ at some $\tilde y\in (Y_1, Y_2)$. }
    \label{fig-conversion}
  \end{figure}
  
  We will next show the other inequality in \eqref{eqn-normequiv}.
  To this end, we use again \eqref{eq-equiv-secant}, the change of variables $u=u_1(y,\tau)$ (or equivalently $y = Y_1(u,\tau)$) and \eqref{eqn-cv}, to obtain
  \begin{equation}
    \int_{D_{2\theta}} w^2 e^{-\frac{y^2}{4}}\, dy \le \frac{C(\theta)}{\sqrt{|\tau|}} \int_{\theta}^{2\theta} W^2 e^{-\frac{Y_1^2(u,\tau)}{4}}\, du = \frac{C(\theta)}{\sqrt{|\tau|}}\, \int_{\theta}^{2\theta} W^2 \, e^{\mu(u,\tau)}\, du
  \end{equation}
  from which the bound
  \[
  \| w \, \chi_{_{D_{2\theta}}} \|_{2,\infty} \leq C(\theta) \, \| W \chi_{_{[\theta, 2\theta]}} \|_{2,\infty}
  \]
  readily follows.
\end{proof}

We will next combine the main results in the previous two sections, Propositions \ref{prop-cylindrical} and \ref{prop-tip}, with the estimate \eqref{eqn-normequiv} above 
to establish our  {\em crucial estimate} which says that what actually dominates in the norm $\|w_\cyl\|_{\hv,\infty}$ is $\|\pr_0 w_\cyl\|_{\hv,\infty}$.  

\begin{prop}
  \label{prop-cor-main}
  For any $\epsilon >0$ there exists a $\tau_0 \ll -1$ so that we have
  \begin{equation}
    \label{eqn-w1230}
    \| \hat w_\cyl \|_{\hv,\infty} \leq \epsilon\, \|\pr_0 w_\cyl(\tau)\|_{\hv,\infty}.
  \end{equation}
\end{prop}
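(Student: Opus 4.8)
The plan is to combine the two regional estimates, Proposition~\ref{prop-cylindrical} (cylindrical region) and Proposition~\ref{prop-tip} (tip region), together with the norm equivalence Lemma~\ref{prop-norm-equiv} in the transition region, in order to absorb the error terms and conclude that $\hat w_\cyl = \pr_+w_\cyl + \pr_-w_\cyl$ is dominated by $\pr_0 w_\cyl$. The key point is that the parameters $(\alpha,\beta,\gamma)$ have been chosen according to Proposition~\ref{lem-rescaling-components-zero} so that $\pr_+ w_\cyl(\tau_0) = \pr_0 w_\cyl(\tau_0) = 0$; in particular $\pr_+ w_\cyl(\tau_0)=0$ is exactly the hypothesis needed to apply Proposition~\ref{prop-cylindrical}. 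Since $(\alpha,\beta,\gamma)$ is admissible with respect to $\tau_0$ (again by Proposition~\ref{lem-rescaling-components-zero}), all the estimates in the cylindrical and tip regions hold uniformly for $\tau \le \tau_0$, independently of the particular admissible choice of parameters, as recorded in Remark~\ref{rem-cylindrical} and Remark~\ref{rem-tip}.

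\textbf{The chain of estimates.} First I would invoke Proposition~\ref{prop-cylindrical}: for every $\epsilon>0$ and $\theta>0$ small there is a $\tau_0\ll -1$ with
\[
\|\hat w_\cyl\|_{\hv,\infty} \le \epsilon\bigl(\|w_\cyl\|_{\hv,\infty} + \|w\,\chi_{D_\theta}\|_{\hilb,\infty}\bigr).
\]
Next I would bound the second term on the right using the tip region. By Lemma~\ref{prop-norm-equiv}, the norm $\|w\,\chi_{D_{2\theta}}\|_{\hilb,\infty}$ is comparable (up to a constant $C(\theta)$) to $\|W\,\chi_{[\theta,2\theta]}\|_{2,\infty}$, and since $D_\theta\subset D_{2\theta}$ for $\theta$ small (this is where one uses that $D_\theta=\{\theta/2\le u_1\le\theta\}$ sits inside $\{\theta\le u_1\le 2\theta\}$ — actually one must be slightly careful here and pick the cut-off geometry so that the support of $\varphi_\cyl'$ lies in $D_{2\theta}$; I would phrase the cut-offs so this holds), we get $\|w\,\chi_{D_\theta}\|_{\hilb,\infty}\le C(\theta)\|W\,\chi_{[\theta,2\theta]}\|_{2,\infty}$. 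Then Proposition~\ref{prop-tip} gives $\|W_T\|_{2,\infty}\le \frac{C}{|\tau_0|}\|W\,\chi_{[\theta,2\theta]}\|_{2,\infty}$, but more to the point, since $\chi_{[\theta,2\theta]}$ is supported where $\varphi_T\equiv 1$ so that $W\,\chi_{[\theta,2\theta]} = W_T\,\chi_{[\theta,2\theta]}$, and by Lemma~\ref{prop-norm-equiv} again converting back, $\|W\,\chi_{[\theta,2\theta]}\|_{2,\infty}\le C(\theta)\|w_\cyl\|_{\hv,\infty}$ — here one uses that on the transition region $w\,\chi_{D_{2\theta}} = w_\cyl\,\chi_{D_{2\theta}}$ because $\varphi_\cyl\equiv 1$ on $\cyl_{2\theta}\supset D_{2\theta}$, and $\|w_\cyl\chi_{D_{2\theta}}\|_{\hilb,\infty}\le\|w_\cyl\|_{\hv,\infty}$. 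Combining, $\|w\,\chi_{D_\theta}\|_{\hilb,\infty}\le C(\theta)\|w_\cyl\|_{\hv,\infty}$, hence
\[
\|\hat w_\cyl\|_{\hv,\infty} \le \epsilon\,C(\theta)\,\|w_\cyl\|_{\hv,\infty}.
\]
Finally I would use the orthogonal decomposition $w_\cyl = \hat w_\cyl + \pr_0 w_\cyl$, which gives $\|w_\cyl\|_{\hv,\infty}\le \|\hat w_\cyl\|_{\hv,\infty} + \|\pr_0 w_\cyl\|_{\hv,\infty}$ (the projections are orthogonal in $\hilb$, $\hv$, and $\hv^*$ simultaneously, as noted after the eigenfunctions of $\cL$), so that
\[
\|\hat w_\cyl\|_{\hv,\infty} \le \epsilon\,C(\theta)\,\bigl(\|\hat w_\cyl\|_{\hv,\infty} + \|\pr_0 w_\cyl\|_{\hv,\infty}\bigr).
\]
Choosing $\epsilon$ small enough (with $\theta$ fixed first, then $\tau_0$) that $\epsilon\,C(\theta)\le \tfrac12$, the term $\epsilon C(\theta)\|\hat w_\cyl\|_{\hv,\infty}$ can be absorbed on the left, yielding $\|\hat w_\cyl\|_{\hv,\infty}\le 2\epsilon C(\theta)\|\pr_0 w_\cyl\|_{\hv,\infty}$. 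Relabelling $\epsilon$ gives exactly \eqref{eqn-w1230}.

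\textbf{Main obstacle.} The chief subtlety is the bookkeeping of the several cut-off functions ($\varphi_\cyl$, $\varphi_T$, and the characteristic functions $\chi_{D_\theta}$, $\chi_{D_{2\theta}}$, $\chi_{[\theta,2\theta]}$) and making sure that on each overlap one can freely replace $w$ by $w_\cyl$ or $W$ by $W_T$ without picking up uncontrolled terms; this requires that the supports of the derivatives of the cylindrical cut-off lie in $D_{2\theta}$, where the two weighted norms are equivalent by Lemma~\ref{prop-norm-equiv}. A second point requiring care is the order of quantifiers: one must first fix $\theta$ so that Lemma~\ref{prop-norm-equiv} and the geometry of the regions hold, which fixes the constants $C(\theta)$ appearing above, and only then choose $\epsilon$ small relative to $1/C(\theta)$ and finally $\tau_0\ll -1$ depending on $\epsilon$ and $\theta$; Proposition~\ref{prop-cylindrical} and Proposition~\ref{prop-tip} both allow such a choice. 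Once the constants are tracked in this order the absorption argument is routine. I would also remark that $\|\pr_0 w_\cyl\|_{\hv,\infty}$ is finite because $w_\cyl$, hence all its $\cL$-projections, are bounded in $\hv$ uniformly for $\tau\le\tau_0$, which follows from the a priori estimates of Section~3 together with Theorem~\ref{thm-old}.
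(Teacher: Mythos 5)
Your high-level strategy — combine Proposition~\ref{prop-cylindrical}, Proposition~\ref{prop-tip}, and the norm equivalence of Lemma~\ref{prop-norm-equiv}, then decompose $w_\cyl = \hat w_\cyl + \pr_0 w_\cyl$ and absorb — is the right one, and your final absorption step is correct and is, in fact, more explicit than what the paper writes.  But the chain of inequalities you use to bound $\|w\,\chi_{D_\theta}\|_{\hilb,\infty}$ has a real gap, and the gap is precisely where Proposition~\ref{prop-tip} must do work: you mention it but never actually use it, and the shortcuts you substitute rest on two incorrect claims.

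First, $D_\theta = \{\theta/2 \le u_1 \le \theta\}$ and $D_{2\theta} = \{\theta \le u_1 \le 2\theta\}$ are essentially disjoint annular regions, not nested; the inclusion $D_\theta\subset D_{2\theta}$ you invoke is false.  Second, $\chi_{[\theta,2\theta]}$ is \emph{not} supported where $\varphi_T\equiv 1$: by \eqref{eqn-cutofftip} the cut-off $\varphi_T$ equals $1$ only on $[0,\theta]$ and decreases to $0$ across $[\theta,2\theta]$, so $W\,\chi_{[\theta,2\theta]}\neq W_T\,\chi_{[\theta,2\theta]}$.  These two slips are not cosmetic; they are exactly what allows you to skip Proposition~\ref{prop-tip}, and without them your chain breaks.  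The structural obstruction is this: the cut-off error from $\varphi_\cyl$ is supported exactly where $\varphi_\cyl$ transitions, and on that set $w\neq w_\cyl$, so $\|w\,\chi_{D_\theta}\|$ cannot be crudely bounded by a multiple of $\|w_\cyl\|$ by any choice of cut-off geometry.  One must first convert into the tip coordinates ($\|w\,\chi_{D_\theta}\|_{\hilb,\infty}\lesssim C(\theta)\|W\chi_{[\theta/2,\theta]}\|_{2,\infty}\le C(\theta)\|W_T\|_{2,\infty}$, using that $\varphi_T\equiv 1$ on $[\theta/2,\theta]$), then invoke the tip estimate $\|W_T\|_{2,\infty}\le \tfrac{C}{|\tau_0|}\|W\chi_{[\theta,2\theta]}\|_{2,\infty}$, and only then convert back to $\|w\chi_{D_{2\theta}}\|_{\hilb,\infty}\le\|w_\cyl\|_{\hilb,\infty}$ on the region where $\varphi_\cyl\equiv 1$.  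The $1/|\tau_0|$ factor from Proposition~\ref{prop-tip} is also what absorbs the $\theta$-dependent constant $C(\theta)$ (by taking $|\tau_0|$ large); without it, the argument would rely on choosing $\epsilon$ small against $C(\theta)$, which happens to be arranged in the right order here, but the deeper point is that the step from the inner annulus to the outer one has no justification in your write-up.  Inserting the tip estimate at that step closes the gap and recovers the paper's proof.
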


\begin{proof}
  By Proposition \ref{lem-rescaling-components-zero} we know that for every $\tau_0 \ll -1$ sufficiently small we can choose parameters $\alpha, \beta$ and $\gamma$ which are admissible with respect to $\tau_0$ and such that $\pr_+ w_\cyl(\tau_0) = \pr_0 w_\cyl(\tau_0) = 0$.
  From now on we will always consider $w(y,\tau) = u_1(y,\tau) - u_2^{\alpha\beta\gamma}(y,\tau)$, for these chosen parameters $\alpha, \beta$ and $\gamma$.
  
  By Proposition \ref{prop-cylindrical}, for every $\epsilon > 0$, there exists a $\tau_0 \ll -1$ so that
  \[
  \|\hat{w}_C\|_{\hv,\infty} < \frac{\epsilon}{3} ( \|w_\cyl\|_{\nu,\infty} + \|w \chi_{D_{\theta}\|_{\hilb,\infty}})
  \]
  where $D_{\theta} = \{y \,\,\,|\,\,\, {\theta}/{2} \le u_1(y,\tau) \le \theta\}$.
  Furthermore, by Lemma \ref{prop-norm-equiv}, by decreasing $\tau_0$ if necessary we ensure that the following holds
  \begin{equation}
    \label{eq-dominates1}
    \begin{split}
      \|\hat{w}_C\|_{\hv,\infty} &< \frac{\epsilon}{3} (\|w_\cyl\|_{\hv,\infty} + C(\theta) \|W\chi_{[\theta/2,\theta]}\|_{2,\infty}) \\
      &< \frac{\epsilon}{3}\, (\|w_\cyl\|_{\hv,\infty} + C(\theta)\,  \|W_T\|_{2,\infty})
    \end{split}
  \end{equation}
  where $\chi_{[\theta/2,\theta]}$ is the characteristic function of interval $u\in [\theta/2,\theta]$ and where we used the property of the cut off function $\varphi_T$ that $\varphi_T \equiv 1$ for $u\in [\theta/2,\theta]$.
  By Proposition \ref{prop-tip}, there exist $0 < \theta \ll 1$ and $\tau_0 \ll -1$ so that
  \[
  \|W_T\|_{2,\infty} < \frac{C(\theta)}{\sqrt{|\tau_0|}} \, \|W \chi_{[\theta,2\theta]}\|_{2,\infty}.
  \]
  By Lemma \ref{prop-norm-equiv} we have
  \[
  \|W_T\|_{2,\infty} \le \frac{C(\theta)}{\sqrt{|\tau_0|}} \, \|w \, \chi_{D_{2\theta}}\|_{\hilb,\infty} \le \frac{C(\theta)}{\sqrt{|\tau_0|}}\, \|w_\cyl\|_{\hilb,\infty}
  \]
  where we also use that $\varphi_\cyl \equiv 1$ on $D_{2\theta}$.
  Combining this with \eqref{eq-dominates1} yields
  \[
  \|\hat{w}_C\|_{\nu,\infty} < \epsilon\Bigl( \|w_\cyl\|_{\nu,\infty} + \frac{C(\theta)}{\sqrt{|\tau_0|}}\, \|w_\cyl\|_{\hilb,\infty}\Bigr) < \frac{2\epsilon}{3} \|w_\cyl\|_{\hv,\infty}
  \]
  by choosing $|\tau_0|$ sufficiently large  relative to $C(\theta)$.
  By choosing $\epsilon$ small,  the last estimate yields \eqref{eqn-w1230} finishing the proof of the proposition.
  
\end{proof}

\begin{proof}[Proof of the Main Theorem \ref{thm-main}]
  Recall that $w^{\alpha\beta\gamma}(y,\tau) = u_1(y,\tau) - u_2^{\alpha\beta\gamma}(y,\tau)$, which we shortly denote by $w(y,\tau) = u_1(y,\tau) - u_2(y,\tau)$, where $u_2^{\alpha\beta\gamma}(y,\tau)$ is given by \eqref{eq-ualphabeta}.
  Proposition \ref{lem-rescaling-components-zero} tells us that for every $\tau_0 \ll -1$ sufficiently small we can choose parameters $\alpha, \beta$ and $\gamma$ which are admissible with respect to $\tau_0$ and such that $\pr_+ w_\cyl(\tau_0) = \pr_0 w_\cyl(\tau_0) = 0$.
  For a given $\tau_0$ sufficiently small we fix parameters $\alpha, \beta$ and $\gamma$ so that above holds.
  Due to admissibility of the parameters all our estimates hold for the difference $w^{\alpha\beta\gamma}(y,\tau)$, for $\tau \le \tau_0 \ll -1$, independently of $\alpha, \beta$ and $ \gamma$.
  Our goal is to show that for that choice of parameters $w(y,\tau) \equiv 0$.
  
  Following the notation from previous sections we have
  \[
  \frac{\partial}{\partial \tau} w_\cyl = \mathcal{L}[w_\cyl] + \cE[w_\cyl] + \bar{\cE}[w,\varphi_\cyl]
  \]
  with $w_\cyl = \hat{w}_\cyl + a(\tau) \, \psi_2$, where $a(\tau) = \langle w_\cyl, \psi_2\rangle$.
  Projecting the above equation on the eigenspace generated by $\psi_2$ while using that $\langle \mathcal{L}[w_\cyl] , \psi_2 \rangle =0$ we obtain
  \[
  \frac{d}{d\tau}a(\tau) = \langle \cE[w_\cyl] + \bar{\cE}[w,\varphi_\cyl] , \psi_2 \rangle.
  \]
  Since ${\displaystyle \frac{\langle \psi_2^2,\psi_2\rangle}{\| \psi_2\|^2} = 8}$ we can write the above equation as
  \[
  \frac{d}{d\tau}a(\tau) = \frac{2a(\tau)}{|\tau|} + F(\tau)
  \]
  where
  \begin{equation}
    \label{eq-F-tau}
    \begin{split}
      F(\tau) &:= \frac{\langle \cE[w_\cyl]  + \bar{\cE}[w,\varphi_\cyl] - \frac{a(\tau)}{4|\tau|} \psi_2^2, \psi_2\rangle}{\|\psi_2\|^2} \\
      &= \frac{\langle\bar{\cE}[w,\varphi_\cyl], \psi_2\rangle}{\|\psi_2\|^2} + \frac{\langle \cE[w_\cyl] - \frac{a(\tau)}{4|\tau|} \psi_2^2, \psi_2\rangle}{\|\psi_2\|^2}.
    \end{split}
  \end{equation}
  Furthermore, solving the above ordinary differential equation for $a(\tau)$ yields
  \[
  a(\tau) = \frac{C}{\tau^2} - \frac{\int_{\tau}^{\tau_0} F(s) s^2\, ds}{\tau^2}.
  \]
  By Remark \ref{rem-choice-par} we may assume $\alpha(\tau_0) = 0$ and hence, $C = 0$, which implies
  \begin{equation}
    \label{eq-alpha-CC}
    |a(\tau)| = \frac{|\int_{\tau}^{\tau_0} F(s) s^2\, ds|}{\tau^2}.
  \end{equation}
  Define $\|a\|_{\hilb,\infty}(\tau) = \sup_{s\le \tau} \Bigl(\int_{s-1}^{s} |a(\zeta)|^2\, d \zeta \Bigr)^{\frac12}$.
  Since $\pr_0 w_\cyl(\cdot,\tau) = a(\tau)\, \psi_2(\cdot)$, we have
  \[
  \|\pr_0 w_\cyl\|_{\hv,\infty}(\tau) =\|a\|_{\hilb,\infty}(\tau) \, \|\psi_2\|_{\hv}.
  \]
  Denote by $\|a\|_{\hilb,\infty} :=\|a\|_{\hilb,\infty}(\tau_0)$.
  Note that
  \[
  \Bigl| \int_{\tau}^{\tau_0} F(s)\, s^2\, ds\Bigr| \le \sum_{j=[\tau]-1}^{\tau_0} \Bigl|\int_j^{j+1} s^2 F(s)\, ds\Bigr| \le C\, \sum_{j=[\tau]-1}^{\tau_0} j^2 \Bigl|\int_j^{j+1} F(s)\, ds\Bigr|
  \]
  where with no loss of generality we may assume $\tau_0$ is an integer.
  Next we need the following claim.
  
  \begin{claim}
    \label{claim-Fs}
    For every $\epsilon > 0$ there exists a $\tau_0$ so that
    \[\Bigl|\int_{\tau-1}^{\tau} F(s)\, ds\Bigr| \le
    \frac{\epsilon}{|\tau|}\,\|a\|_{\hilb,\infty}
    \]
    for $\tau \le \tau_0$.
  \end{claim}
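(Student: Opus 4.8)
The plan is to split $F(\tau)$ exactly as in \eqref{eq-F-tau} into a \emph{cut-off error} $\|\psi_2\|^{-2}\langle\bar\cE[w,\varphi_\cyl],\psi_2\rangle$ and a \emph{nonlinear error} $\|\psi_2\|^{-2}\langle\cE[w_\cyl]-\tfrac{a(\tau)}{4|\tau|}\psi_2^2,\psi_2\rangle$, and to bound the $[\tau-1,\tau]$--average of each, separately, by $\tfrac{\epsilon}{2|\tau|}\|a\|_{\hilb,\infty}$ once $\tau_0$ is negative enough. I will repeatedly use two facts: for $\tau\le\tau_0$ each of the norms $\|\cdot\|_{\hilb,\infty}(\tau),\ \|\cdot\|_{\hv,\infty}(\tau),\ \|\cdot\|_{2,\infty}(\tau)$ is bounded by its value at $\tau_0$; and, by Proposition \ref{prop-cor-main} together with $w_\cyl=\hat w_\cyl+\pr_0 w_\cyl$,
\[
\|w_\cyl\|_{\hv,\infty}\le(1+\epsilon)\,\|\pr_0 w_\cyl\|_{\hv,\infty}=(1+\epsilon)\,\|\psi_2\|_\hv\,\|a\|_{\hilb,\infty}\le C\,\|a\|_{\hilb,\infty},
\]
so that every occurrence of $\|w_\cyl\|_{\hilb,\infty}$ or $\|w_\cyl\|_{\hv,\infty}$ (and, via interior parabolic estimates, of $w_\cyl$ and its derivatives on bounded sets) can be replaced by $C\|a\|_{\hilb,\infty}$.

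For the cut-off error, since $\bar\cE[w,\varphi_\cyl]$ is tested only against functions in $\hv$ and $\psi_2\in\hv$, Cauchy--Schwarz in time and Lemma \ref{lem-error-bar} give
\[
\Bigl|\int_{\tau-1}^{\tau}\langle\bar\cE[w,\varphi_\cyl],\psi_2\rangle\,ds\Bigr|\le\|\psi_2\|_\hv\Bigl(\int_{\tau-1}^{\tau}\|\bar\cE[w,\varphi_\cyl]\|_{\hv^*}^2\,ds\Bigr)^{1/2}\le\frac{\bar C(\theta)}{\sqrt{|\tau|}}\,\|w\,\chi_{D_\theta}\|_{\hilb,\infty}(\tau).
\]
I would then bound $\|w\,\chi_{D_\theta}\|_{\hilb,\infty}$ by running the estimates of Section \ref{sec-tip} in a loop: a variant of Lemma \ref{prop-norm-equiv} on the overlap $\{\theta/2\le u_1\le\theta\}$ (where $\varphi_T\equiv1$) converts $\|w\,\chi_{D_\theta}\|_{\hilb,\infty}$ to $\le C(\theta)\,\|W_T\|_{2,\infty}$; Proposition \ref{prop-tip} replaces this by $\tfrac{C(\theta)}{|\tau|}\,\|W\,\chi_{[\theta,2\theta]}\|_{2,\infty}$; Lemma \ref{prop-norm-equiv} converts back to $\tfrac{C(\theta)}{|\tau|}\,\|w\,\chi_{D_{2\theta}}\|_{\hilb,\infty}\le\tfrac{C(\theta)}{|\tau|}\,\|w_\cyl\|_{\hilb,\infty}$ (using $\varphi_\cyl\equiv1$ on $D_{2\theta}$); and finally $\|w_\cyl\|_{\hilb,\infty}\le\|w_\cyl\|_{\hv,\infty}\le C\|a\|_{\hilb,\infty}$ as above. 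Hence the cut-off part of $\int_{\tau-1}^{\tau}F\,ds$ is $\le\tfrac{C(\theta)}{|\tau|^{3/2}}\,\|a\|_{\hilb,\infty}\le\tfrac{\epsilon}{2|\tau|}\,\|a\|_{\hilb,\infty}$ for $|\tau_0|$ large.

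For the nonlinear error the subtraction of $\tfrac{a(\tau)}{4|\tau|}\psi_2^2$ is essential. From the interior asymptotics \eqref{eqn-good3} (and \eqref{eq-v-quadratic-upper-bound} outside a fixed window $|y|\le M$) one gets $u_1u_2=2(n-1)\bigl(1-\tfrac{y^2-2}{2|\tau|}+\tfrac{\varepsilon(y,\tau)}{|\tau|}\bigr)$, hence
\[
\frac{2(n-1)-u_1u_2}{2u_1u_2}=\frac{\psi_2(y)}{4|\tau|}+\frac{\varepsilon(y,\tau)}{|\tau|},\qquad |\varepsilon(y,\tau)|\le\varepsilon_0\ \text{ on }\ |y|\le M,
\]
while $\bigl|\tfrac{2(n-1)-u_1u_2}{2u_1u_2}\bigr|\le\tfrac{C(\theta)(1+y^2)}{|\tau|}$ throughout $\cyl_\theta$. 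Writing $w_\cyl=\hat w_\cyl+a(\tau)\psi_2$, the product $\tfrac{\psi_2}{4|\tau|}\cdot a(\tau)\psi_2$ is exactly the subtracted term, so the zero-order part of $\cE[w_\cyl]-\tfrac{a(\tau)}{4|\tau|}\psi_2^2$ reduces to $\tfrac{\psi_2}{4|\tau|}\hat w_\cyl+\tfrac{\varepsilon}{|\tau|}w_\cyl$. For $\langle\tfrac{\psi_2}{4|\tau|}\hat w_\cyl,\psi_2\rangle=\tfrac1{4|\tau|}\langle\hat w_\cyl,\psi_2^2\rangle$ one expands $\psi_2^2$ in the eigenbasis and uses $\hat w_\cyl\perp\psi_2$ to get $|\langle\hat w_\cyl,\psi_2^2\rangle|\le C\|\hat w_\cyl\|_\hilb$, so after integrating and invoking Proposition \ref{prop-cor-main} this term contributes $\le\tfrac{C\epsilon}{|\tau|}\|a\|_{\hilb,\infty}$. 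For $\langle\tfrac{\varepsilon}{|\tau|}w_\cyl,\psi_2\rangle$, and for the two gradient terms of \eqref{eq-E10} whose coefficients are $O(|\tau|^{-1})$ by \eqref{eq-apriori-bounds} and $O(y^2|\tau|^{-2})$ on $|y|\le M$ by interior regularity applied to \eqref{eqn-good3}, one splits the $y$-integral at $|y|=M$: on $|y|\le M$ the smallness of the coefficient (for $M$ fixed, $\tau$ very negative) together with $\|w_\cyl\|_{\hv,\infty}\le C\|a\|_{\hilb,\infty}$ gives a contribution $\le\tfrac{\epsilon}{|\tau|}\|a\|_{\hilb,\infty}$; on $|y|\ge M$ the Gaussian weight $e^{-y^2/4}$ supplies an arbitrarily small constant which, combined with Cauchy--Schwarz, the $O(y^2|\tau|^{-1})$ bound on the zero-order coefficient, and $\|w_\cyl\|_{\hilb,\infty}\le C\|a\|_{\hilb,\infty}$, again yields $\le\tfrac{\epsilon}{|\tau|}\|a\|_{\hilb,\infty}$; and the tail on $|y|\ge M$ of the subtracted term $\tfrac{a(\tau)}{4|\tau|}\psi_2^2$ is handled the same way using $\int_{\tau-1}^{\tau}|a(s)|\,ds\le\|a\|_{\hilb,\infty}$. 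Adding the three contributions proves the claim.

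The step I expect to be the main obstacle is the uniform bookkeeping of the powers of $|\tau|$ across the loop in the second paragraph: one must ensure that the cut-off error really carries a full $|\tau|^{-3/2}$ (so that the constant in front of $\|a\|_{\hilb,\infty}$ \emph{decreases} relative to the target $|\tau|^{-1}$ as $\tau\to-\infty$, not only at $\tau=\tau_0$), which forces the use of Propositions \ref{prop-tip} and \ref{prop-cor-main} at the running time $\tau$ with the sharp $|\tau|^{-1}$ gain rather than only in their $|\tau_0|^{-1}$ form; and, for the nonlinear error, that after the algebraic cancellation $\cE[w_\cyl]-\tfrac{a(\tau)}{4|\tau|}\psi_2^2$ the remainder genuinely pairs with $\psi_2$ to give an $o(|\tau|^{-1})$ quantity, which hinges on the refined interior expansion \eqref{eqn-good3} and on the orthogonality $\hat w_\cyl\perp\psi_2$. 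The remaining estimates are routine.
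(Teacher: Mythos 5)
Your split of $F$ into a cut-off error and a nonlinear error, your algebraic cancellation $\bigl(\tfrac{\psi_2}{4|\tau|}+\tfrac{\varepsilon}{|\tau|}\bigr)(\hat w_\cyl+a\psi_2)-\tfrac{a}{4|\tau|}\psi_2^2=\tfrac{\psi_2}{4|\tau|}\hat w_\cyl+\tfrac{\varepsilon}{|\tau|}w_\cyl$, and your handling of the nonlinear error by splitting the $y$-integral at $|y|=M$ all match the paper's proof in spirit; the paper uses the $L^2$ asymptotics \eqref{eq-L2-asymp0} where you use the pointwise ones, but these are interchangeable here. One omission: the term $\tfrac{u_{1y}^2}{1+u_{1y}^2}(w_\cyl)_{yy}$ carries a \emph{second} derivative of $w_\cyl$, which is not controlled by $\|w_\cyl\|_\hv$; you must first integrate by parts (as in \eqref{eq-term4-bar}) to throw one derivative onto $\psi_2$, the coefficient, or the Gaussian weight. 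Your grouping of both nonconstant-coefficient terms as ``gradient terms'' glosses over this.

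The genuinely different — and weaker — step is your treatment of the cut-off error. You correctly identified it as the obstacle, but your fix (run the tip estimates again to harvest one more factor of $|\tau|^{-1}$ at the running time $\tau$) requires a running-$\tau$ version of Proposition \ref{prop-tip} that the paper never states, and your account of why the loop carries a full extra $|\tau|^{-1/2}$ beyond the target is an unverified hope, as you acknowledge. The paper avoids the loop entirely. The observation is that $\bar\cE[w,\varphi_\cyl]$ is supported where $|y|\sim\sqrt{2|\tau|}$ (the transition region $D_\theta$ in the intermediate zone), and on that set the Gaussian weight $e^{-y^2/4}$ is already $\sim e^{-|\tau|/2}$. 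Testing against $\psi_2$ therefore costs only $\|\psi_2\bar\chi\|_\hv\lesssim e^{-|\tau|/4}$, where $\bar\chi$ is a cut-off equal to one on $\supp\bar\cE$ (see \eqref{eq-barE-est}). Thus $\bigl|\langle\bar\cE,\psi_2\rangle\bigr|\le\|\bar\cE\|_{\hv^*}\,\|\psi_2\bar\chi\|_\hv$ is \emph{exponentially} small, and after converting $\|w\chi_{D_\theta}\|_{\hilb,\infty}$ to $C\|a\|_{\hilb,\infty}$ via the same static ($\tau_0$-pinned) reduction already used in Proposition \ref{prop-cor-main}, the bound $\le\frac{\epsilon}{|\tau|}\|a\|_{\hilb,\infty}$ is immediate without any running-$\tau$ refinement of the tip estimates. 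This is the idea you were missing; with it, the rest of your argument goes through.
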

  
  Assume for the moment that the Claim holds.
  Then,
  \begin{equation*}
    \begin{split}
      \Bigl|\int_{\tau}^{\tau_0} F(s) \, s^2\, ds\Bigr| &\le \sum_{j=[\tau]}^{\tau_0} \int_{j-1}^j s^2 F(s)\, ds \le \epsilon\, \|a\|_{\hilb,\infty}\,\sum_{j=[\tau]-1}^{\tau_0} |j|\,   \\
      &\le \epsilon\,\|\alpha\|_{\hilb,\infty} \, \sum_{j=[\tau]-1}^{\tau_0} | j |   \\
      &\le \epsilon\, |\tau|^2\,\|a\|_{\hilb,\infty}.
    \end{split}
  \end{equation*}
  Combining this with \eqref{eq-alpha-CC}, where $\epsilon \le 1/2$, yields
  \[
  |a(\tau)| \le \frac12\|a\|_{\hilb,\infty}, \qquad \mbox{for all} \,\,\, \tau\le \tau_0.
  \]
  This implies
  \[
  \|a\|_{\hilb,\infty} \le \frac12 \, \|a\|_{2,\infty}
  \]
  and hence $\|a\|_{\hilb,\infty} = 0$, which further gives
  \[
  \|\pr_0 w_\cyl\|_{\hv,\infty} = 0.
  \]
  Finally, \eqref{eqn-w1230} implies $\hat{w}_C \equiv 0$ and hence, $w_\cyl \equiv 0$ for $\tau \le \tau_0$.
  By \eqref{eqn-normequiv} and the fact that $\varphi_\cyl \equiv 1$ on $D_{2\theta}$ we have $W\chi_{[\theta,2\theta]} \equiv 0$ for $\tau \le \tau_0$.
  Proposition \ref{prop-tip} then yields that  $W_T \equiv 0$ for $\tau\le \tau_0$.
  All these imply $u_1(y,\tau) \equiv u_2^{\alpha\beta\gamma}(y,\tau)$, for $\tau\le \tau_0$.
  By forward uniqueness of solutions to the mean curvature flow (or equivalently to cylindrical equation \eqref{eq-u}), we have $u_1 \equiv u_2^{\alpha\beta\gamma}$, and hence $
  M_1 \equiv M_2^{\alpha\beta\gamma}.$
  This  concludes the proof of the main Theorem \ref{thm-main}.
  
  \smallskip 
  To complete the proof of Theorem \ref{thm-main} we still need to prove Claim \ref{claim-Fs}, what we do below.
  
  \begin{proof}[Proof of Claim \ref{claim-Fs}]
    Throughout the proof we will use the estimate
    \begin{equation}
      \label{eq-par-int}
      \|w_\cyl\|_{\hv,\infty} \leq C \, \| a\|_{\hilb,\infty}, \qquad \mbox{for} \,\, \tau_0 \ll -1
    \end{equation}
    which follows from Proposition \ref{prop-cor-main}.
    By the proof of the same Proposition we also have
    \[
    \|w \, \chi_{D_{\theta}}\|_{\hilb,\infty} < \frac{C(\theta)}{\sqrt{|\tau_0|}}\, \|w_\cyl\|_{\hilb,\infty}, \qquad \mbox{for} \,\, \tau_0 \ll -1.
    \]
    Also throughout the proof we will use the a'priori  estimates on the solutions $u_i$ shown in our previous work \cite{ADS} which continue to hold here  without the assumption of $O(1)$ symmetry, 
    as we discuss  in Theorem \ref{thm-O1} of our current paper.  
    
    \smallskip
    
    From the definition of $\bar{\cE}[w,\varphi_\cyl]$ given in \eqref{eq-bar-E} and the definition of cut off function $\varphi_\cyl$, we see that the support of $\cE[w,\varphi_\cyl]$ is contained in
    \[
    \Bigl(\sqrt{2 - \frac{\theta^2}{n-1}} - \epsilon_1\Bigr)\, \sqrt{|\tau|} \le |y| \le \Bigl(\sqrt{2 - \frac{\theta^2}{4(n-1)}} + \epsilon_1\Bigr)\, \sqrt{|\tau|}
    \]
    where $\epsilon_1$ is so tiny that $\sqrt{2 - \frac{\theta^2}{4(n-1)}} + \epsilon_1 < \sqrt{2}$.
    Also by the \emph{a priori} estimates proved in \cite{ADS} we have \begin{equation}
      \label{eq-der-bounds10}
      |u_y| + |u_{yy}| \le \frac{C(\theta)}{\sqrt{|\tau|}}, \qquad \mbox{for} \,\, |y| \leq \big (\sqrt{2 - \frac{\theta^2}{4(n-1)}} + \epsilon_1\big ) \sqrt{|\tau|}.
    \end{equation}
    Furthermore, Lemma 5.14 in \cite{ADS} shows that our ancient solutions $u_i$, $i\in \{1,2\}$ satisfy
    \begin{equation}
      \label{eq-L2-asymp0}
      \begin{split}
        \Bigl \|u_i - \sqrt{2(n-1)} + \frac{\sqrt{2(n-1)}}{4|\tau|}\, \psi_2 \Bigr\| &= o(|\tau|^{-1}), \\
        \Bigl\|\Bigl(u_i + \frac{\sqrt{2(n-1)}}{4|\tau|}\, \psi_2\Bigr)_y\Bigr\| &= o(|\tau|^{-1}).
      \end{split}
    \end{equation}
    In particular, this implies
    \begin{equation}
      \label{eq-L2-asymp1}
      % \begin{split}
        \Bigl\|u_i - \sqrt{2(n-1)} \Bigr\| = O(|\tau|^{-1}) \qquad \mbox{and} \qquad \Bigl\|(u_i) _y\Bigr\| = O(|\tau|^{-1}).
      % \end{split}
    \end{equation}
    
    We start by estimating the first term on the right hand side in \eqref{eq-F-tau}.
    Using Lemma \ref{lem-error1-est} we conclude
    \begin{equation}
      \label{eq-barE-est}
      \begin{split}
        |\langle \bar{\cE}[w,\varphi_\cyl], \psi_2\rangle| \le \|\bar{\cE}[w,\varphi_\cyl]\|_{\hv^*} \|\psi_2 \, \bar {\chi} \|_{\hv} < \epsilon \, \|w_\cyl\|_{\hv} \, e^{-|\tau|/4}.
      \end{split}
    \end{equation}
    where $\bar {\chi} $ denotes a smooth function with a support in $|y| \ge (\sqrt{2-\theta^2/(4(n-1))} - 2\epsilon_1)\, \sqrt{|\tau|}$, being equal to one for $|y| \ge (\sqrt{2-\theta^2/(4(n-1))} - \epsilon_1)\, \sqrt{|\tau|}$.
    This implies for every $\epsilon > 0$ we can find a $\tau_0 \ll -1$ so that for $\tau \le \tau_0$ we have
    \[
    \Bigl|\int_{\tau-1}^{\tau} \langle \bar{\cE}[w,\varphi_\cyl], \psi_2\rangle\, ds \Bigr| \leq \frac{\epsilon\|a\|_{\hilb,\infty}}{|\tau|}
    \]
    where we used \eqref{eq-par-int}.
    
    We focus next on the second term on the right hand side in \eqref{eq-F-tau}.
    Lets write $w_\cyl = \hat{w}_\cyl + a(\tau) \psi_2$.
    Recall that
    \begin{equation}
      \label{eq-E-recall}
      \cE[w_\cyl] = \frac{2(n-1) - u_1u_2}{2u_1u_2}w_\cyl -\frac{u_{1y}^2}{1+u_{1y}^2} (w_\cyl)_{yy} -\frac{(u_{1y}+u_{2y})u_{2yy}}{(1+u_{1y}^2)(1+u_{2y}^2)} (w_\cyl)_y.
    \end{equation}
    Then, for the first term on the right hand side of \eqref{eq-E-recall} we get
    \begin{equation}
      \label{eq-first-to-estimate}
      \begin{split}
        & \Bigl| \Bigl\langle \frac{2(n-1) - u_1 u_2}{2u_1 u_2}\, w_\cyl - \frac{a(\tau)}{4|\tau|}\, \psi_2^2, \psi_2\Bigr\rangle\Bigr|  \le \\
        &\le \Bigl| \Bigl\langle \frac{2(n-1) - u_1 u_2}{2 u_1 u_2}\, \hat{w}_C, \psi_2\Bigr\rangle\Bigr| + |a(\tau)|\, \Bigl|\Bigl\langle \frac{2(n-1) - u_1 u_2}{2 u_1 u_2} - \frac{1}{4|\tau|}\, \psi_2, \psi_2^2\Bigr\rangle\Bigr|.
      \end{split}
    \end{equation}
    To estimate the first term on the right hand side in \eqref{eq-first-to-estimate}, we write
    \begin{equation}
      \label{eq-help-help100}
      \begin{split}
        & \Bigl|\Bigl\langle \frac{2(n-1) - u_1 u_2}{2 u_1 u_2}\, \hat{w}_C, \psi_2\Bigr\rangle\Bigr| \le
        \Bigl|\Bigl\langle \frac{(\sqrt{2(n-1)} - u_1)(\sqrt{2(n-1)} + u_1)}{2 u_1 u_2} \hat{w}_C, \psi_2\Bigr\rangle\Bigr| + \\
        &+ \Bigl|\Bigl\langle \frac{u_1 - \sqrt{2(n-1)}}{2 u_2}\, \hat{w}_C, \psi_2\Bigr\rangle\Bigr| + \Bigl|\Bigl\langle \frac{\sqrt{2(n-1)} - u_2}{2 u_2}\, \hat{w}_C, \psi_2\Bigr\rangle\Bigr|.
      \end{split}
    \end{equation}
    Note that ${\displaystyle u_i \ge {\theta}/{2}}$ on the support of $\hat{w}_C$ and hence the arguments for estimating either of the terms on the right hand side in \eqref{eq-help-help100}  are analogous to estimating the second term in \eqref{eq-help-help100}.
    Using Lemma \ref{lem-Poincare}, Proposition \ref{prop-cor-main} and \eqref{eq-L2-asymp1} we get that for every $\epsilon > 0$ there exists a $\tau_0 \ll -1$ so that for $\tau \le \tau_0$ we have
    \[
    \begin{split}
      \Bigl|\Bigl\langle &\frac{u_1 - \sqrt{2(n-1)}}{2u_2}\, \hat{w}_C, \psi_2\Bigr\rangle\Bigr| \\
      &\le C(\theta)\, \Bigl(\int \hat{w}_C^2 |\psi_2| e^{-y^2/4}\, dy\Bigr)^{1/2} \, \Bigl(\int (\sqrt{2(n-1)} - u_1)^2 |\psi_2| e^{-y^2/4}\, dy\Bigr)^{1/2}\\
      &\le C(\theta) \|\hat{w}_C\|_{\hv} \, \|\sqrt{2(n-2)} - u_1\|_{\hv} \\
      &< \frac{\epsilon}{|\tau|}\,\|a\|_{\hilb,\infty}
    \end{split}
    \]
    implying
    \begin{equation}
      \label{eq-term1-bar}
      \Bigl|\int_{\tau-1}^{\tau} \Bigl\langle \frac{2(n-1) - u_1 u_2}{2u_1u_2}\, \hat{w}_C, \psi_2\Bigr\rangle\, ds\Bigr| < \frac{\epsilon}{|\tau|}\,\|a\|_{\hilb,\infty}.
    \end{equation}
    Lets now estimate the second term on the right hand side in \eqref{eq-first-to-estimate}.
    Writing $u_i = \sqrt{2(n-1)} (1 + v_i)$, we get
    \begin{equation}
      \label{eq-noname111}
      \begin{split}
        \Bigl\langle & \frac{2(n-1) - u_1 u_2}{2 u_1 u_2} - \frac{1}{4|\tau|} \psi_2, \psi_2^2\Bigr\rangle \\
        &= - \Bigl\langle \frac{v_1 + v_2 + v_1 v_2}{2(1 + v_1) (1 + v_2)} + \frac{1}{4|\tau|}\, \psi_2, \psi_2^2\Bigr\rangle \\
        &= -\frac 12 \Bigl\langle \frac{v_1}{(1+ v_1)(1+v_2)} + \frac{\psi_2}{4|\tau|}, \psi_2^2\Bigr\rangle - \frac12 \Bigl\langle \frac{v_2}{1 + v_2} + \frac{\psi_2}{4|\tau|}, \psi_2^2\Bigr\rangle.
      \end{split}
    \end{equation}
    The two terms on the right hand side in above equation can be estimated in the same way so we will demonstrate how to estimate the second one.
    Using \eqref{eq-L2-asymp0}, \eqref{eq-L2-asymp1}, the bound \eqref{eq-v-quadratic-upper-bound} and H\"older's inequality we get that for every $\epsilon > 0$ there exist $K$ large enough and $\tau_0 \ll -1$ so that for $\tau \le \tau_0$ we have
    \[
    \begin{split}
      \Bigl\langle \frac{v_2}{1+v_2} + \frac{\psi_2}{4|\tau|}, \psi_2^2\Bigr\rangle
      &= \langle v_2 + \frac{\psi_2}{4|\tau|}, \psi_2^2\rangle - \langle \frac{v_2^2}{1+v_2}, \psi_2^2\rangle  \\
      &\le C\, \Bigl\| v_2 + \frac{\psi_2}{4|\tau|}\Bigr\| + C\int_{\mathbb{R}} v_2^2\,  y^4\,  e^{-\frac{y^2}{4}}\, dy \\
      &\le \frac{o(1)}{|\tau|} + \Bigl(\int_{\mathbb{R}} v_2^2 \, e^{-\frac{y^2}{4}}\, dy\Bigr)^{\frac12} \, \Bigl(\int_{\mathbb{R}} v_2^2 \, y^8 \, e^{-\frac{y^2}{4}}\, dy\Bigr)^{\frac12} \\
      &\le \frac{o(1)}{|\tau|} + \frac{C}{|\tau|} \left( \Bigl(\int_{|y| \le K} v_2^2 e^{-\frac{y^2}{4}}\, dy\Bigr)^{\frac12} + \Bigl(\int_{|y| \ge K} y^{10} e^{-\frac{y^2}{4}}\, dy\Bigr)^{\frac12} \right) \\
      &< \frac{\epsilon}{|\tau|}.
    \end{split}
    \]
    To justify the last inequality note that for a given $\epsilon > 0$ we can find $K$ large enough so that ${\displaystyle \Bigl(\int_{|y|\ge K} y^{10} e^{-\frac{y^2}{4}}\, dy\Bigr)^{\frac12} < \frac{\epsilon}{6C}}$.
    On the other hand, using our asymptotics result proven in \cite{ADS}, for a chosen $K$ we can find a $\tau_0 \ll -1$, so that for $\tau \le \tau_0$ we have ${\displaystyle |v_i| < \frac{\epsilon}{6C\sqrt{K}}}$.
    Finally, we conclude that for every $\epsilon > 0$ there exists a $\tau_0 \ll -1$, so that for all $\tau \le \tau_0$,
    \begin{equation}
      \label{eq-term2-bar}
      \Bigl|\int_{\tau-1}^{\tau} \Bigl\langle \frac{2(n-1) - u_1 u_2}{2 u_1 u_2} - \frac{\psi_2}{4|\tau|}, \psi_2^2\Bigr\rangle \, ds\Bigr| < \frac{\epsilon}{2 |\tau|}\,\|a\|_{\hilb,\infty}.
    \end{equation}
    Since the first term on the right hand side in \eqref{eq-noname111} can be estimated in a similar manner, we conclude that this inequality holds.
    
    It remains now to estimate the second and third terms in the error term \eqref{eq-E-recall}, which involve first and second order derivative bounds for our solutions $u_i$.
    We claim that for every $K$ there exist $\tau_0 \ll -1$ and a uniform constant $C$ so that
    \begin{equation}
      \label{eq-der-bounds11}
      |(u_i)_y| + |(u_i)_{yy}| \le \frac{C}{|\tau|}, \qquad \mbox{for} \,\,\,\, |y| \le K, \,\, \tau \leq \tau_0, \,\,\, \quad i=1,2.
    \end{equation}
    This follows by standard derivative estimates applied to the equation satisfied by each of the  $v_i$, $i=1,2$  and the $L^\infty$  bound  $|v_i| \leq \frac{C}{|\tau|}$, 
    which holds on $|y| \le 2K, \, \tau \leq \tau_0
    \ll -1$. 
    
    Let us use \eqref{eq-der-bounds11} to estimate the projection involving the third term in \eqref{eq-E-recall}: for every $\epsilon > 0$, there exists a $\tau_0 \ll -1$ so that for 
    $\tau \le \tau_0$
    \[
    \begin{split}
      \Bigl|\Bigl\langle
      &\frac{({u_1}_y + {u_2}_y) u_{2yy}}{(1+u_{1y}^2)\, (1+u_{2y}^2)}\, (w_\cyl)_y, \psi_2\Bigr\rangle\Bigr| \\
      &\le C \,   \int_{|y| \le K} (|u_{1y}| + |u_{2y}|)\, |u_{2yy}|\,  |(w_\cyl)_y| \, (y^2 +1 ) \, e^{-\frac{y^2}{4}}\, dy  \\
      &\qquad  +C\,  \int_{|y| \ge K } (|u_{1y}| + |u_{2y}|)\,  |u_{2yy}|\,  |(w_\cyl)_y| \,y^2 \, e^{-\frac{y^2}{4}}\, dy\\
      &\le \frac{C}{|\tau|^2}\, \|w_\cyl\|_{\hv} + \frac{C}{|\tau|}\, \|w_\cyl\|_{\hv} \, \Bigl(\int_{|y| \ge K} \,  y^4 \, e^{-\frac{y^2}{4}}\, dy \Bigr)^{\frac12} \\
      &< \frac{\epsilon}{|\tau|}\, \|w_\cyl\|_{\hv}
    \end{split}
    \]
    where we used H\"older's inequality, estimate \eqref{eq-der-bounds10} in the region $\{|y| \ge K\}\cap \supp w_\cyl\}$ and estimate \eqref{eq-der-bounds11} in the region $\{|y| \le K\}$.
    This implies that for every $\epsilon > 0$ there exists a $\tau_0 \ll -1$ so that for
    \begin{equation}
      \label{eq-term3-bar}
      \Bigl|\int_{\tau-1}^{\tau} \Bigl\langle \frac{(u_1y + u_{2y}) u_{2yy}}{(1+u_{1y}^2)(1+u_{2y}^2)}\, (w_\cyl)_y, \psi_2\Bigr\rangle\Bigr| < \frac{\epsilon}{|\tau|}\, \|w_\cyl\|_{\hv,\infty} < \frac{\epsilon}{|\tau|}\,\|a\|_{\hilb,\infty}.
    \end{equation}
    
    Finally, to estimate the projection involving the second term in \eqref{eq-E-recall}, we note that integration by parts yields
    \begin{equation}
      \label{eq-term4-bar}
      \begin{split}
        \Bigl\langle
        & \frac{u_{1y}^2}{1 + u_{1y}^2} (w_\cyl)_{yy}, \psi_2\Bigr\rangle \\
        &= -2\int_{\mathbb{R}} \frac{u_{1yy} u_{1y}}{1 + u_{1y}^2}\, (w_\cyl)_y \, \psi_2 \, e^{-\frac{y^2}{4}}\, dy
        +  2\int_{\mathbb{R}} \frac{u_{1y}^3 u_{1yy}}{(1 + u_{1y}^2)^2}\,  (w_\cyl)_y \, \psi_2 \, e^{-\frac{y^2}{4}}\, dy  \\
        &\quad - \int_{\mathbb{R}} \frac{ u_{1y}^2}{1 + u_{1y}^2 }\, (w_\cyl)_{y} \, (\psi_2)_y \, e^{-\frac{y^2}{4}}\, dy + \frac12 \int_{\mathbb{R}} \frac{u_{1y}^2 }{1+u_{1y}^2} (w_\cyl)_y \psi_2 \, y \, e^{-\frac{y^2}{4}}\, dy.
      \end{split}
    \end{equation}
    It is easy to see that all terms on the right hand side in \eqref{eq-term4-bar} can be estimated very similarly as in \eqref{eq-term3-bar}.
    Hence, for every $\epsilon > 0$ there exists a $\tau_0$ so that for all $\tau \le \tau_0$ we have
    \begin{equation}
      \label{eq-term5-bar}
      \Bigl|\int_{\tau-1}^{\tau} \Bigl\langle \frac{u_{1y}^2}{1 + u_{1y}^2} (w_\cyl)_{yy}, \psi_2\Bigr\rangle\Bigr| < \frac{\epsilon}{|\tau|}\,\|a\|_{\hilb,\infty}.
    \end{equation}
    Combining \eqref{eq-F-tau}, \eqref{eq-barE-est}, \eqref{eq-E-recall}, \eqref{eq-term1-bar}, \eqref{eq-term2-bar}, \eqref{eq-term3-bar}, \eqref{eq-term4-bar} and \eqref{eq-term5-bar} concludes the Claim \ref{claim-Fs}.
    
  \end{proof}
  
  The proof of our theorem is now also complete. 
\end{proof}

\section{Appendix - Reflection symmetry}
In this appendix we will justify why the conclusions of Theorem \ref{thm-old} proved in \cite{ADS} under the assumption on $O(1)\times O(n)$ symmetry hold in the presence of $O(n)$-symmetry only.
More precisely we will show the following result.

\begin{theorem}
  \label{thm-O1}
  If $M_t$ is an Ancient Oval that is  rotationally symmetric,  then the conclusions of Theorem \ref{thm-old} hold.
\end{theorem}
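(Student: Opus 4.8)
\textbf{Proof proposal for Theorem \ref{thm-O1}.}

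The plan is to show that an $O(n)$-symmetric Ancient Oval automatically possesses the additional $O(1)$ (reflection) symmetry in the axis variable, so that Theorem \ref{thm-old} from \cite{ADS} applies verbatim. The key point is that the arguments in \cite{ADS} never genuinely used the reflection symmetry as a structural hypothesis: they used it only to reduce the analysis to a single tip, and to know that the two tips behave symmetrically. So the cleanest route is a two-step argument: first establish enough a priori structure for an $O(n)$-symmetric Ancient Oval (uniqueness of the neck region, existence of exactly two tips $p_t^1, p_t^2$ lying on opposite sides, convexity, and the backward cylindrical limit) — all of which we already have from Section \ref{sec-rot-symm}, in particular from the discussion following Lemma \ref{lemma-possible-limits} and from Proposition \ref{limit-tip} and Lemma \ref{lemma-tip-Bowl}; and second, run through the proof of Theorem \ref{thm-old} in \cite{ADS} replacing every invocation of reflection symmetry by the corresponding statement applied separately to each of the two tips.

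Concretely, I would proceed as follows. First I would record that by Haslhofer--Kleiner and Xu-Jia Wang (as already used in Section \ref{sec-rot-symm}), the rescaled flow $\bar M_\tau$ converges backward in time, uniformly on compact sets, to the round cylinder $\R \times S^{n-1}$ of radius $\sqrt{2(n-1)}$; this gives part (i) and part (ii) of Theorem \ref{thm-old} directly, since those statements only concern the behavior on compact $y$-sets (resp.\ compact $z$-sets), and the convergence argument in \cite{ADS} for these parts is purely local near the cylindrical region and uses only $O(n)$-symmetry plus convexity plus the a priori estimates. Here one must check that the a priori estimates of \cite{ADS} used in the proofs of (i) and (ii) — the derivative bounds $|u_y| + |u_{yy}| + |u_{yyy}| \le \bar C(\theta)|\tau|^{-1/2}$ in the cylindrical region, and the $L^2$-expansion $\|u - \sqrt{2(n-1)} + \tfrac{\sqrt{2(n-1)}}{4|\tau|}\psi_2\| = o(|\tau|^{-1})$ — are themselves derived without reflection symmetry; this is a matter of re-reading those proofs and observing that the cut-off localizes everything to $\{u \ge \theta/2\}$ where only the evolution equation \eqref{eq-u} and $O(n)$-symmetry enter. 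For part (iii), which concerns the tip, I would apply Lemma \ref{lemma-tip-Bowl}: for each $k = 1,2$ and for $\epsilon$ small there is $\rho_0$ and $t_0$ so that $\hat{\cP}(p_t^k, t, \rho_0)$ is $\epsilon$-close to a piece of the Bowl soliton including the tip. Combined with the asymptotic rate $H_{\max}(t) = \sqrt{\tfrac12 |t|\log|t|}\,(1+o(1))$ — which must itself be re-derived without reflection symmetry — this yields exactly statement (iii). The mean curvature rate can be obtained as in \cite{ADS} from the ODE analysis of $H_{\max}$, which again only uses convexity, $O(n)$-symmetry, and the cylindrical asymptotics (i): the tip-region computations in \cite{ADS} work at one tip at a time.

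The main obstacle I expect is the bookkeeping required to verify that the precise a priori estimates of \cite{ADS} — especially the sharp $o(|\tau|^{-1})$ expansion \eqref{eq-L2-asymp0} and the tip asymptotics feeding part (iii) — truly do not smuggle in the reflection symmetry anywhere. In \cite{ADS} the reflection symmetry is convenient because it forces $\bar d_1(\tau) = \bar d_2(\tau)$ and makes the odd-mode projections vanish automatically, simplifying the neutral-mode analysis that pins down the $\psi_2$ coefficient. Without it, the odd mode $\psi_1 = y$ is a priori present; the resolution is that the $\psi_1$-mode of $u - \sqrt{2(n-1)}$ still decays faster than the $\psi_2$-mode (it sits in a faster-decaying regime once the $\psi_0$ and $\psi_1$ modes have been killed by the translation/time-shift normalization, or more robustly, one shows directly from the tip asymptotics at \emph{both} tips that $\bar d_1(\tau) - \bar d_2(\tau) = o(\bar d_i(\tau))$ and hence the odd perturbation is lower-order). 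I would therefore include a short lemma asserting $\bar d_1(\tau)/\bar d_2(\tau) \to 1$ as $\tau \to -\infty$, proved by applying Lemma \ref{lemma-tip-Bowl} and Corollary \ref{cor-old} at each tip and comparing the two resulting Bowl-soliton scales, both of which are governed by the same $H_{\max}$; with this in hand, the neutral-mode argument of \cite{ADS} goes through with only cosmetic changes. The remainder is then a faithful transcription of the \cite{ADS} proof, tip by tip, which I would present as: "the estimates of \cite{ADS} are local in the cylindrical and tip regions separately, use only \eqref{eq-u}, convexity, and $O(n)$-symmetry, and therefore extend; the only global input — the relation between the two tips — is supplied by the lemma above."
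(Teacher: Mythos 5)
Your overall strategy---rerun the [ADS] analysis in the $O(n)$-only setting and recognize that the new difficulty is the odd eigenmode $\psi_1(y)=y$---is the right one and matches the paper's. (Your opening remark about first proving the solution is genuinely reflection symmetric would be circular here: that fact only becomes available \emph{after} the uniqueness Theorem~\ref{thm-main} is established, which itself requires the present result as input.) However, the mechanism you propose for killing the odd mode has a real gap. You want to prove $\bar d_1(\tau)/\bar d_2(\tau)\to 1$ by comparing Bowl soliton scales at the two tips, asserting that ``both are governed by the same $H_{\max}$.'' That assertion is not justified: Lemma~\ref{lemma-tip-Bowl} is purely qualitative, giving closeness to \emph{some} Bowl soliton at each tip's own curvature scale, and nothing in Section~2 relates $H(p_t^1,t)$ to $H(p_t^2,t)$---Lemma~\ref{lem-unif-equiv-curv} only compares points near the \emph{same} tip. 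Moreover, the precise rate $H_{\max}\sim\sqrt{|t|\log|t|/2}$ in Theorem~\ref{thm-old}(iii) is derived in [ADS] downstream of the inner-region expansion, hence downstream of the very eigenmode analysis you are trying to close; invoking it here is circular. Asymptotic tip symmetry is really an \emph{output} of the eigenmode argument, not an available input to it.

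The paper resolves the odd mode with a much softer tool (Lemma~\ref{lem-must-be-inside}): using the self-shrinker foliation barriers from [ADS], one shows there exist sequences $\tau_i,\tau_i'\to-\infty$ with $u(L_n,\tau_i)<\sqrt{2(n-1)}$ and $u(-L_n,\tau_i')<\sqrt{2(n-1)}$, because otherwise $u$ would lie above the entire foliation on a half-line, forcing $u(y,\tau)\ge\sqrt{2(n-1)}$ for all $y\ge L_n$ and contradicting compactness of $M_\tau$. If $c_1(\tau)$ were the dominant mode, $v(y,\tau)\approx c_1(\tau)\,y$ would have a fixed sign at $y=L_n$ or at $y=-L_n$ for all $\tau\ll 0$, contradicting this lemma. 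No comparison of the two tip scales is needed. You should also be aware that your proposal does not surface the other nontrivial adaptation: the [ADS] proof that $R=\lambda_n/\lambda_1\le 1$ (and hence that $H_{\max}$ is attained at a tip) monotonizes $Q=u_y^2/\bigl(u^2(1+u_y^2)\bigr)$ outward from $y=0$, but without reflection symmetry this must be done outward from the a priori unknown critical point $y(\tau)$ of $u(\cdot,\tau)$, with a separate case handling $R(y(\tau),\tau)>1$. That is Lemma~\ref{lem-Hmax-tips}, and it requires genuine work beyond a "faithful transcription."
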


\begin{proof}
  We will follow closely the arguments in Theorem \ref{thm-old} and point out below only steps in which the arguments slightly change because of the lack of reflection symmetry.
  All other estimates can  be argued in exactly the same way. 
  
  Recall that we consider noncollapsed, ancient solutions (and hence convex due to \cite{HK}) which are $O(n)$-invariant hypersurfaces in $\mathbb{R}^{n+1}$.
  Such hypersurfaces can be represented as
  \[\{(x,x') \,\,\in \mathbb{R}\times\mathbb{R}^n\,\,\,|\,\,\, -d_1(t) < x < d_2(t), \,\,\, \|x'\| = U(x,t)\}\]
  for some function $\|x'\| = U(x,t)$.
  The points $(-d_1(t),0)$ and $(d_2(t),0)$ are called the tips of the surface.
  The profile function $U(x,t)$ is defined only for $x\in [-d_1(t), d_2(t)]$.
  After parabolic rescaling
  \[U(x,t) = \sqrt{T - t} \, u(y,\tau), \qquad y = \frac{x}{\sqrt{T-t}}, \, \qquad \tau = -\log(T-t)\]
  the profile function $u(y,\tau)$ is defined for $-\bar{d}_1(\tau) \le y \le \bar{d}_2(\tau)$.
  Theorem 1.11 in \cite{HK} and Corollary 6.3 in \cite{W} imply that as  $\tau \to -\infty$, surfaces $M_{\tau}$  converge in $C_{loc}^{\infty}$ to a cylinder of radius $\sqrt{2(n-1)}$, with axis passing through the origin. 
  
  Due to concavity, for every $\tau$, there exists a $y(\tau)$ so that $u_y(\cdot,\tau) \le 0$ for $y \ge y(\tau)$, $u_y(\cdot,\tau) \ge 0$ for $y \le y(\tau)$ and $u_y(y(\tau),\tau) = 0$.
  To finish the proof of Theorem \ref{thm-O1} we need the following lemma saying the maximum of $H$ is attained at one of the tips.
  
  \begin{lemma}
    \label{lem-Hmax-tips}
    We have that $(\lambda_1)_y \ge 0$ for $y \in [y(\tau), \bar{d}_1(\tau))$ and $(\lambda_1)_y \le 0$ for $y \in (-\bar{d}_2(\tau), y(\tau)]$.
    As a consequence, the  mean curvature $H$ on $M_t$ attains its maximum at one of the tips $(-d_1(t),0)$ or $(d_2(t),0)$.
  \end{lemma}
  
  \begin{proof}
    We follow the proof of  Corollary 3.8 in \cite{ADS} where the result  followed from the fact that the scaling invariant quantity 
    \[ R := \frac{\lambda_n}{\lambda_1} = -\frac{u u_{yy}}{1 + u_y^2} \geq  0\]
    satisfies 
    \begin{equation}\label{eqn-RRR}
      R \leq 1. 
    \end{equation}
    Let us then show that \eqref{eqn-RRR} still holds in our case.
    Note that at umbilic points one has $R = 1$.
    Both tips of the surface are umbilic points and hence we have $R = 1$ at the tips for all $\tau$ (here we use that the surface is smooth and strictly convex and radially symmetric at the tips).
    Hence, $R_{\max}(\tau)$ is achieved on the surface for all $\tau$ and is larger or equal than one.
    Thus it is sufficient to show that   $R_{\max}(\tau) \leq 1$. 
    We first note that  the quantity ${\displaystyle Q := \frac{u_y^2}{u^2(1+u_y^2)}}$ that we considered before in \cite{ADS} satisfies  $Q_y \ge 0$ for $y \ge y(\tau)$ and $Q_y \le 0$ for $y \le y(\tau)$.     
    
    To prove \eqref{eqn-RRR}, we may assume $R_{\max}(\tau) =R(\bar y_\tau,\tau) > 1$,  for all $\tau \le \tau_0$ and some $\bar y_\tau \in \bar M_\tau$, since otherwise the statement is true.  
    The convergence to the  cylinder in the middle implies that $|\bar{y}_{\tau}| \to +\infty$,  as $\tau \to -\infty$.
    As in the proof of Lemma 3.5 in \cite{ADS} it is enough to show that the
    \begin{equation}
      \label{eq-Q-lower}
      \liminf_{\tau\to -\infty} Q(\bar{y}_{\tau},\tau) \ge c > 0
    \end{equation}
    for a uniform constant $c > 0$ and all $\tau \le \tau_0$. 
    
    The same proof as in \cite{ADS} implies there exists a uniform constant $c_1 > 0$ so that for all $\tau \le \tau_0 \ll -1$ we have that 
    \begin{equation}
      \label{eq-Q-1}
      Q(y,\tau) \ge c_1, \qquad \mbox{whenever} \,\,\, R(y,\tau) = 1.
    \end{equation}
    
    We claim that  this implies \eqref{eq-Q-lower}.
    To prove this claim we argue by contradiction and hence, assume that there exists a sequence $\tau_i\to -\infty$ for which $Q(\bar{y}_{\tau_i},\tau_i) \to 0$ as $i\to \infty$.
    %    We know that as $\tau\to -\infty$, $M_{\tau}$ converges to the cylinder $\mathbb{R}\times S^{n-1}$.
    This implies that the $\lim_{\tau\to -\infty} R(y,\tau) = 0$, uniformly for $y$ bounded.
    We conclude that for all $\tau \le \tau_0$ there exists at least one point $y_{\tau}$ such that $R(y_{\tau},\tau) = 1$.
    The convergence to the cylinder also implies that
    %    \[\lim_{\tau\to -\infty} |y_{\tau}| = \infty.\]
    Without loss of generality we may take  that for a subsequence,  $y(\tau_i) < \bar{y}_{\tau_i}$.
    We consider two different cases.
    
    \smallskip 
    \noindent{\em Case} 1.  $R(y(\tau_i),\tau_i) \le 1$. 
    Then, either  $R(y(\tau_i),\tau_i) = 1$ (in which  case set  $\hat y_{\tau_i} := y(\tau_i)$),  or $R(y(\tau_i),\tau_i) < 1$ (in which case we  find $\hat y_{\tau_i} \in ( y(\tau_i), \bar{y}_{\tau_i})$ so that $R(y_{\tau_i},\tau_i) = 1$).
    In either case, since $R(\hat y_{\tau_i}, \tau_i)=1$, \eqref{eq-Q-1} implies  that
    $Q(\hat y_{\tau_i},\tau_i) \ge c_1$, for $i \geq i_0$. 
    Since $Q_y(\cdot,\tau) \ge 0$ for $y \ge y(\tau)$ and $\bar y_{\tau_i} \geq \hat y_{\tau_i} \geq y(\tau_i)$,   we conclude that 
    $Q(\bar{y}_{\tau_i},\tau_i) \ge c_1 > 0$,  for $ i\ge i_0$ 
    contradicting our assumption that the $\lim_{i\to\infty} Q(\bar{y}_{\tau_i},\tau_i) = 0$.
    \smallskip
    
    \noindent{\em Case} 2.  $R(y(\tau_i),\tau_i) >  1$.
    Recall that $u(y,\tau)$ satisfies the equation
    \[\frac{\partial}{\partial\tau} u = \frac{u_{yy}}{1 + u_y^2} - \frac y2 u_y + \frac u2 - \frac{n-1}{u} = -H \sqrt{1 + u_y^2} - \frac y2 u_y + \frac u2.\]
    The maximum of $u(\cdot,\tau)$ is achieved at $y(\tau)$ and hence, by \eqref{eq-H} we have
    \[\frac{d}{d\tau} u_{\max} \ge -C + \frac{u_{\max}}{2}\]
    implying that
    \[u(y(\tau),\tau) = u_{\max}(\tau) \le \max\{2C, u_{\max}(\tau_0)\}, \qquad \mbox{for} \,\,\,\, \tau \le \tau_0.\]
    On the other hand, due to the convergence to the   cylinder of radius $\sqrt{2(n-1)}$ in the middle we have that $u_{\max}(\tau) \ge u(0,\tau) \ge \frac 12 \,\sqrt{2(n-1)}$ for $\tau \le \tau_0 \ll -1$.
    All these imply that for $\tau\le \tau_0 \ll -1$ we have
    \[C_0 \ge H(y(\tau),\tau) \ge \frac{n-1}{u} \ge c_0 > 0.\]
    Hence, we can take a limit around $(y(\tau_i), u(y(\tau_i),\tau_i)$ to conclude that the limit is a complete graph of a concave, nonnegative function $\hat{u}(y,\tau)$ so that $\hat{u}_y(0,0) = 0$.
    All these yield $\hat{u} \equiv \text{ constant}$, that is  the limit is the round cylinder $\mathbb{R}\times S^{n-1}$, contradicting that $R(y(\tau_i),\tau_i) > 1$.
    
    This finishes  the proof of estimate \eqref{eq-Q-lower} and then we can argue as in the proof of Lemma 3.5 in \cite{ADS} to conclude the proof that $R \le 1$, for $\tau \le \tau_0 \ll -1$. 
    
    To finish the proof of Lemma \ref{lem-Hmax-tips}, note that $R \le 1$ on $M_{\tau}$, for $\tau \le \tau_0$ implies that 
    $$(\lambda_1)_y  \ge 0,\,\,\,  \mbox{for} \,\,  y\in [y(\tau),\bar{d}_1(\tau)] \quad \mbox{and} \quad 
    (\lambda_1)_y \le 0 \,\,\, \mbox{for} \,\, y\in [-\bar d_2(\tau),y(\tau)].$$
    We now conclude as in the proof of Corollary 3.8 in \cite{ADS} that  
    %    Therefore, we have $H(y,\tau) \leq  H(\bar{d}_1(\tau),\tau)$, 
    %    for $y \in[y(\tau), \bar{d}_1(\tau)]$ and similarly 
    %  $H(y,\tau) \le H(-\bar{d}_2(\tau),\tau)$,  for $ y \in [-\bar{d}_2(\tau), y(\tau)]$.
    Hence, 
    \[H(y,\tau) \le \max \big ( H(\bar{d}_1(\tau),\tau), H(\bar{d}_2(\tau),\tau) \big ),  \qquad y\in M_{\tau}\]
    for all $\tau \le \tau_0 \ll -1$ finishing the proof of Lemma \ref{lem-Hmax-tips}.
  \end{proof}
  
  The a'priori estimates from Section 4 in \cite{ADS} hold as well in our case,  one has just to use that $u_y \le 0$ for $y \in [y(\tau), \bar{d}_1(\tau)]$ and $u_y \ge 0$ for $y\in [-\bar{d}_2(\tau),y(\tau)]$.
  By using the  same barriers that  we constructed in \cite{ADS} one can easily see that we still  have the inner-outer estimate we showed  in Section 4.5 in \cite {ADS}.
  Note that the same inner-outer estimates were proved and the same barriers were used in \cite{BC} without assuming any symmetry.
  
  \begin{lemma}\label{lem-must-be-inside}
    There is an $L_n>0$ such that for any  rescaled Ancient Oval  $u(y,\tau)$   there exist sequences $\tau_i, \tau_i'\to-\infty$ such that for all $i=1, 2, 3, \dots$ one has
    \[
    u(L_n, \tau_i) < \sqrt{2(n-1)}  \quad\text{and}\quad u(-L_n, \tau_i') < \sqrt{2(n-1)}.
    \]
  \end{lemma}
  \begin{proof}
    Choose $L_n$ so that the region $\{(y, u) : y\geq L_n, 0\leq u\leq \sqrt{2(n-1)}\}$ is foliated by self-shinkers as in \cite{ADS}, i.e.~for each $a\in(0, \sqrt{2(n-1)})$ there is a unique solution $U_a:[L_n, \infty)\to\R$ of 
    \begin{equation}
      \frac{U_{yy}}{1+U_y^2} - \frac y2 U_y + \frac 12 U - \frac{n-1}{U} = 0,
      \qquad U(L_n) = a.
    \end{equation}
    To prove the Lemma we argue by contradiction and assume that the sequence $\tau_i$ does not exist.
    This means that for some $\tau_*$ one has $u(L_n, \tau)\geq \sqrt{2(n-1)}$ for all $\tau\leq \tau_*$.
    The same arguments as in \cite[Section 4]{ADS} then imply that $u(y, \tau) \geq U_a(y)$ for all $y\geq L_n$, any $\tau\leq \tau_*$ and any $a\in (0, \sqrt{2(n-1)})$.
    This implies that $u(y,\tau)\geq \sqrt{2(n-1)}$ for all $y\geq L_n$  and therefore contradicts the compactness of $M_\tau$.
  \end{proof}
  
  For  any of our rescaled rotationally symmetric Ancient Ovals  $u(y,\tau)$,  then we can consider the truncated difference
  \[
  v(y, \tau) = \varphi(\frac yL) \bigl ( \frac{u(y, \tau)}{\sqrt{2(n-1)}} - 1 \bigr ) 
  \]
  for some large $L$.
  This function satisfies
  \begin{equation}
    v_\tau = \cL v + E(\tau)
  \end{equation}
  where $E$ contains the nonlinear as well as the cut-off terms, and where $\cL$ is the operator
  \[
  \cL\phi = \phi_{yy} - \frac y2 \phi_y + \phi.
  \]
  Using the fact that $v$ comes from an ancient solution, and by comparing the Huisken functionals of $M_\tau$ with that of the cylinder we can show as in \cite{ADS} that for any $\epsilon>0$ one can choose $\ell=\ell_\epsilon$ and $\tau_\epsilon<0$ large enough so that 
  \begin{equation}
    \label{eq-error-small}
    \| E(\tau) \|_\hilb \leq \epsilon \|v(\cdot, \tau)\|_\hilb
  \end{equation}
  holds for all $\tau\leq\tau_\epsilon$.
  
  As in \cite{ADS} we can decompose $v$ into eigenfunctions of the linearized equation, i.e.
  \[
  v(y, \tau) = v_-(y, \tau) + c_2(\tau) \psi_2(y) + v_+(y, \tau) 
  \] 
  with the only difference that $v_\pm$ are no longer necessarily even functions of $y$.
  The component in the unstable directions now has two terms,
  \[
  v_+(y) = c_0(\tau) \psi_0(y) + c_1(\tau) \psi_1(y) = c_0(\tau) + c_1(\tau)\, y.
  \]
  The estimate \eqref{eq-error-small} implies that the exponential growth rates of the various components $v_-$, $c_2$, $c_1$, $c_0$ are close to the growth rates predicted by the linearization,  i.e.~if we write $V_-(\tau) = \|v_-(\cdot, \tau)\|_\hilb$, then we have
  \begin{subequations}
    \begin{align}
      &V_-'(\tau)
      \leq -\tfrac 12 V_-(\tau)
      + \epsilon \|v(\cdot, \tau)\| \label{eq-component-Vmin}\\
      &|c_2'(\tau)| \leq \epsilon \|v(\cdot, \tau)\| \\
      &|c_1'(\tau) - \tfrac 12 c_1(\tau)| \leq \epsilon \|v(\cdot, \tau)\| \\
      &|c_0'(\tau) - c_0(\tau)| \leq \epsilon \|v(\cdot, \tau)\| 
    \end{align}
  \end{subequations}
  The total norm, which appears on the right in each of these inequalities, is given by Pythagoras
  \[
  \|v(\cdot, \tau)\|_\hilb^2 = V_-(\tau)^2 + c_0(\tau)^2 + c_1(\tau)^2 + c_2(\tau)^2.
  \]
  Using the ODE Lemma (see Lemma  in \cite{ADS}) we conclude that for $\tau\to-\infty$ exactly one of the four quantities $V_-(\tau)$, $c_0(\tau)$, $c_1(\tau)$, and $c_2(\tau)$ is much larger than the others.
  Similarly to  \cite{ADS}, we will now argue that $c_2(\tau)$ is in fact the largest term:
  \begin{lemma}
    For $\tau\to -\infty$ we have 
    \[
    V_-(\tau) + |c_0(\tau)| + |c_1(\tau)|= o\bigl(|c_2(\tau)|\bigr).
    \]  
  \end{lemma}
  \begin{proof}
    We must rule out that any of the three components $V_-$, $c_0$, or $c_1$ dominates for $\tau\ll 0$.
    
    The simplest is $V_-$, for if $\|v(\tau)\|_\hilb = \cO(V_-(\tau))$, then \eqref{eq-component-Vmin} implies that $V_-(\tau)$ is exponentially decaying.
    Since $v(\cdot, \tau)\to0$ as $\tau\to -\infty$, it would follow that $V_-(\tau)\equiv0$, and thus $v(\cdot, \tau)\equiv 0$, which is impossible.
    
    If $\|v(\cdot, \tau)\|_\hilb = o\bigl(c_0(\tau)\bigr)$ then on any bounded interval $|y|\leq L$ we have 
    \[
    v(y,\tau) = c_0(\tau) \bigl( 1 + o(1)\bigr) \qquad (\tau\to -\infty).
    \]
    In this case we derive contradiction using the same arguments as in \cite{ADS}.
    
    Finally, if $c_1(\tau)$ were the largest component, then we would have
    \[
    v(y,\tau) = c_1(\tau) \bigl( y + o(1)\bigr) \qquad (\tau\to -\infty)
    \]
    so that we would have either $v(L,\tau)>0$, or $v(-L,\tau)>0$ for all $\tau\ll0$.
    This again contradicts Lemma~\ref{lem-must-be-inside}.

  \end{proof}

  Once we have the result in Lemma \ref{lem-must-be-inside},  it follows as in \cite{ADS} that
  \[
  u(y,\tau) = \sqrt{2(n-1)}\, \left(1 - \frac{y^2 - 2}{4|\tau|}\right) + o(|\tau|^{-1}) \qquad |y| \le M 
  \]
  as $\tau \to -\infty$.
  This implies that 
  $y(\tau)$, the maximum point of $u(y,\tau)$ (such that $u_y(y(\tau),\tau) = 0$) satisfies 
  \[
  |y(\tau)| = o(1), \qquad \mbox{as}\,\,  \tau \to -\infty.
  \]
  In particular we have that $y(\tau) \leq 1$ for $\tau \leq \tau_0 \ll -1$.
  After we conclude this, the arguments in the intermediate and the tip region asymptotics in \cite{ADS} go through in our current  case where we lack the reflection symmetry.

\end{proof}

\end{document}